\documentclass[11pt,final]{article}

\usepackage[T1]{fontenc}
\usepackage{amsmath, amscd, amsfonts, amssymb, amsthm, mathtools}
\usepackage{graphicx, tikz}
\usepackage{dsfont, bbm}
\usepackage{enumerate}
\usepackage[title]{appendix}
\usepackage{hyperref}
\usepackage{fullpage}
\usepackage{kotex}
\usepackage{showkeys}
\usepackage{authblk}
\usepackage{accents}

\theoremstyle{definition}
\newtheorem{definition}{Definition}[section]
\newtheorem{assumption}[definition]{Assumption}

\theoremstyle{plain}
\newtheorem{theorem}[definition]{Theorem}
\newtheorem{lemma}[definition]{Lemma}
\newtheorem{corollary}[definition]{Corollary}
\newtheorem{proposition}[definition]{Proposition}

\newtheorem{claim}[definition]{Claim}

\newtheorem{example}[definition]{Example}

\theoremstyle{remark}
\newtheorem{remark}[definition]{Remark}

\numberwithin{equation}{section}

\newcommand{\prob}{\mathbb{P}}

\newcommand{\R}{\mathbb{R}}

\newcommand{\E}{\mathbb{E}}
\newcommand{\eps}{\epsilon}

\newcommand{\nn}{\nonumber}

\renewcommand{\Im}{\mathrm{Im}\,}
\renewcommand{\Re}{\mathrm{Re}\,}

\DeclareMathOperator{\Tr}{Tr}

\newcommand{\T}{\mathsf{T}}

\newcommand{\triplenorm}[1]{{\left|\!\left|\!\left| #1 \right|\!\right|\!\right|}}
\newcommand{\triplenorms}[1]{{|\!|\!| #1 |\!|\!|}}

\newcommand{\bdot}[1]{\accentset{\scriptscriptstyle\bullet}{#1}}

\newcommand{\email}[1]{\href{mailto:#1}{\texttt{#1}}}

\newcommand{\subjclass}[1]{
  \medskip
  \noindent{\scriptsize 2020 Mathematics Subject Classification:} {\scriptsize #1}\par
}

\newcommand{\keywords}[1]{
  \noindent{\scriptsize Keywords and phrases:} {\scriptsize #1}\par
}

\let\oldthebibliography\thebibliography
\let\endoldthebibliography\endthebibliography

\renewenvironment{thebibliography}[1]
  {
    \oldthebibliography{#1}
    \footnotesize
    \setlength{\itemsep}{1pt}
    \setlength{\parskip}{0pt}
    \setlength{\parsep}{0pt}
  }
  {
    \endoldthebibliography
  }

\begin{document}

\title{Mesoscopic eigenvalue statistics for correlated random matrices} 

\author{
L\'aszl\'o~Erd\H{o}s\textsuperscript{*}
\qquad
Jaehun~Lee\textsuperscript{*}
}

\date{\today}

\maketitle

\begingroup
\renewcommand{\thefootnote}{\fnsymbol{footnote}}
\footnotetext[1]{
ISTA, Am Campus 1, 3400 Klosterneuburg, Austria.
Email: \email{laszlo.erdoes@ist.ac.at}, \email{jaehun.lee@ist.ac.at}.
Supported by ERC Advanced Grant ``RMTBeyond'' No.\,101020331. 
}
\endgroup

\begin{abstract}
	We prove a mesoscopic central limit theorem for linear eigenvalue statistics of correlated Hermitian random matrices. The class considered here includes Wigner and Wigner-type matrices, as well as models whose entry correlations decay polynomially in the distance between index pairs.
The proof combines a multivariate cumulant expansion with multi-resolvent local laws and a detailed analysis of the resulting variance kernel on the operator-level.
\end{abstract}

\subjclass{60B20, 15B52.}

\keywords{Central limit theorem, universality, matrix Dyson equation, multi-resolvent local law.}

\section{Introduction}

Let $H=(H_{ij})$ be an $N\times N$ Hermitian random matrix, and let
$\lambda_1,\ldots,\lambda_N$ denote its eigenvalues. For a compactly supported
and sufficiently regular test function $g:\mathbb{R}\to\mathbb{R}$, we consider
the linear eigenvalue statistics
\begin{equation}\label{eq: 93f}
        \Tr g \bigg(\frac{H-E}{\eta}\bigg)
        =
        \sum_{i=1}^N g\bigg(\frac{\lambda_i-E}{\eta}\bigg)
\end{equation}
around  a fixed reference energy $E\in\mathbb{R}$ and
on an $N$-dependent spectral scale $\eta=\eta(N)>0$.  In this paper we focus on
the case where $E$ lies in the bulk of the limiting spectrum\footnote{Roughly speaking, the bulk means the interior of the support of the limiting spectral distribution away from the edges or other singularities. For the precise definition, see Theorem \ref{thm: 96}.} and $\eta$ is a
mesoscopic scale inside the bulk, that is,
\begin{equation*}
        N^{-1}\ll \eta \ll 1.
\end{equation*}
We prove that, for $H$ belonging to a large class of \emph{correlated} random
matrices, the centered linear eigenvalue statistics converges in distribution
to a centered Gaussian random variable with variance
$(2\beta\pi^2)^{-1}\lVert g\rVert_{\dot{H}^{1/2}}^2$, where
\begin{equation}\label{eq: 186c}
        \lVert g\rVert_{\dot{H}^{1/2}}
        \coloneqq
        \bigg(
        \iint_{\mathbb{R}^2}
        \frac{\bigl(g(x)-g(y)\bigr)^2}{(x-y)^2}
        \,\mathrm{d}x\,\mathrm{d}y
        \bigg)^{1/2}.
\end{equation}
Here the parameter $\beta$ encodes the symmetry class: $\beta=1$ for real
symmetric matrices and $\beta=2$ for complex Hermitian matrices. 
Note that the variance depends only on the $\dot{H}^{1/2}$-norm of the test function (modulo $\beta$) and is otherwise independent of the other details of the model.
Our correlated random matrix model allows for dependence among the entries,
subject to certain summability conditions on their correlations.
These assumptions are designed as a natural extension of the Wigner (and even more generally the Wigner-type) setting,
where the entries are independent up to the symmetry constraint.

For orientation, let us present a prototypical example of our model, previously considered in
\cite{EHR25,MR3941370,MR4480831}. This example is characterized by polynomial
decay of correlations in the sense that
\begin{equation*}
        \big| N \big(\E[H_{i_1j_1}H_{i_2j_2}]-\E[H_{i_1j_1}]\E[H_{i_2j_2}]\big)\big|
        \lesssim
        \frac{1}{
        1+\mathrm{d}\bigl(i_1 j_1, i_2 j_2\bigr)^s
        },
        \qquad s>2,
\end{equation*}
with appropriate matching assumptions for higher multivariate cumulants.
Here the distance between index pairs is defined by
\begin{equation}\label{eq: 179ft}
        \mathrm{d}\bigl(i_1 j_1, i_2 j_2\bigr)
        \coloneqq
        \min
        \left\{
        |i_1-i_2|+|j_1-j_2|,
        |i_1-j_2|+|j_1-i_2|
        \right\}.
\end{equation}
See Example \ref{eg: 275} below for the details of this prototype.
We remark that Wigner and Wigner-type matrices are
included as special cases. In these models, the covariance structure is
supported only on the direct pairing $(ij,ij)$ and the cross pairing
$(ij,ji)$; all other covariances between distinct index pairs vanish.

The problem considered in this paper lies at the intersection of two lines of
research. The first  one is the mesoscopic central limit theorem (CLT) for linear
eigenvalue statistics, a topic with a substantial literature. For random matrix
models with independent entries, such results are by now well understood; see,
for instance,
\cite{MR3678478,MR3914908,LS20,MR4255226,LLS24,Vova25a}. Once dependence
among the entries is allowed, however, new difficulties arise.
CLTs for eigenvalue statistics of random matrices with
dependent entries are comparatively scarce and 
the existing results in this direction concern only the macroscopic scale
$\eta\sim 1$. We defer a discussion of related results to Section \ref{sec: 449a}.

The second line of research concerns the spectral statistics of correlated
random matrix models. There is an extensive body of work on global spectral
distributions for matrices with correlated entries; see, for example,
\cite{MR2417889,MR3332852,MR1431189,MR4705168,MR1887675,MR2191967,NEURIPS2021_29d74915,MR2444540,
MR2155229}. Local eigenvalue statistics were studied on the microscopic scale in \cite{MR3916109,MR3941370}, where bulk universality was established under certain correlation decay assumptions; fast (exponential) decay for \cite{MR3916109} and slow (polynomial) decay for \cite{MR3941370}. This was later refined and generalized in
\cite{EHR25}, where local laws near regular edges and cusps were proved, and so were corresponding universalities on the microscopic scale.
From this perspective, it is natural to ask how eigenvalues behave on the mesoscopic scales, and whether the universal Gaussian limit persists in this case.
The present work settles this natural question at the intersection of these two research directions.

We focus on the mesoscopic scale rather than the macroscopic one because, for general correlated random matrix models, macroscopic fluctuations can exhibit model-specific features that are non-universal.
Indeed, on the macroscopic scale the limiting fluctuations may be non-Gaussian,
or they may remain Gaussian but with a variance depending on the specific model;
see, for example, \cite[Theorems 35 and 39]{MR3078290}.
By contrast, the mesoscopic regime is closer in spirit to universality of local eigenvalue statistics.
Following Wigner-Dyson-Mehta vision in an extended sense, one expects such non-global eigenvalue behavior to depend only on the symmetry class of the ensemble,
and not on the finer details of the underlying distribution or even its correlation structure.
This motivates the present paper:
despite the more complicated global structure and the lack of entrywise independence, the mesoscopic CLT with its $\dot{H}^{1/2}$-norm in the variance remains universal under suitable assumptions on the correlations.

We now describe the main ideas and new technical ingredients of the proof.
Through the Helffer--Sj\"ostrand representation, the study of the mesoscopic CLT reduces to the analysis of traces of operators associated with the \emph{stability operator} \eqref{eq: 535}.
In the independent-entry setting, such as Wigner-type matrices, the fluctuation analysis can be organized through \emph{entrywise} cumulant expansions and deterministic approximations coming from the \emph{vector Dyson equation} \eqref{eq: 528ft}.
In the correlated setting, this entrywise approach is no longer sufficient, because the second and higher cumulants couple many different matrix entries.

One of the key contributions of this paper is to formulate and carry out the corresponding \emph{multivariate} cumulant expansion at the operator level; see Lemmas \ref{lem: 393}--\ref{lem: 445}.
The covariance and higher cumulants are encoded as operators acting on spaces of matrices, and the multivariate cumulant expansion, together with the \emph{matrix Dyson equation} \eqref{eq: 197}, yields self-consistent relations in this operator language.
This is the first point at which the correlated model differs substantially from the Wigner-type setting: the cumulant expansion is no longer local in the entry indices, and the resulting terms must be analyzed using the full structure of the correlation encoded in these operators.

To derive the leading term through the multivariate cumulant expansion, we need a key input, namely the \emph{two-resolvent local law}, which is the simplest case of multi-resolvent local laws; see Theorem \ref{thm: 766}.
The multi-resolvent local law is an estimate controlling products of several resolvents and deterministic matrices, uniformly down to all mesoscopic scales.
These estimates are supplied by the work \cite{EHR26+}.
Even with these local laws at hand, the derivation of the mesoscopic CLT from them is far from straightforward.
The variance computation requires not only a careful analysis of the self-consistent relations produced by the cumulant expansion, but also new estimates of the trace of an operator involving the stability operator; see Section \ref{sec: 2383}.

The main difficulty in the variance computation is that all leading terms must be treated at the operator-level (by considering operator traces) rather than the matrix-level.
In particular, the limiting variance is governed by a two-body stability operator \eqref{eq: 535}.
The required stability estimates are borrowed from \cite{EHR26+}, and the present work identifies how this stability enters the variance calculation and how the universal result emerges from the traces of operators.
The central analytic task is to extract, in the bulk regime, the singular part of the variance kernel. After a subtle
cancellation, this singular part produces precisely the factor $(x-y)^{-2}$ in \eqref{eq: 186c} giving the $\dot{H}^{1/2}$-norm, while the remaining model-dependent terms become negligible and do not contribute to the limiting variance.

\subsection{Results} \label{sec: 445a}

Before introducing the model, we mention that the notational conventions used throughout the paper are collected in Section \ref{sec: 364}.
Let $H=(H_{ij})$ be an $N\times N$ matrix such that
\begin{equation*}
	H = A + W,
\end{equation*}
where $A=(A_{ij})\in\mathbb{C}^{N\times N}$ is a Hermitian deterministic matrix and $W=(W_{ij})\in\mathbb{C}^{N\times N}$ is a centered Hermitian random matrix.
We consider $H$ satisfying Assumption \ref{assump: 118}--\ref{assump: 176} below.

\begin{assumption}[Bounded expectation]\label{assump: 118}
There exists a constant $C_{A}>0$ independent of $N$ satisfying $\lVert A \rVert\le C_{A}$.
\end{assumption}

\begin{assumption}[Finite moments]\label{assump: 180}
For every positive integer $p\ge 2$, there exists a constant $\nu_{p}$ such that $\E|\sqrt{N}W_{\alpha}|^{p}\le \nu_{p}$ for all $\alpha\in[N]^{2}$ and $N\in\mathbb{N}$.
If $W$ has complex-valued entries, then we assume that
\begin{equation*}
	W = X + \mathrm{i}Y,
\end{equation*}
where $X$ is real symmetric, $\{Y_{ij}\}_{i< j}$ are independent copies of $\{X_{ij}\}_{i< j}$, $Y_{ii}=0$ for all $i$, and $Y^{\mathsf{T}}=-Y$.
\end{assumption}

As in \cite{EHR25}, we introduce the normalized cumulant\,\footnote{For a random vector $\mathbf{w}=(w_{1},\dots,w_{k})$, its joint cumulants, $\kappa_{\mathbf{m}}^{(\mathbf{w})}$ with $\mathbf{m}\in\mathbb{N}_{0}^{k}$, are given as the coefficients of the log-characteristic function
\begin{equation*}
	\log\E\big[e^{\mathrm{i}\mathbf{w}\cdot\mathbf{t}}\big]
	 = \sum_{\mathbf{m}} \kappa_{\mathbf{m}}^{(\mathbf{w})} \frac{(i\mathbf{t})^{\mathbf{m}}}{\mathbf{m}!}.
\end{equation*}
We set $\kappa(\alpha_{1},\dots,\alpha_{k}) \equiv \kappa(\sqrt{N}W_{\alpha_{1}},\dots,\sqrt{N}W_{\alpha_{k}})\coloneqq \kappa_{(1,\cdots,1)}^{(\mathbf{w})}$ with $\mathbf{w}=(\sqrt{N}W_{\alpha_{1}},\dots,\sqrt{N}W_{\alpha_{k}})$.
}
\begin{equation*}
	\kappa(\alpha_{1},\dots,\alpha_{k}) \equiv \kappa(\sqrt{N}W_{\alpha_{1}},\dots,\sqrt{N}W_{\alpha_{k}}),
	\quad \alpha_{i}\in [N]^{2}, \quad i\in \{1,\ldots, k\}.
\end{equation*}
For example, $\kappa(\alpha_{1},\alpha_{2})=N\,\E[W_{\alpha_{1}}W_{\alpha_{2}}]$.
Note that $\kappa(\alpha_{1},\dots,\alpha_{k})$ is invariant under permutations of its arguments.
Now we formulate the assumptions on $\kappa$ used in this paper. They express certain summability properties
of the correlations among the matrix elements of $W$; most of them have already appeared in 
previous works on random matrices with general correlations in \cite{EHR25, MR3941370}.
We formulate them in the most general form needed for our proof, but the reader may skip them and jump directly to 
 Example~\ref{eg: 275} illustrating Assumption~\ref{assump: 140} and serving as our motivation.
For the sake of conciseness, Assumption~\ref{assump: 140} is formulated for the case where $W$ is real symmetric. In the complex Hermitian case, we further assume that the cumulant conditions in Assumption~\ref{assump: 140} hold for all choices of real and imaginary parts in each argument of the cumulant; that is,
\begin{equation*}
        \kappa(\alpha_{1}^{\mathfrak{X}_{1}},\dots,\alpha_{k}^{\mathfrak{X}_{k}})
        \equiv
        \kappa(\sqrt{N}\mathfrak{X}_{1} W_{\alpha_{1}},\dots,\sqrt{N}\mathfrak{X}_{k} W_{\alpha_{k}}),
        \qquad
        \mathfrak{X}_{i}\in \{\mathrm{Re},\mathrm{Im}\},
\end{equation*}
as in \cite{EHR25}; see also \cite[Appendix C]{MR3941370} for details.

\begin{assumption}[Correlation structure]\label{assump: 140}
\text{ }
\begin{enumerate}
	\item[(i)] For every $k\ge 2$, there exists a constant $C_{k}$ such that\,\footnote{
	Due to the invariance of $\kappa(\alpha_{1},\dots,\alpha_{k})$ for permutations of its arguments, the norm $\triplenorm{\kappa}_{k}$ can be defined with respect to any two index pairs, for example, $\triplenorm{\kappa}_{k} = \Big\lVert \sum_{\alpha_{3},\dots,\alpha_{k}} |\kappa(\ast,\ast,\alpha_{3},\dots,\alpha_{k})| \Big\lVert.$
	}
	\begin{equation*}
		\triplenorm{\kappa}_{k} \coloneqq \Bigg\lVert \sum_{\alpha_{1},\dots,\alpha_{k-2}} |\kappa(\alpha_{1},\dots,\alpha_{k-2},\ast,\ast)|\, \Bigg\lVert \le C_{k},
	\end{equation*}
	where we use the notation $\lVert X(\ast,\ast) \rVert$ to denote the operator norm of the $N^{2}\times N^{2}$ matrix with entries $X(\alpha,\alpha')$ for each $\alpha,\alpha'\in[N]^{2}$.
	
	\item[(ii)] The second order cumulant $\kappa(\cdot,\cdot)$ admits a decomposition\,\footnote{The subscripts in the symbols, $\kappa_{c}$ and $\kappa_{d}$, stands for ``cross'' and ``direct'', inspired by the correlation structure of Wigner matrices; see \cite[Remark 2.8]{MR3941370} for detail in that context.}
	\begin{equation*}
		\kappa(\alpha_{1},\alpha_{2})=\kappa_{c}(\alpha_{1},\alpha_{2})+\kappa_{d}(\alpha_{1},\alpha_{2}),
	\end{equation*}
	satisfying the following conditions.
    There exist constants $\{C_{r,s}>0\}_{|r|,|s|\le N/2}$ and a constant $C_{2}'$ such that
    
    \begin{equation}\label{eq: 241b}
	    \max_{a,x} |\kappa_{c}(xa,(a+s)(x+r))| \le C_{r,s}, \qquad \sum_{r,s} C_{r,s} \le C_{2}'.
    \end{equation}
    
    Similarly, there exist constants $\{D_{r,s}>0\}_{|r|,|s|\le N/2}$ and a constant $C_{2}''$ such that
    
    \begin{equation}\label{eq: 245b}
    	\max_{a,x} |\kappa_{d}(xa,(x+r)(a+s))| \le D_{r,s}, \qquad \sum_{r,s} D_{r,s} \le C_{2}''.
    \end{equation}
    
    In the above formulas, all shifted indices, $(x+r)$ and $(a+s)$, are understood modulo $N$, with representatives chosen in $[N]$. 
    
    We further impose that
    \begin{equation}\label{eq: 237}
	    \kappa_{d}(aj,bk) = \kappa_{c}(aj,kb).
    \end{equation}
    If $W$ is complex Hermitian, the condition \eqref{eq: 237} is not required.
	
	\item[(iii)] There exists a constant $C_{3}'$ such that
	\begin{equation*}
		N^{-3/2}\sup_{\substack{X,Y,Z\in\mathbb{C}^{N\times N}\\
	\lVert X\rVert,\lVert Y\rVert\le 1, \lVert Z\rVert_{\text{hs}}\le 1}}
	\sum_{a_{1},b_{1}}\sum_{a_{2},b_{2}}\sum_{a_{3},b_{3}}|\kappa(a_{1}b_{1},a_{2}b_{2},a_{3}b_{3})|
	|X_{b_{1}a_{2}}| |Y_{b_{2}a_{3}}| |Z_{b_{3}a_{1}}| \le C_{3}'.
	\end{equation*}
	
	\item[(iv)] There exists a (small) constant $\mu>0$, such that for every $\alpha\in[N]^{2}$, there exists an index set $\mathcal{N}(\alpha)$ of cardinality $|\mathcal{N}(\alpha)|\le N^{\frac{1}{2}-\mu}$ with the property that $W_{\alpha}$ and $W_{\beta}$ are independent for all $\beta\notin\mathcal{N}(\alpha)$.
\end{enumerate}
\end{assumption}

In Assumption \ref{assump: 140}, the condition (i) naturally arise as a mild summability condition of multivariate cumulants; see \cite[Assumption C]{MR3941370}.
The other conditions (ii)--(iv) are introduced for technical reasons.
The condition (iv) roughly means that each entry $W_{\alpha}$ is possibly dependent on at most $N^{\frac{1}{2}-\mu}$ other matrix entries $W_{\beta}$ $(\beta\in[N]^{2})$, and independent of the rest. This condition is needed 
only for truncating the cumulant expansion; see \eqref{eq: 1796} for more details.

Compared with \cite{EHR25}, the conditions \eqref{eq: 241b}--\eqref{eq: 245b} and \eqref{eq: 237} are newly added in this paper.
We will use \eqref{eq: 241b} and \eqref{eq: 245b} to control the trace error terms that do not appear in \cite{EHR25}; for example, see \eqref{eq: 3257a} and \eqref{eq: 3271a} for their usage in our argument.
Note that \eqref{eq: 241b} and \eqref{eq: 245b} implies the weaker condition \cite[Eq.\,(2.7)]{EHR25}.
The other new condition \eqref{eq: 237} may be viewed as a natural extension of the covariance structure of 
real symmetric Wigner matrices. They have the natural decomposition $\kappa(\cdot,\cdot)=\kappa_d (\cdot,\cdot) + \kappa_c (\cdot,\cdot)$ where $\kappa_{c}(aj,bk) = \delta_{ak}\delta_{jb}$ and $\kappa_{d}(aj,bk) = \delta_{ab}\delta_{jk}$,
and hence \eqref{eq: 237} holds for them.

\begin{assumption}[Fullness]\label{assump: 176}
There exists a constant $c_{\text{full}}>0$ such that
\begin{equation*}
	N\E[|\Tr(WX)|^{2}] \ge c_{\text{full}}\Tr(X^{2}),
\end{equation*}
for any deterministic Hermitian matrix $X\in\mathbb{C}^{N\times N}$. 
\end{assumption}

Assumptions \ref{assump: 118}--\ref{assump: 176} also imply the following useful condition called \emph{flatness}:
there exist constants $C_{\text{flat}},c_{\text{flat}}>0$ such that for any positive semi-definite matrix $X$,
\begin{equation}\label{eq: 294a}
	c_{\text{flat}}\langle X\rangle \le \mathcal{S}[X] \le C_{\text{flat}}\langle X\rangle.
\end{equation}
We shall refer to the constants in Assumptions \ref{assump: 118}--\ref{assump: 176} as \emph{model parameters}.

For the reader's convenience we now demonstrate a prototype example satisfying Assumption
 \ref{assump: 140} (i)--(iii).

\begin{example}[Polynomially decaying correlation structure, {\cite{EHR25,MR3941370,MR4480831}}]\label{eg: 275}
For the second order cumulant, there exists a constant $C_{\textnormal{eg}}>0$ such that for a fixed exponent $s>2$,
\begin{equation*}
	|\kappa(a_{1}b_{1},a_{2}b_{2})| \le \frac{C_{\textnormal{eg}}}{1+\mathrm{d}(a_{1}b_{1},a_{2}b_{2})^{s}},
\end{equation*}
where the distance $\mathrm{d}$ is defined in \eqref{eq: 179ft}.
For cumulants of order $k\ge 3$, we further assume
\begin{equation*}
	|\kappa(\alpha_{1},\dots,\alpha_{k})|\le C_{\textnormal{eg}}\prod_{e\in\mathfrak{T}_{\min}(\alpha_{1},\dots,\alpha_{k})} \frac{1}{1+\mathrm{d}(e)^{s}},
\end{equation*}
where $\mathfrak{T}_{\min}(\alpha_{1},\dots,\alpha_{k})$ is a minimal spanning tree, i.e., a spanning tree for which the sum of the edge distances $\mathrm{d}(e)$ is minimal in a complete graph with the vertex set $\{\alpha_{1},\dots ,\alpha_{k}\}$.
According to \cite[Example 2.10]{MR3941370}, this example satisfies Assumption \ref{assump: 140} (i).
In addition, the validity of Assumption \ref{assump: 140} (iii) is shown in \cite[Appendix B]{EHR25}.
One can easily check Assumption \ref{assump: 140} (ii) by setting
\begin{equation*}
	\kappa_{c}(a_{1}b_{1},a_{2}b_{2}) =
	\begin{cases}
		\kappa(a_{1}b_{1},a_{2}b_{2}), & |a_{1}-b_{2}|+|b_{1}-a_{2}| < |a_{1}-a_{2}|+|b_{1}-b_{2}|, \\
		0, & |a_{1}-b_{2}|+|b_{1}-a_{2}| \ge |a_{1}-a_{2}|+|b_{1}-b_{2}|,
	\end{cases}
\end{equation*}
and similarly,
\begin{equation*}
	\kappa_{d}(a_{1}b_{1},a_{2}b_{2}) =
	\begin{cases}
		0, & |a_{1}-b_{2}|+|b_{1}-a_{2}| < |a_{1}-a_{2}|+|b_{1}-b_{2}|, \\
		\kappa(a_{1}b_{1},a_{2}b_{2}), & |a_{1}-b_{2}|+|b_{1}-a_{2}| \ge |a_{1}-a_{2}|+|b_{1}-b_{2}|.
	\end{cases}
\end{equation*}
\end{example}

Our main result holds in the bulk of the spectrum of $H$. First, in order to define this concept properly, we
need to introduce the \emph{self-consistent density of states}. 
The \emph{self-energy operator}
$\mathcal{S}$ is defined by, for every deterministic matrix $X\in\mathbb{C}^{N\times N}$,
\begin{equation}\label{eq: 258}
	\mathcal{S}[X] \coloneqq \E[WXW].
\end{equation}
For $z\in\mathbb{C}\backslash\mathbb{R}$, let $M\equiv M(z)$ be the solution of the following \emph{matrix Dyson equation} (MDE):
\begin{equation}\label{eq: 197}
	-M^{-1}(z) = z - A + \mathcal{S}[M(z)], \quad \Im z \cdot \Im M(z) > 0, 
\end{equation}
where $\Im M=(M-M^{*})/(2\mathrm{i})$.
It is well-known that \eqref{eq: 197} has a unique solution
and that the following limit exists
\begin{equation}\label{eq: 180}
	\rho(E) \coloneqq \lim_{\eta\downarrow 0}\frac{1}{\pi} \langle \Im M(E+\mathrm{i}\eta)\rangle ,
\end{equation}
defining the \emph{self-consistent density of states} $\rho(E)$; we refer to \cite{MR4031100}.
Now we can state our main result.

\begin{theorem}[Mesoscopic CLT in the bulk]\label{thm: 96}
Consider $H=A+W\in\mathbb{C}^{N\times N}$ satisfying Assumptions \ref{assump: 118}--\ref{assump: 176}. 
Let $g\in C_{\textnormal{c}}^{2}(\mathbb{R})$. Fix a small constant $\eps_{0}\in(0,1/2)$, independent of $N$. Consider $\eta_{0}=\eta_{0}(N)\in(N^{-1+\eps_{0}},N^{-\eps_{0}})$.
Take a fixed reference energy $E_{0}>0$ in the bulk, i.e.\,$\rho(E_{0})\ge\eps_{0}$ where $\rho$ is defined in \eqref{eq: 180}.
We define the mesoscopically scaled test function $f$ by setting
\begin{equation}\label{eq: 192}
	f(x) \coloneqq g\Big(\frac{x-E_{0}}{\eta_{0}}\Big).
\end{equation}
Let $\mathsf{N}(\mu,\sigma^{2})$ denote a Gaussian random variable of mean $\mu$ and variance $\sigma^{2}$.
Then we have the following convergence in distribution:
\begin{equation}\label{eq: 104}
	\Tr f(H) - \E[\Tr f(H)] \overset{\text{d}}{\longrightarrow} \mathsf{N}\Big(0,\frac{1}{2\beta\pi^{2}}\lVert g\rVert_{\dot{H}^{1/2}}^{2}\Big),
\end{equation}
where $\beta=1$ and $\beta=2$ corresponds to real symmetric and complex Hermitian $H$, respectively, and
the norm $\lVert g\rVert_{\dot{H}^{1/2}}$ is  defined in \eqref{eq: 186c}.
\end{theorem}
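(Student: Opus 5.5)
We will establish the Gaussian limit by the characteristic function method combined with the Helffer--Sjöstrand representation. Let $\langle f\rangle_c:=\Tr f(H)-\E\Tr f(H)$ and $e(t):=\exp(\mathrm{i}t\langle f\rangle_c)$. The goal is the approximate ODE
\begin{equation*}
\frac{\mathrm{d}}{\mathrm{d}t}\E[e(t)] = -t\,V_N\,\E[e(t)] + o(1),\qquad V_N\to \frac{1}{2\beta\pi^2}\lVert g\rVert_{\dot H^{1/2}}^2,
\end{equation*}
uniformly for $t$ in compacts. Integrating this ODE identifies the limit $e^{-t^2V/2}$ of the characteristic function, and Lévy's continuity theorem gives \eqref{eq: 104}.

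For the first step, write $\Tr f(H)=\frac{1}{\pi}\int_{\mathbb{C}}\partial_{\bar z}\tilde f(z)\Tr G(z)\,\mathrm{d}^2z$, with an almost analytic extension $\tilde f$ supported in $|\Im z|\lesssim \eta_0$-scaled regions. The single-resolvent local law lets us cut off the regime $|\Im z|\le N^{-1+\xi}$. We then need $\E[\langle \Tr G(z)\rangle_c\, e(t)]$. Using $-M^{-1}=z-A+\mathcal S[M]$, we write $G-M = -M\underline{WG}+M\mathcal S[G-M]G$, where $\underline{WG}:=WG+\mathcal S[G]G$. Inverting the stability operator $1-M\mathcal S[\cdot]M$ isolates $\E[\langle\Tr \underline{WG}\,\cdot\rangle e(t)]$.

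For this quantity we apply the multivariate cumulant expansion (Lemmas \ref{lem: 393}--\ref{lem: 445}) in the entries $W_\alpha$, truncated by the dependence neighbourhoods $\mathcal N(\alpha)$ of Assumption \ref{assump: 140}(iv). The second-order term, where the derivative hits $e(t)$, produces $\mathrm{i}t\,\E[e(t)]$ times a two-resolvent trace of the form $\frac1N\sum\kappa(\alpha_1,\alpha_2)\,\Tr(\Delta^{\alpha_1}G(z_1)\Delta^{\alpha_2}G(z_2)^2)$. Here $\Delta^{\alpha}$ denotes the matrix unit at index pair $\alpha$, so this trace is the derivative in $z_2$ of an operator-level trace involving $G_1\mathcal S[\cdot]G_2$. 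The third- and higher-order cumulants are controlled by Assumption \ref{assump: 140}(i),(iii) together with the isotropic and averaged local laws, and are shown to be $o(1)$ after the $\mathrm{d}^2z$ integration. Terms where both derivatives hit $G$ reduce, via the MDE, to lower-order errors; this reduction uses \eqref{eq: 241b}--\eqref{eq: 245b} to bound trace errors.

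Next, replace the two-resolvent quantity by its deterministic approximation using the two-resolvent local law, Theorem \ref{thm: 766}. The result is a kernel of the form $\partial_{z_1}\partial_{z_2}\,\mathrm{Tr}\log\bigl(1-\mathcal{M}_{12}\bigr)$, where $\mathcal M_{12}[X]=M(z_1)\mathcal S[X]M(z_2)$ is the two-body stability operator \eqref{eq: 535}. Here $\mathrm{Tr}$ is an operator trace on $\mathbb{C}^{N\times N}$, and in the real case there is an additional transposed copy coming from $\kappa_c$, which is handled via \eqref{eq: 237}. The error is $N^{\xi}/(N\eta_0)$-small relative to the leading term.

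The \textbf{main obstacle} is the asymptotic analysis of this kernel for $z_1,z_2$ near $E_0$ at mesoscopic separation. The approach is spectral decomposition of $\mathcal M_{12}$. For $\Im z_1\Im z_2<0$, stability estimates from \cite{EHR26+} show that $1-\mathcal M_{12}$ has a single near-zero eigenvalue, of order $|z_1-z_2|$. Its eigenvector is approximately $\Im M$ (and, in the real case, its transpose). The rest of the spectrum is bounded away from $0$. Isolating this eigenvalue via a Riesz projection, $\partial_{z_1}\partial_{z_2}\log(\text{eigenvalue})$ should give $-(z_1-z_2)^{-2}$ up to regular corrections; the exact coefficient follows by differentiating the MDE, $\partial_z M=(1-M\mathcal S M)^{-1}[M^2]$. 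The subtle point is the cancellation showing that the model-dependent parts (the derivative of the eigenvalue beyond linear order, and the non-singular spectrum) contribute only $O(|z_1-z_2|^{-1})$. After the Helffer--Sjöstrand integration and rescaling by $\eta_0$, these corrections vanish, while same-sign pairs $\Im z_1\Im z_2>0$ give regular contributions. The $(z_1-z_2)^{-2}$ singularity with opposite signs then yields, by the standard computation, $\frac{1}{2\beta\pi^2}\iint\frac{(g(x)-g(y))^2}{(x-y)^2}$, with $\beta=1$ doubling via the transposed eigenvector.
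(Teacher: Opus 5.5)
Your outline is correct and follows the paper's route essentially step for step: the characteristic-function ODE via Helffer--Sj\"ostrand, the multivariate cumulant expansion with the derivative of $e(t)$ as the main term, the two-resolvent local law to reach the kernel $\mathcal{K}=\frac{2}{\beta}\partial_z\partial_w\bigl(-\log\det\mathcal{B}_{z,w}\bigr)$, and a Riesz-projection analysis of the single small eigenvalue of $\mathcal{B}_{z,w}$ (eigenvector $\approx\Im M$) giving $-(x-y)^{-2}$ plus $O(|x-y|^{-1})$ remainders. The corrections are in the fine print: for $\beta=1$ the factor $2$ is a prefactor of the kernel, because the $\kappa_d$-term reproduces the $\kappa_c$-term via \eqref{eq: 237} and $G^{\T}=G$ (the small spectral projection is rank one, so no second ``transposed'' eigenvector is involved); the resulting $\mathcal{S}_c$ must then be upgraded to $\mathcal{S}$ by showing the $\mathcal{S}_d$-piece is negligible (Lemma \ref{lem: 1009}); and the $O(|x-y|^{-1})$ bound on the non-singular part of the $N^2$-dimensional operator trace comes from the summability conditions \eqref{eq: 241b}--\eqref{eq: 245b} rather than from a cancellation.
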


\begin{remark}
If one is interested in the macroscopic CLT, corresponding to the spectral scale $\eta_{0}\sim 1$, it may be possible to approach it by studying the joint moments of resolvent traces (together with Wick's theorem
type argument), as in \cite{CES23}. 
In this paper, however, we do not pursue this direction. For general correlated random matrices, the macroscopic CLT may exhibit model-dependent features, as does the global limiting spectral distribution. Our focus is instead on universal phenomena, such as local eigenvalue behavior, which are insensitive to the finer details of the model.
\end{remark}

\subsection{Related works} \label{sec: 449a}

For linear eigenvalue statistics of the form \eqref{eq: 93f}, first the macroscopic scale $\eta\sim 1$ was studied.
Arharov \cite{MR309171} considered the Wishart--Laguerre ensemble and addressed the joint asymptotic normality of normalized traces of its powers; this result was later reformulated and proved by Jonsson \cite{MR650926}.
However, the variance of the limiting Gaussian distribution was not explicitly identified in either \cite{MR309171} or \cite{MR650926}.
For traces of resolvents, Girko provided macroscopic CLTs for Wigner and sample covariance matrices, including formulas for the variance of the Gaussian limit; see \cite{MR1887675, MR1887676}.
This line of results was later developed in a more refined and generalized form by Lytova and Pastur \cite{MR2561434}, and M.\,Shcherbina \cite{MR2829615}.
We remark that the Fourier transform (characteristic function) method was introduced in \cite{MR2561434} as an alternative to the Stieltjes transform and moment methods;
it was also employed in \cite{MR2829615} for further improvements.
Macroscopic CLTs have also been studied in more general settings, for instance under weaker moment assumptions or in the presence of deformations; see, e.g., \cite{MR3568772,MR4119597,MR4923079}.

We now review the literature on the mesoscopic CLT, focusing on the Wigner matrix model and its variants.
In \cite{MR1678012}, Boutet de Monvel and Khorunzhy proved that, for the Gaussian orthogonal ensemble (GOE), the centered statistics $\eta \Tr (H-z)^{-1}$ converges in distribution to a Gaussian random variable throughout the mesoscopic range $\eta \in [N^{-1+\eps},N^{-\eps}]$.
Here $\eta \Tr (H-z)^{-1}$ corresponds to $\Tr g(\frac{H-E}{\eta})$ with $g(x)=(x-\mathrm{i})^{-1}$ and $z=E+\mathrm{i}\eta$.
This result was later extended to Wigner matrices in \cite{MR1689027}, though under several additional restrictions, including the condition $\eta \in [N^{-\frac{1}{8}+\eps},N^{-\eps}]$ on the mesoscopic scale.
Subsequently, for general test functions $g$ satisfying suitable regularity and decay assumptions, the mesoscopic CLT was established throughout the full mesoscopic regime by He--Knowles \cite{MR3678478}; the same result was also shown for $\eta \in [N^{-\frac{1}{3}+\eps},N^{-\eps}]$ by Lodhia--Simm \cite{LS15+}.
Later, the result was further generalized to deformed Wigner matrices and sample covariance matrices in \cite{MR3914908,MR4255183}, to generalized Wigner matrices in \cite{MR4255226}, and to Wigner matrices with more general test functions $g$, without decay assumptions, in \cite{LS20}.

Departing from the Wigner model, one can consider generalized Wigner matrices, as introduced in \cite{MR2871147}.
In this model, $\E H_{ij}=0$, but the variances of the entries, $S_{ij}=\E |H_{ij}|^2$, are not necessarily identical to $1/N$, but the variance profile $S=(S_{ij})$ is assumed to be \emph{doubly stochastic}, i.e.\,$\sum_{i=1} S_{ij}=1$ and $\sum_{j=1} S_{ij}=1$
for all $i,j$, and \emph{flat}, that is, $S_{ij}\sim N^{-1}$ for all $i,j$.
For generalized Wigner matrices, the mesoscopic CLT was obtained by Li and Xu in \cite{MR4255226}.
By dropping the doubly stochastic condition, one arrives at the class of Wigner-type matrices, studied first in \cite{MR3719056}.
In this setting, the variance profile is still assumed to be flat, but it is not necessarily doubly stochastic. Consequently, the limiting spectral distribution remains semicircular for generalized Wigner matrices, but not for Wigner-type matrices in general.
More precisely, the Stieltjes transformation of the limiting spectral distribution of Wigner matrices, and also of generalized Wigner matrices, is described by the scalar self-consistent equation
\begin{equation*}
        -m^{-1}(z) = z+m(z), \quad \Im z \cdot \Im m(z)>0,
\end{equation*}
whereas the corresponding deterministic description for Wigner-type matrices is given by the vector Dyson equation
\begin{equation}\label{eq: 528ft}
        -m_i^{-1}(z) = z+\sum_{j} S_{ij}m_j(z), \quad
        \Im z \cdot \Im m_i(z)>0, \quad i\in[N].
\end{equation}
For Wigner-type matrices, the mesoscopic CLT was recently obtained by Landon, Lopatto and Sosoe \cite{LLS24}, and Riabov \cite{Vova25a}.
More precisely, \cite{LLS24} covers the case where $g$ is a smooth compactly supported function, $g\in C^{\infty}_{\mathrm{c}}$, as well as the case of a smooth step function specifically satisfying $g'(x)=0$ for $|x|>1$, with $g(1)=1$ and $g(-1)=0$.
Later, Riabov \cite{Vova25a} relaxed the regularity assumption and established the result for test functions $g\in C^{2}_{\mathrm{c}}$.

All the mesoscopic-scale results mentioned above concern bulk reference energies $E$, where the limiting density satisfies $\rho(E)\gtrsim 1$.
Near spectral singularities,\footnote{For a general class of random matrices satisfying a flatness condition, including Wigner-type matrices and the correlated models considered in the present paper, such singularities are classified into two types: \emph{regular edge} and \emph{cusp}; we refer to \cite{MR3916109} for detail.}
where $\rho(E)$ becomes small or vanishes,
one can obtain CLTs with different universal variance formulas; we refer to \cite{MR1706781,MR4038239,MR4255183,Vova25b} for related results in simpler models, such as Gaussian ensembles, Wigner matrices, and Wigner-type matrices, all of which have independent entries.

Beyond the independent-entry setting, there are several works on specific random matrix models with dependent entries.
Here we leave aside invariant ensembles, for which powerful computational tools are available;
see \cite{MR1487983,MR1454702,MR1797877,MR2404179,MR2817648,MR3063494} for CLT results for invariant ensembles and related models.
Examples of dependent-entry models include random permutation matrices \cite{MR1813834, MR3335019}, sums of i.i.d. permutation matrices \cite{MR3078290}, and random geometric graphs \cite{HNO26+}.
These works concern macroscopic CLTs, and the corresponding limiting distributions may be non-Gaussian, or Gaussian with model-dependent variances.
Moreover, the methods employed there are elegant but necessarily rather model-specific.
Finally, we remark that, although our focus is on generalizing the random matrix model itself where the mesoscopic CLT holds in the universal form,
another important direction in the study of CLTs for linear eigenvalue statistics is to weaken or generalize the regularity assumptions on the test functions; see \cite{LS22+} and the references therein.

\subsection{Organization}
The proof of the main result, Theorem \ref{thm: 96}, is given in Section \ref{sec: 444}, where we reduce it to two main
 inputs, Propositions \ref{prop: 147}--\ref{prop: 169}. 
In Section \ref{sec: 552}, we collect several preliminary results that serve as the toolbox for the rest of the paper. 
The remaining sections, Sections \ref{sec: 535}--\ref{sec: 1776}, are devoted to the proofs of
Propositions \ref{prop: 147}--\ref{prop: 169}, respectively.

\subsection{Notation} \label{sec: 364}

For $N \in \mathbb{N}$, we write $[N]\coloneqq\{1,2,\dots,N\}$.
We use Latin letters, such as $i,j,k,\ell$, to denote integer indices. 
Greek letters, such as $\alpha$, denote pairs of integer indices, possibly with subscripts. 
Thus $\alpha$ may be written either as $(i,j)$ or, when no confusion can arise, simply as $ij$; in both cases $\alpha$ is an element of $[N]^2 = [N]\times [N].$

For a vector $\mathbf{v}=(v_{1},\dots,v_{N})\in\mathbb{C}^{N}$, we denote by
$\lVert \mathbf{v} \rVert$ its Euclidean norm.
For each $i,j\in[N]$, let $E^{ij}$ be a standard basis matrix defined by $E^{ij}_{ab}\coloneqq\delta_{ai}\delta_{jb}$.
Let $A=(A_{ij})$ be an $N \times N$ complex matrix.
We write $A^{\mathsf{T}}$ and $A^{*}$ for the transpose and conjugate transpose of $A$, respectively.
We denote by $\lVert A \rVert$
the operator norm of $A$, induced by the Euclidean norm.
We write $\langle A \rangle \coloneqq N^{-1}\Tr(A)$ for the normalized trace of $A$. 
For matrices $A,B\in\mathbb{C}^{N\times N}$, we define the normalized Hilbert--Schmidt
inner product by
\begin{equation}\label{eq: 117}
	\langle A,B \rangle \coloneqq \frac{1}{N} \Tr(A^{*}B),
\end{equation}
and set the normalized Hilbert--Schmidt norm as $\lVert A \rVert_{\mathrm{hs}} = \langle A,A \rangle^{1/2}$.
We introduce the sandwich operator $\mathcal{C}_{A,B}:\mathbb{C}^{N\times N}\to\mathbb{C}^{N\times N}$ by
$\mathcal{C}_{A,B}[X] \coloneqq AXB.$
When $A=B$, we write simply $\mathcal{C}_{A} \equiv \mathcal{C}_{A,A}$.
With respect to the inner product \eqref{eq: 117}, we have the adjoint $\mathcal{C}_{A,B}^{*} = \mathcal{C}_{A^{*},B^{*}}$.
Moreover, if $A$ and $B$ are invertible, then $\mathcal{C}_{A,B}^{-1} = \mathcal{C}_{A^{-1},B^{-1}}$.
For a linear operator $\mathcal{F}:\mathbb{C}^{N\times N}\to\mathbb{C}^{N\times N}$, we denote its induced norms by
\begin{equation*}
	\lVert \mathcal{F}\rVert_{\lVert\cdot\rVert\to\lVert\cdot\rVert} \quad\text{and}\quad
	\lVert \mathcal{F}\rVert_{\lVert\cdot\rVert_{\text{hs}}\to\lVert\cdot\rVert_{\text{hs}}},
\end{equation*}
which are induced by the operator norm and the normalized Hilbert--Schmidt norm on the domain and codomain.
Let $\mathrm{Id} : \mathbb{C}^{N \times N} \to \mathbb{C}^{N \times N}$
denote the identity operator.
To simplify notation, we identify each scalar $\zeta \in \mathbb{C}$ with the
operator $\zeta\,\mathrm{Id}$ whenever no confusion arises.

For two positive quantities $X$ and $Y$,
we write $X\lesssim Y$
if there exists a constant $C>0$ only depending on the model parameters in Assumptions \ref{assump: 118}--\ref{assump: 176} (unless explicitly stated otherwise), such that $X\le CY$.
We also write $X\sim Y$ if $X\lesssim Y$ and $Y\lesssim X$. 
For a (possibly complex) quantity $X$ and a positive quantity $Y$, we use the notation $X=O(Y)$ to indicate $|X|\lesssim Y$.

For two positive random variables $X_N$ and $Y_N$, we write $X_N \prec Y_N$
if, for every (small) $\eps > 0$ and every (large) $D > 0$, there exists $N_0(\eps,D) \in \mathbb{N}$
such that, for all $N \geq N_0(\eps,D)$,
\begin{equation*}
	\prob\{ X_N > N^{\eps} Y_N \}
    \leq
    N^{-D}.
\end{equation*}
If $X_{N}$ and $Y_{N}$ are deterministic, then $X_{N}\prec Y_{N}$ if and only if for any $\eps>0$, $X_{N}\le N^{\eps} Y_{N}$ for sufficiently large $N$.
We write $X_N = O_{\prec}(Y_N)$ to denote $|X_{N}|\prec Y_{N}$ for a (possibly complex) variable $X_{N}$ and a positive variable $Y_{N}$.

\subsection*{Acknowledgment}
We are grateful to Volodymyr Riabov for many helpful discussions.

\section{Proof of Theorem \ref{thm: 96}} \label{sec: 444}

In this section, we will provide the proof of the main result by reducing it to two technical ingredients (Proposition \ref{prop: 147}--\ref{prop: 169}) stated below.
We will use the characteristic function approach to prove the central limit theorem, which goes back to \cite{MR2561434, MR2829615},
but the recent works \cite{MR3914908, LS20, LLS24, Vova25a} are more relevant to our proof details.

Consider the characteristic function $\phi$ of the centered trace:
\begin{equation*}
	\phi(\lambda) \coloneqq \E e^{\mathrm{i}\lambda(\Tr f(H)-\E\Tr f(H))}, \quad \lambda\in\mathbb{R}.
\end{equation*}
In order to get \eqref{eq: 104}, by L\'{e}vy's continuity theorem, it is enough to show that for any fixed $\lambda\in\mathbb{R}$
\begin{equation}\label{eq: 137}
	\phi'(\lambda) = - \lambda\phi(\lambda) V(f) + o(1),
\end{equation}
where
\begin{equation}\label{eq: 141}
	V(f) \coloneqq \frac{1}{2\beta\pi^{2}}\lVert g\rVert_{\dot{H}^{1/2}}^{2},
\end{equation}
and, here and throughout this paper, $o(1)$ denotes a quantity, possibly depending on fixed parameters such as $\lambda$, $g$, $E_{0}$, and $\eps_{0}$, that tends to zero as $N \to \infty$.
Thus, from now on, we focus on showing \eqref{eq: 137} and \eqref{eq: 141},
which are immediate consequences of the following two propositions.

\begin{proposition}\label{prop: 147}
Let $g,\eps_{0}$ and $\eta_{0}$ be as in Theorem \ref{thm: 96}.
Recall the scaled test function $f$ as in \eqref{eq: 192}.
Define the quasi-analytic extension of $f$ by
\begin{equation}\label{eq: 691e}
	\tilde{f}(x+\mathrm{i}\eta) \coloneqq \chi(\eta)(f(x)+\mathrm{i}\eta f'(x)),
\end{equation}
where $\chi:\mathbb{R}\to[0,1]$ is an even $C_{\text{c}}^{\infty}(\mathbb{R})$ function supported on $[-1,1]$ with $\chi(\eta)=1$ for $|\eta|<1/2$.
Let $M(z)$ be the solution of \eqref{eq: 197}.
Introduce the kernel $\mathcal{K}(z,w)$ by setting, for any $z,w\in\mathbb{C}$, 
\begin{equation}\label{eq: 723ft}
	\mathcal{K}(z,w) \coloneqq \frac{2}{\beta}\Tr_{\textnormal{op}}(\mathcal{C}_{M'M^{-1},I}(1-\mathcal{C}_{M,\widetilde{M}}\mathcal{S})^{-1}\mathcal{C}_{M,\widetilde{M}'}\mathcal{S}(1-\mathcal{C}_{M,\widetilde{M}}\mathcal{S})^{-1}),
\end{equation}
where $M\equiv M(z)$ and $\widetilde{M}\equiv M(w)$.
Then, for any fixed $\lambda\in\mathbb{R}$, we have
\begin{equation*}
	\phi'(\lambda) = - \frac{\lambda\phi(\lambda)}{\pi^{2}}
	\int_{\Omega_{0}}\int_{\Omega_{0}'}
	\frac{\partial\tilde{f}(z)}{\partial\bar{z}}\frac{\partial\tilde{f}(w)}{\partial\bar{w}}\mathcal{K}(z,w)
	\mathrm{d}^{2}w \mathrm{d}^{2}z
	+ O_{\prec}\Big( (1+|\lambda|)^{2}N^{-\eps_{0}/2} \Big),
\end{equation*}
where
\begin{equation}\label{eq: 273}
	\Omega_{0} \coloneqq \{z\in\mathbb{C}: |\Im z|> N^{-\eps_{0}/100}\eta_{0}\}, \quad
	\Omega_{0}' \coloneqq \{z\in\mathbb{C}: |\Im z|> 2N^{-\eps_{0}/100}\eta_{0}\}.
\end{equation}
\end{proposition}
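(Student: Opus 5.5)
We follow the characteristic function approach of \cite{LS20,LLS24,Vova25a}, adapted to correlated entries through a multivariate cumulant expansion. First, the Helffer--Sj\"ostrand formula
\begin{equation*}
\Tr f(H) = \frac{1}{\pi}\int_{\mathbb{C}} \frac{\partial \tilde f(z)}{\partial \bar z}\Tr G(z)\,\mathrm{d}^2 z, \qquad G(z)=(H-z)^{-1},
\end{equation*}
lets us discard the region $|\Im z|\le N^{-\eps_0/100}\eta_0$. There $|\partial_{\bar z}\tilde f|\lesssim |\Im z|\,|f''|$ and $|\Tr G-\E\Tr G|\prec 1/|\Im z|$ (from the single-resolvent local law), so this region contributes only $O_\prec(N^{-\eps_0/100})$-type errors. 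This leaves the regularized centered statistic $\langle\!\langle\Tr f(H)\rangle\!\rangle$ expressed through $\Tr G(z)-\E\Tr G(z)$ with $z\in\Omega_0$. Writing $e(\lambda)\coloneqq e^{\mathrm{i}\lambda(\Tr f-\E\Tr f)}$, we get
\begin{equation*}
\phi'(\lambda)=\frac{\mathrm{i}}{\pi}\int_{\Omega_0}\partial_{\bar z}\tilde f(z)\,\E\big[(\Tr G(z)-\E\Tr G(z))e(\lambda)\big]\,\mathrm{d}^2z+\text{error}.
\end{equation*}
The main task is therefore to compute $\E[\langle\!\langle \Tr G\rangle\!\rangle e(\lambda)]$ for a fixed $z\in\Omega_0$.

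For this we use the identity $G=M-M\mathcal{S}[M]\ldots$ in its standard form, namely
\begin{equation*}
G-M = -M\underline{WG} + M\mathcal{S}[G-M]G,
\end{equation*}
where $\underline{WG}\coloneqq WG+\mathcal{S}[G]G$ is the self-renormalized product. Taking $\Tr(B\,\cdot)$ and inverting the stability operator $1-\mathcal{C}_{M,M}\mathcal{S}$ gives
\begin{equation*}
\Tr G-\E\Tr G = -\Tr\big(X\,\underline{WG}\big)+\text{(quadratic in }G-M),
\end{equation*}
with $X=(1-\mathcal{S}\mathcal{C}_{M,M})^{-1}[I]$-type deterministic matrices. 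The quadratic terms are small by the single-resolvent local law. Next we apply the multivariate cumulant expansion (Lemmas \ref{lem: 393}--\ref{lem: 445}) to $\E[\Tr(XWG)e(\lambda)]$. At second order, the derivative hitting $G$ cancels against $\mathcal{S}[G]G$ up to fluctuation terms, while the derivative hitting $e(\lambda)$ produces
\begin{equation*}
\frac{\mathrm{i}\lambda}{\pi}\int \partial_{\bar w}\tilde f(w)\,\frac{1}{N}\sum_{\alpha_1,\alpha_2}\kappa(\alpha_1,\alpha_2)\,(GX)_{\alpha_1}\,\partial_w G(w)_{\alpha_2}\,\mathrm{d}^2w\;\E e(\lambda).
\end{equation*}
This is the term $\Tr(\ldots G(z)\ldots \mathcal{S}$-type sandwich$\ldots G(w)^2)$, and it is where $\Omega_0'$ arises, after again cutting out small $|\Im w|$.

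The deterministic approximation of these two-resolvent chains $G(z)B\,G(w)$ by $(1-\mathcal{C}_{M,\widetilde M}\mathcal{S})^{-1}\mathcal{C}_{M,\widetilde M}[B]$ comes from the two-resolvent local law (Theorem \ref{thm: 766}), with errors of order $N^{-\eps}$ relative to the leading size $1/(|\Im z||\Im w|)$. Collecting terms at the operator level, the $w$-derivative acting on $\widetilde M$ together with $\partial_z$ through $M'M^{-1}$ gives exactly the operator trace $\mathcal{K}(z,w)$. The factor $2/\beta$ arises from combining the direct and cross parts $\kappa_d,\kappa_c$: in the real case, \eqref{eq: 237} makes the transposed chain equal the direct one, doubling the contribution, while in the complex case the cross term is absent. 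Two further steps remain: replacing $\E[\ldots e(\lambda)]$ by $\E[\ldots]\phi(\lambda)$ using concentration of the resolvent chains, and showing that cumulants of order $k\ge3$ are negligible. For the latter we use Assumption \ref{assump: 140}(i),(iii); at order $k\ge 3$ each term carries extra $N^{-1/2}$ factors against local-law bounds, and the expansion is truncated using (iv).

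The main obstacle will be the error control uniformly on mesoscopic $\Im z,\Im w\ge N^{-1+\eps_0}$. The errors must be integrated against $|\partial_{\bar z}\tilde f||\partial_{\bar w}\tilde f|$, whose total mass gives a factor $\eta_0^{-2}$ that has to be compensated. For the second-order cumulant, I would first bound the fluctuation of $\Tr(\mathcal{S}$-sandwich of $G-M)$ by the averaged multi-resolvent local law; the $\eta$-gains needed here exploit the decay assumptions \eqref{eq: 241b}--\eqref{eq: 245b} to handle the off-diagonal correlation traces. For the higher cumulants I would try power counting with the $\triplenorm{\kappa}_k$ norms. This should yield the claimed $O_\prec((1+|\lambda|)^2N^{-\eps_0/2})$.
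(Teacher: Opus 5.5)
Your architecture matches the paper's: Helffer--Sj\"ostrand with a cutoff to $\Omega_0$, a multivariate cumulant expansion whose leading term is the second-order term with the derivative on the exponential, the two-resolvent law of Theorem~\ref{thm: 766} applied summand by summand, and Lemma~\ref{lem: 358} for the errors. Expanding via $\underline{WG}$ and inverting $\mathcal{B}_z$ at the random level is only a reshuffling of the paper's equation $\mathcal{B}_z[\mathfrak{g}]=-(R+\sum_k Q^{(k)}+\mathcal{E}^{(L)})M$ combined with $\Tr\mathcal{B}_z^{-1}[JM]=\Tr(M'M^{-1}J)$.

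The genuine gap is your claim that collecting terms gives \emph{exactly} $\mathcal{K}(z,w)$. To apply Theorem~\ref{thm: 766} with a summable error, you must write the $\kappa$-sum (before $\partial_w$) as $\frac1N\sum_{j,b}(M'M^{-1}G\,T^{(jb)}\widetilde G)_{jb}$ with $\sum_{j,b}\lVert T^{(jb)}\rVert\lesssim N$. For $\kappa_c$ this follows from \eqref{eq: 241b}. For $\kappa_d$ it requires $\widetilde G^{\mathsf{T}}=\widetilde G$ and \eqref{eq: 237}, and it produces the \emph{same} $T_c$-chain. Applying the law directly to $\mathcal{S}_d[E^{jb}]$, which equals $N^{-1}E^{bj}$ for Wigner matrices, would cost $N\Psi$, and its deterministic term would capture only half of the variance. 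What you actually obtain is therefore $2\,\partial_w\Tr_{\textnormal{op}}(\mathcal{C}_{M'M^{-1},I}\mathcal{B}_{z,w}^{-1}\mathcal{C}_{M,\widetilde M}\mathcal{S}_c)$, whereas $\mathcal{K}$ involves $\mathcal{S}=\mathcal{S}_c+\mathcal{S}_d$.

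You still have to show that the $\mathcal{S}_d$-kernel $\partial_w\Tr_{\textnormal{op}}(\mathcal{C}_{M'M^{-1},I}\mathcal{B}_{z,w}^{-1}\mathcal{C}_{M,\widetilde M}\mathcal{S}_d)$ is negligible after the double integration; this is the paper's Lemma~\ref{lem: 1009}. It is not automatic, because for $\Im z\,\Im w<0$ this kernel contains $\beta_{z,w}^{-2}$. The paper splits $\mathcal{B}_{z,w}^{-1}=\beta_{z,w}^{-1}\Pi_{z,w}+\mathcal{B}_{z,w}^{-1}(1-\Pi_{z,w})$. It then uses the rank-one form of $\Pi_{z,w}$ together with \eqref{eq: 245b} to show that the $\beta_{z,w}^{-2}$-part carries an extra factor $N^{-1}$, i.e.\ is $\lesssim N^{-1}(1+|\Im z|^{-1})(1+|\Im w|^{-1})$. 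The remainder is $O(1+|\Im z|^{-1}+|\Im w|^{-1})$.

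A second step fails as written: the claim that the quadratic remainder $\Tr(X\,\mathcal{S}[G-M](G-M))$ is small by single-resolvent laws. Its $\kappa_c$-part is indeed small, via the average law for $\mathcal{S}_c[G-M]$ and the isotropic law for the other factor, as for the paper's $Q^{(1,2)}$. Its $\kappa_d$-part, however, is a $\kappa_d$-smeared version of $N^{-1}\Tr(X(G-M)^{\mathsf{T}}(G-M))$. Single-resolvent bounds plus the Ward identity give only $O_\prec(|\Im z|^{-1})$ for it, and Lemma~\ref{lem: 358} with $s=1$ then returns $O(\log N)$, not $o(1)$. For Wigner matrices one could use $G^{\mathsf{T}}G=G^2=\partial_z G$, but that trick is unavailable once $\kappa_d$ is smeared. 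You need the centering together with the two-resolvent law at $w=z$, where $\mathcal{B}_z^{-1}$ is bounded by \eqref{eq: 922d}. Then the deterministic leading terms cancel and only $O_\prec(\Psi(z,z))$ survives; this is how the paper treats $Q^{(1,1)}$.

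Two smaller points. First, third-order cumulant terms are not suppressed by an extra $N^{-1/2}$. After the all-$M$ term cancels by centering, the piece with one factor $G-M$ is only $O_\prec(|\Im z|^{-1/2})$. Together with the terms where derivatives hit the exponential, these contribute $O_\prec((1+|\lambda|)^2\eta_0^{1/2})$, which is exactly the origin of the $N^{-\eps_0/2}$ in the statement. Second, in the complex case it is the direct pairing $\kappa_d$, not the cross pairing $\kappa_c$, that drops out.
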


\begin{proposition}\label{prop: 169}
Under the same setting as in Proposition \ref{prop: 147}, we have
\begin{equation*}
	\int_{\Omega_{0}}\int_{\Omega_{0}'}\frac{\partial\tilde{f}(z)}{\partial\bar{z}}\frac{\partial\tilde{f}(w)}{\partial\bar{w}}\mathcal{K}(z,w) \mathrm{d}^{2}w\mathrm{d}^{2}z
	= \frac{1}{2\beta}\iint_{\mathbb{R}^{2}}\frac{(f(x)-f(y))^{2}}{(x-y)^{2}}\mathrm{d}x\mathrm{d}y
	+ o(1).
\end{equation*}
\end{proposition}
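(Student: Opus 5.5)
The first step is to rewrite the kernel $\mathcal{K}(z,w)$ in a form where its singularity near the real axis becomes visible. Write $\mathcal{B}(z,w)\coloneqq 1-\mathcal{C}_{M,\widetilde M}\mathcal{S}$ for the two-body stability operator. Differentiating the MDE \eqref{eq: 197} in $w$ gives $\widetilde M' = (1-\mathcal{C}_{\widetilde M}\mathcal{S})^{-1}[\widetilde M^2]$. I expect that, after cyclic rearrangement of the operator trace and use of the identity $\partial_w \mathcal{B}(z,w) = -\mathcal{C}_{M,\widetilde M'}\mathcal{S}$, one obtains
\begin{equation*}
\mathcal{K}(z,w) = -\frac{2}{\beta}\,\partial_w \Tr_{\mathrm{op}}\bigl(\mathcal{C}_{M'M^{-1},I}\,\mathcal{B}(z,w)^{-1}\bigr)
\end{equation*}
up to harmless terms. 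A similar identity in $z$ should exhibit $\mathcal{K}$, modulo regular pieces, as $\frac{2}{\beta}\partial_z\partial_w \log\det{}_{\mathrm{op}}\mathcal{B}(z,w)$. This is the operator analogue of the Wigner-type kernel $\partial_z\partial_w\log\det(1-m\tilde m S)$. With this representation, integration by parts against $\partial_{\bar z}\tilde f\,\partial_{\bar w}\tilde f$ is available.

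The second step is a case split by the signs of $\Im z$ and $\Im w$. When $\Im z\,\Im w>0$, the operator $\mathcal{B}$ stays uniformly invertible, using the bounds on $\mathcal{B}^{-1}$ borrowed from \cite{EHR26+}. The kernel is then bounded (more precisely, $|\mathcal{K}|\lesssim N^{\xi}$ times a regular function), and after Stokes' theorem the contribution vanishes as $\eta_0\to0$, since $\int|\partial_{\bar z}\tilde f|\,\mathrm{d}^2z = O(\eta_0)$-type bounds combined with the mesoscopic scaling kill it. The essential case is $\Im z>0>\Im w$ (and its conjugate), with $z,w$ near $E_0$ and $|z-w|\ll1$. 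There $\mathcal{B}$ has a single small eigenvalue $\beta_*(z,w)$, with right and left eigenvectors close to $\Im M$, coming from the Perron--Frobenius structure of the flatness condition \eqref{eq: 294a}.

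The third step, which I expect to be the main obstacle, is to expand this eigenvalue as
\begin{equation*}
\beta_*(z,w) = c(E_0)\,(z-w) + O(|z-w|^2+\ldots),
\end{equation*}
with $c(E_0)\ne 0$. The aim is then to show that the trace of $\mathcal{B}^{-1}$ restricted to the complement of the critical direction is bounded uniformly by a constant and, after differentiation, produces only an $o(1)$ contribution. This is the part that needs the new conditions \eqref{eq: 241b}--\eqref{eq: 245b} and \eqref{eq: 237}. Operator traces over $\mathbb{C}^{N\times N}$ are of size $N^2$ times a norm, so naive bounds fail; instead I would bound traces like $\Tr_{\mathrm{op}}(\mathcal{C}_{X,Y}\mathcal{S})$ entrywise using the shift-summable bounds $C_{r,s}$ and $D_{r,s}$.

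For $\beta=1$ there is an additional contribution from the transpose symmetry via $\kappa_c$. Condition \eqref{eq: 237} should make the cross-part of $\mathcal{S}$ act like a transpose composed with the direct part, and so contribute a second critical eigenvalue near $\Im M^{\mathsf T}$. This doubling is what produces $2/\beta$. The leading singular part should therefore be $\frac{2}{\beta}\partial_z\partial_w\log(z-w)=\frac{2}{\beta}(z-w)^{-2}$, with the constant $c(E_0)$ cancelling in the logarithmic derivative. This is the cancellation the paper alludes to.

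Finally, with $\mathcal{K}=\frac{2}{\beta}(z-w)^{-2}\mathbf{1}_{\Im z\Im w<0}+\mathcal{K}_{\mathrm{reg}}$, I would follow the computation of \cite{MR3678478,LLS24,Vova25a}. Applying Green's formula twice turns the $(z-w)^{-2}$ integral over $\Omega_0\times\Omega_0'$ into a boundary integral. In the limit, this yields $\frac{1}{2\beta}\iint (f(x)-f(y))^2/(x-y)^2\,\mathrm{d}x\,\mathrm{d}y$, with the cutoff regions \eqref{eq: 273} contributing $O(N^{-c})$ thanks to $g\in C^2_{\mathrm{c}}$. The regular part $\mathcal{K}_{\mathrm{reg}}$, bounded by $|z-w|^{-1}$ or better, gives $o(1)$ by the same bookkeeping, since $\eta_0\to0$ and the $\dot H^{1/2}$ norm is scale invariant.
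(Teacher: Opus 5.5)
\textbf{The doubling step is wrong.} Proposition~\ref{prop: 169} is a deterministic statement about the kernel $\mathcal{K}$ of \eqref{eq: 723ft}, and that kernel already carries the prefactor $2/\beta$. The factor was produced upstream, in Proposition~\ref{prop: 147}: there the $\kappa_c$ and $\kappa_d$ pairings of the cumulant expansion give the same deterministic term by \eqref{eq: 237}, and Lemma~\ref{lem: 1009} shows that trading $\mathcal{S}_c$ for $\mathcal{S}=\mathcal{S}_c+\mathcal{S}_d$ is harmless.

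The factor is not a spectral feature of $\mathcal{B}_{z,w}$. By Lemma~\ref{lem: 792}, $\mathcal{B}_{z,w}$ has exactly one eigenvalue in the $\eps_*$-disk ($\Pi_{z,w}$ has rank one). Moreover, for real symmetric $H$, uniqueness of the MDE solution forces $M^{\mathsf{T}}=M$, so your second eigenvector ``near $\Im M^{\mathsf{T}}$'' is just the first one. For Wigner matrices the transpose part of $\mathcal{S}$ is $X\mapsto X^{\mathsf{T}}/N$, which has norm $1/N$ and cannot create a small eigenvalue.

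Carried out literally, your step either finds nothing or double counts. A doubled logarithmic singularity in $\log\det\mathcal{B}_{z,w}$, on top of the explicit $2/\beta$, would give $\frac1\beta$ instead of $\frac{1}{2\beta}$ in front of $\iint\frac{(f(x)-f(y))^{2}}{(x-y)^{2}}\,\mathrm{d}x\,\mathrm{d}y$. The correct mechanism is a single critical eigenvalue, which gives $\partial_z\partial_w\mathcal{L}\approx-(z-w)^{-2}$ independently of $\beta$.

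\textbf{Signs, and the missing core estimate.} The following identities hold exactly, with no harmless terms:
$\mathcal{K}=\frac{2}{\beta}\partial_w\Tr_{\mathrm{op}}(\mathcal{C}_{M'M^{-1},I}\mathcal{B}_{z,w}^{-1})=-\frac{2}{\beta}\partial_z\partial_w\log\det\mathcal{B}_{z,w}$.
So the singular part is $-\frac{2}{\beta}(z-w)^{-2}$, matching the $-1/(x-y)^2$ in Proposition~\ref{prop: 478}. Since $\iint\partial_{\bar z}\tilde f\,\partial_{\bar w}\tilde f\,c\,(z-w)^{-2}\mathbf{1}_{\Im z\Im w<0}\,\mathrm{d}^{2}w\,\mathrm{d}^{2}z$ tends to $-\frac{c}{4}\iint\frac{(f(x)-f(y))^{2}}{(x-y)^{2}}\,\mathrm{d}x\,\mathrm{d}y$, your sign would produce a negative variance.

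More importantly, the global split $\mathcal{K}=-\frac{2}{\beta}(z-w)^{-2}\mathbf{1}_{\Im z\Im w<0}+\mathcal{K}_{\mathrm{reg}}$, with $|\mathcal{K}_{\mathrm{reg}}|\lesssim|z-w|^{-1}$ on all of $\Omega_0\times\Omega_0'$, is the entire difficulty, and you only assert it. The paper never needs such a bound away from the real axis. It first pushes the cutoff down to $\eta_*=N^{-100}$ using only the crude bound \eqref{eq: 1825}. It then integrates by parts through $\mathcal{L}$ (Proposition~\ref{prop: 114}, Lemma~\ref{lem: 2129}), so the kernel is needed only on the lines $\Im z=\pm\eta_*$, $\Im w=\mp\eta_*$, where $\beta_{z,\bar z}\sim\eta_*$ is negligible.

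On those lines the constant $-1$ comes from multiplying the two one-sided expansions \eqref{eq: 2931} and \eqref{eq: 2475a} of Lemma~\ref{lem: 523}. This is the rigorous form of your ``constant cancels in the logarithmic derivative'', since the leading term equals $\partial_z\beta_{z,w}\,\partial_w\beta_{z,w}/((1-\beta_{z,w})\beta_{z,w}^{2})$. The nonvanishing of the linear coefficient, which you name but do not prove, is Proposition~\ref{prop: 2625}. Its proof rests on $\Im M=\gamma R+O(\eta_*)$ and on $L\ge c'I$ (Lemma~\ref{lem: 2809}).

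Your route would need all of this uniformly at $|\Im z|\gg\eta_*$, where $\beta_{z,\bar z}\sim|\Im z|$ is no longer negligible. One way would be the identity $\beta_{z,w}=(z-w)F(w)$ with $F$ smooth and bounded away from zero in a full neighbourhood. That is plausible, but none of it is supplied.
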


We  remark that
\begin{equation}\label{eq: 513a}
    \iint_{\mathbb{R}^{2}}\frac{(f(x)-f(y))^{2}}{(x-y)^{2}}\mathrm{d}x\mathrm{d}y
    = \iint_{\mathbb{R}^{2}}\frac{(g(s)-g(t))^{2}}{(s-t)^{2}}\mathrm{d}s\mathrm{d}t
	= \lVert g\rVert_{\dot{H}^{1/2}}^{2}.
\end{equation}
Combining Propositions \ref{prop: 147}--\ref{prop: 169} and \eqref{eq: 513a},
we deduce \eqref{eq: 137} and \eqref{eq: 141}, which completes the proof of Theorem \ref{thm: 96}.
We present the proofs of Proposition \ref{prop: 147}--\ref{prop: 169} in Section \ref{sec: 535} and Section \ref{sec: 1776}, respectively.
Following the convention of \cite{LS20, LLS24, Vova25a}, we restrict our proof to the real symmetric case ($\beta=1$) to make a concise presentation.
The same argument works to the complex Hermitian case ($\beta=2$) with a minor modification, which is omitted for brevity.

\begin{remark}\label{rmk: 760b} 
Besides approaches based on the characteristic function of the linear statistics,
Gaussian fluctuations may also be obtained through moment computations for resolvent traces.
This alternative path was followed, for example, 
 in \cite{MR3678478,CES23,CES24}.
In particular, \cite{CES23,CES24} established CLTs for linear eigenvalue statistics of non-Hermitian random matrices.
Starting with Girko's Hermitization formula, they computed the joint moments of the resolvent traces of the Hermitized matrices and showed that, asymptotically, only Wick pairings contribute.
\end{remark}

\section{Preliminaries} \label{sec: 552}

We collect the preliminary results  from earlier works that form the toolkit for us.

\begin{lemma}[Multivariate cumulant expansion, {\cite[Lemma 3.1]{MR3678478}; \cite[Proposition 3.2]{MR3941370}; \cite[Proposition 5.2]{EHR25}}]\label{lem: 378}
For a positive integer $L$, let $f:\mathbb{R}^{N\times N}\to\mathbb{C}$ be an $L$ times differentiable function with bounded derivatives. Let $W=(W_{\alpha})_{\alpha\in[N]^{2}}$ be an $N\times N$ random matrix, whose normalized cumulants satisfy Assumption \ref{assump: 140}.
We use the notation $\partial_{\alpha}\coloneqq\frac{\partial}{\partial W_{\alpha}}$ for every $\alpha\in [N]^{2}$.
Let $\mathcal{N}(\alpha)$ be as in Assumption \ref{assump: 140} (iv).
Then, for any index $\alpha_{0}\in[N]^{2}$,
\begin{equation*}
	\E[W_{\alpha_{0}}f(W)] = \sum_{k=0}^{L-1} \sum_{\boldsymbol{\alpha}\in\mathcal{N}^{k}(\alpha_{0})}
	\frac{\kappa(\alpha_{0},\boldsymbol{\alpha})}{N^{(k+1)/2}k!}\E[\partial_{\boldsymbol{\alpha}}f(W)] + \mathcal{E}_{L}(f,\alpha_{0}),
\end{equation*}
where $\boldsymbol{\alpha}\coloneqq(\alpha_{1},\dots,\alpha_{k})$ and
$\partial_{\boldsymbol{\alpha}}\coloneqq\partial_{\alpha_{1}}\dots\partial_{\alpha_{k}}$.
Furthermore, the term $\mathcal{E}_{L}(f,\alpha_{0})$ satisfies, for some constant $C_{L}>0$ depending only on $L$,
\begin{equation}\label{eq: 391}
	|\mathcal{E}_{L}(f,\alpha_{0})|
	\lesssim \frac{C_{L}}{N^{(L+1)/2}}\sum_{\boldsymbol{\alpha}\in\mathcal{N}^{L}(\alpha_{0})}
	\sup_{\lambda\in[0,1]} \bigg(\E\bigg[\Big|
	\partial_{\boldsymbol{\alpha}}
	f\big(\lambda W|_{\mathcal{N}(\alpha_{0})}+W|_{[N]^{2}\backslash\mathcal{N}(\alpha_{0})}\big)\Big|^{2}\bigg]\bigg)^{1/2},
\end{equation}
where the notation $W|_{\mathcal{N}}$ for $\mathcal{N}\subset [N]^{2}$ refers to the matrix defined by
\begin{equation*}
	W|_{\mathcal{N}}(\alpha) \coloneqq
	\begin{cases}
		W_{\alpha} & \alpha\in\mathcal{N},\\
		0 & \alpha\notin\mathcal{N}.
	\end{cases}
\end{equation*} 
\end{lemma}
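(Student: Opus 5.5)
The plan is a Taylor expansion in the variables $\{W_\beta\}_{\beta\in\mathcal N(\alpha_0)}$, with the variables outside $\mathcal N(\alpha_0)$ frozen. Write $W = W^{(1)} + W^{(0)}$, where $W^{(1)} \coloneqq W|_{\mathcal{N}(\alpha_0)}$ and $W^{(0)} \coloneqq W|_{[N]^2\setminus\mathcal{N}(\alpha_0)}$. We may assume $\alpha_0\in\mathcal N(\alpha_0)$. By Assumption \ref{assump: 140} (iv), $W_{\alpha_0}$ is independent of $W^{(0)}$; we take (as is implicit in the assumption) that the family $\{W_\beta\}_{\beta\in\mathcal N(\alpha_0)}$ is jointly independent of $W^{(0)}$. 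Conditioning on $W^{(0)}$, the claim reduces to a cumulant expansion for the finite-dimensional random vector $\mathbf w=(W_\beta)_{\beta\in\mathcal N(\alpha_0)}$. Its joint law and cumulants are unchanged by the conditioning, and the function $F(\mathbf w)\coloneqq f(\mathbf w+W^{(0)})$ is $L$ times differentiable with bounded derivatives.

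For a fixed random vector $\mathbf w$ with all moments finite (Assumption \ref{assump: 180}), the standard multivariate identity, as in \cite[Lemma 3.1]{MR3678478} and \cite[Proposition 3.2]{MR3941370}, gives
\[
\E[w_{0}F(\mathbf w)]=\sum_{k=0}^{L-1}\frac{1}{k!}\sum_{\boldsymbol\beta}\kappa(w_0,w_{\beta_1},\dots,w_{\beta_k})\,\E[\partial_{\boldsymbol\beta}F(\mathbf w)]+R_L .
\]
To prove it, I would Taylor expand $F$ to order $L-1$ around $\lambda\mathbf w$ and interpolate in $\lambda\in[0,1]$. Alternatively, I would compare the Fourier identity $\E[w_0 e^{\mathrm i\mathbf t\cdot \mathbf w}] = -\mathrm i\,\partial_{t_0}\log\phi(\mathbf t)\cdot\phi(\mathbf t)$ and expand $\partial_{t_0}\log\phi$ in its cumulant series up to order $L-1$. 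In the first route, the remainder $R_L$ is a sum over $\boldsymbol\beta\in\mathcal N^{L}(\alpha_0)$ of terms of the form $\E[\,\text{(polynomial of degree }L+1\text{ in }\mathbf w)\cdot\partial_{\boldsymbol\beta}F(\lambda\mathbf w)]$, integrated against a bounded weight in $\lambda$. After the rescaling $W=N^{-1/2}(\sqrt N W)$, the cumulants acquire the factor $N^{-(k+1)/2}$ appearing in the statement. This gives the main term exactly, with $\kappa(\alpha_0,\boldsymbol\alpha)$ and $\boldsymbol\alpha\in\mathcal N^k(\alpha_0)$.

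For the remainder, I would apply Cauchy--Schwarz to each term. The $L+1$ factors of $\sqrt N W_\beta$ have $L^2$-norm bounded by $\nu_{2L+2}$-type constants, so they give a bound $C_L$. The remaining factor is $\big(\E|\partial_{\boldsymbol\alpha}f(\lambda W|_{\mathcal N(\alpha_0)}+W|_{[N]^2\setminus\mathcal N(\alpha_0)})|^2\big)^{1/2}$, which is $\le$ its supremum over $\lambda\in[0,1]$. Collecting these yields \eqref{eq: 391}. The sum over $\boldsymbol\alpha$ in \eqref{eq: 391} need not be small by itself; this is exactly why it is restricted to $\mathcal N^L(\alpha_0)$, of cardinality at most $N^{L(1/2-\mu)}$, which later makes $\mathcal E_L$ negligible.

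I expect the main obstacle to be the bookkeeping of the remainder. Writing $R_L$ explicitly, through the combinatorial relation between moments and cumulants in the Taylor interpolation, shows that it involves mixed moments of $\mathbf w$ and both $\lambda\mathbf w$ and $\mathbf w$. One must check that only products of at most $L+1$ entries multiply $\partial_{\boldsymbol\beta}F(\lambda\mathbf w)$ with $|\boldsymbol\beta|=L$, and that these entries are always from $\mathcal N(\alpha_0)$. Since this is exactly the content of the cited \cite[Proposition 5.2]{EHR25}, I would follow that derivation and only verify the conditioning step.
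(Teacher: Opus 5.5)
The paper does not prove this lemma; it only cites \cite{MR3678478,MR3941370,EHR25}. Your outline is the standard argument behind those references: Taylor expansion of $f$ in the coordinates of $W|_{\mathcal N(\alpha_0)}$ about $W|_{\mathcal N(\alpha_0)^c}$, recombination of moments into cumulants, and Cauchy--Schwarz on the remainder. Once your independence hypothesis (discussed below) is granted, it works.

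The bookkeeping you flag is short. Expand about $\mathbf w=0$ (not about $\lambda\mathbf w$), with integral remainder, both $\E[w_0F(\mathbf w)]$ and each $\E[\partial_{\boldsymbol\beta}F(\mathbf w)]$ with $|\boldsymbol\beta|=k<L$. The identity $\E[X_0X_1\cdots X_n]=\sum_{S\subseteq[n]}\kappa(X_0,X_S)\,\E[X_{S^c}]$ makes all polynomial terms cancel exactly. Only two kinds of terms remain, integrated against bounded weights in $\lambda$:
$\E[w_0\mathbf w_{\boldsymbol\gamma}\partial_{\boldsymbol\gamma}F(\lambda\mathbf w)]$ with $|\boldsymbol\gamma|=L$, and $\kappa(w_0,\mathbf w_{\boldsymbol\beta})\,\E[\mathbf w_{\boldsymbol\gamma}\partial_{\boldsymbol\beta\boldsymbol\gamma}F(\lambda\mathbf w)]$ with $|\boldsymbol\beta|+|\boldsymbol\gamma|=L$.
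Each carries $N^{-(L+1)/2}$ and gives \eqref{eq: 391} by Cauchy--Schwarz. The Fourier route would not produce a remainder of this form.

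The genuine gap is the conditioning step. Independence of the whole block $W|_{\mathcal N(\alpha_0)}$ from $W|_{\mathcal N(\alpha_0)^c}$ is not implicit in Assumption~\ref{assump: 140}~(iv). That condition is only pairwise. Even the stronger reading ``$W_{\alpha_0}$ is independent of the family $\{W_\beta\}_{\beta\notin\mathcal N(\alpha_0)}$'' does not give it. In a finite-range dependent model it typically fails, because entries near the boundary of $\mathcal N(\alpha_0)$ depend on entries just outside.

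Nor can the hypothesis be dropped, since without it the statement is false. Take the following example:
\begin{itemize}
\item $\xi_1,\xi_2$ independent Rademacher variables, and $w_1=\xi_1$, $w_2=\xi_1\xi_2$, $w_3=\xi_2$;
\item $\alpha_0=1$ and $\mathcal N(1)=\{1,2\}$, so $w_1$ is independent of the complement $\{w_3\}$;
\item $f(\mathbf w)=\psi(w_2)\psi(w_3)$ with $\psi\in C_{\mathrm c}^\infty(\mathbb R)$ and $\psi(x)=x$ on $[-2,2]$.
\end{itemize}
Then $\E[w_1f]=\E[w_1w_2w_3]=1$. Every main term vanishes: $\E w_1=0$, $\partial_1f\equiv0$, $\kappa(w_1,w_2)=0$, and $\partial_2^kf=0$ for $k\ge2$ at all points with coordinates in $[-1,1]$. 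For the same reason the remainder vanishes for every $L\ge2$.

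The missing contribution is the mixed cumulant $\kappa\bigl(w_1,w_2,\partial_2f(W|_{\mathcal N^c})\bigr)=\kappa(w_1,w_2,w_3)=1$. In general, without block independence one picks up terms involving $\kappa\bigl(W_{\alpha_0},W_{\boldsymbol\beta},\partial_{\boldsymbol\gamma}f(W|_{\mathcal N(\alpha_0)^c})\bigr)$. You therefore have two options. Either state independence of $W|_{\mathcal N(\alpha_0)}$ and $W|_{\mathcal N(\alpha_0)^c}$ as an explicit additional hypothesis on the model, which is what the lemma as written actually requires. Or keep these mixed-cumulant terms as extra errors and control them with a separate decorrelation assumption.
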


\begin{theorem}[Single resolvent local law in the bulk, {\cite[Theorem 2.8]{EHR25} and \cite[Theorem 2.2]{MR3941370}}]\label{thm: 369}
Consider an $N\times N$ matrix $H=A+W$ satisfying Assumptions \ref{assump: 118}--\ref{assump: 176}.
Let $\eps_{0}$ as in Theorem \ref{thm: 96}.
Let $\rho$ be as in \eqref{eq: 180}.
Define $$\mathcal{I}(\eps)\coloneqq\{E\in\mathbb{R}: \rho(E)\ge\eps\}, \quad \eps>0.$$
Let $M(z)$ be the solution of \eqref{eq: 197}.
Then, uniformly for all $z=E+\mathrm{i}\eta$ with $E\in\mathcal{I}(\eps_{0}/2)$ and $\eta\in[N^{-1+(\eps_{0}/100)},100]$ (where the number $100$ can be replaced with any large positive real), the resolvent $G(z)\coloneqq (H-z)^{-1}$ satisfies the isotropic local law,
\begin{equation}\label{eq: 869e}
	|\mathbf{u}^{*}(G(z)-M(z))\mathbf{v}| \prec \lVert\mathbf{u}\rVert \lVert\mathbf{v}\rVert (N\eta)^{-1/2},
\end{equation}
for any deterministic vectors $\mathbf{u},\mathbf{v}\in\mathbb{C}^{N}$, and the average local law,
\begin{equation}\label{eq: 873e}
	|\langle (G(z)-M(z)) B \rangle| \prec \frac{\lVert B\rVert_{\textnormal{hs}}}{N\eta} ,
\end{equation}
for any deterministic matrix $B\in\mathbb{C}^{N\times N}$.
\end{theorem}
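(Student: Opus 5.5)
The statement just above is Theorem~\ref{thm: 369}, the single-resolvent local law in the bulk. I would not reprove it from scratch. It is quoted from \cite[Theorem 2.8]{EHR25} and \cite[Theorem 2.2]{MR3941370}, so the plan is to check that our hypotheses fit those results and to indicate the structure of the argument behind them.

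\emph{Step 1: hypotheses.} Assumptions~\ref{assump: 118}--\ref{assump: 176} contain, or strengthen, the assumptions of \cite{EHR25}. We have bounded $A$, all moments of $\sqrt N W_\alpha$, and cumulant summability via $\triplenorm{\kappa}_k$. Conditions \eqref{eq: 241b}--\eqref{eq: 245b} imply \cite[Eq.\,(2.7)]{EHR25}. Assumption~\ref{assump: 140}(iii) is the three-cumulant bound, and (iv) gives the finite-range dependence used to truncate Lemma~\ref{lem: 378}. Fullness yields flatness \eqref{eq: 294a}. Hence the cited theorem applies on $E\in\mathcal I(\eps_0/2)$ for $\eta\ge N^{-1+\eps_0/100}$.

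\emph{Step 2: self-consistent equation.} Set $D:=WG+\mathcal S[G]G$, so that $G$ satisfies $-1=(z-A+\mathcal S[G])G+D$. Subtracting the MDE \eqref{eq: 197} gives
\begin{equation*}
(1-\mathcal C_{M}\mathcal S)[G-M]=-MD+M\mathcal S[G-M](G-M).
\end{equation*}
In the bulk, the stability operator $(1-\mathcal C_M\mathcal S)^{-1}$ is bounded in both $\lVert\cdot\rVert$ and $\lVert\cdot\rVert_{\mathrm{hs}}$. This follows from flatness and $\rho\ge\eps_0/2$ via \cite{MR4031100}. The error is therefore controlled by $D$ plus a quadratic term.

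\emph{Step 3: bounding $D$.} This is the main obstacle. I would use high-moment estimates for $\langle \mathbf x, D\mathbf y\rangle$ and $\langle DB\rangle$ obtained from the multivariate cumulant expansion, Lemma~\ref{lem: 378}. The second-cumulant term cancels $\mathcal S[G]G$ exactly. The higher cumulants are summed using $\triplenorm{\kappa}_k$ and (iii), and the truncation error \eqref{eq: 391} is negligible by (iv). The Ward identity $\sum_j|G_{ij}|^2=\Im G_{ii}/\eta$ then yields
\begin{equation*}
|\langle\mathbf x,D\mathbf y\rangle|\prec (N\eta)^{-1/2},\qquad |\langle DB\rangle|\prec \lVert B\rVert_{\mathrm{hs}}/(N\eta),
\end{equation*}
the average bound using the fluctuation-averaging mechanism. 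The correlated structure is what makes this step delicate: derivatives hit index pairs in $\mathcal N(\alpha)$, and these must be resummed at the operator level.

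\emph{Step 4: bootstrap.} At $\eta=100$ the bounds hold trivially. Decreasing $\eta$ on a grid of mesh $N^{-3}$, and using the Lipschitz continuity of $G$ and $M$ in $z$, the a priori bound $|G-M|\prec N^{-\delta}$ turns the quadratic term into a small perturbation. This upgrades the bound to \eqref{eq: 869e}. Inserting it into the averaged equation, tested against $(1-\mathcal C_M\mathcal S)^{-*}$ applied to $B$, gives \eqref{eq: 873e}. A union bound over the grid gives uniformity in $z$.
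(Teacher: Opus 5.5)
Your proposal matches the paper: the paper gives no proof of this theorem and simply imports it from \cite{EHR25} (Theorem 2.8) and \cite{MR3941370} (Theorem 2.2), noting that \eqref{eq: 241b}--\eqref{eq: 245b} imply the weaker correlation condition required in \cite{EHR25}. Your Steps 2--4 are the standard argument behind those references and are not needed for the citation; note one sign slip there: with $D=WG+\mathcal S[G]G$ one has $-1=(z-A+\mathcal S[G])G-D$, which is the sign consistent with your next display.
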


\begin{theorem}[Two-resolvent local law in the bulk, {\cite[Theorem 3.7--3.8]{EHR26+}}]\label{thm: 766}
Suppose the same setting as in Theorem \ref{thm: 369}.
Then, for any deterministic $N\times N$ matrix $A$ and any deterministic vectors $\mathbf{x},\mathbf{y}\in\mathbb{C}^{N}$, we have
\begin{equation}\label{eq: 882d}
	\langle \mathbf{x}, (G(z)AG(w) - (1-\mathcal{C}_{M(z),M(w)}\mathcal{S})^{-1}\mathcal{C}_{M(z),M(w)}[A]) \mathbf{y} \rangle
	\prec \lVert A\rVert\lVert\mathbf{x}\rVert\lVert\mathbf{y}\rVert \Psi(z,w)
\end{equation}
uniformly in all $z=E_{1}+\mathrm{i}\eta_{1}$ and $w=E_{2}+\mathrm{i}\eta_{2}$ satisfying 
$E_{i}\in \mathcal{I}(\eps_{0}/2)$ and $\eta_{i}\in[N^{-1+(\eps_{0}/100)},100]$ ($i=1,2$),
and the term $\Psi$ is defined by
\begin{equation}\label{eq: 928ty}
	\Psi(z,w) = \frac{1}{\min\{\eta_{1},\eta_{2}\}^{1/2}} + \frac{1}{(N\min\{\eta_{1},\eta_{2}\}^{3})^{1/2}}
	+ \frac{1}{N\min\{\eta_{1},\eta_{2}\}^{2}}.
\end{equation}
\end{theorem}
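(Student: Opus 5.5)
We would prove \eqref{eq: 882d} by the standard self-consistent equation approach, combined with a bootstrap in the spectral parameters. Throughout we write $G_1=G(z)$, $G_2=G(w)$, $M_1=M(z)$, $M_2=M(w)$ and $\eta_*=\min\{\eta_1,\eta_2\}$.

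\emph{Step 1: the approximate equation.} Starting from $HG_1=I+zG_1$ and the MDE \eqref{eq: 197}, we get the identity
\[
G_1=M_1-M_1\underline{WG_1}+M_1\mathcal{S}[G_1-M_1]G_1 ,
\]
where $\underline{WG_1}\coloneqq WG_1+\mathcal{S}[G_1]G_1$ is the self-renormalization. The subtracted term $\mathcal{S}[G_1]G_1$ is exactly the second-order part of the multivariate cumulant expansion (Lemma \ref{lem: 378}) of $WG_1$. Multiplying on the right by $AG_2$ and renormalizing the product $W G_1AG_2$ in the same way, a second-order term $M_1\mathcal{S}[G_1AG_2]G_2$ appears. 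Replacing $G_2$ by $M_2$ in this term, we arrive at
\[
(1-\mathcal{C}_{M_1,M_2}\mathcal{S})[G_1AG_2]=\mathcal{C}_{M_1,M_2}[A]+\mathcal{E},
\]
where $\mathcal{E}$ collects three kinds of terms:
\begin{itemize}
\item $M_1\underline{WG_1AG_2}$;
\item $M_1\mathcal{S}[G_1AG_2](G_2-M_2)$;
\item $M_1\mathcal{S}[G_1-M_1]G_1AG_2$.
\end{itemize}

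\emph{Step 2: inverting the stability operator.} In the bulk we would use the stability bound
\[
\|(1-\mathcal{C}_{M_1,M_2}\mathcal{S})^{-1}\|_{\|\cdot\|\to\|\cdot\|}\lesssim (|z-w|+\eta_1+\eta_2)^{-1}\lesssim \eta_*^{-1},
\]
in operator norm as well as in its isotropic version, obtained from flatness \eqref{eq: 294a} and the Perron--Frobenius type structure of $\mathcal{C}_{M,M^*}\mathcal{S}$. This reduces \eqref{eq: 882d} to showing
\[
\langle \mathbf x,\mathcal{E}\mathbf y\rangle \prec \eta_*\Psi\,\|A\|\|\mathbf x\|\|\mathbf y\| .
\]
The two $\mathcal{S}$-terms are handled with the single-resolvent laws of Theorem \ref{thm: 369}, applied to $G-M$. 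In addition we need the a priori bounds
\[
\|G_1AG_2\|\le \eta_*^{-1}\|A\|,\qquad |\langle\mathbf x,G_1AG_2\mathbf y\rangle|\le \|A\|\,\|G_1^*\mathbf x\|\,\|G_2\mathbf y\|\prec \eta_*^{-1}\|A\|\|\mathbf x\|\|\mathbf y\|,
\]
the last step by the Ward identity $\|G\mathbf v\|^2=\eta^{-1}\Im\langle \mathbf v,G\mathbf v\rangle$. The flatness of $\mathcal{S}$ lets us bound $\mathcal{S}[X]$ by averaged traces, so the average law \eqref{eq: 873e} gains a factor $(N\eta)^{-1}$. This produces the terms $(N\eta_*^3)^{-1/2}$ and $(N\eta_*^2)^{-1}$ of \eqref{eq: 928ty}.

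\emph{Step 3: the fluctuation term (main obstacle).} The main difficulty is the high-moment bound for $\langle\mathbf x,M_1\underline{WG_1AG_2}\mathbf y\rangle$. We would compute $\E|\cdot|^{2p}$ by applying Lemma \ref{lem: 378} to one $W$. Cumulants of order $\ge3$ are controlled by Assumption \ref{assump: 140}(i),(iii), and the expansion is truncated using (iv). The second-order terms in which the derivative hits another factor of the moment produce chains such as $G_1AG_2\,\mathbf y\,\mathbf x^*\,G_1AG_2$. These are bounded by Ward identities together with $\triplenorm{\kappa}_2\lesssim 1$, which converts the correlated double sums into operator-norm bounds. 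Because these chains again involve two-resolvent quantities, the argument must be closed by a bootstrap in $\eta$: one first establishes the bound at $\eta_1,\eta_2\sim1$, then lowers $\eta_*$ by factors $N^{-\delta}$ along a grid, using Lipschitz continuity in $z$ and $w$ and feeding the current bound back into the error terms. We expect this bootstrap, in particular keeping the correlated cumulant sums at the operator level so that no $N$-power is lost, to be the main technical step. The comparatively weak leading error $\eta_*^{-1/2}$ in $\Psi$ leaves room to absorb the resulting losses.
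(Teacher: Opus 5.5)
Your route is genuinely different from the paper's. The paper never derives a self-consistent equation for $G(z)AG(w)$. It imports the two-resolvent law for $\Im G(\tilde z)B_1\Im G(\tilde w)B_2$ from \cite{EHR26+}, with $B_1=A$ and $B_2=N\mathbf{y}\mathbf{x}^*$, after splitting each $B_i$ into a regular part $\bdot{B}_i$ and a scalar $\Upsilon_i$. It then transfers this to $G(z)AG(w)$ through the cone representation $G(z)=\pi^{-1}\int\Im G(\psi(x))(x-\psi^{-1}(z))^{-1}\mathrm{d}x$, used twice, with the deterministic side obtained by tensorization; the integrals are harmless by \eqref{eq: 960ty}. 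The three terms of $\Psi$ are exactly the two-regular, one-regular and no-regular bounds of \cite{EHR26+}. A from-scratch proof is legitimate in principle, but yours breaks at Step 2, and the missing ingredient is precisely that regular/scalar splitting.

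\emph{The reduction asks for a false bound.} Requiring $\langle\mathbf{x},\mathcal{E}\mathbf{y}\rangle\prec\eta_*\Psi$ cannot hold. Take complex Wigner matrices ($\mathcal{S}[X]=\langle X\rangle$, $M=m$), $A=I$ and $w=\bar z$. Computing exactly via $WG_1=I+zG_1$ and $G_1G_2=(G_1-G_2)/(z-w)$ shows that $\langle\mathbf{x},M_1\underline{WG_1G_2}\mathbf{y}\rangle$ is of order $(N\eta_*^3)^{-1/2}$. This is much larger than $\eta_*\Psi$ once $\eta_*\ll N^{-1/4}$. The correct bookkeeping is $\mathcal{B}_{z,w}^{-1}=1+\mathcal{B}_{z,w}^{-1}\mathcal{C}_{M_1,M_2}\mathcal{S}$: isotropic errors pass through the identity without loss, and only the averaged quantity $\mathcal{S}[\mathcal{E}]$ meets $\mathcal{B}_{z,w}^{-1}$. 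Also, for $\Im z,\Im w>0$, Lemma \ref{lem: 792} gives $\lVert\mathcal{B}_{z,w}^{-1}\rVert\lesssim1$, so your factor $\eta_*^{-1}$ is a pure loss there.

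\emph{The real obstruction: opposite signs.} Consider $\Im z\cdot\Im w<0$ with $|z-w|\lesssim\eta_*$. This is the regime $\Psi$ is designed for, and the paper uses it: in \eqref{eq: 1048a}, $z\in\Omega_0$ and $w\in\Omega_0'$ range over both half-planes. There $\beta_{z,w}\sim|z-w|$, and the components of your error terms along the critical eigenvector are individually too large.

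In the Wigner example with $A=I$, the normalized trace of $M_1\mathcal{S}[G_1-M_1]G_1G_2$ is $m_1\langle G_1-m_1\rangle\langle G_1G_2\rangle\sim(N\eta_*)^{-1}\eta_*^{-1}$. The same holds for the traces of $M_1\mathcal{S}[G_1G_2](G_2-M_2)$ and of $M_1\underline{WG_1G_2}$. Dividing by $\beta_{z,w}$ gives $(N\eta_*^3)^{-1}$, which exceeds every term of $\Psi$ when $\eta_*\ll N^{-2/5}$. These three contributions cancel to leading order; the true error is $(N\eta_*^3)^{-1/2}$ by the resolvent identity. No bootstrap in $\eta$ applied to term-by-term bounds can detect this cancellation.

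\emph{What is missing.} You need to write $A=\bdot{A}+\Upsilon$, with $\Upsilon$ scalar and $\bdot{A}$ regular in the sense of \cite[Definition 3.3]{EHR26+}. Handle $\Upsilon G_1G_2$ by the resolvent identity and Theorem \ref{thm: 369}. For $\bdot{A}$, show that the critical component is not amplified. In the high-moment bound this requires averaged estimates on longer chains with regular observables, such as $\langle\Im G(w)\bdot{A}^*\Im G(z)\bdot{A}\rangle\prec1$, i.e.\ a genuine multi-resolvent hierarchy. That hierarchy is what \cite{EHR26+} supplies and your sketch does not. (Minor: your list for $\mathcal{E}$ omits $M_1A(G_2-M_2)$, which is harmless by \eqref{eq: 869e}.)
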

\begin{proof} 
The proof of \eqref{eq: 882d} is based upon the analogous two-resolvent local law with $\Im G$’s
instead of $G$’s  from \cite[Theorem 3.7--3.8]{EHR26+} and an integral representation formula from~\cite{MR4819934}
that expresses $G$ in terms of $\Im G$.
Since the more delicate part is `perturbative spectral regime' 
(following the terminology of \cite{EHR26+}), we focus on how to use \cite[Theorem 3.7]{EHR26+}.
The other regime, based upon \cite[Theorem 3.8]{EHR26+} is much easier.

To apply the local law in terms of $\Im G\coloneqq (G-G^{*})/2\mathrm{i}$, we recall the cone integral representation \cite[Lemma 4.11]{MR4819934}:
\begin{equation*}
	G(z) = \frac{1}{\pi}\int_{\mathbb{R}} \frac{\Im G(\psi(x))}{x-\psi^{-1}(z)}\mathrm{d}x,
\end{equation*}
where $\psi(x)$ is a conformal mapping of the closed upper half plane to a cone, see \cite[Eq.\,(4.75)]{MR4819934}.
Let $\xi=|\Im z| /2$ and $\gamma=1/8$, then $\psi( \mathrm{i}\xi^{1/\gamma}) = z$ and $\psi(0)$ is mapped into the tip of the cone, lying between $z$ and the real line (see~\cite[Fig.2]{MR4819934}). 

Using this representation twice, we write
\begin{equation}\label{eq: 967ty}
	\langle G(z)B_{1}G(w)B_{2}\rangle = \frac{1}{\pi^{2}}\iint_{\R^{2}}\frac{\langle \Im G(\psi(x))B_{1}\Im G(\widetilde\psi(y))B_{2}\rangle}{(x-\psi^{-1}(z))(y-\widetilde\psi^{-1}(w))} \mathrm{d}x\mathrm{d}y,
\end{equation}
where $\widetilde\psi$ is defined analogously for a cone representation with vertex $w$.
Let 
$\tilde{z}\coloneqq\psi(x)$ and $\tilde{w}\coloneqq\widetilde\psi(y)$ for brevity.
Notice that the same representation holds for the corresponding deterministic terms, i.e. 
\begin{equation}\label{eq: 1001ty}
	\langle M(z,B_{1},w)B_{2}\rangle = \frac{1}{\pi^{2}}\iint_{\R^{2}}\frac{\langle 
	\widehat{M}(\tilde{z},B_{1},\tilde{w})B_{2}\rangle}{(x-\psi^{-1}(z))(y-\widetilde\psi^{-1}(w))}\mathrm{d}x\mathrm{d}y,
\end{equation}
where 
$$ M(z,B_{1},w):=
(1-\mathcal{C}_{M(z),M(w)}\mathcal{S})^{-1}\mathcal{C}_{M(z),M(w)}[B_1]) 
$$
is the deterministic approximation to $G(z) B_1 G(w)$ and 
 $\widehat{M}(\tilde{z},B_{1},\tilde{w})$ is defined as a four-fold linear combination of $M(\tilde{z},B_{1},\tilde{w})$’s  with the spectral parameters $\tilde{z}, \tilde{w}$ and their complex conjugates~\cite[Eq.\,(3.16)]{EHR26+}. 
The proof of~\eqref{eq: 1001ty} follows by applying
a standard tensorization procedure (also known as \emph{meta-argument}, e.g.\,\cite[Section 2.6]{MR3513608} and \cite[Appendix D]{MR4748758}): tensorize the original matrix $H\in\mathbb{C}^{N\times N}$ into $\widetilde{H}\in\mathbb{C}^{N\times N}\otimes \mathbb{C}^{K\times K}$ in the canonical way and then take the limit $K\to\infty$.
Applying the global law~\cite[Proposition 5.3]{EHR26+} for fixed $N$ and $K\to \infty$, 
 we deduce \eqref{eq: 1001ty} from \eqref{eq: 967ty}. Combining these two formulas, we have
\begin{equation}\label{eq: 994ty}
	\langle [G(z)B_{1}G(w)-M(z,B_{1},w)]B_{2}\rangle
	= \frac{1}{\pi^{2}}\iint_{\R^{2}}\frac{\langle [\Im G(\tilde{z})B_{1}\Im G(\tilde{w})-\widehat{M}(\tilde{z},B_{1},\tilde{w})]B_{2}\rangle}{(x-\psi^{-1}(z))(y-\widetilde\psi^{-1}(w))} \mathrm{d}x\mathrm{d}y.
\end{equation}

We  use this formula by setting $B_{1}=A$ and $B_{2}=N\mathbf{y}\mathbf{x}^{*}$. 
For this choice and in the regime where $\Re \tilde z, \Re\tilde w$ are in the bulk,  using \cite[Theorem 3.7]{EHR26+}, the numerator can be estimated as follows
\begin{equation}\label{eq: 973ty}
	\langle [\Im G(\tilde{z})B_{1}\Im G(\tilde{w}) - \widehat{M}(\tilde{z},B_{1},\tilde{w})] B_{2}\rangle
	\prec \lVert A\rVert\lVert\mathbf{x}\rVert\lVert\mathbf{y}\rVert \Psi(\tilde{z},\tilde{w}).
\end{equation}
 
Once~\eqref{eq: 973ty} is established, the double integral in \eqref{eq: 994ty} can be estimated easily. Indeed,
observe that for the $x$ integration we have
\begin{equation}\label{eq: 960ty}
	\int_{\R} \frac{1}{|x-\psi^{-1}(z)| |\Im\psi(x)|^{p}} \mathrm{d}x \lesssim 
	\frac{1}{|\Im z|^{p}} + \int_{|x|>|\Im z|^{1/\gamma}} \frac{1}{|x|^{\gamma p+1}}\mathrm{d}x
	\lesssim \frac{1}{|\Im z|^{p}}, \quad p>0,
\end{equation} 
using $\Im\psi(x)\gtrsim \max\{|x|^{\gamma},|\Im z|\}$ and
$|x-\psi^{-1}(z)|\gtrsim \max\{|x|,|\Im z|^{1/\gamma}\}$ and analogous formula holds for the $y$-integration. 
The $|\Im\psi(x)|^{p}\equiv |\Im\tilde z|^p$ factor comes from the $\Psi(\tilde{z},\tilde{w})$ factor in~\eqref{eq: 973ty}. 
Strictly speaking~\eqref{eq: 973ty} is available only if $\Re \tilde z, \Re\tilde w$ are in the bulk but 
since $\Re z$ is in the bulk, note 
 that $\Re\tilde z\equiv \Re\psi(x)$ also
  lies in the bulk if $x$ is in a neighborhood of the origin. When $x$ is away from the origin, then 
 $\Im \tilde z\gtrsim 1$, the denominators in~\eqref{eq: 994ty} are bounded away from zero and
 we can use the global law for the numerator.

Now we explain the proof of~\eqref{eq: 973ty}.
Let $C>0$ be a constant sufficiently large. 
We restrict our proof to the case that $\Im\tilde z, \Im\tilde w\le C$, the nontrivial regime.
Without loss of generality, we suppose $\Re\tilde z$ and $\Re\tilde w$ are in the bulk for all $\Im\tilde z, \Im\tilde w\le C$ by choosing the cones appropriately.
Next we regularize the observables (with respect to the spectral parameters $\tilde{z}$ and $\tilde{w}$), that is, $B_{i}=\bdot{B}_{i}+\Upsilon_{i}$ for each $i\in\{1,2\}$;
see \cite[Definition 3.3]{EHR26+} for the notion of regular observables.
Note that each $\Upsilon_{i}$ is a scalar.
After regularization, we split into four terms and apply the local law \cite[Theorem 3.7]{EHR26+} for each term.
Using \cite[Eq.\,(3.36a)]{EHR26+} to the term $\langle \Im G(\tilde z) \bdot{B}_{1} \Im G(\tilde w) \bdot{B}_{2} \rangle$ containing two regular observables, we get the bound $\lVert A \rVert\lVert\mathbf{x}\rVert\lVert\mathbf{y}\rVert/\sqrt{\min(\eta_{1},\eta_{2})}$, which contributes the first term in \eqref{eq: 928ty}.
Here we used that 
 $\Im\tilde z \gtrsim \Im z\equiv\eta_1$ and $\Im\tilde w \gtrsim \Im w\equiv \eta_2$ based upon the cone constructrion and we also used
  $\triplenorms{\bdot{B}_{1}}_{\tilde{z},\tilde{w}}\lesssim \lVert A\rVert$ and $\triplenorms{\bdot{B}_{2}}_{\tilde{z},\tilde{w}}\lesssim \lVert\mathbf{x}\rVert\lVert\mathbf{y}\rVert\sqrt{N}$ in the bulk (see \cite[Remark 3.12]{EHR26+}).
There are two terms containing a single regular observables, e.g.\,$\Upsilon_{2}\langle \Im G(\tilde z) \bdot{B}_{1} \Im G(\tilde w) \rangle$. For this term, we apply \cite[Eq.\,(3.36b)]{EHR26+}.
The other term (with a single regular observable) is estimated analogously. Together with $|\Upsilon_{1}|\lesssim\lVert A\rVert$, we obtain the bound $\lVert A\rVert\lVert\mathbf{x}\rVert\lVert\mathbf{y}\rVert/\sqrt{N\min\{\eta_{1},\eta_{2}\}^{3}}$ for these single regular observable case, which contributes the middle term in \eqref{eq: 928ty}.
The last one $\Upsilon_{1}\Upsilon_{2}\langle \Im G(\tilde z) \Im G(\tilde w) \rangle$ is estimated by \cite[Eq.\,(3.36c)]{EHR26+} with the fact $|\Upsilon_{2}|\lesssim \lVert\mathbf{x}\rVert\lVert\mathbf{y}\rVert$ (recalling our setting $B_{2}=N\mathbf{y}\mathbf{x}^{*}$) in the bulk, which gives the bound $\lVert A\rVert\lVert\mathbf{x}\rVert\lVert\mathbf{y}\rVert/N \min\{\eta_{1},\eta_{2}\}^{2}$.
Putting together all these contributions, we get \eqref{eq: 973ty}.

Combining \eqref{eq: 973ty}--\eqref{eq: 960ty} and \eqref{eq: 1001ty}, we obtain \eqref{eq: 882d}.
The non-perturbative regime is easier since the regularization is not needed and one can use \cite[Theorem 3.8]{EHR26+} directly. This concludes the proof. 
\end{proof}

Define the stability operators $\mathcal{B}_{z,w}$ and $\mathcal{B}_{z}\coloneqq\mathcal{B}_{z,z}$ by setting
\begin{equation}\label{eq: 535}
	\mathcal{B}_{z,w} \coloneqq 1 - \mathcal{C}_{M(z),M(w)}\mathcal{S}.
\end{equation} 

\begin{lemma}[Two-body stability, {\cite[Lemma 3.2]{EHR26+}}]\label{lem: 792}
Suppose the same setting as in Theorem \ref{thm: 369}.
Consider $z=E_{1}+\mathrm{i}\eta_{1}$ and $w=E_{2}+\mathrm{i}\eta_{2}$ with $E_{1},E_{2}\in\mathcal{I}(\eps_{0}/2)$ and $|\eta_{1}|,|\eta_{2}|\in(0,100)$.
Let $\mathcal{B}_{z,w}$ be as in \eqref{eq: 535}.
Then,
\begin{equation}\label{eq: 913d}
	\lVert \mathcal{B}_{z,w}^{-1} \rVert_{\lVert\cdot\rVert\to\lVert\cdot\rVert} + \lVert \mathcal{B}_{z,w}^{-1} \rVert_{\lVert\cdot\rVert_{\textnormal{hs}}\to\lVert\cdot\rVert_{\textnormal{hs}}}
	\lesssim
	\begin{cases}
		1, & \eta_{1}\eta_{2} > 0, \\
		\max\{1,|z-w|^{-1}\}, & \eta_{1}\eta_{2} < 0.
	\end{cases}
\end{equation}
Specifically, for the case $z=w$,
\begin{equation}\label{eq: 922d}
	\lVert \mathcal{B}_{z}^{-1} \rVert_{\lVert\cdot\rVert\to\lVert\cdot\rVert} + \lVert \mathcal{B}_{z}^{-1} \rVert_{\lVert\cdot\rVert_{\textnormal{hs}}\to\lVert\cdot\rVert_{\textnormal{hs}}}
	\lesssim 1.
\end{equation}
Moreover, there exist (small) positive thresholds $\beta_{*},\eps_{*}\sim 1$, $\eps_{*}<1/3$, such that, if $|z-w|\le\beta_{*}$ and $\eta_{1}\eta_{2}<0$, then
\begin{equation}\label{eq: 572a}
	\lVert (\zeta-\mathcal{B}_{z,w})^{-1} \rVert_{\lVert\cdot\rVert\to\lVert\cdot\rVert} + \lVert (\zeta-\mathcal{B}_{z,w})^{-1} \rVert_{\lVert\cdot\rVert_{\textnormal{hs}}\to\lVert\cdot\rVert_{\textnormal{hs}}}
	\lesssim 1,
\end{equation}
for all $\zeta\in\mathbb{C}$ satisfying $|\zeta|\ge\eps_{*}$ and $|1-\zeta|\ge 1-2\eps_{*}$,
and the disk of radius $\eps_{*}$ around the origin contains a single eigenvalue $\beta_{z,w}$ of $\mathcal{B}_{z,w}$ with algebraic multiplicity one, that is,
\begin{equation}\label{eq: 482}
	\textnormal{rank}\,\Pi_{z,w} = 1, \quad\;\; \Pi_{z,w}\coloneqq \frac{1}{2\pi\mathrm{i}}\oint_{|\zeta|=\eps_{*}} (\zeta-\mathcal{B}_{z,w})^{-1} \mathrm{d}\zeta,
\end{equation}
where $\Pi_{z,w}$ is the spectral projection corresponding to $\beta_{z,w}$.
Furthermore, the eigenvalue $\beta_{z,w}$ of $\mathcal{B}_{z,w}$ satisfies
\begin{equation}\label{eq: 937d}
	|\beta_{z,w}| \sim 
	\begin{cases}
		1, & \eta_{1}\eta_{2} > 0, \\
		\min\{1,|z-w|\}, & \eta_{1}\eta_{2} < 0.
	\end{cases}
\end{equation}
\end{lemma}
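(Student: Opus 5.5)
The plan is to derive Lemma~\ref{lem: 792} from the structure of the MDE and the flatness \eqref{eq: 294a}, using the standard symmetrization of the stability operator. First I rewrite $\mathcal{B}_{z,w}$ in a conjugated form. I use the polar-type decompositions $M(z)=Q_1^{*}U_1Q_1$ and $M(w)=Q_2^{*}U_2Q_2$, where the $Q_i$ are invertible with $\lVert Q_i\rVert,\lVert Q_i^{-1}\rVert\lesssim1$ and the $U_i$ are unitary. These bounds hold in the bulk because $\lVert M\rVert\lesssim1$ and $\Im M\sim\rho\gtrsim1$ there, which follows from flatness and $\rho\ge\eps_0/2$. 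With these, $\mathcal{C}_{M(z),M(w)}\mathcal{S}$ is similar, via sandwich operators with bounded inverses, to $\mathcal{C}_{U_1,U_2}\mathcal{F}$. Here $\mathcal{F}=\mathcal{C}_{Q_1,Q_2}\mathcal{S}\,\mathcal{C}_{Q_1^{*},Q_2^{*}}$ is (after a further symmetrization) a positivity-preserving operator with $\lVert\mathcal{F}\rVert_{\mathrm{hs}\to\mathrm{hs}}\le1$. Its top eigenvalue is $1-O(\min\{\eta_1,\eta_2\})$, it has a spectral gap $\gtrsim1$ (from flatness, via the Perron--Frobenius-type argument in \cite{MR4031100}), and its top eigenvector $F$ is positive definite and comparable to $I$.

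Second, the case $\eta_1\eta_2>0$. Both $U_1,U_2$ have spectra in the upper (or both in the lower) half circle. The key fact is $\langle F,\mathcal{C}_{U_1,U_2}F\rangle$. It stays away from $1$ by a quantity $\sim\langle \Im U_1\rangle+\langle\Im U_2\rangle\gtrsim\rho\gtrsim1$, since the imaginary parts add rather than cancel. Combined with the spectral gap of $\mathcal{F}$, the standard ``rotation-inversion'' lemma then gives $\lVert(1-\mathcal{C}_{U_1,U_2}\mathcal{F})^{-1}\rVert_{\mathrm{hs}}\lesssim1$. I then upgrade from hs to operator norm in the usual way: write $\mathcal{B}^{-1}=1+\mathcal{C}_{M,\widetilde M}\mathcal{S}\mathcal{B}^{-1}$ and use $\lVert\mathcal{S}[X]\rVert\lesssim\lVert X\rVert_{\mathrm{hs}}$, which follows from Assumption~\ref{assump: 140}(i). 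This yields \eqref{eq: 913d}, \eqref{eq: 922d} and the first case of \eqref{eq: 937d}.

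Third, the case $\eta_1\eta_2<0$, which is where the small eigenvalue appears. Take $w$ in the lower half plane, so that $M(w)=M(\bar w)^{*}$. At $w=\bar z$ with $\eta_1\to0$, the Ward-type identity $\Im M=\mathcal{C}_{M,M^{*}}\mathcal{S}[\Im M]+\eta M M^{*}$, obtained from the MDE, shows that $\mathcal{B}_{z,\bar z}$ has eigenvalue $O(\eta_1)$ with eigenvector $\Im M$. The rest of the spectrum of $\mathcal{C}_{U,U^{*}}\mathcal{F}$ stays at distance $\gtrsim1$ from $1$ because of the gap of $\mathcal{F}$. Analytic perturbation theory in $z,w$ then applies: $\mathcal{B}_{z,w}$ is Lipschitz in $M(z),M(w)$, and hence in $z,w$ in the bulk. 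For $|z-w|\le\beta_*$ with $\beta_*$ small, this gives an isolated eigenvalue $\beta_{z,w}$ inside $|\zeta|<\eps_*$, the rank-one projection \eqref{eq: 482}, and the resolvent bound \eqref{eq: 572a} on the stated $\zeta$-region.

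The main obstacle is the lower bound $|\beta_{z,w}|\gtrsim|z-w|$. Here I use the first-order perturbation formula $\beta_{z,w}\approx\langle L,\mathcal{B}_{z,w}R\rangle/\langle L,R\rangle$, with $R=L^*\approx\Im M(z)$. The upper bound $|\beta_{z,w}|\lesssim|z-w|$ is the easier direction. For the lower bound I compute the derivative using the MDE, $M'=\mathcal{B}_z^{-1}[M^2]$. This gives $\beta_{z,w}=c\,(\bar z-w)\langle \Im M\rangle+\ldots$, where $|c|\sim1$ via the positivity of $\langle M M^{*}\rangle$. The delicate point is to check that there are no cancellations when $\Re z\neq\Re w$. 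I would handle this by showing that the real part of the leading coefficient equals $\langle(\Im M)^{2}\rangle$-type positive quantities, together with the $\eta$-contribution $\eta\langle MM^*\Im M\rangle$, which has a definite sign. Once $|\beta_{z,w}|\gtrsim\min\{1,|z-w|\}$ is established, the spectral decomposition $\mathcal{B}^{-1}=\beta^{-1}\Pi+\mathcal{B}^{-1}(1-\Pi)$, with the second part bounded by \eqref{eq: 572a} via contour integration, yields the second case of \eqref{eq: 913d}. For $|z-w|>\beta_*$ the bound instead follows by continuity and compactness from the absence of eigenvalues near $0$.
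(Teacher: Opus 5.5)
Your perturbative analysis near $w=\bar z$ is sound and is what the paper relies on downstream; the paper itself does not prove this lemma but imports it from the external two-body stability result. Since $M(\bar z)=M(z)^*$, one factor $Q$ symmetrizes $\mathcal{C}_{M,M^*}\mathcal{S}$ into $\mathcal{C}_{U,U^*}\mathcal{F}$ with $\mathcal{F}$ self-adjoint, positivity preserving and gapped. The rank-one-plus-contraction structure then gives \eqref{eq: 572a}--\eqref{eq: 482}, and Lipschitz perturbation covers $|w-\bar z|\le|z-w|\le\beta_*$. The contour argument of Corollary~\ref{cor: 953} and your hs-to-operator-norm upgrade are also fine. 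Likewise, the same half-plane case with $|z-w|$ small follows by perturbing from $w=z$.

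The genuine gap is the non-perturbative two-point regime: $\eta_1\eta_2>0$ with $|z-w|\sim 1$, and $\eta_1\eta_2<0$ with $|z-w|>\beta_*$. With $M(z)=Q_1^*U_1Q_1$ and $M(w)=Q_2^*U_2Q_2$, conjugation by $\mathcal{C}_{Q_1^*,Q_2}$ produces $\mathcal{C}_{U_1,U_2}\mathcal{F}_{12}$ with $\mathcal{F}_{12}[Y]=Q_1\mathcal{S}[Q_1^*YQ_2]Q_2^*$. This operator is self-adjoint, but for $Q_1\neq Q_2$ it is not positivity preserving: $Q_1^*YQ_2$ is not even Hermitian for $Y\ge 0$. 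So Perron--Frobenius/Krein--Rutman gives you neither a spectral gap nor a positive definite top eigenvector $F\sim I$, and the rotation--inversion lemma has nothing to act on. Your unspecified ``further symmetrization'' is not available in general. A sandwich conjugation with a positivity-preserving middle factor would essentially need a simultaneous congruence $M(z)=AU_1A^*$, $M(w)=AU_2A^*$, which exists for $w\in\{z,\bar z\}$ but not for general pairs. What you describe is the mechanism of the commutative (vector Dyson equation) setting. The non-commutativity of $M(z)$ and $M(w)$ is exactly the difficulty the cited lemma has to overcome.

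Your fallback for $|z-w|>\beta_*$ fails as well, for three reasons:
\begin{enumerate}
\item Continuity and compactness in $(z,w)$ yield $N$-dependent constants, since $\mathcal{B}_{z,w}$ acts on $\mathbb{C}^{N\times N}$.
\item $\mathcal{B}_{z,w}$ is non-normal, so eigenvalues bounded away from $0$ would not bound $\lVert\mathcal{B}_{z,w}^{-1}\rVert$.
\item The absence of small eigenvalues there is itself unproven.
\end{enumerate}
What is missing is an $N$-uniform, quantitative invertibility argument for $1-\mathcal{C}_{M(z),M(w)}\mathcal{S}$ with genuinely different $M(z),M(w)$. One possibility is a 2-positivity/Cauchy--Schwarz comparison of the completely positive $\mathcal{S}$ with the one-point operators, together with a quantitative analysis of the near-equality case.

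A secondary issue: your lower bound $|\beta_{z,w}|\gtrsim|z-w|$ is not closed. Writing $\beta_{z,w}$ as an $\eta$-term plus $c(\bar z-w)$ leaves precisely the cancellation you worry about. Ruling it out requires the exact ratio of the two coefficients, since their sum is in fact a multiple of $z-w$. Use the identity \eqref{eq: 2617}: pairing it with $L$ gives $\beta_{z,w}\langle L,M(z)-M(w)\rangle=(z-w)\langle L,M(z)M(w)\rangle$.
\begin{itemize}
\item For $|w-\bar z|\le\beta_*$, $L(z,w)$ is $O(|w-\bar z|)$-close to $L(z,\bar z)$ by Lemma~\ref{lem: 523}, and $L(z,\bar z)\ge c'I$ by flatness, as in Lemma~\ref{lem: 2809}.
\item Hence $|\langle L,M(z)M(w)\rangle|\ge c'\langle MM^*\rangle-O(\beta_*)\gtrsim 1$, using $\langle MM^*\rangle\ge\lVert M^{-1}\rVert^{-2}$.
\item Also $|\langle L,M(z)-M(w)\rangle|=2\langle L(z,\bar z),\Im M(z)\rangle+O(\beta_*)\sim 1$.
\end{itemize}
Together these give $|\beta_{z,w}|\sim|z-w|$ at once. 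Finally, $R=L^*$ is incorrect: $R$ is close to a multiple of $\Im M$, whereas $L$ is the Perron--Frobenius eigenvector of $\mathcal{S}\mathcal{C}_{M^*,M}$, and the two differ in general.
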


We obtain the following corollary from Lemma \ref{lem: 792}.

\begin{corollary}\label{cor: 953}
Suppose the same setting as in Lemma \ref{lem: 792}.
For $|z-w|\le\beta_{*}$ and $\eta_{1}\eta_{2}<0$, we have
\begin{equation*}
	\lVert \mathcal{B}_{z,w}^{-1}(1-\Pi_{z,w})\rVert_{\lVert\cdot\rVert\to\lVert\cdot\rVert}\lesssim 1, \quad
	\lVert \mathcal{B}_{z,w}^{-1}(1-\Pi_{z,w})\rVert_{\lVert\cdot\rVert_{\textnormal{hs}}\to\lVert\cdot\rVert_{\textnormal{hs}}}\lesssim 1.
\end{equation*}
\end{corollary}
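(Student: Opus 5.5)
We derive the bounds from the holomorphic functional calculus of $\mathcal{B}\equiv\mathcal{B}_{z,w}$ together with the resolvent bound \eqref{eq: 572a}; we work in either of the two norms, since the argument is identical. Because $\Pi\equiv\Pi_{z,w}$ is the Riesz projection of $\mathcal{B}$, it commutes with $\mathcal{B}$. On the range of $1-\Pi$, the spectrum of $\mathcal{B}$ is the spectrum of $\mathcal{B}$ with $\beta_{z,w}$ removed. By \eqref{eq: 572a}, this remaining spectrum lies outside the annulus-like region $\{|\zeta|\ge\eps_*,\ |1-\zeta|\ge 1-2\eps_*\}$. By \eqref{eq: 482}, it also contains no point of the disk $|\zeta|<\eps_*$. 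Hence the spectrum of $\mathcal{B}$ restricted to $\operatorname{ran}(1-\Pi)$ lies in the disk $D\coloneqq\{|1-\zeta|<1-2\eps_*\}$. This disk stays at distance $\ge 2\eps_*-\eps_*=\eps_*$ from $0$, since it is contained in $\{|\zeta|> 2\eps_*\}$.

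Next, write
\begin{equation*}
\mathcal{B}^{-1}(1-\Pi)=\frac{1}{2\pi\mathrm{i}}\oint_{\Gamma}\zeta^{-1}(\zeta-\mathcal{B})^{-1}\,\mathrm{d}\zeta ,
\end{equation*}
where $\Gamma$ is the circle $|1-\zeta|=1-\tfrac{3}{2}\eps_*$, traversed positively. To justify this, note first that $\Gamma$ encloses all of the spectrum except $\beta_{z,w}$. Indeed, $|\beta_{z,w}|<\eps_*$ gives $|1-\beta_{z,w}|>1-\eps_*>1-\tfrac32\eps_*$, so $\beta_{z,w}$ lies outside $\Gamma$. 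Also, $\zeta^{-1}$ is holomorphic on and inside $\Gamma$, since $0$ lies outside $\Gamma$. The Riesz–Dunford calculus then gives $\frac{1}{2\pi\mathrm{i}}\oint_\Gamma \zeta^{-1}(\zeta-\mathcal{B})^{-1}\mathrm{d}\zeta=\mathcal{B}^{-1}(1-\Pi)$. Here we use that the spectral projection for $\Gamma$ equals $1-\Pi$, because the two contours $\Gamma$ and $|\zeta|=\eps_*$ together separate the whole spectrum. Note that $\mathcal{B}$ is invertible, since $\beta_{z,w}\neq0$ for $z\ne w$ by \eqref{eq: 937d}.

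To bound the integral, check that every point of $\Gamma$ satisfies the hypotheses of \eqref{eq: 572a}. We have $|1-\zeta|=1-\tfrac32\eps_*\ge1-2\eps_*$. We also have $|\zeta|\ge 1-|1-\zeta|=\tfrac32\eps_*\ge\eps_*$. Hence $\|(\zeta-\mathcal{B})^{-1}\|\lesssim1$ uniformly on $\Gamma$, while $|\zeta|^{-1}\le 2/(3\eps_*)\lesssim1$ and the length of $\Gamma$ is $O(1)$. This gives the claim in both the $\|\cdot\|$ and $\|\cdot\|_{\rm hs}$ operator norms.

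The only step that needs care is verifying that the spectrum of $\mathcal{B}$ splits cleanly, with $\beta_{z,w}$ alone inside $|\zeta|<\eps_*$ and everything else inside $\Gamma$. This requires $\eps_*<1/3$, so that the regions are nested properly and $\Gamma$ avoids both $0$ and the forbidden zone. That condition is exactly what Lemma \ref{lem: 792} provides.
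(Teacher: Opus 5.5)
Your proof is correct and is essentially the paper's argument: both write $\mathcal{B}_{z,w}^{-1}(1-\Pi_{z,w})$ as a contour integral of $\zeta^{-1}(\zeta-\mathcal{B}_{z,w})^{-1}$ and bound it on the contour via \eqref{eq: 572a}. The only difference is the contour. The paper uses the small circle $|\zeta|=\eps_*$, which encloses $\beta_{z,w}$ and the pole at $0$ (hence its minus sign) and so avoids your preliminary step of locating the rest of the spectrum inside $\{|1-\zeta|<1-2\eps_*\}$. You instead integrate over the complementary circle $|1-\zeta|=1-\tfrac32\eps_*$.
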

\begin{proof}
Note that
\begin{equation*}
	\mathcal{B}_{z,w}^{-1}(1-\Pi_{z,w}) = -\frac{1}{2\pi\mathrm{i}}\oint_{|\zeta|=\eps_{*}}\zeta^{-1}(\zeta-\mathcal{B}_{z,w})^{-1}\mathrm{d}\zeta.
\end{equation*}
Then the desired result follows from \eqref{eq: 572a}.
\end{proof}

\begin{lemma}[Bounds for $M(z)$ and $\mathcal{S}$, {\cite[Proposition 3.5; Lemma 4.8]{AEK20}}]\label{lem: 354}
Suppose the same setting as in Theorem \ref{thm: 369}.
We have
\begin{equation}\label{eq: 949d}
	\lVert M(z) \rVert \lesssim 1 \quad\text{and}\quad \lVert M^{-1}(z) \rVert \lesssim 1+|z|,
\end{equation}
uniformly for every $z=E+\mathrm{i}\eta$ with $E$ and $\eta$ as in Theorem \ref{thm: 369}.
In addition,
\begin{equation}\label{eq: 954d}
	\lVert \mathcal{S} \rVert_{\lVert\cdot\rVert_{\textnormal{hs}}\to\lVert\cdot\rVert}
	\lesssim 1.
\end{equation}
\end{lemma}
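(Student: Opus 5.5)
The plan is to derive the three bounds in the order $\lVert\mathcal{S}\rVert_{\lVert\cdot\rVert_{\rm hs}\to\lVert\cdot\rVert}$, then $\lVert M\rVert$, then $\lVert M^{-1}\rVert$. Each step uses only the MDE \eqref{eq: 197}, the flatness property \eqref{eq: 294a}, and the positivity $\Im M\ge 0$ in the upper half-plane (the lower half-plane follows from $M(\bar z)=M(z)^*$).

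\emph{The bound \eqref{eq: 954d}.} Any $X\in\mathbb{C}^{N\times N}$ splits as $X=\Re X+\mathrm{i}\Im X$, and each Hermitian part splits further into positive and negative parts. This gives $X=\sum_{k=1}^4 c_k X_k$ with $|c_k|=1$, $X_k\ge0$ and $\lVert X_k\rVert_{\rm hs}\le\lVert X\rVert_{\rm hs}$. For each $X_k\ge 0$, the upper bound in \eqref{eq: 294a} gives $0\le\mathcal{S}[X_k]\le C_{\rm flat}\langle X_k\rangle$, so $\lVert\mathcal{S}[X_k]\rVert\le C_{\rm flat}\langle X_k\rangle\le C_{\rm flat}\lVert X_k\rVert_{\rm hs}$ by Cauchy--Schwarz. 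Summing over $k$ yields \eqref{eq: 954d}. Since $\lVert X\rVert_{\rm hs}\le\lVert X\rVert$, it also gives $\lVert\mathcal{S}[X]\rVert\lesssim\lVert X\rVert$. If the paper's derivation of \eqref{eq: 294a} is not to be taken for granted, I would instead obtain the upper bound directly from Assumption \ref{assump: 140}(i) with $k=2$. The entries are $\mathcal{S}[X]_{ij}=N^{-1}\sum_{a,b}\kappa(ia,bj)X_{ab}$, and a Schur/Cauchy--Schwarz estimate bounds this using $\triplenorm{\kappa}_2\le C_2$.

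\emph{The bound $\lVert M\rVert\lesssim 1$.} Take the imaginary part of \eqref{eq: 197}. This gives $\Im(-M^{-1})=\eta+\mathcal{S}[\Im M]$. Because $\Im M\ge0$, the lower flatness bound yields $\Im(-M^{-1})\ge \eta+c_{\rm flat}\langle\Im M\rangle$. Now take a unit vector $\mathbf{v}$ and put $\mathbf{u}=M\mathbf{v}$. Then
\[
\|\mathbf{u}\|\ge|\langle \mathbf{u},\mathbf{v}\rangle|=|\langle\mathbf{u},M^{-1}\mathbf{u}\rangle|\ge\big(\eta+c_{\rm flat}\langle\Im M\rangle\big)\|\mathbf{u}\|^2 ,
\]
so $\lVert M\rVert\le(\eta+c_{\rm flat}\langle\Im M(z)\rangle)^{-1}$. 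It therefore suffices to prove $\eta+\langle\Im M(E+\mathrm{i}\eta)\rangle\gtrsim1$ uniformly for $E\in\mathcal{I}(\eps_0/2)$ and $\eta\in(0,100]$. For $\eta$ bounded below this is immediate. For small $\eta$, the quantity $\langle\Im M(E+\mathrm{i}\eta)\rangle$ is $\pi$ times the Poisson extension of the self-consistent density, and $\rho(E)\ge\eps_0/2$.

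This last step is the main obstacle. It requires a regularity input for $\rho$, namely uniform Hölder-$1/3$ continuity of the density of states, so that $\rho\gtrsim\eps_0$ on an $N$-independent neighbourhood of $E$. The Poisson integral of $\rho$ over that neighbourhood is then $\gtrsim1$. I would quote this regularity from the MDE theory (\cite{MR4031100}, \cite{AEK20}), which applies here because flatness holds. An alternative is a bootstrap in $\eta$ using the continuity of $M$ in $z$ together with the stability bound \eqref{eq: 922d}.

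\emph{The bound on $M^{-1}$.} From \eqref{eq: 197}, $M^{-1}=-(z-A+\mathcal{S}[M])$. Hence $\lVert M^{-1}\rVert\le|z|+C_A+\lVert\mathcal{S}[M]\rVert\lesssim 1+|z|+\lVert M\rVert\lesssim1+|z|$, using Assumption \ref{assump: 118}, the operator-norm consequence of \eqref{eq: 954d} from the first step, and $\lVert M\rVert\lesssim1$ from the second step.
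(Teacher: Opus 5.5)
Your argument is correct, but it does more than the paper does. The paper gives no proof and simply imports both bounds from \cite{AEK20} (Proposition~3.5 and Lemma~4.8 there). You instead derive them from the MDE \eqref{eq: 197} and flatness \eqref{eq: 294a} in three steps:
\begin{itemize}
\item The decomposition of $X$ into four positive parts, together with upper flatness and the positivity-preservation of $\mathcal{S}$, gives \eqref{eq: 954d}.
\item The imaginary part of the MDE together with lower flatness gives $\lVert M(z)\rVert\le(|\Im z|+c_{\rm flat}\langle\Im M(z)\rangle)^{-1}$.
\item The identity $M^{-1}=-(z-A+\mathcal{S}[M])$ then bounds $M^{-1}$.
\end{itemize}
All three steps are elementary and sound.

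Your route buys two things. First, it isolates the only non-trivial input: a uniform lower bound $\langle\Im M(E+\mathrm{i}\eta)\rangle\gtrsim1$ for bulk $E$ and small $\eta$. You still import this, via uniform H\"older regularity of $\rho$; any fixed exponent with $N$-independent constants suffices, not specifically $1/3$. Second, it gives the bounds for all $0<|\Im z|\le 100$ in both half-planes, using $M(\bar z)=M(z)^*$. The lemma as stated only covers $\Im z\ge N^{-1+\eps_0/100}$. Yet the paper later uses the lemma at $\Im z=\pm\eta_*=N^{-100}$, both in Section~\ref{sec: 1776} and through \eqref{eq: 796}, and your version covers those uses. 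The citation is shorter and comes with the general framework of \cite{AEK20}.

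If you take your bootstrap alternative instead of H\"older regularity, run it as a continuity argument in $\eta$ on the set where $\langle\Im M\rangle\ge\pi\eps_0/4$. There your inequality already bounds $\lVert M\rVert$. A bulk stability bound, \eqref{eq: 922d} or an AEK-type version expressed through $\lVert M\rVert$ and $\langle\Im M\rangle$, combined with \eqref{eq: 671}, then gives $\lVert M'\rVert\lesssim1$. So $\eta\mapsto\langle\Im M(E+\mathrm{i}\eta)\rangle$ is uniformly Lipschitz. It therefore cannot fall from $\pi\rho(E)\ge\pi\eps_0/2$ to $\pi\eps_0/4$ before $\eta\gtrsim1$, and beyond that the $\eta$ term takes over. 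Setting it up this way avoids leaning on \eqref{eq: 922d} in a regime where its own proof presupposes $\lVert M\rVert\lesssim1$.
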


\begin{lemma}[{\cite[Lemma 4.4]{LS20}} and {\cite[Lemma 5.6]{Vova25a}}]\label{lem: 358}
Let $g$, $f$, $\tilde{f}$, $\eta_{0}$, and $\chi$ be as in Theorem \ref{thm: 96} and Proposition \ref{prop: 147}.
Consider a domain $\Omega$ of the form
\begin{equation*}
	\Omega = \{z=x+\mathrm{i}\eta\in\mathbb{C}: x \in (a,b),\, \eta\in(cN^{-\tau}\eta_{0},1) \},
\end{equation*}
such that $\textnormal{supp}(f)\subset (a,b)$ and $\tau,c$ are positive constants.
Let $K(z)$ be a holomorphic function on $\Omega$ satisfying
\begin{equation*}
	|K(z)| \le C|\eta|^{-s}, \quad z\in\Omega,
\end{equation*}
for some constants $C>0$ and $s\ge0$. Then there exists a constant $C'>0$ depending only on $g$, $\chi$, and $s$, such that
\begin{equation*}
	\bigg| \int_{\Omega}\frac{\partial\tilde{f}}{\partial\bar{z}} K(x+\mathrm{i}\eta) \mathrm{d}x\mathrm{d}\eta \bigg|
	\le
	\begin{cases}
		CC'\eta_{0}^{1-s}\log{N}, & s\in[0,2] \\
		c^{2-s}CC'N^{-\tau(2-s)}\eta_{0}^{1-s}\log{N}, & s > 2
	\end{cases}
	.
\end{equation*}
\end{lemma}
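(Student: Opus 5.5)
This lemma is a deterministic estimate, and I would prove it directly from the explicit form of $\partial_{\bar z}\tilde f$ together with Stokes' theorem. The plan follows \cite{LS20,Vova25a}.

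\emph{Derivative and splitting.} From \eqref{eq: 691e},
\begin{equation*}
\partial_{\bar z}\tilde f(x+\mathrm{i}\eta)=\tfrac12\big(\mathrm{i}\chi'(\eta)(f(x)+\mathrm{i}\eta f'(x))-\chi(\eta)\eta f''(x)\big).
\end{equation*}
The $\chi'$ part lives on $\eta\in[1/2,1]$, where $|K|\le C$. Since $\int|f|+\int|f'|\lesssim \eta_0$, its contribution is $O(C\eta_0)$, which is within the bound.

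\emph{The term $\chi(\eta)\eta f''(x)K$.} I would split the $\eta$-range at $\eta=\eta_0$.

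On $\eta\ge\eta_0$, integrate by parts once in $x$ and use holomorphy, $\partial_x K=-\mathrm{i}\partial_\eta K$:
\begin{equation*}
\int\!\!\int \eta f''(x)K\,\mathrm{d}x\,\mathrm{d}\eta=-\int\!\!\int \eta f'(x)\partial_xK\,\mathrm{d}x\,\mathrm{d}\eta=\mathrm{i}\int\!\!\int \eta f'(x)\partial_\eta K\,\mathrm{d}x\,\mathrm{d}\eta .
\end{equation*}
Then integrate by parts in $\eta$ (with the $\chi$ factor) to move the derivative onto $\eta\chi(\eta)$. This produces three pieces:
\begin{itemize}
\item a bulk term of size $\int|f'|\int_{\eta_0}^1|K|\,\mathrm{d}\eta\lesssim \int_{\eta_0}^1 C\eta^{-s}\,\mathrm{d}\eta$;
\item a boundary term at $\eta=\eta_0$ of size $\eta_0\int|f'||K(x+\mathrm{i}\eta_0)|\,\mathrm{d}x\lesssim C\eta_0^{1-s}$;
\item a $\chi'$ term, which is harmless.
\end{itemize}
Since $\int_{\eta_0}^1\eta^{-s}\,\mathrm{d}\eta\lesssim\eta_0^{1-s}\log N$ for $s\in[0,2]$ (the logarithm appears only when $s=1$), this part is bounded by $CC'\eta_0^{1-s}\log N$ in all cases $s\ge0$.

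On the lower strip $cN^{-\tau}\eta_0<\eta<\eta_0$, estimate directly with $\int|f''|\lesssim\eta_0^{-1}$:
\begin{equation*}
\eta_0^{-1}\int_{cN^{-\tau}\eta_0}^{\eta_0}C\eta^{1-s}\,\mathrm{d}\eta .
\end{equation*}
\begin{itemize}
\item For $s<2$ this is $\lesssim C\eta_0^{1-s}$.
\item For $s=2$ it is $\lesssim C\eta_0^{-1}\log N$.
\item For $s>2$ it is $\lesssim C\eta_0^{-1}(cN^{-\tau}\eta_0)^{2-s}=c^{2-s}CN^{-\tau(2-s)}\eta_0^{1-s}$.
\end{itemize}
In the case $s>2$, the upper-region contribution $C\eta_0^{1-s}\log N$ is smaller than this, because $N^{-\tau(2-s)}\ge1$. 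So it is absorbed as long as $c\lesssim1$; for general $c$, I would split the range at $\min(c,1)\eta_0$ instead.

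\emph{Scaling and the main obstacle.} All constants depend only on norms of $g$ and $\chi$, using $f^{(k)}=\eta_0^{-k}g^{(k)}((\cdot-E_0)/\eta_0)$, so $\int|f^{(k)}|=\eta_0^{1-k}\|g^{(k)}\|_{L^1}$. The one delicate point is the integration by parts on the upper region. It needs $f'$ compactly supported inside $(a,b)$, so that no lateral boundary terms appear; this is guaranteed by $\operatorname{supp}f\subset(a,b)$. It also needs $K$ holomorphic up to the horizontal line $\eta=\eta_0$, which holds since that line lies inside $\Omega$.
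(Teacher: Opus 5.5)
Your split at $\eta=\eta_0$ with a direct bound on the lower strip matches the structure the paper relies on. The paper cites \cite{LS20,Vova25a} for $s\in[0,2]$, and for $s>2$ only changes the lower-strip integral $\int_{cN^{-\tau}\eta_0}^{\eta_0}\eta^{1-s}\,\mathrm{d}\eta\lesssim(cN^{-\tau}\eta_0)^{2-s}$, which is exactly your computation.

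However, there is a genuine gap in the upper region $\eta\ge\eta_0$ when $s\in[0,1)$, coming from two false estimates.

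\begin{itemize}
\item $\int|f'|\,\mathrm{d}x=\lVert g'\rVert_{L^1}$ is of order one, not $O(\eta_0)$; your own scaling formula $\int|f^{(k)}|=\eta_0^{1-k}\lVert g^{(k)}\rVert_{L^1}$ says so. Hence your bound on the $\chi'$ part, which contains $\eta\chi'(\eta)f'(x)K$, is only $O(C)$.
\item For fixed $s<1$ one has $\int_{\eta_0}^1\eta^{-s}\,\mathrm{d}\eta\to(1-s)^{-1}$, which is not $\lesssim\eta_0^{1-s}\log N$. So your bulk term $\int|f'|\int_{\eta_0}^1|K|\,\mathrm{d}\eta$ is again only $O(C)$.
\end{itemize}

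Since $\eta_0\le N^{-\eps_0}$, the target $C\eta_0^{1-s}\log N$ is $o(C)$ for $s<1$. Your bounds therefore miss by the diverging factor $\eta_0^{s-1}/\log N$. These terms are small only through a cancellation ($\int f'=0$, and $K$ varies slowly in $x$ on scale $\eta_0$ once $\eta\gg\eta_0$), and taking absolute values destroys it.

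This is not a corner case. The paper uses the lemma with $s=1/2$ (after \eqref{eq: 1921tg} and in \eqref{eq: 1852a}) and with $s=0$ (for \eqref{eq: 474} and the $k\ge3$ cumulant terms), precisely to get the vanishing bounds $\eta_0^{1/2}$ and $\eta_0$.

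The missing step is to use holomorphy once more. On $\Omega$ we have $\partial_{\bar z}(\tilde fK)=K\,\partial_{\bar z}\tilde f$. Apply Green's theorem on $\{x\in(a,b),\ \eta_0<\eta<1\}$; this turns the whole upper-region integral, $\chi'$ part included, into a boundary integral along $\eta=\eta_0$.

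\begin{itemize}
\item The top edge contributes nothing: $\chi(1)=0$ and $K$ is bounded near $\eta=1$. Cut at $1-\delta$ and let $\delta\to0$.
\item The vertical sides contribute nothing, since $f\equiv0$ near $a$ and $b$.
\end{itemize}

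Hence, for every $s\ge0$,
\begin{equation*}
\Big|\int_{\eta>\eta_0}\frac{\partial\tilde f}{\partial\bar z}K\Big|\le\frac12\int_a^b|\tilde f(x+\mathrm{i}\eta_0)|\,|K(x+\mathrm{i}\eta_0)|\,\mathrm{d}x\le\frac{C}{2}\,\eta_0^{-s}\int\big(|f|+\eta_0|f'|\big)\,\mathrm{d}x\lesssim C\eta_0^{1-s}.
\end{equation*}

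Equivalently, within your own scheme, integrate the leftover $f'$ by parts in $x$ once more, use $\partial_xK=-\mathrm{i}\partial_\eta K$, and integrate by parts in $\eta$. This trades $f'$ for $f$ and recovers the missing factor $\eta_0$; treat the $\eta\chi'(\eta)f'$ term the same way. With this replacement your argument is complete.

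A harmless slip: the coefficient of $\chi(\eta)\eta f''(x)$ in $\partial_{\bar z}\tilde f$ is $\mathrm{i}/2$, not $-1/2$. This does not affect any absolute-value bound.
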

\begin{proof} The claim for $s\in[0,2]$ was shown in \cite[Lemma 4.4]{LS20} and \cite[Lemma 5.6]{Vova25a}.
For $s>2$, the only difference is the integral estimate for $|\eta|\in (cN^{-\tau}\eta_{0},\eta_{0})$;
specifially,
\begin{equation*}
	\int_{cN^{-\tau}\eta_{0}}^{\eta_{0}}|\eta|^{1-s}\mathrm{d}\eta \lesssim (cN^{-\tau}\eta_{0})^{2-s}.
\end{equation*}
The remaining parts of the proof is the same as in \cite[Lemma 4.4]{LS20} and \cite[Lemma 5.6]{Vova25a}.
\end{proof}

\begin{lemma}[Non-Hermitian perturbation, {\cite[Lemma C.1]{AEK20}}]\label{lem: 523}
Let $\beta_{*}$ and $\eps_{*}$ be as in Lemma \ref{lem: 792}. Consider $(z_{0},w_{0}),(z_{1},w_{1})\in\mathbb{C}$ satisfying $|z_{i}-w_{i}|\le\beta_{*}$, $\Re z_{i}, \Re w_{i}\in\mathcal{I}(\eps_{0}/2)$, and $\Im z_{i}\cdot\Im w_{i}<0$ for each $i=0,1$.
Denote the stability operator $\mathcal{B}_{i}\coloneqq \mathcal{B}_{z_{i},w_{i}}$ and
the single eigenvalue $\beta_{i}\coloneqq\beta_{z_{i},w_{i}}$ of $\mathcal{B}_{i}$ for each $i=0,1$.
Let $R_{0}$ be the normalized right eigenvector of $\mathcal{B}_{0}$ corresponding $\beta_{0}$;
i.e.\,$\mathcal{B}_{0}[R_{0}]=\beta_{0}R_{0}$ and $\lVert R_{0}\rVert_{\textnormal{hs}}=1$.
Similarly, let $L_{0}$ be the corresponding left eigenvector, that is, $\mathcal{B}_{0}^{*}[L_{0}]=\bar{\beta}_{0}L_{0}$  and $\lVert L_{0}\rVert_{\textnormal{hs}}=1$.
Let $\Pi_{i}$ be the rank-one spectral projection corresponding to $\beta_{i}$ (as in Lemma \ref{lem: 792}) for each $i=0,1$.
Define $R_{1}\coloneqq \Pi_{1}[R_{0}]$ and $L_{1}\coloneqq \Pi_{1}^{*}[L_{0}]$.

Then $R_{1}$ and $L_{1}$ are (unnormalized) eigenvectors of $\mathcal{B}_{1}$ and $\mathcal{B}_{1}^{*}$ corresponding to $\beta_{1}$ and $\bar{\beta}_{1}$, respectively.
Write $\Delta\mathcal{B}\coloneqq \mathcal{B}_{1}-\mathcal{B}_{0}$. Then, we have
\begin{equation*}
	R_{1} = R_{0} + O(\lVert \Delta\mathcal{B}\rVert_{\lVert\cdot\rVert_{\textnormal{hs}}\to\lVert\cdot\rVert_{\textnormal{hs}}}), \quad
	L_{1} = L_{0} + O(\lVert \Delta\mathcal{B}\rVert_{\lVert\cdot\rVert_{\textnormal{hs}}\to\lVert\cdot\rVert_{\textnormal{hs}}}),
\end{equation*}
\begin{equation*}
	\beta_{1}\langle L_{1}, R_{1} \rangle 
	= \beta_{0} \langle L_{0}, R_{0}\rangle + \langle L_{0}, \Delta\mathcal{B}[R_{0}]\rangle + O(\lVert  \Delta\mathcal{B}\rVert_{\lVert\cdot\rVert_{\textnormal{hs}}\to\lVert\cdot\rVert_{\textnormal{hs}}}^{2}),
\end{equation*}
where the implicit constants in the error terms $O(\lVert \Delta\mathcal{B}\rVert_{\lVert\cdot\rVert_{\textnormal{hs}}\to\lVert\cdot\rVert_{\textnormal{hs}}})$ depend only on $\eps_{*}$ and $\beta_{*}$.
\end{lemma}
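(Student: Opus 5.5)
This is standard analytic perturbation theory for an isolated, algebraically simple eigenvalue. I would base everything on the Riesz projection in \eqref{eq: 482} and the resolvent bound \eqref{eq: 572a}.

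\textbf{Bounds on the projections.} Since both $(z_0,w_0)$ and $(z_1,w_1)$ satisfy the hypotheses of Lemma \ref{lem: 792}, the resolvent $(\zeta-\mathcal{B}_i)^{-1}$ is bounded by $O(1)$ on the circle $|\zeta|=\eps_*$ for $i=0,1$. The second resolvent identity gives
\begin{equation*}
(\zeta-\mathcal{B}_1)^{-1}-(\zeta-\mathcal{B}_0)^{-1}=(\zeta-\mathcal{B}_1)^{-1}\Delta\mathcal{B}(\zeta-\mathcal{B}_0)^{-1}.
\end{equation*}
Integrating this over the circle yields $\Pi_1-\Pi_0=O(\lVert\Delta\mathcal{B}\rVert)$ in the hs-norm. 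Since $R_0=\Pi_0[R_0]$, we get $R_1=\Pi_1[R_0]=R_0+(\Pi_1-\Pi_0)[R_0]=R_0+O(\lVert\Delta\mathcal{B}\rVert)$. Taking adjoints gives the same for $L_1$. Because $\Pi_1$ commutes with $\mathcal{B}_1$ and has rank one with range equal to the $\beta_1$-eigenspace, $R_1$ is an eigenvector of $\mathcal{B}_1$ whenever it is nonzero. It is nonzero if $\lVert\Delta\mathcal{B}\rVert$ is small. If $\lVert\Delta\mathcal{B}\rVert\gtrsim1$, every claimed estimate is trivial, because all quantities involved are $O(1)$.

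\textbf{Eigenvalue formula.} Write $\beta_1\langle L_1,R_1\rangle=\langle L_1,\mathcal{B}_1R_1\rangle$ and $\beta_0\langle L_0,R_0\rangle=\langle L_0,\mathcal{B}_0R_0\rangle$. Set $\delta R=R_1-R_0$ and $\delta L=L_1-L_0$, both $O(\lVert\Delta\mathcal{B}\rVert)$. Expanding,
\begin{equation*}
\langle L_1,\mathcal{B}_1R_1\rangle=\langle L_0,\mathcal{B}_0R_0\rangle+\langle L_0,\Delta\mathcal{B}R_0\rangle+\langle L_0,\mathcal{B}_0\delta R\rangle+\langle \delta L,\mathcal{B}_0R_0\rangle+O(\lVert\Delta\mathcal{B}\rVert^2).
\end{equation*}
The two linear terms are $\langle L_0,\mathcal{B}_0\delta R\rangle=\beta_0\langle L_0,\delta R\rangle$ and $\langle \delta L,\mathcal{B}_0R_0\rangle=\beta_0\langle\delta L,R_0\rangle$. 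These are not automatically quadratic, because $\beta_0$ need not be small in general. Here the structure of the definitions helps. Since $\Pi_0^*L_0=L_0$,
\begin{equation*}
\langle L_0,\delta R\rangle=\langle L_0,(\Pi_1-\Pi_0)\Pi_0R_0\rangle,
\end{equation*}
and similarly for $\langle\delta L,R_0\rangle$. I would therefore expand $\Pi_0(\Pi_1-\Pi_0)\Pi_0$ to first order using the contour formula. The first-order term is
\begin{equation*}
\frac{1}{2\pi i}\oint \Pi_0(\zeta-\mathcal{B}_0)^{-1}\Delta\mathcal{B}(\zeta-\mathcal{B}_0)^{-1}\Pi_0\,d\zeta=\frac{1}{2\pi i}\oint (\zeta-\beta_0)^{-2}\,d\zeta\;\Pi_0\Delta\mathcal{B}\Pi_0,
\end{equation*}
and this vanishes since $\oint(\zeta-\beta_0)^{-2}\,d\zeta=0$. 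Hence $\langle L_0,\delta R\rangle$ and $\langle\delta L,R_0\rangle$ are both $O(\lVert\Delta\mathcal{B}\rVert^2)$, and the formula follows.

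\textbf{Main obstacle.} The delicate step is the cancellation of the $\beta_0$-linear terms just described, which needs the normalization $R_1=\Pi_1R_0$ rather than an arbitrary normalization. The rest is bookkeeping. All constants come only from the bound \eqref{eq: 572a} on the contour, which depends on $\eps_*$ and $\beta_*$.
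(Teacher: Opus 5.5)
Your proof is correct and follows essentially the route the paper takes. The paper just invokes the Riesz-projection perturbation argument of \cite[Lemma C.1]{AEK20}, and notes that the contour resolvent bound \eqref{eq: 572a} for both $\mathcal{B}_0$ and $\mathcal{B}_1$ lets every error be measured in the $\lVert\cdot\rVert_{\textnormal{hs}}\to\lVert\cdot\rVert_{\textnormal{hs}}$ norm; your steps (the second resolvent identity for $\Pi_1-\Pi_0$, and killing the $\beta_0$-linear cross terms because the first-order part of $\Pi_0(\Pi_1-\Pi_0)\Pi_0$ vanishes by $\oint(\zeta-\beta_0)^{-2}\,\mathrm{d}\zeta=0$) are a valid explicit reconstruction of that cited argument.
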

\begin{proof}
This is a direct application of \cite[Lemma C.1]{AEK20} for $\mathcal{B}_{z,w}$ together with Lemma \ref{lem: 792}.
The only difference is replacing $\lVert \Delta\mathcal{B}\rVert_{\lVert\cdot\rVert\to\lVert\cdot\rVert}$ with $\lVert \Delta\mathcal{B}\rVert_{\lVert\cdot\rVert_{\textnormal{hs}}\to\lVert\cdot\rVert_{\textnormal{hs}}}$ in the error term, which is possible due to \eqref{eq: 572a} of Lemma \ref{lem: 792}.
All the other parts of the proof are identical with those of \cite[Lemma C.1]{AEK20} so we omit them.
\end{proof}

\section{Proof of Proposition \ref{prop: 147}} \label{sec: 535}

Following \cite{LS20, LLS24, Vova25a},
define $e(\lambda) \coloneqq \exp\big( \mathrm{i}\lambda \{\Tr f(H)-\E\Tr f(H)\} \big).$
Then, $\phi(\lambda) = \E[e(\lambda)].$
Expressing $\Tr f(H)$ by the Helffer-Sj\"{o}strand formula \cite[Section 1.13]{MR3791802},
\footnote{One can also refer to the lecture note at \href{https://arxiv.org/pdf/1601.04055}{\texttt{https://arxiv.org/pdf/1601.04055}} for the same content. See Appendix C therein.}
\begin{equation*}
	\phi'(\lambda) = \frac{\mathrm{i}}{\pi}\int_{\mathbb{C}}\frac{\partial\tilde{f}}{\partial\bar{z}}
	\E\big[e(\lambda)\Tr (G(z) - \E G(z))\big] \mathrm{d}^{2}z.
\end{equation*}

\begin{lemma}[E.g.\,{\cite[(5.9)]{Vova25a} and \cite[(4.22)]{LS20}}]
Let $\Omega_{0}$ and $\Omega_{0}'$ as in \eqref{eq: 273}.
Define
\begin{equation*}
	\tilde{e}(\lambda)
	\coloneqq
	\exp\bigg\{ \frac{\mathrm{i}\lambda}{\pi}\int_{\Omega_{0}'}\frac{\partial\tilde{f}}{\partial\bar{z}}\Tr (G(z) - \E G(z))\mathrm{d}^{2}z\bigg\},
\end{equation*}
and
\begin{equation}\label{eq: 678}
	\mathfrak{g} \equiv \mathfrak{g}(z, \lambda) \coloneqq \E[\tilde{e} (\lambda)  (G(z)-\E[G(z)])].
\end{equation}
Then it follows that
\begin{equation}\label{eq: 667a}
	\phi'(\lambda) = \frac{\mathrm{i}}{\pi}\int_{\Omega_{0}}\frac{\partial\tilde{f}}{\partial\bar{z}}
	\Tr(\mathfrak{g}) \mathrm{d}^{2}z
	+ O_{\prec}(|\lambda|N^{-\eps_{0}/2}).
\end{equation}
\end{lemma}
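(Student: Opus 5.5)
The plan has two reductions. First, cut the Helffer--Sj\"ostrand integral in $\phi'(\lambda)$ down to $\Omega_{0}$. Second, replace $e(\lambda)$ by $\tilde e(\lambda)$, whose exponent integrates only over $\Omega_{0}'$.

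\textbf{Step 1: removing the region $|\Im z|\le N^{-\eps_0/100}\eta_0$ from $\phi'(\lambda)$.} Since $\chi(\eta)=1$ for $|\eta|<1/2$, we have $\partial_{\bar z}\tilde f(x+\mathrm{i}\eta)=\frac{\mathrm{i}}{2}\eta f''(x)$ in this region. Also $|e(\lambda)|=1$. It therefore suffices to bound
\begin{equation*}
\int_{|\eta|\le N^{-\eps_0/100}\eta_0}|\eta|\,|f''(x)|\,\E|\Tr(G(x+\mathrm{i}\eta)-\E G(x+\mathrm{i}\eta))|\,\mathrm{d}x\,\mathrm{d}\eta .
\end{equation*}
For $|\eta|\ge N^{-1+\eps_0/100}$, the average local law \eqref{eq: 873e} gives $|\Tr(G-\E G)|\prec \eta^{-1}$. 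Using $\int|f''|\lesssim\eta_0^{-1}$, this piece is bounded by $N^{-\eps_0/100}$ up to $N^{\epsilon}$ factors.

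Below the scale $N^{-1+\eps_0/100}$ the local law is not available, so I would use the standard monotonicity of $\eta\mapsto\eta\Im\Tr G(x+\mathrm{i}\eta)$. This gives $|\eta|\,|\Tr G(x+\mathrm{i}\eta)|\prec 1$ in the whole region $|\eta|\le N^{-1+\eps_0/100}$. Integrating over $\eta$ in this strip yields the bound $N^{-1+\eps_0/100}\eta_0^{-1}\le N^{-\eps_0/2}$, since $\eta_0\ge N^{-1+\eps_0}$. This is the usual argument of \cite[Lemma 4.4]{LS20} and \cite[(5.9)]{Vova25a}. The cut-off exponents are loose enough that all errors come out as $O_\prec(|\lambda|N^{-\eps_0/2})$ after possibly adjusting constants; if the $N^{-\eps_0/100}$ piece is too weak I will use Lemma \ref{lem: 358}-type integration instead.

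\textbf{Step 2: replacing $e(\lambda)$ by $\tilde e(\lambda)$.} I would write $|e(\lambda)-\tilde e(\lambda)|\le|\lambda|\,|X|$. Here $X$ is the difference of the exponents divided by $\lambda$. It is the Helffer--Sj\"ostrand integral of $\Tr(G-\E G)$ over the region $|\Im z|\le 2N^{-\eps_0/100}\eta_0$. By the same estimates as in Step 1, now with high probability rather than in expectation, $|X|\prec N^{-\eps_0/100}$.

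In $\E[(e-\tilde e)\Tr(G(z)-\E G(z))]$ we then have $|\Tr(G(z)-\E G(z))|\prec |\Im z|^{-1}$ on $\Omega_0$. Integrating this against $|\partial_{\bar z}\tilde f|$ over $\Omega_0$ produces a factor of order $\log N$, as in Lemma \ref{lem: 358} with $s=1$. The total error is thus $O_\prec(|\lambda|N^{-\eps_0/100}\log N)$.

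Finally, the expectation $\E[\tilde e\,\Tr(G-\E G)]$ is exactly $\Tr\mathfrak g$ as defined in \eqref{eq: 678}. Stochastic domination passes to expectations because all quantities have deterministic polynomial bounds, namely $|G|\le|\Im z|^{-1}$ and $|\tilde e|\le 1$ (the exponent is real).

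\textbf{Main obstacle.} The delicate point is bookkeeping the exponents so that the error really is $N^{-\eps_0/2}$ rather than $N^{-\eps_0/100}$. I expect the intended statement tolerates this, since the paper cites it from prior work, and I would follow \cite{Vova25a} verbatim for the constants.
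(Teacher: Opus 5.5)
Your argument is the one the paper has in mind. Its proof only invokes the Landon--Sosoe argument: the average local law above $N^{-1+\eps_0/100}$, and monotonicity of $\eta\mapsto\eta\,\Im\Tr G(x+\mathrm{i}\eta)$ below it. Your Step~2, namely $|e(\lambda)-\tilde e(\lambda)|\le|\lambda|\,|X'|$ followed by Lemma~\ref{lem: 358} with $s=1$ on $\Omega_0$, is fine.

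\textbf{The bottom strip in Step~1.} This step does not work as written. Monotonicity controls only $\Im\Tr G$. Starting from $\eta_2=N^{-1+\eps_0/100}$ it gives $\eta\,\Im\Tr G(x+\mathrm{i}\eta)\le\eta_2\,\Im\Tr G(x+\mathrm{i}\eta_2)\prec N^{\eps_0/100}$, not $\prec 1$. It says nothing about $\Re\Tr G$, and there the trivial bound $\eta|\Re\Tr G|\le N/2$ is far too weak.

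The fix is to pair $z$ with $\bar z$. Since $\chi$ is even and $f$ is real, $(\partial_{\bar z}\tilde f)(\bar z)=\overline{\partial_{\bar z}\tilde f(z)}$ and $G(\bar z)=G(z)^*$. Hence the strip part of $\Tr f(H)-\E\Tr f(H)$ is the real variable $Y=-\frac1\pi\int_0^{\eta_c}\!\int\eta f''(x)\,\Im\Tr(G-\E G)(x+\mathrm{i}\eta)\,\mathrm{d}x\,\mathrm{d}\eta$ with $\eta_c=N^{-\eps_0/100}\eta_0$. This involves only $\Im\Tr G$, and your bookkeeping then gives $N^{-1+\eps_0/50}\eta_0^{-1}\le N^{-49\eps_0/50}$ for the bottom piece.

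The same observation supplies the factor $|\lambda|$ that your Step~1 bound lacks. Write the removed term as $\mathrm{i}\E[e(\lambda)Y]=\mathrm{i}\E[(e(\lambda)-1)Y]$, using $\E Y=0$. Then use $|e(\lambda)-1|\le|\lambda|\,|\Tr f(H)-\E\Tr f(H)|\prec|\lambda|$, which follows from the local law and Lemma~\ref{lem: 358}.

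\textbf{The rate.} What this argument proves is $O_\prec(|\lambda|N^{-\eps_0/100})$, not $O_\prec(|\lambda|N^{-\eps_0/2})$. Your fallback to Lemma~\ref{lem: 358} cannot improve this. That lemma handles the region above the cutoff, and in the strip, which lies below $\eta_0$, its mechanism gives nothing beyond the direct bound.

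The paper's one-line sketch does not produce $N^{-\eps_0/2}$ either. Heuristically, the rate is not attainable with a cutoff at $N^{-\eps_0/100}\eta_0$. The variable $Y$ is the centred linear statistic of $h$ with $\hat h(\xi)=\hat f(\xi)\bigl(1-(1+\eta_c|\xi|)e^{-\eta_c|\xi|}\bigr)\approx\frac12\eta_c^2\xi^2\hat f(\xi)$. Its covariance with $\Tr f(H)$ is of order $(\eta_c/\eta_0)^2=N^{-\eps_0/50}$, so it enters $\phi'(\lambda)$ at order $|\lambda|N^{-\eps_0/50}$.

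So you should state and prove the lemma with $N^{-\eps_0/100}$, or any $N^{-c}$, rather than defer to \cite{Vova25a} for the constants. Only an $o(1)$ error is used to conclude Theorem~\ref{thm: 96}.
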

\begin{proof}
As in \cite[Appendix A]{Vova25a}, we can use the argument around \cite[Eq.\,(4.21)--(4.22)]{LS20}.
The same reasoning works for our correlated model 
since it relies only on the monotonicity of the map $\eta\mapsto\eta\,\Im G(E+\mathrm{i}\eta)$
that remains valid.
Hence, applying the average local law in Theorem \ref{thm: 369} for $\eta\sim N^{-\eps_{0}/2}\eta_{0}$, we obtain \eqref{eq: 667a}. The other minor details are omitted.
\end{proof}

The following lemma computes $\mathfrak{g}$. The proof relies on
deriving an approximate self-consistent equation for $\mathfrak{g}$ by applying the multivariate cumulant expansion (Lemma \ref{lem: 378}).
The proof will be given in Section \ref{sec: 783}.

\begin{lemma}\label{lem: 393} 
Let $\mathcal{B}_{z}$ and $\mathfrak{g}$ be as in \eqref{eq: 535} and \eqref{eq: 678}.
Then,
\begin{equation}\label{eq: 694a}
	\mathfrak{g} = -\mathcal{B}_{z}^{-1}\Big[\Big(R + \sum_{k=1}^{L-1} Q^{(k)} + \mathcal{E}^{(L)}\Big)M\Big],
\end{equation}
where $R=(R_{ij})$ and $Q^{(k)}=(Q^{(k)}_{ij})$ are defined by
\begin{equation}\label{eq: 698a}
	R_{ij} \coloneqq \sum_{a,b,k} \frac{\kappa(aj,bk)}{N} \E[(\partial_{bk}\tilde{e})G_{ia}],
\end{equation}
\begin{equation}\label{eq: 1104e}
	Q^{(1)}_{ij} \coloneqq Q^{(1,1)}_{ij} + Q^{(1,2)}_{ij} + Q^{(1,3)}_{ij}, \quad
	Q^{(1,1)}_{ij} \coloneqq -\sum_{a,b,k}\frac{\kappa_{d}(aj,bk)}{N}\E[\tilde{e}(G_{ib}G_{ka}-\E[G_{ib}G_{ka}])],
\end{equation}
\begin{multline}\label{eq: 1108e}
	Q^{(1,2)}_{ij} \coloneqq - \sum_{a,b,k}\frac{\kappa_{c}(aj,bk)}{N} \E[\tilde{e}(G_{ib}-\E[G_{ib}])]\E[G_{ka}-M_{ka}]
	- \sum_{a,b,k}\frac{\kappa_{c}(aj,bk)}{N} \E[\tilde{e}(G_{ka}-\E[G_{ka}])]\E[G_{ib}-M_{ib}] \\
	-\sum_{a,b,k}\frac{\kappa_{c}(aj,bk)}{N} \E[(\tilde{e}-\E[\tilde{e}])(G_{ib}-\E[G_{ib}])(G_{ka}-\E[G_{ka}])],
\end{multline}
\begin{equation}\label{eq: 1113e}
	Q^{(1,3)}_{ij}
	\coloneqq \sum_{a,b,k}\frac{\kappa_{d}(aj,bk)}{N} \E[\tilde{e}(G_{ib}-\E[G_{ib}])]M_{ka}
	+ \sum_{a,b,k}\frac{\kappa_{d}(aj,bk)}{N} \E[\tilde{e}(G_{ka}-\E[G_{ka}])]M_{ib},
\end{equation}
\begin{equation}\label{eq: 709a}
	Q^{(k)}_{ij} \coloneqq \sum_{a} \sum_{\boldsymbol{\alpha}\in\mathcal{N}^{k}(\alpha_{0})}
	\frac{\kappa(\alpha_{0},\boldsymbol{\alpha})}{N^{(k+1)/2}k!}\E[\partial_{\boldsymbol{\alpha}}(\tilde{e}G_{ia}) - \tilde{e}\E[\partial_{\boldsymbol{\alpha}}G_{ia}]], \quad k\in\{2,\dots,L-1\}.
\end{equation}
Finally, each entry of $\mathcal{E}^{(L)}=(\mathcal{E}^{(L)}_{ij})$ corresponds to a linear combination of error terms satisfying \eqref{eq: 391} where $f$ is replaced with
\begin{equation*}
	\sum_{a,b,k}\frac{\kappa(aj,bk)}{N}G_{ia} \quad\text{or}\quad \sum_{a,b,k}\frac{\kappa(aj,bk)}{N}\tilde{e}G_{ia}.
\end{equation*}
\end{lemma}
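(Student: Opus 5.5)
We derive a self-consistent equation for $\mathfrak{g}$ from the resolvent identity and then apply the multivariate cumulant expansion of Lemma~\ref{lem: 378}. Write $H=A+W$ and start from $G(H-z)=I$, i.e.\ $G(A-z)+GW=I$. By the MDE \eqref{eq: 197}, $A-z=M^{-1}+\mathcal{S}[M]$. Right-multiplying by $M$ gives
\begin{equation*}
	G = M - GWM - G\mathcal{S}[M]M,
\end{equation*}
so the first step is to rewrite $GW+G\mathcal{S}[M]=-(G-M)M^{-1}$. We then multiply by $\tilde{e}$ and take expectations after centering:
\begin{equation*}
	\mathfrak{g}M^{-1} = -\E\big[\tilde{e}\,\big(GW-\E[GW]\big)\big] - \E\big[\tilde{e}\,(G-\E G)\big]\mathcal{S}[M].
\end{equation*}

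The second step expands $(GW)_{ij}=\sum_{a}G_{ia}W_{aj}$ with $\alpha_{0}=aj$, applying Lemma~\ref{lem: 378} to both $\tilde{e}G_{ia}$ and $G_{ia}$ and subtracting, up to order $L-1$. The terms are grouped as follows.
\begin{itemize}
	\item The $k=0$ term vanishes since $W$ is centered.
	\item In the $k=1$ term, the derivative falling on $\tilde{e}$ gives exactly $R_{ij}$ in \eqref{eq: 698a}, up to sign conventions.
	\item The derivative falling on $G_{ia}$ uses $\partial_{bk}G_{ia}=-G_{ib}G_{ka}$ (with the symmetric-matrix convention, the second ordering being absorbed by summing over $\alpha$). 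This yields $-\sum\frac{\kappa(aj,bk)}{N}\E[\tilde{e}(G_{ib}G_{ka}-\E[G_{ib}G_{ka}])]$.
	\item Terms of order $k\ge2$ give $Q^{(k)}$ verbatim as in \eqref{eq: 709a}, and the remainder gives $\mathcal{E}^{(L)}$ with the stated functions $f$.
\end{itemize}

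The third step, which is the main bookkeeping obstacle, is the treatment of the second-order term. We split $\kappa=\kappa_{c}+\kappa_{d}$.
\begin{itemize}
	\item The $\kappa_{d}$ part is kept as $Q^{(1,1)}$.
	\item For the $\kappa_{c}$ part we write $G=\E G+(G-\E G)$ and $\E G=M+(\E G-M)$ in each factor of $G_{ib}G_{ka}$, then expand the product. The fully fluctuating piece is $\E[\tilde{e}(\delta G)(\delta G)]-\E\tilde{e}\,\E[(\delta G)(\delta G)]=\E[(\tilde{e}-\E\tilde{e})(\delta G)(\delta G)]$. The cross terms with $\E G-M$ give the first two summands of $Q^{(1,2)}$.
	\item The cross terms with $M$ are the linear terms $\sum\frac{\kappa_{c}(aj,bk)}{N}\E[\tilde{e}\,\delta G_{ib}]M_{ka}$ and its partner. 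The key observation is that $\sum_{a,k}\frac{\kappa(aj,bk)}{N}M_{ka}=\E[WMW]_{bj}$, so the full-$\kappa$ linear terms combine with $-\mathfrak{g}\mathcal{S}[M]$ and with $-\E[\tilde{e}\,\delta G]\,\mathcal{S}[\mathfrak{g}]$-type terms.
	\item Concretely, the term $\sum\frac{\kappa(aj,bk)}{N}\E[\tilde{e}\,\delta G_{ka}]M_{ib}$ equals $(M\mathcal{S}[\mathfrak{g}])_{ij}$ by $\mathcal{S}[X]_{bj}=\sum_{a,k}\frac{\kappa(aj,bk)}{N}X_{ka}$. Only its $\kappa_{c}$ part appears naturally via the swap in indices; relation \eqref{eq: 237} for $\beta=1$ converts $\kappa_{d}$-terms into $\kappa_{c}$-ones. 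What remains of the $\kappa_{d}$ linear pieces is recorded in $Q^{(1,3)}$, with sign opposite to that of the absorbed terms.
\end{itemize}
After these cancellations the equation reads
\begin{equation*}
	\mathfrak{g}M^{-1} - M\mathcal{S}[\mathfrak{g}]M^{-1} = -\Big(R+\textstyle\sum_{k}Q^{(k)}+\mathcal{E}^{(L)}\Big),
\end{equation*}
up to where the $M$ sits. Right-multiplying by $M$ gives $(1-\mathcal{C}_{M}\mathcal{S})[\mathfrak{g}]=-(\dots)M$.

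The last step inverts $\mathcal{B}_{z}$, which is bounded by \eqref{eq: 922d}. This yields \eqref{eq: 694a}. The delicate point throughout is tracking which of the two index orderings ($G_{ib}G_{ka}$ versus the transposed one coming from symmetry of $W$) pairs with $\kappa_{c}$ versus $\kappa_{d}$, so that the $M$-linear terms reconstruct exactly $\mathcal{C}_{M}\mathcal{S}[\mathfrak{g}]$ and the leftover terms are exactly $Q^{(1,3)}$.
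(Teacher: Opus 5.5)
Your proposal follows the paper's proof: the resolvent identity combined with the MDE (you insert the MDE at the start rather than at the end, which makes no difference), the cumulant expansion of $\sum_a \E[\tilde e(G_{ia}W_{aj}-\E[G_{ia}W_{aj}])]$ producing $R$, $Q^{(k)}$ for $k\ge 2$ and $\mathcal E^{(L)}$, the split $\kappa=\kappa_c+\kappa_d$ with $G=M+(\E G-M)+(G-\E G)$ in the $\kappa_c$ part, and finally inversion of $\mathcal B_z$. Two slips should be fixed: \eqref{eq: 237} plays no role in this lemma (with the convention $\partial_{bk}G_{ia}=-G_{ib}G_{ka}$ there is no transposed ordering to track, the full-$\kappa$ linear terms are exactly $(\mathfrak g\mathcal S[M])_{ij}$ and $(M\mathcal S[\mathfrak g])_{ij}$, and $Q^{(1,3)}$ is just the $+\kappa_d$ remainder from adding and subtracting, whereas applying \eqref{eq: 237} would produce $\kappa_c(aj,kb)$ with the wrong index pairing), and your final display should read $\mathfrak g M^{-1}-M\mathcal S[\mathfrak g]=-(R+\sum_k Q^{(k)}+\mathcal E^{(L)})$, since right-multiplying your version by $M$ gives $\mathfrak g-M\mathcal S[\mathfrak g]$ rather than $\mathcal B_z[\mathfrak g]$.
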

Using Lemma \ref{lem: 393}, we get
\begin{equation}\label{eq: 722a}
	\frac{\mathrm{i}}{\pi}\int_{\Omega_{0}}\frac{\partial\tilde{f}}{\partial\bar{z}}
	\Tr(\mathfrak{g}) \mathrm{d}^{2}z
	= \frac{\mathrm{i}}{\pi}\int_{\Omega_{0}}\frac{\partial\tilde{f}}{\partial\bar{z}}
	\Big(-\Tr\mathcal{B}_{z}^{-1}\Big[\Big(R + \sum_{k=1}^{L-1} Q^{(k)} + \mathcal{E}^{(L)}\Big)M\Big]\Big) \mathrm{d}^{2}z.
\end{equation}
We remark that $R$ is the main term and the other terms are negligible as shown in the next lemma.

\begin{lemma}\label{lem: 445}
We define the matrix $V=(V_{ij})$ by setting
\begin{equation}\label{eq: 531}
	V_{ij}\equiv V_{ij}(z,w) \coloneqq \sum_{b} \big( \mathcal{B}_{z,w}^{-1}\mathcal{C}_{M,\widetilde{M}}\mathcal{S}[E^{jb}] \big)_{ib}.
\end{equation}
Then,
\begin{multline}\label{eq: 459}
	\frac{\mathrm{i}}{\pi}\int_{\Omega_{0}}\frac{\partial\tilde{f}}{\partial\bar{z}}
	\Tr\mathcal{B}_{z}^{-1}[RM] \mathrm{d}^{2}z \\
	=
	\frac{2\lambda\phi(\lambda)}{\pi^{2}}
	\int_{\Omega_{0}}\int_{\Omega_{0}'}\frac{\partial\tilde{f}(z)}{\partial\bar{z}}\frac{\partial\tilde{f}(w)}{\partial\bar{w}}
	\partial_{w}\Tr (M'M^{-1}V) \mathrm{d}^{2}w \mathrm{d}^{2}z
	+ O_{\prec}\big( (1+|\lambda|)N^{-\eps_{0}/4} \big).
\end{multline}
Moreover,
\begin{equation}\label{eq: 468}
	\int_{\Omega_{0}}\frac{\partial\tilde{f}}{\partial\bar{z}}
	\Tr\mathcal{B}_{z}^{-1}[Q^{(k)}M] \mathrm{d}^{2}z
	= O_{\prec}\Big( (1+|\lambda|)^{2}\eta_{0}^{1/2} + (N\eta_{0})^{-1/2} \Big), \quad k\in\{1,\dots,L-1\},
\end{equation}
and
\begin{equation}\label{eq: 474}
	\int_{\Omega_{0}}\frac{\partial\tilde{f}}{\partial\bar{z}}
	\Tr\mathcal{B}_{z}^{-1}[\mathcal{E}^{(L)}M] \mathrm{d}^{2}z = O_{\prec}(\eta_{0}).
\end{equation}
\end{lemma}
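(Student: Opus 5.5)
The lemma has three parts. I would treat the main term \eqref{eq: 459} first, then the two error estimates \eqref{eq: 468} and \eqref{eq: 474}.

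\emph{Main term.} First compute $\partial_{bk}\tilde e$ explicitly. Since $\partial_{bk}G(w)=-G(w)E^{bk}G(w)$ (symmetrized in the real symmetric case) and $\Tr G'=\partial_w\Tr G$, we get
\begin{equation*}
\partial_{bk}\tilde e=-\frac{2\mathrm{i}\lambda}{\pi}\,\tilde e\int_{\Omega_0'}\frac{\partial\tilde f(w)}{\partial\bar w}\,\partial_w\big(G(w)\big)_{kb}\,\mathrm{d}^2w .
\end{equation*}
Inserting this into \eqref{eq: 698a} gives
\begin{equation*}
R_{ij}\approx -\frac{2\mathrm{i}\lambda}{\pi}\int_{\Omega_0'}\partial_{\bar w}\tilde f(w)\,\partial_w\sum_{a,b,k}\frac{\kappa(aj,bk)}{N}\E\big[\tilde e\,G_{ia}(z)G_{kb}(w)\big]\,\mathrm{d}^2w .
\end{equation*}
The inner sum is $\E[\tilde e\,(G(z)\mathcal S'[\,\cdot\,]G(w))]$-type, i.e.\ a two-resolvent chain with the deterministic observable produced by $\kappa$.

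Next, decorrelate $\tilde e$ from the product: replace $\E[\tilde e\, G G]$ by $\E[\tilde e]\E[GG]$ and use $\E\tilde e=\phi(\lambda)+O_\prec(|\lambda|N^{-\eps_0/2})$, justified by the concentration of $GG$ in local laws. Then apply Theorem~\ref{thm: 766} to replace $G(z)XG(w)$ by $\mathcal B_{z,w}^{-1}\mathcal C_{M,\widetilde M}[X]$. Summing over the indices converts the $\kappa$-contraction into the $\mathcal S$-structure, giving exactly the matrix $V$ of \eqref{eq: 531}.

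To finish the main term, take $\Tr\mathcal B_z^{-1}[\,\cdot\,M]$ and use the identity $\mathcal B_z^{-1}\mathcal C_{M}[\,\cdot\,]$ acting on the trace. Differentiating the MDE gives $M'=\mathcal B_z^{-1}[M^2]$, so that $\Tr\mathcal B_z^{-1}[XM]=\Tr(M'M^{-1}X)$ after moving $\mathcal B_z^{-1}$ to its adjoint. This yields $\partial_w\Tr(M'M^{-1}V)$. The error $\Psi(z,w)$ from Theorem~\ref{thm: 766} is integrated via Lemma~\ref{lem: 358}, in both $w$ and $z$, with $s\le 2$ exponents. The terms $1/(N\eta^2)$ and $(N\eta^3)^{-1/2}$ need the cutoffs $\Omega_0,\Omega_0'$, and they give $N^{-\eps_0/4}$ after using $\eta_0\ge N^{-1+\eps_0}$. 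I expect this bookkeeping of the $\Psi$ error at $\eta\sim N^{-\eps_0/100}\eta_0$ to be the main quantitative obstacle. In particular the $\partial_w$ derivative costs a factor $1/\eta_2$, which I would handle by a Cauchy estimate on a slightly smaller disk before integrating.

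\emph{Error terms.} For \eqref{eq: 468}, treat $Q^{(1,1)}$ and $Q^{(1,2)}$ as fluctuation-times-fluctuation terms. Bound them using Cauchy--Schwarz, the isotropic/average laws \eqref{eq: 869e}--\eqref{eq: 873e}, and the summability \eqref{eq: 241b}--\eqref{eq: 245b}. The $\kappa_c$ and $\kappa_d$ contractions become, after shifting indices by $(r,s)$, traces of products of $G$ with shift matrices of bounded norm. Each such trace is $O_\prec((N\eta)^{-1})$ relative to its size, and this gives the $(N\eta_0)^{-1/2}$ contribution. $Q^{(1,3)}$ is linear in $\mathfrak g$. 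Since \eqref{eq: 237} lets it be rewritten through $\mathcal S$, I would show it is bounded by $\eta_0^{1/2}$ times a crude bound on $\mathfrak g$; the factor $(1+|\lambda|)^2$ arises from bounding $\mathfrak g$ by one more cumulant expansion. For $k\ge2$, $Q^{(k)}$ carries $N^{-(k+1)/2}$ and is handled with Assumption~\ref{assump: 140}(i),(iii). Each derivative produces either $G$-entries bounded by $O_\prec(1)$, or derivatives of $\tilde e$ carrying $\lambda$ and $\int\partial_{\bar w}\tilde f\,G^2$ factors of size $O_\prec(1)$ by Lemma~\ref{lem: 358}. The net gain is $N^{-1/2}\cdot(\text{Ward identity})\lesssim \eta_0^{1/2}$.

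Finally, \eqref{eq: 474} follows from \eqref{eq: 391} with $|\mathcal N(\alpha)|\le N^{1/2-\mu}$. Choosing $L$ large makes $N^{-(L+1)/2}N^{L(1/2-\mu)}$ arbitrarily small; together with $\|G\|\le\eta^{-1}$ and Lemma~\ref{lem: 358}, this gives $O_\prec(\eta_0)$.
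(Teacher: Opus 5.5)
\textbf{Main term.} There is a genuine gap in how you identify the leading term in \eqref{eq: 459}.

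First, the factor $2$ in your derivative is a double count. In \eqref{eq: 698a} and Lemma~\ref{lem: 378}, $\partial_{bk}$ is the plain partial derivative on $\mathbb{R}^{N\times N}$, the sum runs over all ordered pairs, and the symmetry $W_{bk}=W_{kb}$ is encoded in $\kappa$. Hence $\partial_{bk}\tilde e=-\frac{\mathrm{i}\lambda}{\pi}\tilde e\int_{\Omega_0'}\partial_{\bar w}\tilde f\,\partial_w G_{kb}(w)\,\mathrm{d}^2w$, with no factor $2$.

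This error is compensated, only numerically, by a second one. Applying Theorem~\ref{thm: 766} to each summand $(M'M^{-1}G\,\mathcal{S}[E^{jb}]\,\widetilde G)_{jb}$ and summing does not give $\Tr(M'M^{-1}V)$ up to a small error. The accumulated error is of order $\Psi\sum_{j,b}\lVert\mathcal{S}[E^{jb}]\rVert$. The $\kappa_c$-part of this sum is $O(1)$ by \eqref{eq: 241b}. The $\kappa_d$-part, however, is of order $N$ (for GOE, $N\mathcal{S}_d[E^{jb}]=E^{bj}$), so you only get $O_\prec(N\Psi)$.

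The resulting deterministic term is also wrong, not just unjustified. For GOE, $\sum_{a,b,k}\kappa(aj,bk)G_{ia}\widetilde G_{kb}=2(G\widetilde G)_{ij}$. Yet the $\mathcal{S}_d$-part of $\Tr(M'M^{-1}V)$ is only $O(1+(N|z-w|)^{-1})$, so your summand-wise procedure recovers just half of the singular term.

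What is missing is the paper's rearrangement:
\begin{itemize}
\item In the $\kappa_d$-part, use $\widetilde G^{\mathsf{T}}=\widetilde G$ to write it as $\frac1N\sum_k(G\,T_d^{(jk)}\widetilde G)_{ik}$ with $T_d^{(jk)}(a,b)=\kappa_d(aj,bk)$. For these observables $\sum_{j,k}\lVert T_d^{(jk)}\rVert\lesssim N$ by \eqref{eq: 245b}, so the two-resolvent law applies with error $O_\prec(\Psi)$.
\item Use \eqref{eq: 237}, i.e.\ $T_d^{(jk)}=T_c^{(jk)}$, so that both parts produce the same deterministic term. This gives $2\Tr(M'M^{-1}\widetilde V)$, where $\widetilde V$ is built from $\mathcal{S}_c$ rather than $\mathcal{S}$. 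That is the true origin of the factor $2$.
\item To replace $\widetilde V$ by $V$, you still need Lemma~\ref{lem: 1009}. It states that $\partial_w\Tr_{\mathrm{op}}(\mathcal{C}_{M'M^{-1},I}\mathcal{B}_{z,w}^{-1}\mathcal{C}_{M,\widetilde M}\mathcal{S}_d)$ contributes only $O(N^{-\eps_0})$ after the double integration. Its proof requires the rank-one decomposition \eqref{eq: 2317} of $\mathcal{B}_{z,w}^{-1}$, and \eqref{eq: 245b} to gain a factor $N^{-1}$ against $\beta_{z,w}^{-2}$.
\end{itemize}
Your main-term argument never uses \eqref{eq: 237}, which was introduced exactly for this step.

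\textbf{The term $Q^{(1,1)}$.} This is a second gap: $Q^{(1,1)}$ is not a product of fluctuations. It is the centered equal-parameter chain $\frac1N\sum_{j,k}(M'M^{-1}G\widetilde T_d^{(jk)}G)_{jk}$ with $\widetilde T_d^{(jk)}(b,a)=\kappa_d(aj,bk)$. After writing $G=M+(G-M)$, the linear pieces are handled by the average law. For the quadratic piece, however, Cauchy--Schwarz with \eqref{eq: 869e}--\eqref{eq: 873e} only gives a bound $\lesssim\lVert G-M\rVert_{\mathrm{hs}}^2\prec\eta^{-1}$, which is sharp for this norm by the Ward identity. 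Lemma~\ref{lem: 358} with $s=1$ then yields $O(\log N)$, not $o(1)$. You must instead apply Theorem~\ref{thm: 766} with $w=z$ to both $GG$ and $\E[GG]$, so that the deterministic terms cancel and the remainder is $O_\prec(\Psi(z,z))$.

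\textbf{Remaining error terms.} These are essentially as in the paper, with two remarks.
\begin{itemize}
\item For $Q^{(2)}$, when both derivatives hit $G_{ia}$, entrywise $O_\prec(1)$ bounds combined with Assumption~\ref{assump: 140}(i) lose a factor $N^{1/2}$. The splitting $G=M+(G-M)$, with Assumption~\ref{assump: 140}(iii) for the terms containing at most one factor $G-M$, is therefore indispensable.
\item $Q^{(1,3)}$ needs no further cumulant expansion. It equals $\E[\tilde e\,\langle T(G-\E G)\rangle]$ with a deterministic $T$ satisfying $\lVert T\rVert_{\mathrm{hs}}\lesssim1$ by \eqref{eq: 245b}, so the average law directly gives $O_\prec((N\eta)^{-1})$.
\end{itemize}
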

 
Combining \eqref{eq: 667a} and \eqref{eq: 722a}, Lemma \ref{lem: 445} completes the proof of Proposition \ref{prop: 147}, together with the following identities;
\begin{align*}
	\Tr(M'M^{-1}V)
	&= \Tr_{\text{op}}(\mathcal{C}_{M'M^{-1},I}\mathcal{B}_{z,w}^{-1}\mathcal{C}_{M,\widetilde{M}}\mathcal{S}), \\
	\partial_{w}\Tr_{\text{op}}(\mathcal{C}_{M'M^{-1},I}\mathcal{B}_{z,w}^{-1}\mathcal{C}_{M,\widetilde{M}}\mathcal{S})
	&= \Tr_{\textnormal{op}}(\mathcal{C}_{M'M^{-1},I}\mathcal{B}_{z,w}^{-1}\mathcal{C}_{M,\widetilde{M}'}\mathcal{S}\mathcal{B}_{z,w}^{-1}).
\end{align*}

\subsection{Proof of Lemma \ref{lem: 393}} \label{sec: 783}

Consider 
\begin{equation}\label{eq: 784a}
	z\E[\tilde{e}(G_{ij}-\E[G_{ij}])]
	= \sum_{a}\E[\tilde{e} (G_{ia}H_{aj}-\E[G_{ia}H_{aj}])],
\end{equation}
where we used $G(H-z)=I$.
Next,
\begin{equation}\label{eq: 220}
	\sum_{a}\E[\tilde{e} (G_{ia}H_{aj}-\E[G_{ia}H_{aj}])] = \sum_{a}\E[\tilde{e} (G_{ia}W_{aj}-\E[G_{ia}W_{aj}])]
	+ \sum_{a}\E[\tilde{e} (G_{ia}-\E[G_{ia}])A_{aj}].
\end{equation}
We expand the first term in the right-hand side of \eqref{eq: 220} via the multivariate cumulant expansion, Lemma \ref{lem: 378}.
Recalling \eqref{eq: 709a},
\begin{equation}\label{eq: 1259ft}
	\sum_{a}\E[\tilde{e} (G_{ia}W_{aj}-\E[G_{ia}W_{aj}])]
	= \sum_{a,b,k}\frac{\kappa(aj,bk)}{N}\E\big[\partial_{bk}(\tilde{e}G_{ia}) - \tilde{e}\E[\partial_{bk}G_{ia}]\big]
	+ \sum_{k=2}^{L-1} Q^{(k)}_{ij}
	+ \mathcal{E}^{(L)}_{ij},
\end{equation}
where the error term $\mathcal{E}^{(L)}_{ij}$ satisfies the desired property due to \eqref{eq: 391}.
Since
\begin{equation*}
	\E\big[\partial_{bk}(\tilde{e}G_{ia}) - \tilde{e}\E[\partial_{bk}G_{ia}]\big]
	= \E[\tilde{e}(\partial_{bk}G_{ia}-\E[\partial_{bk}G_{ia}])] + \E[(\partial_{bk}\tilde{e})G_{ia}],
\end{equation*}
together with \eqref{eq: 698a}, we have
\begin{equation}\label{eq: 819a}
	\eqref{eq: 1259ft}
	= \sum_{a,b,k}\frac{\kappa(aj,bk)}{N}\E[\tilde{e}(\partial_{bk}G_{ia}-\E[\partial_{bk}G_{ia}])]
	+ R_{ij}
	+ \sum_{k=2}^{L-1} Q^{(k)}_{ij}
	+ \mathcal{E}^{(L)}_{ij}.
\end{equation}
Now we claim that
\begin{equation}\label{eq: 827a}
	\sum_{a,b,k}\frac{\kappa(aj,bk)}{N}\E[\tilde{e}(\partial_{bk}G_{ia}-\E[\partial_{bk}G_{ia}])]
	= - (\mathfrak{g}\mathcal{S}[M])_{ij} - (M\mathcal{S}[\mathfrak{g}])_{ij} + Q^{(1)}_{ij}.
\end{equation}
Combining \eqref{eq: 784a}--\eqref{eq: 827a}, we obtain
\begin{equation*}
	z\mathfrak{g} = \mathfrak{g}A - \mathfrak{g}\mathcal{S}[M] - M\mathcal{S}[\mathfrak{g}]
	+ R
	+ \sum_{k=1}^{L-1} Q^{(k)} + \mathcal{E}^{(L)}.
\end{equation*}
Using the MDE \eqref{eq: 197}, we have
\begin{equation*}
	\mathcal{B}_{z}[\mathfrak{g}] = -\Big(R + \sum_{k=1}^{L-1} Q^{(k)} + \mathcal{E}^{(L)}\Big)M,
\end{equation*}
which implies \eqref{eq: 694a}.

What remains is to show the claim \eqref{eq: 827a}.
Using $\partial_{bk}G_{ia} = -G_{ib}G_{ka}$, and
the decomposition $\kappa(\cdot,\cdot)=\kappa_{c}(\cdot,\cdot)+\kappa_{d}(\cdot,\cdot)$ from Assumption \ref{assump: 140} (ii),
\begin{multline}\label{eq: 850a}
    \sum_{a,b,k}\frac{\kappa(aj,bk)}{N}\E[\tilde{e}(\partial_{bk}G_{ia}-\E[\partial_{bk}G_{ia}])]
	= -\sum_{a,b,k}\frac{\kappa_{c}(aj,bk)}{N}\E[\tilde{e}(G_{ib}G_{ka}-\E[G_{ib}G_{ka}])] + Q^{(1,1)}_{ij}.
\end{multline}
For arbitrary random variables $X$, $Y$ and $Z$, we have the identity
\begin{equation*}
	\E[X(YZ-\E[YZ])] = \E[(X-\E[X])(Y-\E[Y])(Z-\E[Z])]
	+ \E[X(Y-\E[Y])]\E[Z] + \E[X(Z-\E[Z])]\E[Y].
\end{equation*}
Hence, for the first term of the right-hand side in \eqref{eq: 850a},
\begin{multline*}
	-\sum_{a,b,k}\frac{\kappa_{c}(aj,bk)}{N}\E[\tilde{e}(G_{ib}G_{ka}-\E[G_{ib}G_{ka}])]
	= -\sum_{a,b,k}\frac{\kappa_{c}(aj,bk)}{N} \E[(\tilde{e}-\E[\tilde{e}])(G_{ib}-\E[G_{ib}])(G_{ka}-\E[G_{ka}])] \\
	- \sum_{a,b,k}\frac{\kappa_{c}(aj,bk)}{N} \E[\tilde{e}(G_{ib}-\E[G_{ib}])]\E[G_{ka}]
	- \sum_{a,b,k}\frac{\kappa_{c}(aj,bk)}{N} \E[\tilde{e}(G_{ka}-\E[G_{ka}])]\E[G_{ib}].
\end{multline*}
Using $G_{ij} = M_{ij} + (G_{ij}-M_{ij})$, it follows that
\begin{align}\label{eq: 881a} 
	-&\sum_{a,b,k}\frac{\kappa_{c}(aj,bk)}{N}\E[\tilde{e}(G_{ib}G_{ka}-\E[G_{ib}G_{ka}])] \nn\\
	&= - \sum_{a,b,k}\frac{\kappa(aj,bk)}{N} \E[\tilde{e}(G_{ib}-\E[G_{ib}])]M_{ka}
	- \sum_{a,b,k}\frac{\kappa(aj,bk)}{N} \E[\tilde{e}(G_{ka}-\E[G_{ka}])]M_{ib}
	+ Q^{(1,2)}_{ij} + Q^{(1,3)}_{ij}.
\end{align}
Combining \eqref{eq: 850a}--\eqref{eq: 881a}, we obtain
\begin{multline}\label{eq: 907a}
	\sum_{a,b,k}\frac{\kappa(aj,bk)}{N}\E[\tilde{e}(\partial_{bk}G_{ia}-\E[\partial_{bk}G_{ia}])] \\
	= - \sum_{a,b,k}\frac{\kappa(aj,bk)}{N} \E[\tilde{e}(G_{ib}-\E[G_{ib}])]M_{ka}
	- \sum_{a,b,k}\frac{\kappa(aj,bk)}{N} \E[\tilde{e}(G_{ka}-\E[G_{ka}])]M_{ib}
	+ Q^{(1)}_{ij}.
\end{multline}
Recalling $\kappa(aj,bk) = N\E[W_{aj}W_{bk}]$ and using the fact that $\mathcal{S}[M]=\E[WMW]$,
we observe
\begin{multline*}
    \sum_{a,b,k}\frac{\kappa(aj,bk)}{N} \E[\tilde{e}(G_{ib}-\E[G_{ib}])]M_{ka}
    =
	\sum_{a,b,k}\E[W_{aj}W_{bk}]\E[\tilde{e}(G_{ib}-\E[G_{ib}])]M_{ka} \\
	= -\sum_{b}\E[\tilde{e}(G_{ib}-\E[G_{ib}])](\mathcal{S}[M])_{bj}
	= (\mathfrak{g}S[M])_{ij},
\end{multline*}
which identifies the first term in the right-hand side of \eqref{eq: 907a}.
The identification of the second term with $(MS[\mathfrak{g}])_{ij}$ is analogous by writing $S[\mathfrak{g}]=\E[\widetilde{W}\mathfrak{g}\widetilde{W}]$ with an independent copy $\widetilde{W}$ of $W$.
This completes the proof of claim~\eqref{eq: 827a} and thus Lemma~\ref{lem: 393}.
\qed

\subsection{Proof of Lemma \ref{lem: 445}} \label{sec: 947}

This lemma consists of the estimates for certain quantities of the following form: 
\begin{equation*}
	\int_{\Omega_{0}}\frac{\partial\tilde{f}}{\partial\bar{z}}
	\Tr\mathcal{B}_{z}^{-1}[JM] \mathrm{d}^{2}z, \quad J\in\mathbb{C}^{N\times N}.
\end{equation*}
We observe that
\begin{equation}\label{eq: 953a}
	\Tr\mathcal{B}_{z}^{-1}[JM] = \Tr (M'M^{-1}J).
\end{equation}
To verify \eqref{eq: 953a}, we start with the following identity:
\begin{equation*}
	\Tr\mathcal{B}_{z}^{-1}[J M] = N\langle I,\mathcal{B}_{z}^{-1}[JM]\rangle
	= N\langle (\mathcal{B}_{z}^{*})^{-1}[I],JM\rangle.
\end{equation*}
Note that $\mathcal{B}_{z}^{*} = 1 - \mathcal{S}\mathcal{C}_{M^{*}(z)} = 1 - \mathcal{S}\mathcal{C}_{M(\bar{z})}.$
Combining the identity
\begin{equation*}
	(1-\mathcal{C}_{M}\mathcal{S})^{-1}\mathcal{C}_{M} = \mathcal{C}_{M}(1-\mathcal{S}\mathcal{C}_{M})^{-1},
\end{equation*}
and the fact
\begin{equation}\label{eq: 671}
	M' = (1-\mathcal{C}_{M}\mathcal{S})^{-1}\mathcal{C}_{M}[I], \quad M'(w)=\partial_{w}M(w),
\end{equation}
we get
\begin{equation*}
	(1 - \mathcal{S}\mathcal{C}_{M})^{-1}[I] = \mathcal{C}_{M}^{-1}(1-\mathcal{C}_{M}\mathcal{S})^{-1}\mathcal{C}_{M}[I] = M^{-1}M'M^{-1}.
\end{equation*}
Applying this to $M(\bar{z})=M^{*}(z)$, it implies $N\langle (\mathcal{B}_{z}^{*})^{-1}[I],JM\rangle = \Tr (M'M^{-1}J),$
which gives \eqref{eq: 953a}.
Thus, in order to show Lemma \ref{lem: 445}, we will consider the following quantities:
\begin{equation}\label{eq: 1000a}
	\frac{\mathrm{i}}{\pi}\int_{\Omega_{0}}\frac{\partial\tilde{f}}{\partial\bar{z}}
	\Tr (M'M^{-1}R) \mathrm{d}^{2}z, \,
	\int_{\Omega_{0}}\frac{\partial\tilde{f}}{\partial\bar{z}}
	\Tr(M'M^{-1}Q^{(k)}) \mathrm{d}^{2}z, \,
	\int_{\Omega_{0}}\frac{\partial\tilde{f}}{\partial\bar{z}}
	\Tr(M'M^{-1}\mathcal{E}^{(L)}) \mathrm{d}^{2}z.
\end{equation}
We now present their analysis one by one.

\begin{proof}[Proof of \eqref{eq: 459}]

Write
\begin{equation*}
	\Tr (M'M^{-1}R)
	= \sum_{i,j,a,b,k} (M'M^{-1})_{ji}\frac{\kappa(aj,bk)}{N} \E[(\partial_{bk}\tilde{e})G_{ia}].
\end{equation*}
Note that
\begin{equation}\label{eq: 2003e}
	\partial_{bk}\tilde{e}(\lambda)
	= - \frac{\mathrm{i}\lambda\tilde{e}(\lambda)}{\pi}
	\int_{\Omega_{0}'}\frac{\partial\tilde{f}}{\partial\bar{w}} \partial_{w}G_{kb}(w) \mathrm{d}^{2}w.
\end{equation}
Then we observe
\begin{multline}\label{eq: 1038a}
	\frac{\mathrm{i}}{\pi}\int_{\Omega_{0}}\frac{\partial\tilde{f}}{\partial\bar{z}}
	\Tr (M'M^{-1}R) \mathrm{d}^{2}z \\
	= \frac{\lambda}{\pi^{2}} \int_{\Omega_{0}} \int_{\Omega_{0}'}
	\frac{\partial\tilde{f}}{\partial\bar{z}} \frac{\partial\tilde{f}}{\partial\bar{w}}
	\E\bigg[\tilde{e}\, \partial_{w}\bigg( \sum_{i,j,a,b,k}(M'M^{-1})_{ji}\frac{\kappa(aj,bk)}{N}G_{ia}\widetilde{G}_{kb} \bigg) \bigg]
	\mathrm{d}^{2}w \mathrm{d}^{2}z,
\end{multline}
where $G\equiv G(z)$ and $\widetilde{G}\equiv G(w)$.
We claim that
\begin{multline}\label{eq: 1048a}
	\sum_{i,j,a,b,k}(M'M^{-1})_{ji}\frac{\kappa(aj,bk)}{N}G_{ia}\widetilde{G}_{kb} \\
	= \frac{2}{N}\sum_{j,b}( \mathcal{C}_{M'M^{-1},I} (1-\mathcal{C}_{M,\widetilde{M}}\mathcal{S})^{-1}\mathcal{C}_{M,\widetilde{M}}[T_{c}^{(jb)}] )_{jb}
	+ O_{\prec}\big(\Psi(z,w)\big),
\end{multline}
where the matrix $T_{c}^{(jb)}\in\mathbb{R}^{N\times N}$ is defined (for each $(j,b)\in[N]^{2}$) by $T_{c}^{(jb)}(a,k) \coloneqq \kappa_{c}(aj,bk)$, and the term $\Psi(z,w)$ is from \eqref{eq: 928ty}.
The proof of \eqref{eq: 1048a}, an application of two-resolvent local laws, will be given later after we completed the proof of \eqref{eq: 459} assuming \eqref{eq: 1048a}.

We define the operator $\mathcal{S}_{c}$ by
\begin{equation}\label{eq: 1061a}
	(\mathcal{S}_{c}[E^{jb}])_{ak} \coloneqq \frac{\kappa_{c}(aj,bk)}{N}
	= \frac{T_{c}^{(jb)}(a,k)}{N}
	, \quad (j,b),(a,k)\in [N]^{2}.
\end{equation}
By Cauchy's integral formula on a circular contour centred at the origin with radius $\Im w/2$, the claim \eqref{eq: 1048a} implies, together with \eqref{eq: 1061a},
\begin{multline*}
	\partial_{w}\sum_{i,j,a,b,k}(M'M^{-1})_{ji}\frac{\kappa(aj,bk)}{N}G_{ia}\widetilde{G}_{kb} \\
	= 2\partial_{w}\sum_{j,b} ( \mathcal{C}_{M'M^{-1},I} (1-\mathcal{C}_{M,\widetilde{M}}\mathcal{S})^{-1}\mathcal{C}_{M,\widetilde{M}}\mathcal{S}_{c}[E^{jb}] )_{jb}
	+ O_{\prec}\big((\Im w)^{-1}\Psi(z,w)\big).
\end{multline*}
Applying Lemma \ref{lem: 358} twice,
\begin{multline}\label{eq: 1060a}
	\frac{\lambda}{\pi^{2}} \int_{\Omega_{0}} \int_{\Omega_{0}'}
	\frac{\partial\tilde{f}}{\partial\bar{z}} \frac{\partial\tilde{f}}{\partial\bar{w}}
	\E\bigg[\tilde{e} \, \partial_{w}\bigg( \sum_{i,j,a,b,k}(M'M^{-1})_{ji}\frac{\kappa(aj,bk)}{N}G_{ia}\widetilde{G}_{kb} \bigg) \bigg]
	\mathrm{d}^{2}w \mathrm{d}^{2}z \\
	=  \frac{2\lambda\phi(\lambda)}{\pi^{2}}
	\int_{\Omega_{0}}\int_{\Omega_{0}'}\frac{\partial\tilde{f}(z)}{\partial\bar{z}}\frac{\partial\tilde{f}(w)}{\partial\bar{w}}
	\partial_{w}\Tr (M'M^{-1}\tilde{V}) \mathrm{d}^{2}w \mathrm{d}^{2}z
	+ O_{\prec}\big( |\lambda|N^{-\eps_{0}/4} \big),
\end{multline}
where 
\begin{equation*}
	\widetilde{V}_{ij} \coloneqq \sum_{b} \big( (1-\mathcal{C}_{M,\widetilde{M}}\mathcal{S})^{-1}\mathcal{C}_{M,\widetilde{M}}\mathcal{S}_{c}[E^{jb}] \big)_{ib}.
\end{equation*}
Note that $V$ from \eqref{eq: 531} is defined similarly but with $\mathcal{S}$ instead of $\mathcal{S}_{c}$.
Define the operator $\mathcal{S}_{d}$ by
\begin{equation*}
	(\mathcal{S}_{d}[E^{jb}])_{ak} \coloneqq \frac{\kappa_{d}(aj,bk)}{N}, \quad (j,b), (a,k)\in [N]^{2}.
\end{equation*}
Then, $\mathcal{S} = \mathcal{S}_{c} + \mathcal{S}_{d}$
and it follows that
\begin{equation}\label{eq: 1094}
	\partial_{w}\Tr (M'M^{-1}\widetilde{V}) = \partial_{w}\Tr (M'M^{-1}V) - \partial_{w}\Tr_{\textnormal{op}}\big( \mathcal{C}_{M'M^{-1},I} (1-\mathcal{C}_{M,\widetilde{M}}\mathcal{S})^{-1} \mathcal{C}_{M,\widetilde{M}}\mathcal{S}_{d} \big).
\end{equation}
Based on \eqref{eq: 1038a}, \eqref{eq: 1060a} and \eqref{eq: 1094},
the desired conclusion \eqref{eq: 459} follows from the technical lemma below.

\begin{lemma}\label{lem: 1009}
We have
\begin{equation*}
	\int_{\Omega_{0}}\int_{\Omega_{0}'}\frac{\partial\tilde{f}(z)}{\partial\bar{z}}\frac{\partial\tilde{f}(w)}{\partial\bar{w}}
	\partial_{w}\Tr_{\textnormal{op}}\big( \mathcal{C}_{M'M^{-1},I} (1-\mathcal{C}_{M,\widetilde{M}}\mathcal{S})^{-1} \mathcal{C}_{M,\widetilde{M}}\mathcal{S}_{d} \big) \mathrm{d}^{2}w \mathrm{d}^{2}z
	= O\big(N^{-\eps_{0}}\big).
\end{equation*}
\end{lemma}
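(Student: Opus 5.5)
Throughout, write $M\equiv M(z)$, $\widetilde M\equiv M(w)$ and $\mathcal{B}_{z,w}=1-\mathcal{C}_{M,\widetilde M}\mathcal{S}$ as in \eqref{eq: 535}. The plan is to show that the kernel
\begin{equation*}
	\mathcal{K}_d(z,w)\coloneqq\partial_{w}\Tr_{\textnormal{op}}\big( \mathcal{C}_{M'M^{-1},I}\mathcal{B}_{z,w}^{-1} \mathcal{C}_{M,\widetilde{M}}\mathcal{S}_{d} \big)
\end{equation*}
is small: of order $O(1)$ instead of $O(N)$. It may be weakly singular at $z\approx \bar w$, but it never carries the $N|z-w|^{-2}$-type singularity responsible for the main term. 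Plugging such a bound into Lemma \ref{lem: 358}, applied once in $z$ and once in $w$, then gives the claimed $O(N^{-\eps_{0}})$.

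The reason is structural. $\mathcal{S}_{d}$ is the \emph{direct} part of the covariance, so $\mathcal{S}_{d}[E^{jb}]$ is concentrated near $E^{jb}$ itself (the shifts $(r,s)$ in \eqref{eq: 245b} are summable). Consequently the operator trace of $\mathcal{X}\mathcal{S}_{d}$, for any operator $\mathcal{X}$ that is a sandwich $\mathcal{C}_{A,B}$ or a composition of bounded such pieces, behaves like $N^{-1}\sum_{r,s}D_{r,s}\,|\Tr A|\,|\Tr B|$-type quantities, i.e.\ $O(N)$ in total without extra smallness. This should be compared with the cross part: for $\mathcal{S}_{c}$ a transposition is hidden, which produces $\Tr(AB)$-type terms giving the main contribution.

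First step: the case $\eta_1\eta_2>0$. Here $\|\mathcal{B}_{z,w}^{-1}\|\lesssim1$ by \eqref{eq: 913d}. I expand $\mathcal{B}_{z,w}^{-1}=\sum_{n}(\mathcal{C}_{M,\widetilde M}\mathcal{S})^n$, or more robustly write
\begin{equation*}
	\mathcal{B}_{z,w}^{-1}=1+\mathcal{C}_{M,\widetilde M}\mathcal{S}\mathcal{B}_{z,w}^{-1},
\end{equation*}
so that the trace splits into two pieces. The first is a sandwich $\Tr_{\rm op}(\mathcal{C}_{X,Y}\mathcal{S}_d)$, which is estimated directly by \eqref{eq: 245b}. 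The second contains $\mathcal{S}\mathcal{B}^{-1}\mathcal{C}\mathcal{S}_d$, and since $\mathcal{S}$ maps $\|\cdot\|_{\rm hs}\to\|\cdot\|$ boundedly by \eqref{eq: 954d}, it has effectively finite rank-type trace bounded by $O(1)\cdot\|\mathcal{S}_d\|$. The $w$-derivative is handled by Cauchy's formula on a circle of radius $\Im w/2$, losing at most $(\Im w)^{-1}$. Together this gives $|\mathcal{K}_d|\lesssim N^{0}(\Im w)^{-1}$ or so.

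For the integral in this case I rather plan to use analyticity. The integrand is holomorphic in $z$ and $w$ separately in each half-plane with bounded kernel, so Lemma \ref{lem: 358} with $s\le 1$ gives $\eta_0^{1-s}\log N$ per variable. This yields an $\eta_0\log^2N\le N^{-\eps_0}$-type bound, after checking that the $O(1)$ bound on the operator trace (not $O(N)$) holds.

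Second step: the case $\eta_1\eta_2<0$ with $|z-w|\le\beta_*$, where $\mathcal{B}_{z,w}$ has the small eigenvalue $\beta_{z,w}\sim|z-w|$. I decompose
\begin{equation*}
	\mathcal{B}_{z,w}^{-1}=\mathcal{B}_{z,w}^{-1}(1-\Pi_{z,w})+\beta_{z,w}^{-1}\Pi_{z,w}.
\end{equation*}
The first piece is bounded by Corollary \ref{cor: 953}, so it is handled as in the first step. The rank-one piece contributes
\begin{equation*}
	\beta^{-1}\langle L,R\rangle^{-1}\,\langle (\mathcal{C}_{M,\widetilde M}\mathcal{S}_d)^*\mathcal{C}^*_{M'M^{-1},I}L,\,R\rangle\cdot N.
\end{equation*}
The key claim is that this pairing, after multiplication by $N$, is only $O(1)$ because it is of the form $\langle X\rangle$-like with an $N^{-1}$ gain from the direct structure. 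The $\partial_w$ then creates at most $|z-w|^{-2}$, and Lemma \ref{lem: 523} controls the $w$-dependence of $R,L,\beta$.

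The resulting kernel, of size $O(|z-w|^{-2})$ without a factor $N$, is integrated against $\partial_{\bar z}\tilde f\,\partial_{\bar w}\tilde f$. Since $\partial_{\bar z}\tilde f$ is supported where $\eta\gtrsim\eta_0$ up to the $f''$ term, the singularity is integrable and gives $O(\eta_0\log N)$ after scaling. Far from the diagonal, $|z-w|\ge\beta_*$, the bound is $O(1)$ again.

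\textbf{Main obstacle.} The main obstacle is the rank-one term. I must show that $\Tr_{\rm op}(\mathcal{C}_{A,I}\Pi\,\mathcal{C}_{M,\widetilde M}\mathcal{S}_d)$ lacks the $N$ factor. I would try first to identify the eigenvectors approximately: at $w=\bar z$ one has $R\approx$ a multiple of $\Im M$-type matrix, using the identity $\mathcal{B}_{z,\bar z}[\Im M]=\eta\,\mathcal{C}_{M,M^*}[I]$. I would then compute the trace of the rank-one operator in $\mathcal{S}_d$ as $N\langle \mathcal{S}_d^*[\cdot],\cdot\rangle$ and use \eqref{eq: 245b} together with \eqref{eq: 237} to see that the direct part contributes only a $\Tr$ of a single product. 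The danger is that this yields $O(1)\cdot|z-w|^{-2}$, which must still integrate to $o(1)$.
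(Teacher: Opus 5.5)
Your architecture is the paper's: split $\mathcal{B}_{z,w}^{-1}=\beta_{z,w}^{-1}\Pi_{z,w}+\mathcal{B}_{z,w}^{-1}(1-\Pi_{z,w})$ for $\eta\tilde\eta<0$, $|z-w|\le\beta_*$; handle the bounded part via $\mathcal{B}_{z,w}^{-1}=1+\mathcal{B}_{z,w}^{-1}\mathcal{C}_{M,\widetilde M}\mathcal{S}$; and conclude with Lemma~\ref{lem: 358}. There is, however, a genuine gap: the bound you aim for on the singular part is too weak by a factor $N$, and the step meant to close the argument is false.

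With $\langle A,B\rangle=N^{-1}\Tr(A^*B)$, the trace of the rank-one map $X\mapsto\langle U,X\rangle V$ is $\langle U,V\rangle$, not $N\langle U,V\rangle$. Consequently the leading kernel is $-(x-y)^{-2}$ with \emph{no} factor $N$ (Claim~\ref{claim: 2339}), and $|\mathcal{K}|\lesssim(|\eta|+|\tilde\eta|)^{-2}$ by \eqref{eq: 1825}. A kernel of size $|z-w|^{-2}$ is therefore exactly as large as the one producing the $O(1)$ variance $\frac14\iint (f(x)-f(y))^2(x-y)^{-2}\,\mathrm{d}x\,\mathrm{d}y$. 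By size alone, $|z-w|^{-2}\le|\eta|^{-1}|\tilde\eta|^{-1}$ together with Lemma~\ref{lem: 358} ($s=1$ in both variables) gives only $O(\log^2 N)$. Your claim that such a kernel integrates to $O(\eta_0\log N)$ is wrong. So proving ``$O(|z-w|^{-2})$ without a factor $N$'' for the rank-one piece does not prove the lemma; this is precisely the danger you flag at the end. You do sense an $N^{-1}$ gain from the direct structure, but you spend it cancelling a spurious factor $N$.

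The missing idea is that the rank-one piece is $O(N^{-1}|\beta_{z,w}|^{-2})$. This follows from \eqref{eq: 245b} alone; no identification $R\approx\Im M$ and no use of \eqref{eq: 237} is needed.
\begin{itemize}
\item Since $(\mathcal{S}_d[X])_{xy}=N^{-1}\sum_{a,b}\kappa_d(xa,by)X_{ab}$ with $\kappa_d$ concentrated at $b\approx x$, $a\approx y$, each shift $(r,s)$ gives a weighted reindexing of the entries of $X$. Hence $\lVert\mathcal{S}_d\rVert_{\lVert\cdot\rVert_{\mathrm{hs}}\to\lVert\cdot\rVert_{\mathrm{hs}}}\lesssim N^{-1}\sum_{r,s}D_{r,s}\lesssim N^{-1}$.
\item For real symmetric Wigner matrices, $\mathcal{S}_d[X]=N^{-1}X^{\mathsf{T}}$ and $\mathcal{S}_c[X]=\langle X\rangle I$. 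The transposition sits in $\mathcal{S}_d$, which is why $\mathcal{S}_d$ is small, and $\mathcal{S}_d[E^{jb}]$ is concentrated near $E^{bj}$. The averaging $\mathcal{S}_c$ is what drives the main term. This is the reverse of your heuristic.
\item In the $\beta_{z,w}^{-2}$ term from \eqref{eq: 4008ft}, the factor $\sum_{a,b}N\langle E^{ab},M'M^{-1}R\rangle\langle L,\mathcal{C}_{M,\widetilde M}\mathcal{S}_d[E^{ab}]\rangle=\langle L,\mathcal{C}_{M,\widetilde M}\mathcal{S}_d[M'M^{-1}R]\rangle$ is therefore $O(N^{-1})$. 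This is what the paper proves via the shifts.
\item With $|\beta_{z,w}|^{-2}\lesssim(1+|\eta|^{-1})(1+|\tilde\eta|^{-1})$ and Lemma~\ref{lem: 358}, this term contributes $O(N^{-1}\log^2 N)$. The less singular terms are bounded by $1+|\eta|^{-1}+|\tilde\eta|^{-1}$ and contribute $O(\eta_0\log^2 N)$.
\end{itemize}

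Your non-singular step (``finite rank-type trace bounded by $O(1)\cdot\lVert\mathcal{S}_d\rVert$'') also does not follow from \eqref{eq: 954d}. A norm bound loses the dimension $N^2$ when you take a trace over $\mathbb{C}^{N\times N}$. You need either the shift structure as in \eqref{eq: 2892}--\eqref{eq: 2956}, or a trace-norm bound $\lesssim N$ for $\mathcal{S}_c$ combined with $\lVert\mathcal{S}_d\rVert\lesssim N^{-1}$.
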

We defer the proof of Lemma \ref{lem: 1009} to Section \ref{sec: 3491} because the argument of the proof is related to that of Proposition \ref{prop: 169} to be discussed in Section \ref{sec: 1776}.
Given the claim \eqref{eq: 1048a} and Lemma \ref{lem: 1009}, the proof of \eqref{eq: 459} is completed.

From now on we prove \eqref{eq: 1048a}.
Recalling the decomposition $\kappa=\kappa_{c}+\kappa_{d}$,
it is enough to show that
\begin{multline}\label{eq: 1134a}
	\sum_{i,j,a,b,k} (M'M^{-1})_{ji} \frac{\kappa_{c}(aj,bk)}{N} G_{ia} \widetilde{G}_{kb} \\
	=  \frac{1}{N}\sum_{j,b}( \mathcal{C}_{M'M^{-1},I} (1-\mathcal{C}_{M,\widetilde{M}}\mathcal{S})^{-1}\mathcal{C}_{M,\widetilde{M}}[T_{c}^{(jb)}] )_{jb}
	+ O_{\prec}\big(\Psi(z,w)\big),
\end{multline}
and the same estimate holds if $\kappa_{c}$ is replaced with $\kappa_{d}$,
which yields the factor of $2$ in the right-hand side of \eqref{eq: 1048a}.
To obtain \eqref{eq: 1134a}, we write
\begin{equation*}
	\sum_{i,j,a,b,k}(M'M^{-1})_{ji}\frac{\kappa_{c}(aj,bk)}{N}G_{ia}\widetilde{G}_{kb}
	= \frac{1}{N}\sum_{j,b}(M'M^{-1}GT_{c}^{(jb)}\widetilde{G})_{jb}.
\end{equation*}
Appying the two-resolvent local law \eqref{eq: 882d} for each summand, we get
\begin{multline}\label{eq: 1596tg}
	\frac{1}{N}\sum_{j,b}(M'M^{-1}GT_{c}^{(jb)}\widetilde{G})_{jb}
	= \frac{1}{N}\sum_{j,b}( \mathcal{C}_{M'M^{-1},I} (1-\mathcal{C}_{M,\widetilde{M}}\mathcal{S})^{-1}\mathcal{C}_{M,\widetilde{M}}[T_{c}^{(jb)}] )_{jb} \\
	+ O_{\prec}\bigg(\frac{\Psi(z,w)\lVert M'M^{-1}\rVert\sum_{j,b} \lVert T_{c}^{(jb)}\rVert}{N} \bigg).
\end{multline}
By \eqref{eq: 671}, \eqref{eq: 922d} and \eqref{eq: 949d},
\begin{equation}\label{eq: 796} 
	\lVert M'\rVert \lesssim 1, \quad \lVert M^{-1}\rVert \lesssim 1.
\end{equation}
Since the assumption \eqref{eq: 241b} implies
\begin{equation}\label{eq: 814}
	\sum_{j,b} \lVert T_{c}^{(jb)}\rVert \lesssim N,
\end{equation}
the proof of \eqref{eq: 1134a} is completed.

The proof of \eqref{eq: 1134a} with $\kappa_{d}$ instead of $\kappa_{c}$ is very similar.
We write
\begin{equation*}
	\sum_{a,b,k} \frac{\kappa_{d}(aj,bk)}{N} G_{ia} \widetilde{G}_{kb}
	= \frac{1}{N} \sum_{k} (GT_{d}^{(jk)}\widetilde{G})_{ik},
\end{equation*}
where $T_{d}^{(jk)}(a,b) \coloneqq \kappa_{d}(aj,bk)$ and recalled that $G^{\T} = G$ for the real symmetric case.
By the two-resolvent local law \eqref{eq: 882d},
we get the same estimate as in \eqref{eq: 1596tg} with $T_{d}^{(\cdot)}$ instead of $T_{c}^{(\cdot)}$.

As in \eqref{eq: 814}, the assumption \eqref{eq: 245b} implies
$\sum_{j,k} \lVert T_{d}^{(jk)}\rVert \lesssim N.$
Then the rest of the proof is identical except for using \eqref{eq: 237} additionally to have $T_{d}^{(jk)} = T_{c}^{(jk)}$.
This completes the proof of \eqref{eq: 1048a}.

\end{proof}

\begin{proof}[Proof of \eqref{eq: 468} for $k=1$]

Recalling \eqref{eq: 1000a}, we consider
\begin{equation*}
	\int_{\Omega_{0}}\frac{\partial\tilde{f}}{\partial\bar{z}}
	\Tr(M'M^{-1}Q^{(1)}) \mathrm{d}^{2}z,
\end{equation*}
where $Q^{(1)} = Q^{(1,1)} + Q^{(1,2)} + Q^{(1,3)}$, thus
it is enough to show the following estimates:
\begin{equation} \label{eq: 1379a}
	\bigg| \int_{\Omega_{0}}\frac{\partial\tilde{f}}{\partial\bar{z}}
	\Tr(M'M^{-1}Q^{(1,p)}) \mathrm{d}^{2}z \bigg| \prec
	(N\eta_{0})^{-1/2}, \quad p=1,2,3.
\end{equation}
We remark that the other part $(1+|\lambda|)^{2}\eta_{0}^{1/2}$ of the bound \eqref{eq: 468} is not needed here.

To get the estimate \eqref{eq: 1379a} for $p=1$, we recall the definition of $Q^{(1,1)}$ from \eqref{eq: 1104e}.
Consider
\begin{equation*}
	\sum_{i,j,a,b,k} (M'M^{-1})_{ji} \kappa_{d}(aj,bk) G_{ib} G_{ka} 
	= \sum_{j,k} (M'M^{-1}G\widetilde{T}_{d}^{(jk)}G)_{jk},
\end{equation*}
where $\widetilde{T}_{d}^{(jk)}(b,a) = \kappa_{d}(aj,bk),$ and $G^{\T} = G$ for the real symmetric case.
We apply the two-resolvent local law \eqref{eq: 882d} to the term associated with $G_{ib}G_{ka}$ and the term associated with $(-\E[G_{ib}G_{ka}])$ separately, together with \eqref{eq: 796}.
Then, by the cancellation between the deterministic leading terms (from the local law),
we get
\begin{equation*}
	|\Tr(M'M^{-1}Q^{(1,1)})|
	\prec \frac{\Psi(z,z)}{N} \sum_{j,k} \lVert \widetilde{T}_{d}^{(jk)}\rVert
	\lesssim \Psi(z,z),
\end{equation*}
where we used $\sum_{j,k} \lVert \widetilde{T}_{d}^{(jk)}\rVert \lesssim N$ for the last step, which is deduced from the assumption \eqref{eq: 245b}.
Then, by Lemma \ref{lem: 358}, we obtain \eqref{eq: 1379a} for $p=1$.

Next we shall prove \eqref{eq: 1379a} for $p=2$. Recall that $Q^{(1,2)}$ is defined in \eqref{eq: 1108e}.
Although $Q^{(1,2)}$ splits into several parts by definition, using $G_{ij}-\E[G_{ij}]=(G_{ij}-M_{ij})-\E[G_{ij}-M_{ij}]$),
it is enough to consider the following form:
\begin{equation*}
	\frac{1}{N} \sum_{i,j,a,b,k} (M'M^{-1})_{ji} \kappa_{c}(aj,bk) (G-M)_{ib} (G-M)_{ka}.
\end{equation*}
We ignore the fact that some $(G-M)_{\alpha}$ factor may come with expectation, since we will have high probability bounds on $(G-M)_{\alpha}$ from the local laws. So $(G-M)_{\alpha}$ and $\E[(G-M)_{\alpha}]$ terms are controled in the same way. No additional gain from expectation is used.

Recall $T^{(jb)}_{c}(a,k) = \kappa_{c}(aj,bk)$.
By the average local law \eqref{eq: 873e},
\begin{equation*}
	\bigg|\frac{1}{N} \sum_{a,k} T^{(jb)}_{c}(a,k) (G-M)_{ka} \bigg|
	\prec \frac{1}{N\eta} \lVert T^{(jb)}_{c} \rVert_{\text{hs}}.
\end{equation*}
By the isotropic local law \eqref{eq: 869e},
\begin{equation*}
	|\sum_{i} (M'M^{-1})_{ji}(G-M)_{ib}| = |(M'M^{-1}(G-M))_{jb}| \prec \frac{1}{\sqrt{N\eta}}.
\end{equation*}
From \eqref{eq: 814}, we deduce $\sum_{j,b} \lVert T^{(jb)}_{c} \rVert_{\text{hs}} \lesssim N$.
Hence the application of Lemma \ref{lem: 358} finishes the proof of \eqref{eq: 1379a} for $p=2$.

What remains is to prove the case $p=3$ of \eqref{eq: 1379a}.
Starting from the first term of \eqref{eq: 1113e}, together with $G_{ib}-\E[G_{ib}]=(G_{ib}-M_{ib})-\E[G_{ib}-M_{ib}]$, we consider
\begin{equation*}
	\frac{1}{N} \sum_{i,j,a,b,k} (M'M^{-1})_{ji} \kappa_{d}(aj,bk) (G-M)_{ib} M_{ka}.
\end{equation*}
Setting $T_{bi} \coloneqq \sum_{j,a,k} (M'M^{-1})_{ji} \kappa_{d}(aj,bk) M_{ka}$, then by \eqref{eq: 873e},
\begin{equation*}
	\Big|\frac{1}{N} \sum_{i,j,a,b,k} (M'M^{-1})_{ji} \kappa_{d}(aj,bk) (G-M)_{ib} M_{ka}\Big|
	\prec \frac{1}{N\eta} \lVert T\rVert_{\text{hs}}.
\end{equation*}
We claim
\begin{equation}\label{eq: 5484}
	\lVert T\rVert_{\text{hs}} \lesssim 1,
\end{equation}
and then
\begin{equation}\label{eq: 1148}
	\Big|\frac{1}{N} \sum_{i,j,a,b,k} (M'M^{-1})_{ji} \kappa_{d}(aj,bk) (G-M)_{ib} M_{ka}\Big|
	\prec \frac{1}{N\eta},
\end{equation}
which provides the contribution from the first term of \eqref{eq: 1113e} to the estimate \eqref{eq: 1379a} for $p=3$ via Lemma \ref{lem: 358}.
We will prove \eqref{eq: 5484} after finishing the proof of \eqref{eq: 1379a} for $p=3$.

Similarly, we find for the contribution from the second term of \eqref{eq: 1113e},
\begin{equation}\label{eq: 1210}
	\Big|\frac{1}{N} \sum_{i,j,a,b,k} (M'M^{-1})_{ji} \kappa_{d}(aj,bk) (G-M)_{ka} M_{ib}\Big|
	\prec \frac{1}{N\eta},
\end{equation}
by setting $\widetilde{T}_{ak} \coloneqq \sum_{i,j,b} (M'M^{-1})_{ji} \kappa_{d}(aj,bk) M_{ib}$ and applying \eqref{eq: 873e} with the following claim
\begin{equation}\label{eq: 1224}
	\lVert \widetilde{T}\rVert_{\text{hs}} \lesssim 1.
\end{equation}
Combining \eqref{eq: 1148} and \eqref{eq: 1210}, we indeed obtain \eqref{eq: 1379a} for $p=3$ by Lemma \ref{lem: 358}, thus have completed the proof of \eqref{eq: 468} for $k=1$.

To show \eqref{eq: 5484}, we define (locally for this proof)
\begin{equation*}
	S_{bj} \coloneqq \sum_{a,k} \kappa_{d}(aj,bk) M_{ka}.
\end{equation*}
Then, $T = S M'M^{-1}$ and hence $\lVert T\rVert_{\text{hs}} \le \lVert M'M^{-1} \rVert \lVert S\rVert_{\text{hs}}.$
Since $\lVert M'M^{-1} \rVert=O(1)$ due to \eqref{eq: 796}, it is enough to bound $\lVert S\rVert_{\text{hs}}$ appropriately.
We observe that for each $b,j\in [N]$, by the assumption \eqref{eq: 245b} and the first estimate of \eqref{eq: 949d},
\begin{equation*}
	\sum_{b,j}|S_{bj}|^2 \lesssim \sum_{b,j}\sum_{a,k} D_{b-a,k-j}|M_{ka}|^{2}
	\lesssim \sum_{a,k}|M_{ka}|^{2} = O(N).
\end{equation*}
Consequently the claim \eqref{eq: 5484} follows.
The proof of \eqref{eq: 1224} is analogous hence omitted.

\end{proof}

\begin{proof}[Proof of \eqref{eq: 468} for $k=2$]

Following \eqref{eq: 1000a}, it is enough to consider
\begin{equation*}
	\int_{\Omega_{0}}\frac{\partial\tilde{f}}{\partial\bar{z}}
	\Tr(M'M^{-1}Q^{(2)}) \mathrm{d}^{2}z,
\end{equation*}
where we have from \eqref{eq: 709a} that
\begin{equation}\label{eq: 1857tg}
	Q^{(2)}_{ij} = \sum_{a,b,k,c,\ell}\frac{\kappa(aj,bk,c\ell)}{2N^{3/2}}
	(\E[\partial_{bk}\partial_{c\ell}(\tilde{e}G_{ia})-\tilde{e}\E[\partial_{bk}\partial_{c\ell}G_{ia}]]).
\end{equation}
Our goal is to prove
\begin{equation}\label{eq: 1645a}
	\bigg| \int_{\Omega_{0}}\frac{\partial\tilde{f}}{\partial\bar{z}}
	\Tr(M'M^{-1}Q^{(2)}) \mathrm{d}^{2}z \bigg| \prec (1+|\lambda|)^{2}\eta_{0}^{1/2},
\end{equation}
where we do not need the other part $(N\eta_{0})^{-1/2}$ of the bound \eqref{eq: 468}.
In the term $\partial_{bk}\partial_{c\ell}(\tilde{e}G_{ia})$, each differential can hit the function $\tilde{e}$ or the resolvent entry $G_{ia}$.
We distinguish several cases.

~

\emph{Case 1.}
We first look at the case that both differentials hit $G_{ia}$, namely,
$$\partial_{bk}\partial_{c\ell}G_{ia}=G_{ic}G_{\ell b}G_{ka}+G_{ib}G_{kc}G_{\ell a}.$$
The contribution of the terms, $G_{ic}G_{\ell b}G_{ka}$ and $G_{ib}G_{kc}G_{\ell a}$,
are estimated very similarly
so we focus on the first one, more precisely,
\begin{equation}\label{eq: 1406}
	\sum_{i,j,a,b,k,c,\ell} 
	(M'M^{-1})_{ji}\frac{\kappa(aj,bk,c\ell)}{N^{3/2}}
	\E[\tilde{e}(G_{ic}G_{\ell b}G_{ka}-\E[G_{ic}G_{\ell b}G_{ka}])].
\end{equation}
Using the decomposition $G=M+(G-M)$ for each factor, we further split \eqref{eq: 1406} into finitely many sub-terms, for example, omitting the expectation $\E[\cdot]$ and the function $\tilde{e}$,
\begin{equation}\label{eq: 1888e}
	\sum_{i,j,a,b,k,c,\ell} (M'M^{-1})_{ji}\frac{\kappa(aj,bk,c\ell)}{N^{3/2}}
	(G-M)_{ic}M_{\ell b}M_{ka},
\end{equation}
and
\begin{equation}\label{eq: 1893e}
	\sum_{i,j,a,b,k,c,\ell}(M'M^{-1})_{ji}\frac{\kappa(aj,bk,c\ell)}{N^{3/2}}
	(G-M)_{ic}(G-M)_{\ell b}M_{ka},
\end{equation}
and there is a third type of terms with three $(G-M)$\,factors.
Note that there is a single obvious cancellation, that is, $M_{ic}M_{\ell b}M_{ka}-\E[M_{ic}M_{\ell b}M_{ka}]=0$.

For \eqref{eq: 1888e}, our calculation proceeds as follows.
Summing over $i$ and swapping some arguments of $\kappa$ appropriately, we have
\begin{equation*}
	\eqref{eq: 1888e} = \frac{1}{N^{3/2}}\sum_{j,a,b,k,c,\ell} \kappa(c\ell,bk,aj) M_{\ell b}M_{ka} (M'M^{-1}(G-M))_{jc}.
\end{equation*}
According to Assumption \ref{assump: 140} (iii) and \eqref{eq: 869e}, we get the upper bound
\begin{equation}\label{eq: 1921tg}
    \bigg| \frac{1}{N^{3/2}}\sum_{j,a,b,k,c,\ell} \kappa(c\ell,bk,aj) M_{\ell b}M_{ka} (M'M^{-1}(G-M))_{jc} \bigg| \\
	\lesssim
	\lVert M\rVert^{2} \lVert M'M^{-1}\rVert \lVert G-M\rVert_{\text{hs}} \prec \eta^{-1/2},
\end{equation}
where we used the isotropic local law \eqref{eq: 869e} to get $\lVert G-M\rVert_{\text{hs}}\prec\eta^{-1/2}$.
Then Lemma \ref{lem: 358} implies
\begin{equation*}
	\bigg| \int_{\Omega_{0}}\frac{\partial\tilde{f}}{\partial\bar{z}}
	\bigg( \frac{1}{N^{3/2}}\sum_{j,a,b,k,c,\ell} \kappa(c\ell,bk,aj) M_{\ell b}M_{ka} \E[(M'M^{-1}(G-M))_{jc}] \bigg)
	\mathrm{d}^{2}z \bigg| \prec \eta_{0}^{1/2}.
\end{equation*}
The same estimate holds for the other similar cases, $M_{ic}(G-M)_{\ell b}M_{ka}$ and $M_{ic}M_{\ell b}(G-M)_{ka}$.

For \eqref{eq: 1893e}, summing over $i$, we get
\begin{equation*}
	\eqref{eq: 1893e} = \frac{1}{N^{3/2}} \sum_{a,j,b,k,c,\ell} \kappa(aj,bk,c\ell)
	(M'M^{-1}(G-M))_{jc}(G-M)_{\ell b}M_{ka}.
\end{equation*}
Note that $|(M'M^{-1}(G-M))_{jc}|\prec (N\eta)^{-1/2}$, $(G-M)_{\ell b}\prec (N\eta)^{-1/2}$ and $|M_{ka}|=O(1)$.
By Assumption \ref{assump: 140} (i),
\begin{equation*}
	\sum_{a,j,b,k,c,\ell}|\kappa(aj,bk,c\ell)|\le N^{2} \triplenorm{\kappa}_{3} \lesssim N^2.
\end{equation*}
Thus we find that
\begin{equation*}
	\bigg| \frac{1}{N^{3/2}} \sum_{a,j,b,k,c,\ell} \kappa(aj,bk,c\ell)
	(M'M^{-1}(G-M))_{jc}(G-M)_{\ell b}M_{ka} \bigg| \prec
	\frac{1}{N^{1/2}\eta},
\end{equation*}
which implies, by Lemma \ref{lem: 358},
\begin{equation*}
	\bigg| \int_{\Omega_{0}}\frac{\partial\tilde{f}}{\partial\bar{z}}
	\bigg( \frac{1}{N^{3/2}} \sum_{a,j,b,k,c,\ell} \kappa(aj,bk,c\ell)
	\E[(M'M^{-1}(G-M))_{jc}(G-M)_{\ell b}]M_{ka} \bigg)
	\mathrm{d}^{2}z \bigg| \prec N^{-1/2}.
\end{equation*}
The other cases,
\begin{equation*}
	(G-M)_{ic}M_{\ell b}(G-M)_{ka}, \quad 
	M_{ic}(G-M)_{\ell b}(G-M)_{ka}, \quad
	(G-M)_{ic}(G-M)_{\ell b}(G-M)_{ka},
\end{equation*}
are estimated analogously.
In summary,
\begin{equation}\label{eq: 1979e}
	\bigg| \int_{\Omega_{0}}\frac{\partial\tilde{f}}{\partial\bar{z}}
	\bigg( \sum_{i,j,a,b,k,c,\ell} 
	(M'M^{-1})_{ji}\frac{\kappa(aj,bk,c\ell)}{N^{3/2}}
	\E[\tilde{e}(\partial_{bk}\partial_{c\ell}G_{ia}-\E[\partial_{bk}\partial_{c\ell}G_{ia}])] \bigg)
	\mathrm{d}^{2}z \bigg| \prec \eta_{0}^{1/2}.
\end{equation}

~

\emph{Case 2.}
Consider the case that one differential hits $G_{ia}$ and the other hits $\tilde{e}$ in \eqref{eq: 1857tg}, for example,
\begin{equation}\label{eq: 1498}
	\sum_{i,j,a,b,k,c,\ell} (M'M^{-1})_{ji} \frac{\kappa(aj,bk,c\ell)}{N^{3/2}}
	\E[(\partial_{bk}{\tilde{e}})(\partial_{c\ell}G_{ia})].
\end{equation}
The other term associated with $(\partial_{c\ell}{\tilde{e}})(\partial_{bk}G_{ia})$ is estimated in the same way hence omitted.
Without taking the expectation in \eqref{eq: 1498}, we have
\begin{equation*}
	\frac{1}{N^{3/2}}\sum_{a,j,b,k,c,\ell} \kappa(aj,bk,c \ell) (\partial_{bk}\tilde{e}) (M'M^{-1}G)_{jc} G_{\ell a}.
\end{equation*}
Note that $|(M'M^{-1}G)_{jc}G_{\ell a}| = O_{\prec}(1)$.
Recall \eqref{eq: 2003e} for $\partial_{bk}\tilde{e}$.
Since $\partial_{w}G_{kb}(w)=(G^{2}(w))_{kb}$, we shall find the desired bound by considering
\begin{equation*}
	\frac{1}{N^{3/2}}\sum_{a,j,b,k,c,\ell} |\kappa(aj,bk,c \ell) (G^{2}(w))_{kb}|.
\end{equation*}
Viewing $\sum_{(aj)}|\kappa(aj,\ast,\ast)|$ as $N^{2}\times N^{2}$ matrix, by Assumption \ref{assump: 140} (i) and \eqref{eq: 869e}, we have
\begin{equation*}
	\bigg|\frac{1}{N^{3/2}}\sum_{a,j,b,k,c,\ell} |\kappa(aj,bk,c \ell) (G^{2}(w))_{kb}|\bigg|
	\le \lVert I\rVert_{\text{hs}} \lVert G^{2}(w)\rVert_{\text{hs}} \le \lVert G(w)\rVert \lVert G(w)\rVert_{\text{hs}}
	\prec (\Im w)^{-3/2},
\end{equation*}
where we used $\lVert G\rVert_{\text{hs}}\le \lVert G-M\rVert_{\text{hs}} + \lVert M\rVert_{\text{hs}}$ recalling \eqref{eq: 1921tg} and below for the bound of $\lVert G-M\rVert_{\text{hs}}$.
Applying of Lemma \ref{lem: 358} twice and
recalling the factor $\lambda$ in \eqref{eq: 2003e},
\begin{equation}\label{eq: 1808a}
	\bigg| \int_{\Omega_{0}}\frac{\partial\tilde{f}}{\partial\bar{z}}
	\bigg( \sum_{i,j,a,b,k,c,\ell} (M'M^{-1})_{ji} \frac{\kappa(aj,bk,c\ell)}{N^{3/2}}
	\E[(\partial_{bk}{\tilde{e}})(\partial_{c\ell}G_{ia})] \bigg)
	\mathrm{d}^{2}z \bigg| \prec |\lambda|\eta_{0}^{1/2}.
\end{equation}

~

\emph{Case 3.}
When both differentials hit the function $\tilde{e}=\tilde{e}(\lambda)$ in \eqref{eq: 1857tg}, we consider
\begin{multline*}
	\sum_{i,j,a,b,k,c,\ell}(M'M^{-1})_{ji}\frac{\kappa(aj,bk,c\ell)}{N^{3/2}}
	\E[(\partial_{bk}\partial_{c\ell}{\tilde{e}})G_{ia}] \\
	= \frac{1}{N^{3/2}}\sum_{a,j,b,k,c,\ell} \kappa(aj,bk,c \ell) \E[(\partial_{bk}\partial_{c\ell}\tilde{e}) (M'M^{-1}G)_{ja}].
\end{multline*}
Note that
\begin{multline*}
	\partial_{bk}\partial_{c\ell}\tilde{e}
	= - \frac{\lambda^{2}\tilde{e}}{\pi^{2}} \int_{\Omega_{0}'}\frac{\partial\tilde{f}}{\partial\bar{w}} \partial_{w}G_{kb}(w) \mathrm{d}^{2}w \int_{\Omega_{0}'}\frac{\partial\tilde{f}}{\partial\bar{w'}} \partial_{w'}G_{\ell c}(w') \mathrm{d}^{2}w' \\
	+ \frac{\mathrm{i}\lambda\tilde{e}}{\pi} \int_{\Omega_{0}'}\frac{\partial\tilde{f}}{\partial\bar{w}} \partial_{w}(G_{kc}(w)G_{\ell b}(w)) \mathrm{d}^{2}w.
\end{multline*}
We find that $|\partial_{bk}\partial_{c\ell}\tilde{e}| = O_{\prec}(1+|\lambda|^{2})$ by the Ward identity;
\begin{equation}\label{eq: 1555}
	\sum_{j}|G_{ij}(E+i\eta)|^{2} = \frac{\Im G_{ii}(E+i\eta)}{\eta},
\end{equation}
and Lemma \ref{lem: 358} regarding the integral over $w$.
Viewing $\sum_{(c\ell)}|\kappa(\ast,\ast,c\ell)|$ as $N^{2}\times N^{2}$ matrix, due to Assumption \ref{assump: 140} (i) and \eqref{eq: 869e},
\begin{equation*}
	\frac{1}{N^{3/2}}\sum_{a,j,b,k,c,\ell} |\kappa(aj,bk,c \ell) (M'M^{-1}G)_{ja}|
	\lesssim \lVert M'M^{-1}\rVert \lVert G\rVert_{\text{hs}}
	\prec \eta^{-1/2}.
\end{equation*}
By Lemma \ref{lem: 358}, we conclude that
\begin{equation}\label{eq: 1852a}
	\bigg| \int_{\Omega_{0}}\frac{\partial\tilde{f}}{\partial\bar{z}}
	\bigg( \sum_{i,j,a,b,k,c,\ell}(M'M^{-1})_{ji}\frac{\kappa(aj,bk,c\ell)}{N^{3/2}}
	\E[(\partial_{bk}\partial_{c\ell}{\tilde{e}})G_{ia}] \bigg)
	\mathrm{d}^{2}z \bigg| \prec (1+|\lambda|)^{2}\eta_{0}^{1/2}.
\end{equation}
Collecting \eqref{eq: 1979e}, \eqref{eq: 1808a}, \eqref{eq: 1852a} and analogous estimates,
the desired result \eqref{eq: 1645a} follows.
This finishes the proof of \eqref{eq: 468} for $k=2$.

\end{proof}

\begin{proof}[Proof of \eqref{eq: 468} for $k\ge3$]

For the case $k=3$, we consider
\begin{equation*}
	\int_{\Omega_{0}}\frac{\partial\tilde{f}}{\partial\bar{z}}
	\Tr(M'M^{-1}Q^{(3)}) \mathrm{d}^{2}z,
\end{equation*}
where we have from \eqref{eq: 709a}
\begin{equation*}
	Q^{(3)}_{ij} = \sum_{a}\sum_{\alpha,\beta,\gamma}\frac{\kappa(aj,\alpha,\beta,\gamma)}{6N^{2}}
	(\E[\partial_{\alpha}\partial_{\beta}\partial_{\gamma}(\tilde{e}G_{ia})-\tilde{e}\E[\partial_{\alpha}\partial_{\beta}\partial_{\gamma}G_{ia}]]).
\end{equation*}
Here we will not exploit a cancellation from $\partial_{\alpha}\partial_{\beta}\partial_{\gamma}(\tilde{e}G_{ia})-\tilde{e}\E[\partial_{\alpha}\partial_{\beta}\partial_{\gamma}G_{ia}]$, so estimate each term separately.
First consider a specific case and then explain how to generalize the argument.

When $\partial_{c\ell}\partial_{dm}$ hits $\tilde{e}$ and $\partial_{bk}$ hits $G_{ia}$ in the term $\E[\partial_{bk}\partial_{c\ell}\partial_{dm}(\tilde{e}G_{ia})]$, we have (omitting the constant factor including the sign and the expectation $\E[\cdot]$)
\begin{multline*}
	\frac{1}{N^{2}}\sum_{i,j,a,b,k,c,\ell,d,m}
	(M'M^{-1})_{ji} \kappa(aj,bk,c\ell,dm)(\partial_{c\ell}\partial_{dm}\tilde{e})G_{ib}G_{ka} \\
	= \frac{1}{N^{2}}\sum_{j,a,b,k,c,\ell,d,m}
	\kappa(aj,bk,c\ell,dm)(\partial_{c\ell}\partial_{dm}\tilde{e}) (M'M^{-1}G)_{jb} G_{ka}.
\end{multline*}
Note that $|(\partial_{c\ell}\partial_{dm}\tilde{e}) (M'M^{-1}G)_{jb} G_{ka}|\prec 1+|\lambda|^{2}$.
Then, due to Assumption \ref{assump: 140} (i),
\begin{equation*}
	\bigg| \frac{1}{N^{2}}\sum_{j,a,b,k,c,\ell,d,m}
	\kappa(aj,bk,c\ell,dm)(\partial_{c\ell}\partial_{dm}\tilde{e}) (M'M^{-1}G)_{jb} G_{ka} \bigg|
	\prec \frac{(1+|\lambda|^{2})}{N^{2}}\times N\times N \lesssim (1+|\lambda|^{2}),
\end{equation*}
where we used that the Euclidean norm of the all-one vector of dimension $N^{2}$ is given by $N$.
Applying Lemma \ref{lem: 358}, one can find that
\begin{equation*}
	\bigg| \int_{\Omega_{0}}\frac{\partial\tilde{f}}{\partial\bar{z}}
	\bigg( \frac{1}{N^{2}}\sum_{i,j,a,b,k,c,\ell,d,m}
	(M'M^{-1})_{ji} \kappa(aj,bk,c\ell,dm)(\partial_{c\ell}\partial_{dm}\tilde{e})G_{ib}G_{ka} \bigg)
	\mathrm{d}^{2}z \bigg| \prec (1+|\lambda|^{2}) \eta_{0}.
\end{equation*}

~

Note that all other cases can be handled in the same way
because any derivative of $\tilde{e}$ of the form $\partial_{\alpha_{1}}\cdots\partial_{\alpha_{p}}\tilde{e}$ is $O_{\prec}(1+|\lambda|^{p})$ by the Ward identity \eqref{eq: 1555} and Lemma \ref{lem: 358}, and we also have $|M'M^{-1}G|_{ij}\prec 1$ and $|G_{ij}|\prec 1$ by the isotropic local law \eqref{eq: 869e}.
The special case that all differentials hit $G_{ia}$ is also included due to the bound $|\tilde{e}|\prec 1$. 
As a result,
\begin{equation*}
	\bigg| \int_{\Omega_{0}}\frac{\partial\tilde{f}}{\partial\bar{z}}
	\Tr(M'M^{-1}Q^{(3)}) \mathrm{d}^{2}z \bigg|
	\prec (1+|\lambda|^{3})\eta_{0}.
\end{equation*}
The other cases $k\in\{4,\dots,L-1\}$ can be handled by the same reasoning, yielding
\begin{equation*}
	\bigg| \int_{\Omega_{0}}\frac{\partial\tilde{f}}{\partial\bar{z}}
	\Tr(M'M^{-1}Q^{(k)}) \mathrm{d}^{2}z \bigg|
	\prec (1+|\lambda|^{k})\eta_{0}.
\end{equation*}
Since $\lambda$ is fixed, the estimate $\eqref{eq: 468}$ holds for every $k\in\{3,\dots,L-1\}$ (as we consider $N$ large enough), and thus the proof ends.

\end{proof}

\begin{proof}[Proof of \eqref{eq: 474}]

Referring to \eqref{eq: 391},
since $|\mathcal{N}(\alpha_{0})|\le N^{\frac{1}{2}-\mu}$ by Assumption \ref{assump: 140} (iv),
we have for sufficiently large $L$,
\begin{equation}\label{eq: 1796}
	|\mathcal{E}^{(L)}_{ij}| \lesssim \frac{1}{N^{2}}.
\end{equation}
Then, using Lemma \ref{lem: 358},
\begin{equation*}
	\bigg| \int_{\Omega_{0}}\frac{\partial\tilde{f}}{\partial\bar{z}}
	\Tr(M'M^{-1}\mathcal{E}^{(L)}) \mathrm{d}^{2}z \bigg| \prec \eta_{0},
\end{equation*}
which provides \eqref{eq: 474}.

\end{proof}

This completes the proof of Lemma \ref{lem: 445}. \qed

\section{Proof of Proposition \ref{prop: 169}} \label{sec: 1776}

Let $\mathcal{K}(z,w)$ be as in \eqref{eq: 723ft}.
We want to show (recalling the restriction to the case $\beta=1$ for brevity)
\begin{equation*}
	\int_{\Omega_{0}}\int_{\Omega_{0}'}\frac{\partial\tilde{f}(z)}{\partial\bar{z}}\frac{\partial\tilde{f}(w)}{\partial\bar{w}}\mathcal{K}(z,w)
	\mathrm{d}^{2}w \mathrm{d}^{2}z
	= \frac{1}{2}\iint_{\mathbb{R}^{2}}\frac{(f(x)-f(y))^{2}}{(x-y)^{2}}\mathrm{d}x\mathrm{d}y
	+ o(1).
\end{equation*}
Set $\eta_{*}\equiv\eta_{*}(N)=N^{-100}$ (where one can replace the number $100$ with any sufficiently large positive real). Define $\Omega_{*}\coloneqq\{z\in\mathbb{C}:|\Im z|\in(\eta_{*},1)\}.$
According to \eqref{eq: 3463} in Lemma \ref{lem: 2129} (to be stated below), we have,
for $\eta=\Im z$ and $\tilde{\eta}=\Im w$,
\begin{equation}\label{eq: 1825}
	|\mathcal{K}(z,w)| \lesssim 
	\begin{cases}
		1, & \eta\tilde{\eta}>0, \\
		(|\eta|+|\tilde{\eta}|)^{-2}, & \eta\tilde{\eta}<0.
	\end{cases}
\end{equation}
Following the same proof as in \cite[Eq.\,(7.17)--(7.20)]{Vova25a}, together with \eqref{eq: 1825}, we obtain
\begin{equation}\label{eq: 2059a}
	\int_{\Omega_{0}}\int_{\Omega_{0}'}\frac{\partial\tilde{f}(z)}{\partial\bar{z}}\frac{\partial\tilde{f}(w)}{\partial\bar{w}}\mathcal{K}(z,w)\mathrm{d}^{2}w\mathrm{d}^{2}z 
	= \int_{\Omega_{*}}\int_{\Omega_{*}}\frac{\partial\tilde{f}(z)}{\partial\bar{z}}\frac{\partial\tilde{f}(w)}{\partial\bar{w}}\mathcal{K}(z,w)\mathrm{d}^{2}w\mathrm{d}^{2}z
	+ O(N^{-\eps_{0}}).
\end{equation}
Since $\mathcal{K}(z,w)=2\partial_{w}\Tr(M'M^{-1}V)$ (recalling \eqref{eq: 531} for $V$), it is enough to show that
\begin{equation}\label{eq: 2390}
    \int_{\Omega_{*}}\int_{\Omega_{*}} \frac{\partial\tilde{f}(z)}{\partial\bar{z}} \frac{\partial\tilde{f}(w)}{\partial\bar{w}} \partial_{w}\Tr(M'M^{-1}V) \mathrm{d}^{2}w\mathrm{d}^{2}z \\
	= \frac{1}{4}\iint_{\mathbb{R}^{2}}\frac{(f(x)-f(y))^{2}}{(x-y)^{2}}\mathrm{d}x\mathrm{d}y
	+ o(1).
\end{equation}

\begin{proposition}[Reduction via integration by parts]\label{prop: 114}
Let $E_{0}$ be as in Theorem \ref{thm: 96}.
There exists a sufficiently small constant $\hat{\eps}>0$ such that
\begin{multline*}
	\int_{\Omega_{*}}\int_{\Omega_{*}} \frac{\partial\tilde{f}(z)}{\partial\bar{z}} \frac{\partial\tilde{f}(w)}{\partial\bar{w}} \partial_{w}\Tr(M'M^{-1}V) \mathrm{d}^{2}w\mathrm{d}^{2}z \\
	= -\frac{1}{8}\int_{E_{0}-\hat{\eps}}^{E_{0}+\hat{\eps}}\int_{E_{0}-\hat{\eps}}^{E_{0}+\hat{\eps}} 
	(f(y)-f(x))^{2} \sum_{(\eta,\tilde{\eta})=\pm(\eta_{*},-\eta_{*})}\partial_{w}\Tr(M'(z)M^{-1}(z)V(z,w)) \mathrm{d}x\mathrm{d}y
	+ o(1),
\end{multline*}
where $z=x+\mathrm{i}\eta$ and $w=y+\mathrm{i}\tilde{\eta}$ on the right-hand side.
\end{proposition}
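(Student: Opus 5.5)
This is the standard double Stokes/integration-by-parts reduction of Helffer--Sjöstrand double integrals, as in \cite[Section 7]{Vova25a} and \cite{LS20}. Write $K(z,w)\coloneqq\partial_{w}\Tr(M'M^{-1}V)(z,w)$.

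\textbf{Step 1: holomorphy.} $K$ is holomorphic in each variable separately on each of the four quadrant regions $\{\pm\Im z>0\}\times\{\pm\Im w>0\}$. This follows because $M$ is holomorphic off the real axis and $\mathcal{B}_{z,w}$ is invertible there by Lemma~\ref{lem: 792}.

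\textbf{Step 2: integrate by parts in $w$.} For fixed $z$, write $\mathrm{d}^{2}w=\mathrm{d}y\,\mathrm{d}\tilde\eta$ and apply Green's formula $\int_{D}\partial_{\bar w}F\,\mathrm{d}^{2}w=\frac{1}{2\mathrm{i}}\oint_{\partial D}F\,\mathrm{d}w$ on each half of $\Omega_{*}$ to $F=\tilde f(w)K(z,w)$. Since $K$ is holomorphic in $w$, $\partial_{\bar w}F=\partial_{\bar w}\tilde f\,K$. The boundary $|\Im w|=1$ drops out because $\chi$ vanishes there, so only the horizontal lines $\Im w=\pm\eta_{*}$ remain. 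On these lines $\tilde f(y\pm\mathrm{i}\eta_{*})=f(y)+O(\eta_{*})$.

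\textbf{Step 3: integrate by parts in $z$.} Repeat the same argument in $z$. This expresses the left-hand side as $-\frac{1}{4}\sum_{\pm,\pm}(\text{signs})\iint f(x)f(y)K(x\pm\mathrm{i}\eta_{*},y\pm\mathrm{i}\eta_{*})\,\mathrm{d}x\,\mathrm{d}y$.

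\textbf{Step 4: discard same-sign boundary terms.} The same-sign terms $\eta\tilde\eta>0$ are bounded by \eqref{eq: 1825}. Near $E_0$, the $x$- and $y$-integrals of these boundary values are over a compact set of size $O(\eta_{0})$ each, since $f$ is supported on a window of size $\eta_{0}$. Hence these terms are $O(\eta_{0}^{2})=o(1)$.

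\textbf{Step 5: reassemble the opposite-sign terms.} Only the two terms $(\eta,\tilde\eta)=\pm(\eta_{*},-\eta_{*})$ survive. To turn $f(x)f(y)$ into $-\frac12(f(x)-f(y))^{2}$, expand $(f(x)-f(y))^2=f(x)^2+f(y)^2-2f(x)f(y)$. It then suffices to show that
\[
\iint f(x)^{2}\,K(x+\mathrm{i}\eta,y+\mathrm{i}\tilde\eta)\,\mathrm{d}y\,\mathrm{d}x
\]
is negligible, and similarly with $f(y)^{2}$. For fixed $x$, the $y$-integral of the boundary values over all of $\mathbb R$ can be closed into the half-plane of $w$ where $K$ stays holomorphic and decays. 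The decay holds because $K=\partial_{w}(\cdot)$ and $M(w)\sim -1/w$ at infinity, so $K=O(|w|^{-2})$. Thus $\int_{\mathbb R}K\,\mathrm{d}y$ equals the difference of the two boundary lines, which cancels up to $O(\eta_{*})$ contributions. Alternatively, one can insert the $f^{2}$ terms before integrating by parts, using $\int\partial_{\bar w}\tilde f\,\mathrm{d}^{2}w$-type identities, as in \cite{Vova25a}.

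\textbf{Step 6: localize to $|x-E_{0}|,|y-E_{0}|\le\hat\eps$.} Away from $E_0$, $f$ vanishes, so $(f(x)-f(y))^{2}$ is nonzero only if one point lies in the support. If exactly one point is outside the $\hat\eps$-window, then $|x-y|\gtrsim\hat\eps$. There Lemma~\ref{lem: 792} gives $|\beta_{z,w}|\sim1$, so $K=O(1)$, and the contribution is $O(\int f^{2})=O(\eta_{0})$. Choose $\hat\eps$ small enough that the window lies in the bulk $\mathcal I(\eps_0/2)$ and within $\beta_{*}$.

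\textbf{Main obstacle.} The delicate step is Step 5 near the diagonal. There $K(x+\mathrm{i}\eta_{*},y-\mathrm{i}\eta_{*})$ is of size $|x-y|^{-2}$ up to $\eta_{*}^{-2}$, so the boundary integrals of $f(x)^2K$ are not absolutely convergent uniformly. The cancellation must therefore be performed before taking absolute values. I would first try to handle it with the contour-shift argument above, using only holomorphy and the bound \eqref{eq: 1825}. This keeps the square $(f(x)-f(y))^{2}$, which vanishes quadratically on the diagonal, paired with the singular kernel at all times.
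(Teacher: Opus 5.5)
\textbf{Gaps.} Your route applies Stokes directly to $\tilde f K$, with $K=\partial_w\Tr(M'M^{-1}V)$, and relies only on the kernel bound \eqref{eq: 1825}. The paper, following Riabov's Lemma 7.2, instead works with the double antiderivative $\mathcal L(z,w)=-\log\det\mathcal B_{z,w}$, for which $\partial_z\partial_w\mathcal L=K$. Its only model-specific input is Lemma~\ref{lem: 2129}: the bound $\mathcal L=O(N^2)$ and the first-derivative bound in \eqref{eq: 3455}. Your plan has nothing playing the role of these antiderivative bounds, and that breaks it in two places.

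\textbf{First gap: Step~2.} Replacing $\tilde f(y\pm\mathrm i\eta_*)=f(y)\pm\mathrm i\eta_* f'(y)$ by $f(y)$ is not an $O(\eta_*)$ error. On the opposite-sign lines $K$ is of size up to $\eta_*^{-2}$ near the diagonal (by \eqref{eq: 1825}) and $\sim|x-y|^{-2}$ away from it (Proposition~\ref{prop: 478}). Hence $\int|K|\,\mathrm dy$ is of order $\eta_*^{-1}$. The correction $\eta_*\iint|f'(x)f(y)K|\,\mathrm dx\,\mathrm dy$ is therefore not small: the available bounds give only $O(N^{\delta})$, and heuristically, with $K\approx-(z-w)^{-2}$, it is of order one.

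In the paper's route both derivatives are moved onto $\tilde f$. The corrections then have the form $\eta_*\iint f''(x)f'(y)\mathcal L$ and are killed by the crude bound $\mathcal L=O(N^2)$ against $\eta_*=N^{-100}$. Within your route you would have to symmetrize $\tilde f(x+\mathrm i\sigma\eta_*)\tilde f(y+\mathrm i\tilde\sigma\eta_*)$ itself before discarding the imaginary parts. Then the $\eta_*$-correction carries the factor $|f(x)-f(y)|\lesssim |x-y|/\eta_0$, which makes it negligible.

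\textbf{Second gap: Steps~5--6.} The near-diagonal issue you flag as the main obstacle is harmless, since the full-line integral vanishes exactly. What is not harmless is what this exact vanishing costs.
\begin{itemize}
\item The identity $\int_{\mathbb R}K(x+\mathrm i\eta_*,y-\mathrm i\eta_*)\,\mathrm dy=0$ requires $w\mapsto K(z,w)$ to be holomorphic and decaying on the whole lower half-plane.
\item Afterwards you must cut $(f(x)-f(y))^2K$ back to $I^2$, where $I=[E_0-\hat\eps,E_0+\hat\eps]$. This needs $\sup_x\int_{\mathbb R\setminus I}|K(x+\mathrm i\eta_*,y-\mathrm i\eta_*)|\,\mathrm dy\lesssim_{\hat\eps}1$ uniformly in $N$.
\item That in turn needs control of $K$ for $y$ near spectral edges, outside the support, and as $|y|\to\infty$.
\end{itemize}
Lemma~\ref{lem: 792}, and with it \eqref{eq: 1825} and \eqref{eq: 3463}, requires both real parts to lie in $\mathcal I(\eps_0/2)$, so it says nothing in that region. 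Your Step~6 uses it outside its range.

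\textbf{How to repair it.} A fix that never leaves the bulk is to integrate $y$ only over $I$. Then $\int_I K\,\mathrm dy$ equals the difference of the $w$-antiderivative $\Tr(M'M^{-1}V)=\partial_z\mathcal L$ evaluated at $y=E_0\pm\hat\eps$. By the first-derivative bound \eqref{eq: 3455} this is $O(1/\hat\eps)$, since there $|x-y|\gtrsim\hat\eps$. This is the role \eqref{eq: 3455} plays in the paper, where it controls the boundary terms of the final real integration by parts on $I^2$. With these two repairs your symmetrization argument can be completed. As written, it depends on bounds that neither \eqref{eq: 1825} nor Lemma~\ref{lem: 792} supplies.
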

\begin{proof}
This statement is similar to \cite[Lemma 7.2]{Vova25a}.
From a technical point of view, the only difference in the proof occurs when it comes to getting suitable bounds for the log-determinant
\begin{equation}\label{eq: 2266ft}
	\mathcal{L}(z,w)\coloneqq -\log\det(\mathcal{B}_{z,w}),
\end{equation}
and its derivatives,
which is formulated in Lemma \ref{lem: 2129} below.
Note that $\partial_{z}\partial_{w}\mathcal{L}(z,w)=\frac{1}{2}\mathcal{K}(z,w)$.
The rest of the proof is essentially the same, so we omit the details.
\end{proof}

\begin{proposition}[Kernel estimate]\label{prop: 478}
Let $E_{0}$ and $\eps_{0}$ be as in Theorem \ref{thm: 96}.
Let $\beta_{*}$ be as in Lemma \ref{lem: 523}.
Consider a (small) constant $\hat{\eps}\in(0,\beta_{*}/3)$ such that $[E_{0}-\hat{\eps},E_{0}+\hat{\eps}]\subset\mathcal{I}(\eps_{0}/2)$.
Choose a small constant $\delta\in(0,1)$.
For $x,y\in[E_{0}-\hat{\eps},E_{0}+\hat{\eps}]$ with $|x-y|\ge N^{\delta}\eta_{*}$,
\begin{equation*}
	\partial_{w}\Tr(M'(x\pm\mathrm{i}\eta_{*})M^{-1}(x\pm\mathrm{i}\eta_{*})V(x\pm\mathrm{i}\eta_{*},y\mp\mathrm{i}\eta_{*}))
	= -\frac{1}{(x-y)^{2}} \Big( 1 + O(\hat{\eps}) \Big),
\end{equation*}
where the implicit constant in $O(\hat{\eps})$ is independent of $x$ and $y$.
\end{proposition}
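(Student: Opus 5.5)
We plan to reduce the kernel to the derivative of the log-determinant $\mathcal{L}(z,w)=-\log\det\mathcal{B}_{z,w}$ and extract its singular part from the isolated small eigenvalue $\beta_{z,w}$ of Lemma \ref{lem: 792}. Recall $\partial_{w}\Tr(M'M^{-1}V)=\Tr_{\textnormal{op}}(\mathcal{C}_{M'M^{-1},I}\mathcal{B}_{z,w}^{-1}\mathcal{C}_{M,\widetilde M'}\mathcal{S}\mathcal{B}_{z,w}^{-1})=\partial_z\partial_w\mathcal{L}(z,w)$. The first step is to verify this directly from $\partial_z\mathcal{B}_{z,w}=-\mathcal{C}_{M',\widetilde M}\mathcal{S}$ and $\mathcal{C}_{M',\widetilde M}=\mathcal{C}_{M'M^{-1},I}\mathcal{C}_{M,\widetilde M}$, together with cyclicity of $\Tr_{\textnormal{op}}$. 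For $z=x+\mathrm{i}\eta_*$ and $w=y-\mathrm{i}\eta_*$ with $|z-w|\le\beta_*$, we use the Riesz projection $\Pi_{z,w}$ to split
\[
\mathcal{L}(z,w)=-\log\beta_{z,w}-\log\det\big(\mathcal{B}_{z,w}(1-\Pi_{z,w})+\Pi_{z,w}\big).
\]
By Corollary \ref{cor: 953} and \eqref{eq: 572a}, the second term is analytic in $(z,w)$ on this region and has bounded derivatives. Hence its mixed derivative contributes $O(1)$, which is $O(\hat\eps)\cdot(x-y)^{-2}$ because $|x-y|\le 2\hat\eps$.

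The main term is therefore $-\partial_z\partial_w\log\beta_{z,w}$. We would show
\[
\beta_{z,w}=c(z,w)\,(z-w)+O(|z-w|^{2}),
\]
with $c$ smooth, nonvanishing, and $c=c_0(1+O(\hat\eps))$ across the window. To get this, apply Lemma \ref{lem: 523} with base point $(z_0,w_0)=(x_0+\mathrm{i}0,x_0-\mathrm{i}0)$, taking boundary values at coinciding points where $\beta=0$. This is the standard fact that $\mathcal{B}_{E+\mathrm{i}0,E-\mathrm{i}0}=1-\mathcal{C}_{M,M^*}\mathcal{S}$ has eigenvalue $0$ with eigenvector $\Im M$, from the imaginary part of the MDE. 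With $\Delta\mathcal{B}=-(\mathcal{C}_{M(z),M(w)}-\mathcal{C}_{M_0,M_0^*})\mathcal{S}$ and the first-order formula $\beta_1\langle L_1,R_1\rangle=\langle L_0,\Delta\mathcal{B}R_0\rangle+O(|\Delta|^2)$, we obtain the linear term. Along the diagonal $z=w$ the eigenvalue stays away from zero, so the natural variable is $z-w$. Writing the leading coefficient as $a(z-x_0)-b(w-x_0)$, the requirement that $\beta$ vanish on the anti-diagonal $z=\bar w$ real forces $a=b$ at leading order. We expect that $\partial_z$ and $\partial_w$ of $\beta$ at $(x_0+\mathrm{i}0,x_0-\mathrm{i}0)$ are $\pm$ the same quantity $\langle L_0,\mathcal{C}_{M_0',M_0^*}\mathcal{S}\Im M_0\rangle/\langle L_0,R_0\rangle$, up to conjugation. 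Its nonvanishing follows from $|\beta_{z,w}|\sim|z-w|$ in \eqref{eq: 937d}.

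Given this expansion, $\log\beta=\log(z-w)+\log c+\log(1+O(z-w))$. Then
\[
-\partial_z\partial_w\log(z-w)=-(z-w)^{-2},
\]
which equals $-(x-y)^{-2}(1+O(\eta_*/|x-y|))$. The correction is negligible because $|x-y|\ge N^\delta\eta_*$. The remaining pieces have mixed derivatives that are $O(1)$ or $O(|z-w|^{-1})$. Their relative contribution is therefore $O(|x-y|)=O(\hat\eps)$, which yields $-(x-y)^{-2}(1+O(\hat\eps))$. The main obstacle is controlling the remainder $O(|z-w|^2)$ in $\beta$ together with its first and mixed second derivatives, uniformly in $\eta_*\to0$. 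Lemma \ref{lem: 523} gives only value bounds. We would upgrade these by regarding $\beta_{z,w}$ as analytic in $(z,w)$ on a polydisc of radius comparable to $\beta_*$ in the region $\Im z>0>\Im w$. Uniform bounds on $\Pi_{z,w}$ from \eqref{eq: 572a} then let Cauchy estimates convert the value bounds into derivative bounds. This requires care near the real axis, where the polydisc must be taken with radius $\sim|x-y|$, which still suffices for the relative $O(\hat\eps)$ error. The $-\mathrm{i}\eta_*,+\mathrm{i}\eta_*$ sign case follows by symmetry or conjugation.
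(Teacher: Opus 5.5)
Your treatment of the regular factor has a genuine gap. You assert that $-\log\det(\mathcal{B}_{z,w}(1-\Pi_{z,w})+\Pi_{z,w})$ ``has bounded derivatives'' by Corollary~\ref{cor: 953} and \eqref{eq: 572a}. Those results only give operator-norm bounds, whereas every derivative of this log-determinant is a trace over the $N^{2}$-dimensional space $\mathbb{C}^{N\times N}$.

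Write $\mathcal{A}\coloneqq\mathcal{B}_{z,w}(1-\Pi_{z,w})+\Pi_{z,w}$. Up to signs, the mixed derivative contains $\Tr_{\textnormal{op}}\big(\mathcal{A}^{-1}\mathcal{C}_{M,\widetilde M'}\mathcal{S}(1-\Pi_{z,w})\mathcal{A}^{-1}\mathcal{C}_{M',\widetilde M}\mathcal{S}(1-\Pi_{z,w})\big)$ and $\Tr_{\textnormal{op}}\big(\mathcal{A}^{-1}\mathcal{C}_{M',\widetilde M'}\mathcal{S}(1-\Pi_{z,w})\big)$. Bounded operator norms control these only by $O(N^{2})$. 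Only the pieces containing $\Pi_{z,w}$ or $\partial\Pi_{z,w}$ as a factor are harmless, since they have finite rank.

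Analyticity plus Cauchy estimates does not rescue this, because a priori $|\log\det\mathcal{A}|$ is itself only $O(N^{2})$ (cf.\ Lemma~\ref{lem: 2129}). You need this contribution to be $O(\hat\eps\,|x-y|^{-2})$ for $|x-y|$ up to order $\hat\eps$, so even an $O(N^{c})$ bound would be fatal. Controlling these traces requires using the structure of the cumulants, not just stability of $\mathcal{B}_{z,w}$.

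This is exactly where the paper's proof does its real work. The analogue of your regular part is the last term of \eqref{eq: 2520e}, estimated in \eqref{eq: 2357} of Claim~\ref{claim: 2343}. There the paper expands $\mathcal{B}_{z,w}^{-1}=1+\mathcal{B}_{z,w}^{-1}\mathcal{C}_{M,\widetilde M}\mathcal{S}$ as in \eqref{eq: 3299e} to expose two factors of $\mathcal{S}$. It then invokes the new condition Assumption~\ref{assump: 140}(ii): by \eqref{eq: 241b}--\eqref{eq: 245b}, $\mathcal{S}_{c}[E^{ab}]$ and $\mathcal{S}_{d}[E^{ab}]$ are superpositions of shifted diagonals with summable coefficients $C_{r,s}$, $D_{r,s}$. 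This collapses the $N^{2}$-fold sum to averages of matrix entries controlled by operator norms (see \eqref{eq: 3257a}, \eqref{eq: 2892}, \eqref{eq: 2956}), yielding $O(|x-y|^{-1})$. Your proposal never uses this assumption, and without some input of this kind the step fails.

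Your main term, by contrast, is sound and equivalent to the paper's Claim~\ref{claim: 2339}. By first-order perturbation theory, $\partial_z\beta_{z,w}\,\partial_w\beta_{z,w}/\beta_{z,w}^{2}$ equals $(1-\beta_{z,w})$ times the first term of \eqref{eq: 2520e}.

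One smaller point in that part: a polydisc around $(z,w)$ with $\Im z=\eta_*$ and radius $\sim|x-y|$ does not fit inside $\{\Im z>0>\Im w\}$ unless you first continue $M$ analytically across the bulk. It is simpler to bound $\partial_z\partial_w\beta_{z,w}=O(1)$ directly from second-order perturbation formulas. These involve only scalar inner products and no traces.
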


\begin{proposition}[Negligible contribution near the diagonal]\label{prop: 2274a}
Recall $z=x+\mathrm{i}\eta$ and $w=y+\mathrm{i}\tilde{\eta}$.
Let $E_{0}$ be as in Theorem \ref{thm: 96}. Let $\hat{\eps}$ and $\delta$ as in Proposition \ref{prop: 478}.
We have
\begin{equation}\label{eq: 2276f}
	\int_{E_{0}-\hat{\eps}}^{E_{0}+\hat{\eps}}\int_{|x-y|<N^{\delta}\eta_{*}}
	\Big|(f(y)-f(x))^{2} \sum_{(\eta,\tilde{\eta})=\pm(\eta_{*},-\eta_{*})}\partial_{w}\Tr(M'(z)M^{-1}(z)V(z,w))\Big| \mathrm{d}x\mathrm{d}y
	= O(N^{3\delta}\eta_{*}).
\end{equation}
\end{proposition}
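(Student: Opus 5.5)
We use only a crude pointwise bound on the kernel, since the region of integration is tiny. With $z=x\pm\mathrm{i}\eta_*$ and $w=y\mp\mathrm{i}\eta_*$, the spectral parameters lie in opposite half-planes. Recall from Section~\ref{sec: 535} that
\begin{equation*}
\partial_w\Tr(M'M^{-1}V)=\Tr_{\mathrm{op}}\big(\mathcal{C}_{M'M^{-1},I}\mathcal{B}_{z,w}^{-1}\mathcal{C}_{M,\widetilde M'}\mathcal{S}\mathcal{B}_{z,w}^{-1}\big)=\tfrac12\mathcal{K}(z,w).
\end{equation*}
By \eqref{eq: 1825} (equivalently \eqref{eq: 3463} of Lemma~\ref{lem: 2129}), with $|\eta|+|\tilde\eta|=2\eta_*$, we get the pointwise bound
\begin{equation*}
|\partial_w\Tr(M'M^{-1}V)|\lesssim \eta_*^{-2}
\end{equation*}
uniformly in $x,y\in[E_0-\hat\eps,E_0+\hat\eps]$. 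If one prefers to derive this directly, split $\mathcal{B}_{z,w}^{-1}=\beta_{z,w}^{-1}\Pi_{z,w}+\mathcal{B}_{z,w}^{-1}(1-\Pi_{z,w})$ using Lemma~\ref{lem: 792} and Corollary~\ref{cor: 953}. The rank-one part carries $|\beta_{z,w}|^{-2}\lesssim |z-w|^{-2}\le \eta_*^{-2}$, since $|z-w|\ge 2\eta_*$. The remaining pieces are traces of operators containing at least one rank-one factor, or else $\mathcal{S}$, which maps $\lVert\cdot\rVert_{\mathrm{hs}}$ into $\lVert\cdot\rVert$ by \eqref{eq: 954d}. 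Hence they are bounded by $O(|\beta|^{-1})$ or $O(1)$ times powers of $\lVert M\rVert,\lVert M'\rVert,\lVert M^{-1}\rVert,\lVert \widetilde M'\rVert\lesssim 1$ from \eqref{eq: 949d}--\eqref{eq: 796}. One point needs care here: $\lVert\widetilde M'\rVert$ must be bounded for $w$ at distance $\eta_*$ from the real axis. This follows from \eqref{eq: 671} and \eqref{eq: 922d}, which hold uniformly down to $\eta\to0$ in the bulk.

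Next we use the smallness of the factor $(f(y)-f(x))^2$. Since $f\in C^2_c$ with $\lVert f'\rVert_\infty\lesssim \eta_0^{-1}$, we have
\begin{equation*}
(f(y)-f(x))^2\le \lVert f'\rVert_\infty^2|x-y|^2\lesssim \eta_0^{-2}|x-y|^2 .
\end{equation*}
On the region $|x-y|<N^\delta\eta_*$, the integrand (summing the two sign choices) is therefore bounded by $\eta_0^{-2}N^{2\delta}\eta_*^2\cdot\eta_*^{-2}=\eta_0^{-2}N^{2\delta}$. The region has Lebesgue measure at most $2\hat\eps\cdot 2N^\delta\eta_*$, so the integral is $O(\eta_0^{-2}N^{3\delta}\eta_*)$.

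The remaining factor $\eta_0^{-2}\le N^2$ must be absorbed. Since $\eta_*=N^{-100}$ is arbitrary up to choosing the exponent large, and the statement's bound $O(N^{3\delta}\eta_*)$ is meant in this spirit, we can either absorb $\eta_0^{-2}\le N^{2}$ into a slight redefinition of $\eta_*$, or keep the factor explicitly; either way the quantity is $o(1)$ and negligible for \eqref{eq: 2390}. To avoid the loss altogether, note that the kernel bound can be refined to $|\partial_w\Tr(\cdots)|\lesssim |x-y|^{-2}+\eta_*^{-2}$ with $|z-w|\sim |x-y|+\eta_*$. In that case the $f$-factor contributes only $\eta_0^{-2}$ times a quantity of order $|x-y|^2/(|x-y|^2+\eta_*^2)\le 1$. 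Honest bookkeeping of the $\lVert f'\rVert_\infty$ dependence, which the $O(\cdot)$ may absorb as a $g$- and $\eta_0$-dependent constant, yields \eqref{eq: 2276f}.

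The main obstacle is the uniform kernel bound $|\mathcal{K}(z,w)|\lesssim (|\eta|+|\tilde\eta|)^{-2}$ at distance $\eta_*=N^{-100}$ from the axis. It requires that the isolated small eigenvalue $\beta_{z,w}$ be the only source of instability, which Lemma~\ref{lem: 792} provides, together with the bound $|\beta_{z,w}|\gtrsim |z-w|\ge 2\eta_*$ from \eqref{eq: 937d}.
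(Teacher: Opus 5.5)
Your proposal is correct and is essentially the paper's argument. Both combine three ingredients: a pointwise $O(\eta_*^{-2})$ bound on the kernel at $(z,w)=\pm(x+\mathrm{i}\eta_*,y-\mathrm{i}\eta_*)$, the Lipschitz bound on $(f(y)-f(x))^2$, and the $O(N^{\delta}\eta_*)$ measure of the near-diagonal strip. The paper takes the kernel bound from the proof of Proposition~\ref{prop: 478}, while you cite \eqref{eq: 1825}/\eqref{eq: 3463}, which have the same origin.

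You are also right that the paper's bound $|f(x)-f(y)|^2=O(N^{2\delta}\eta_*^2)$ silently drops the factor $\lVert f'\rVert_\infty^2\sim\eta_0^{-2}\le N^{2}$. This is harmless only because $\eta_*=N^{-100}$, and your ``refined'' kernel bound does not remove it.

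In your optional direct derivation, \eqref{eq: 954d} alone does not bound the trace of the piece without a rank-one factor by $O(1)$; controlling such traces is what the $\kappa_c/\kappa_d$ shift assumptions are used for in Section~\ref{sec: 2944}. However, the trivial bound $|\Tr_{\mathrm{op}}\mathcal{X}|\le N^{2}\lVert\mathcal{X}\rVert_{\mathrm{hs}\to\mathrm{hs}}=O(N^2)\ll\eta_*^{-2}$ already suffices here.
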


Hence, combining these propositions above,
we get the desired estimate \eqref{eq: 2390} because we can choose $\hat{\eps}$ arbitrarily small, $f$ is given by \eqref{eq: 192} with compactly supported $g$, and the contribution of the almost diagonal region $|x-y|<N^{\delta}\eta_{*}$ is negligible.
This completes the proof of Proposition \ref{prop: 169}.
\qed

~

For reference, we provide the following lemma, a necessary technical input for Proposition \ref{prop: 114}.
\begin{lemma}\label{lem: 2129}
Let $z$ and $w$ be as in Lemma \ref{lem: 792}.
Write $z=x+\mathrm{i}\eta$ and $w=y+\mathrm{i}\tilde{\eta}$.
Then, for $\mathcal{L}(z,w)$ defined in \eqref{eq: 2266ft} we have
\begin{equation}\label{eq: 3455}
	\mathcal{L}(z,w) = O(N^{2}), \quad
	|\partial_{z}\mathcal{L}(z,w)|+|\partial_{w}\mathcal{L}(z,w)| \lesssim 1 + \frac{1}{\max\{|x-y|,|\eta|+|\tilde{\eta}|\}}.
\end{equation}
For $|\eta|+|\tilde{\eta}|<1$, we have
\begin{equation}\label{eq: 3463}
	\big|\partial_{z}\partial_{w}\mathcal{L}(z,w)\big|
	\lesssim 
	\begin{cases}
		1, & \eta\tilde{\eta}>0, \\
		(|\eta|+|\tilde{\eta}|)^{-2}, & \eta\tilde{\eta}<0.
	\end{cases}
\end{equation}
\end{lemma}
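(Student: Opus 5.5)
Everything will be derived from the derivative identities of $\log\det$ of the $N^2\times N^2$ operator $\mathcal{B}_{z,w}$, combined with the two-body stability of Lemma \ref{lem: 792}. Since $\partial_z\mathcal{B}_{z,w}=-\mathcal{C}_{M',\widetilde M}\mathcal{S}$, we have
\begin{equation*}
\partial_z\mathcal{L}(z,w)=\Tr_{\textnormal{op}}\big(\mathcal{B}_{z,w}^{-1}\mathcal{C}_{M',\widetilde M}\mathcal{S}\big),
\end{equation*}
and symmetrically for $\partial_w$. Differentiating once more gives $\partial_z\partial_w\mathcal{L}$ as a sum of two traces,
\begin{equation*}
\Tr_{\textnormal{op}}\big(\mathcal{B}^{-1}\mathcal{C}_{M',\widetilde M'}\mathcal{S}\big)
\quad\text{and}\quad
\Tr_{\textnormal{op}}\big(\mathcal{B}^{-1}\mathcal{C}_{M,\widetilde M'}\mathcal{S}\,\mathcal{B}^{-1}\mathcal{C}_{M',\widetilde M}\mathcal{S}\big).
\end{equation*}
The bound $\mathcal{L}=O(N^2)$ then follows from $\log|\det\mathcal{B}|\le N^2\log\|\mathcal{B}\|_{\mathrm{hs}\to\mathrm{hs}}$ together with the corresponding lower bound $\log|\det\mathcal{B}|\ge -N^2\log\|\mathcal{B}^{-1}\|$. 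Here $\|\mathcal{B}^{-1}\|\lesssim 1+|z-w|^{-1}\lesssim \eta_*^{-1}$ by \eqref{eq: 913d}, which costs only $O(N^2\log N)$; if needed I would interpret $O(N^2)$ up to this logarithm. The branch of the logarithm is fixed by continuity.

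The key technical point is that operator traces of the form $\Tr_{\textnormal{op}}(\mathcal{X}\,\mathcal{C}_{A,B}\mathcal{S})$ with $\|\mathcal{X}\|_{\mathrm{hs}\to\mathrm{hs}}\lesssim 1$ are $O(1)$ and not $O(N^2)$. The reason is that $\mathcal{S}$ is effectively trace class with $\Tr_{\textnormal{op}}|\mathcal{S}|\lesssim 1$. For Wigner-type matrices, $\mathcal{S}[X]=\langle X\rangle+\text{diag}$-type terms, so its nuclear norm in $\mathrm{hs}$ is bounded. In general I would show
\begin{equation*}
\sum_{j,b}\|\mathcal{S}[E^{jb}]\|_{\mathrm{hs}}\cdot\|E^{jb}\|_{\mathrm{hs}}^{-1}\cdot N^{-1}\lesssim 1
\end{equation*}
via \eqref{eq: 241b}--\eqref{eq: 245b}, exactly as in \eqref{eq: 814}: $(\mathcal{S}[E^{jb}])_{ak}=\kappa(aj,bk)/N$, so $\sum_{j,b}\|T^{(jb)}\|\lesssim N$, and this yields $|\Tr_{\textnormal{op}}(\mathcal{X}\mathcal{C}_{A,B}\mathcal{S})|\lesssim\|\mathcal{X}\|\,\|A\|\,\|B\|$. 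I expect establishing this trace-class bound cleanly in both the $\|\cdot\|$ and $\|\cdot\|_{\mathrm{hs}}$ norms to be the main obstacle.

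With this in hand I treat the cases separately. If $\eta\tilde\eta>0$, then $\|\mathcal{B}^{-1}\|\lesssim 1$ and $\|M'\|\lesssim 1$ give every bound directly. If $\eta\tilde\eta<0$ and $|z-w|\ge\beta_*$, again $\|\mathcal{B}^{-1}\|\lesssim1$. In the remaining regime, $\eta\tilde\eta<0$ with $|z-w|<\beta_*$, I split $\mathcal{B}^{-1}=\beta_{z,w}^{-1}\Pi_{z,w}+\mathcal{B}^{-1}(1-\Pi_{z,w})$ and use Corollary \ref{cor: 953}. The rank-one part contributes $O(|\beta_{z,w}|^{-1})=O(|z-w|^{-1})$ by \eqref{eq: 937d}, since the trace of a rank-one operator is bounded by its norm. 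Because $|z-w|\sim\max\{|x-y|,|\eta|+|\tilde\eta|\}$, this gives \eqref{eq: 3455}.

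For \eqref{eq: 3463}, the second trace contains two factors of $\mathcal{B}^{-1}$. Inserting the splitting into both produces a worst term of order $\beta_{z,w}^{-2}$ from $\Pi\,\cdot\,\Pi$, together with cross terms of order $\beta^{-1}$. By \eqref{eq: 937d} this gives the bound $|z-w|^{-2}\le(|\eta|+|\tilde\eta|)^{-2}$. If the $(|\eta|+|\tilde\eta|)^{-2}$ bound is required while the $\Pi\Pi$ term is naively as large as $|z-w|^{-2}$, that is still sufficient, since $|z-w|\ge|\eta|+|\tilde\eta|$ when $\eta\tilde\eta<0$. The $M'$ factors remain bounded by \eqref{eq: 796}.
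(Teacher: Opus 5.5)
Your derivative formulas, the splitting $\mathcal{B}^{-1}=\beta_{z,w}^{-1}\Pi_{z,w}+\mathcal{B}^{-1}(1-\Pi_{z,w})$, and your treatment of the rank-one pieces via \eqref{eq: 937d} all match the paper. The gap is the step you call the key technical point: it is false, and every bound on the regular part rests on it. The operator $\mathcal{S}$ is not trace class with $O(1)$ trace norm. Consequently $|\Tr_{\textnormal{op}}(\mathcal{X}\mathcal{C}_{A,B}\mathcal{S})|\lesssim\lVert\mathcal{X}\rVert\lVert A\rVert\lVert B\rVert$ fails for general bounded $\mathcal{X}$, already for GOE. There $\mathcal{S}[X]=\langle X\rangle I+N^{-1}X^{\T}$, and the transposition part has all $N^{2}$ singular values equal to $N^{-1}$, so $\Tr_{\textnormal{op}}|\mathcal{S}|\sim N$. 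Taking $\mathcal{X}[Y]=Y^{\T}$ (an isometry in both norms) and $A=B=I$ gives $\Tr_{\textnormal{op}}(\mathcal{X}\mathcal{S})=1+N$.

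Your displayed sum cannot detect this. The orthonormal basis for $\langle\cdot,\cdot\rangle$ is $\{\sqrt{N}E^{jb}\}$, so $\Tr_{\textnormal{op}}|\mathcal{S}|\le\sum_{j,b}\lVert\mathcal{S}[E^{jb}]\rVert_{\mathrm{hs}}\lVert E^{jb}\rVert_{\mathrm{hs}}^{-1}$ without your extra factor $N^{-1}$. Even granting your sum is $O(1)$, it only yields $\Tr_{\textnormal{op}}|\mathcal{S}|\lesssim N$. Hence your argument bounds the following traces only by $O(N)$:
\begin{itemize}
\item $\Tr_{\textnormal{op}}(\mathcal{B}^{-1}\mathcal{C}_{M',\widetilde M}\mathcal{S})$ in the case $\eta\tilde\eta>0$;
\item $\Tr_{\textnormal{op}}(\mathcal{B}^{-1}(1-\Pi_{z,w})\mathcal{C}_{M',\widetilde M}\mathcal{S})$ in the singular regime;
\item the single-$\mathcal{S}$ trace $\Tr_{\textnormal{op}}(\mathcal{B}^{-1}\mathcal{C}_{M',\widetilde M'}\mathcal{S})$ in $\partial_z\partial_w\mathcal{L}$.
\end{itemize}
The true $O(1)$ bounds come from a cancellation. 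For GOE ($M=mI$), the symmetric and antisymmetric eigenspaces of the transposition contribute roughly $\pm\frac{N}{2}m'\widetilde m$ to $\Tr_{\textnormal{op}}(\mathcal{B}^{-1}\mathcal{C}_{M',\widetilde M}\mathcal{S})$, and these cancel.

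What is true is that $\mathcal{S}$ is Hilbert--Schmidt: $\Tr_{\textnormal{op}}(\mathcal{S}^{*}\mathcal{S})=N^{-2}\sum_{a,j,b,k}|\kappa(aj,bk)|^{2}\lesssim 1$ by \eqref{eq: 241b}--\eqref{eq: 245b}. So a trace with two $\mathcal{S}$ factors, $\Tr_{\textnormal{op}}(\mathcal{X}\mathcal{S}\mathcal{Y}\mathcal{S})$, is $O(1)$ whenever $\mathcal{X},\mathcal{Y}$ are bounded on $\lVert\cdot\rVert_{\mathrm{hs}}$. A trace with a single $\mathcal{S}$, however, is controlled only for a pure sandwich, by explicit computation:
$\Tr_{\textnormal{op}}(\mathcal{C}_{A,B}\mathcal{S})=N^{-1}\sum_{a,b,x,y}A_{ax}B_{yb}\kappa(xa,by)=O(\lVert A\rVert\lVert B\rVert)$, using the $\kappa_c/\kappa_d$ shift structure as in \eqref{eq: 3113a}--\eqref{eq: 2771}.

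The missing step is to reduce to these two cases by peeling off one $\mathcal{S}$. Use $\mathcal{B}^{-1}=1+\mathcal{B}^{-1}\mathcal{C}_{M,\widetilde M}\mathcal{S}$, or in the singular regime $\mathcal{B}^{-1}(1-\Pi_{z,w})=(1-\Pi_{z,w})+\mathcal{B}^{-1}(1-\Pi_{z,w})\mathcal{C}_{M,\widetilde M}\mathcal{S}$ together with Corollary \ref{cor: 953}. The leftover $\Pi_{z,w}$-terms are rank one and bounded as you describe. This is exactly the $\mathcal{D}_1,\mathcal{D}_2,\mathcal{D}_3$ decomposition the paper uses in the proof of \eqref{eq: 2357}, to which its proof of this lemma refers. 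With this repair the rest of your outline goes through. Your $O(N^{2}\log N)$ caveat for $\mathcal{L}$ is harmless; it is also what the paper's lower bound gives once all $N^{2}$ eigenvalues of $\mathcal{B}^{*}\mathcal{B}$ are counted.
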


Remaining technical ingredients, Proposition \ref{prop: 478}--\ref{prop: 2274a}, will be proven in the rest of this section, but we focus more on Proposition \ref{prop: 478} that is one of our main technical contributions.
Proposition \ref{prop: 2274a} (and Lemma \ref{lem: 2129}) will be given as a byproduct after showing Proposition \ref{prop: 478}.

\subsection{Proof of Proposition \ref{prop: 478}: kernel estimate}\label{sec: 2383}

Fix $\hat{\eps}\in(0,\beta_{*}/3)$ satisfying $[E_{0}-\hat{\eps},E_{0}+\hat{\eps}]\subset\mathcal{I}(3\eps_{0}/2)$. Let $x,y\in[E_{0}-\hat{\eps},E_{0}+\hat{\eps}]$.
Recall that we consider
\begin{equation}\label{eq: 2501}
	(z,w)=\pm(x+\mathrm{i}\eta_{*},y-\mathrm{i}\eta_{*}).
\end{equation}
Note that
\begin{equation*}
	\partial_{w}\Tr (M'M^{-1}V) 
	= \Tr_{\textnormal{op}}(\mathcal{C}_{M'M^{-1},I}\mathcal{B}_{z,w}^{-1}\mathcal{C}_{M,\widetilde{M}'}\mathcal{S}\mathcal{B}_{z,w}^{-1}),
\end{equation*}
where we recall $\mathcal{B}_{z,w}=1-\mathcal{C}_{M,\widetilde{M}}\mathcal{S}$ with $M=M(z)$ and $\widetilde{M}=M(w)$.
Let $\Pi_{z,w}$ be the spectral projection corresponding to $\beta_{z,w}$ as in \eqref{eq: 482}.
Note that
\begin{equation}\label{eq: 2317}
	\mathcal{B}_{z,w}^{-1} = \beta_{z,w}^{-1}\Pi_{z,w} + \mathcal{B}_{z,w}^{-1}(1-\Pi_{z,w}).
\end{equation}
Let $R\equiv R(z,w)$ and $L\equiv L(z,w)$ be the right and left eigenvectors of $\mathcal{B}_{z,w}$ such that
$\mathcal{B}_{z,w}[R]=\beta_{z,w}R$, $\mathcal{B}_{z,w}^{*}[L]=\bar{\beta}_{z,w}L$ and $\lVert R\rVert_{\textnormal{hs}}=1=\lVert L\rVert_{\textnormal{hs}}$.
One can write
\begin{equation}\label{eq: 2411}
	\Pi\equiv \Pi_{z,w} = \frac{\langle L, \cdot \rangle}{\langle L, R \rangle}R.
\end{equation}
Then,
\begin{align}
	\Tr_{\textnormal{op}}(\mathcal{C}_{M'M^{-1},I}\mathcal{B}_{z,w}^{-1}\mathcal{C}_{M,\widetilde{M}'}\mathcal{S}&\mathcal{B}_{z,w}^{-1})
	 = \frac{1}{(\beta_{z,w}\langle L, R \rangle)^{2}} \langle L,M'M^{-1}R\rangle \langle L, \mathcal{C}_{M,\widetilde{M}'}\mathcal{S}[R] \rangle \nn \\
	& + \frac{N}{\beta_{z,w}\langle L,R\rangle} \sum_{a,b} \langle E^{ab},M'M^{-1}R\rangle\langle L, \mathcal{C}_{M,\widetilde{M}'}\mathcal{S}\mathcal{B}_{z,w}^{-1}(1-\Pi_{z,w})[E^{ab}]\rangle \nn\\
	& + \frac{N}{\beta_{z,w}\langle L,R\rangle} \sum_{a,b} \langle E^{ab},\mathcal{C}_{M'M^{-1},I}\mathcal{B}_{z,w}^{-1}(1-\Pi_{z,w})\mathcal{C}_{M,\widetilde{M}'}\mathcal{S}[R]\rangle\langle L, E^{ab}\rangle \nn\\
	& + \Tr_{\text{op}}( \mathcal{C}_{M'M^{-1},I}\mathcal{B}^{-1}(1-\Pi_{z,w})\mathcal{C}_{M,\widetilde{M}'}\mathcal{S}\mathcal{B}^{-1}(1-\Pi_{z,w}) ). \label{eq: 2520e}
\end{align}
We claim that the first term in the right-hand side of \eqref{eq: 2520e} is the leading contribution in the mesoscopic regime $|x-y|\lesssim \eta_{0}$, which is formally presented in the following statements.

\begin{claim}\label{claim: 2339}
Let $z$ and $w$ be as in \eqref{eq: 2501}.
For $|x-y|\ge N^{\delta}\eta_{*}$, we have
\begin{equation}\label{eq: 2379}
	\frac{1}{(\beta_{z,w}\langle L, R \rangle)^{2}} \langle L,M'M^{-1}R\rangle \langle L, \mathcal{C}_{M,\widetilde{M}'}\mathcal{S}[R] \rangle
	= -\frac{1}{(x-y)^{2}} \big( 1 + O(\hat{\eps}) \big).
\end{equation}
\end{claim}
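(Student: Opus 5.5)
The plan is to reduce the left-hand side of \eqref{eq: 2379} to a ratio of derivatives of the small eigenvalue $\beta_{z,w}$, and then to show that $\beta_{z,w}$ is, to leading order, a linear function of $z-w$. Throughout, $z=x+\mathrm{i}\eta_*$ and $w=y-\mathrm{i}\eta_*$ as in \eqref{eq: 2501}; the case with the opposite signs is symmetric. Since $\hat\eps<\beta_*/3$, we have $|z-w|\le\beta_*$ and $\eta\tilde\eta<0$, so Lemma \ref{lem: 792}, Corollary \ref{cor: 953} and Lemma \ref{lem: 523} all apply.

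\emph{Step 1: Hellmann--Feynman identities.} The map $(z,w)\mapsto\mathcal{B}_{z,w}$ is holomorphic, and $\beta_{z,w}$ is an isolated simple eigenvalue. Differentiating $\mathcal{B}_{z,w}[R]=\beta_{z,w}R$ and pairing with $L$ gives
\begin{equation*}
\partial_w\beta_{z,w}=-\frac{\langle L,\mathcal{C}_{M,\widetilde M'}\mathcal{S}[R]\rangle}{\langle L,R\rangle},\qquad
\partial_z\beta_{z,w}=-\frac{\langle L,\mathcal{C}_{M',\widetilde M}\mathcal{S}[R]\rangle}{\langle L,R\rangle}.
\end{equation*}
Next I use the eigenvector equation in the form $\mathcal{C}_{M,\widetilde M}\mathcal{S}[R]=(1-\beta_{z,w})R$. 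This gives $\mathcal{S}[R]=(1-\beta_{z,w})M^{-1}R\widetilde M^{-1}$, and hence
\begin{equation*}
\mathcal{C}_{M',\widetilde M}\mathcal{S}[R]=(1-\beta_{z,w})M'M^{-1}R.
\end{equation*}
Therefore $\langle L,M'M^{-1}R\rangle/\langle L,R\rangle=-\partial_z\beta_{z,w}\,(1+O(|\beta_{z,w}|))$. Substituting both identities, the left-hand side of \eqref{eq: 2379} equals
\begin{equation*}
\frac{\partial_z\beta_{z,w}\,\partial_w\beta_{z,w}}{\beta_{z,w}^2}\bigl(1+O(|z-w|)\bigr),
\end{equation*}
and $|z-w|\lesssim\hat\eps$ on the region considered. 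The factor $\langle L,R\rangle$ is bounded below because the spectral projection $\Pi_{z,w}$ is bounded by \eqref{eq: 572a}.

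\emph{Step 2: Linearization of $\beta_{z,w}$.} Take the reference pair $(z_0,w_0)=(E_0+\mathrm{i}\eta_*,E_0-\mathrm{i}\eta_*)$, so that $w_0=\bar z_0$ and $M(w_0)=M(z_0)^*$. Taking imaginary parts in the MDE \eqref{eq: 197} gives
\begin{equation*}
\mathcal{B}_{z_0,\bar z_0}[\Im M]=\eta_*MM^*.
\end{equation*}
Similarly, $L_0\propto M^{-1}(\Im M)(M^*)^{-1}$ satisfies $\mathcal{B}^*_{z_0,\bar z_0}[L_0]=\eta_* L_0'$ with $L_0'$ bounded. Hence $R_0\propto\Im M$ and this $L_0$ are eigenvectors up to $O(\eta_*)$, and $\beta_{z_0,w_0}=O(\eta_*)$.

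More generally, the same identity shows $\beta_{z,\bar z}=O(\Im z)$ for every bulk $z$. So the holomorphic function $\beta$ vanishes on the totally real set $\{w=\bar z\}$ up to $O(\eta_*)$. Combined with \eqref{eq: 937d}, i.e.\ $|\beta_{z,w}|\sim|z-w|$, I expect
\begin{equation*}
\beta_{z,w}=c\,(z-w)+O(|z-w|^2)+O(\eta_*),\qquad c=\partial_z\beta_{z_0,w_0}\neq0.
\end{equation*}
To make this rigorous I would use Lemma \ref{lem: 523}, with $\Delta\mathcal{B}=-\bigl(\mathcal{C}_{M(z),M(w)}-\mathcal{C}_{M(z_0),M(w_0)}\bigr)\mathcal{S}$. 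This gives $\|\Delta\mathcal{B}\|_{\mathrm{hs}\to\mathrm{hs}}\lesssim\hat\eps$, using $\|M'\|\lesssim1$ from \eqref{eq: 796} and \eqref{eq: 954d}. The first-order term $\langle L_0,\Delta\mathcal{B}[R_0]\rangle$ then produces $c\,(x-E_0)-c\,(y-E_0)=c(x-y)$. Here the coefficients of $x-E_0$ and $y-E_0$ are forced to be opposite by the vanishing along the diagonal $w=\bar z$.

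Since $\beta$ is analytic with a $\hat\eps$-independent radius of control, Cauchy estimates give $\partial_z\beta_{z,w}=c(1+O(\hat\eps))$ and $\partial_w\beta_{z,w}=-c(1+O(\hat\eps))$.

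\emph{Step 3: Conclusion.} Inserting Step 2 into the formula from Step 1 gives
\begin{equation*}
\frac{-c^2(1+O(\hat\eps))}{c^2(z-w)^2(1+O(\hat\eps))}=-\frac{1}{(x-y+2\mathrm{i}\eta_*)^2}\bigl(1+O(\hat\eps)\bigr).
\end{equation*}
The condition $|x-y|\ge N^\delta\eta_*$ lets me replace $(x-y+2\mathrm{i}\eta_*)^2$ by $(x-y)^2$. The same condition absorbs the $O(\eta_*)$ error in $\beta$ relative to $c(x-y)$.

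\emph{Main obstacle.} The delicate point is Step 2: showing that the first-order variation of $\beta$ has no component along $x+y$ and identifying it as exactly $c(z-w)$ with $c\neq0$. Lemma \ref{lem: 523} only gives $\beta\langle L,R\rangle$ to first order. I would therefore first try the cleanest route: compute $\partial_z\beta$ and $\partial_w\beta$ at $(z_0,\bar z_0)$ directly from the explicit eigenvectors $\Im M$ and $M^{-1}\Im M(M^*)^{-1}$, using $M'=\mathcal{B}_z^{-1}\mathcal{C}_M[I]$ from \eqref{eq: 671}, and check that they are negatives of each other. The nonvanishing of $c$ follows from the lower bound in \eqref{eq: 937d}.
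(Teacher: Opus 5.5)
\textbf{Step 1 is correct.} It is in fact an exact identity: Hellmann--Feynman together with $\mathcal{C}_{M',\widetilde M}\mathcal{S}[R]=(1-\beta_{z,w})M'M^{-1}R$ shows that the left-hand side of \eqref{eq: 2379} equals $\partial_z\beta_{z,w}\,\partial_w\beta_{z,w}/\bigl(\beta_{z,w}^2(1-\beta_{z,w})\bigr)$, so $\langle L,R\rangle$ drops out.

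\textbf{The gap is in Step 2.} Step 3 needs $\beta_{z,w}=c\,(z-w)\bigl(1+O(\hat\eps)\bigr)$ with a \emph{relative} error, uniformly down to $|x-y|\sim N^{\delta}\eta_*$. The regime $|x-y|\sim\eta_0\ll\hat\eps^2$ is exactly the one that matters. Your rigorous tool does not give this. Lemma \ref{lem: 523} around the fixed anchor $(E_0+\mathrm{i}\eta_*,E_0-\mathrm{i}\eta_*)$ controls $\beta\langle L,R\rangle$ only up to $O(\lVert\Delta\mathcal{B}\rVert^2)=O(\hat\eps^2)$, because $|x-E_0|$ and $|y-E_0|$ are of size $\hat\eps$, not $|x-y|$. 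Knowing the first derivatives at that anchor exactly would not help, since the remainder is quadratic in the distance to the anchor. For $|x-y|\ll\hat\eps^2$ this error swamps $c(x-y)$. Indeed, the displayed form $c(z-w)+O(|z-w|^2)$ with a fixed $c$ is false in general, because the slope depends on the energy (already for Wigner matrices); the best one gets is $O(\hat\eps|z-w|)$.

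Two further problems:
\begin{itemize}
\item The relation $\partial_w\beta=-\partial_z\beta$ on the diagonal, which the single-anchor approach relies on, is only asserted. Checking it from the explicit eigenvectors amounts to proving $\Re\langle L_0,M'M^{-1}\Im M\rangle=O(\eta_*)$, which you do not do.
\item Cauchy estimates with an $\hat\eps$-independent radius are not available as stated. $M$ is holomorphic only off the real axis, and $z,w$ are at distance $\eta_*$ from it. Derivative bounds must instead come from $\lVert M'\rVert,\lVert M''\rVert\lesssim 1$ and perturbation theory for the isolated eigenvalue.
\end{itemize}

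\textbf{The missing idea} is to anchor on the diagonal \emph{next to} $(z,w)$, and to use a different anchor for each factor of $\beta^2$. This is what the paper does. It expands around $(z,\bar z)$ varying only $w$, with $w-\bar z=y-x$, and around $(\bar w,w)$ varying only $z$, with $z-\bar w=x-y$. At both anchors $|\beta|\lesssim\eta_*$ by \eqref{eq: 937d}. In your language,
\[
\beta_{z,w}=\partial_w\beta_{z,\bar z}\,(y-x)+O(\eta_*+|x-y|^2)=\partial_z\beta_{\bar w,w}\,(x-y)+O(\eta_*+|x-y|^2),
\]
while $\partial_w\beta_{z,w}=\partial_w\beta_{z,\bar z}+O(|x-y|)$ and $\partial_z\beta_{z,w}=\partial_z\beta_{\bar w,w}+O(|x-y|)$. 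Write $\beta^2$ as the product of these two expansions. Each derivative in your numerator then cancels one factor, and the sign comes from $(y-x)(x-y)$.

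This route needs no relation between $\partial_z\beta$ and $\partial_w\beta$. It only needs lower bounds on the two anchor derivatives, which is the content of Proposition \ref{prop: 2625}. Alternatively, \eqref{eq: 2617} gives the exact identity $\beta_{z,w}\langle L,M(z)-M(w)\rangle=(z-w)\langle L,M(z)M(w)\rangle$. Hence $\beta=(z-w)F$ with $\partial_z\beta=F+O(|z-w|)$ and $\partial_w\beta=-F+O(|z-w|)$, provided you bound $\langle L,M(z)-M(w)\rangle$ from below as in Claim \ref{claim: 3252}.
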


\begin{claim}\label{claim: 2343}
Under the same setting of Claim \ref{claim: 2339}, for $|x-y|\ge N^{\delta}\eta_{*}$,
\begin{align}
	\frac{N}{\beta_{z,w}\langle L,R\rangle} \sum_{a,b} \langle E^{ab},M'M^{-1}R\rangle\langle L, \mathcal{C}_{M,\widetilde{M}'}\mathcal{S}\mathcal{B}_{z,w}^{-1}(1-\Pi_{z,w})[E^{ab}]\rangle 
	&= O(|x-y|^{-1}), \label{eq: 2349}\\
	\frac{N}{\beta_{z,w}\langle L,R\rangle} \sum_{a,b} \langle E^{ab},\mathcal{C}_{M'M^{-1},I}\mathcal{B}_{z,w}^{-1}(1-\Pi_{z,w})\mathcal{C}_{M,\widetilde{M}'}\mathcal{S}[R]\rangle\langle L, E^{ab}\rangle 
	&= O(|x-y|^{-1}), \label{eq: 2353}\\
	\Tr_{\textnormal{op}}( \mathcal{C}_{M'M^{-1},I}\mathcal{B}_{z,w}^{-1}(1-\Pi_{z,w})\mathcal{C}_{M,\widetilde{M}'}\mathcal{S}\mathcal{B}_{z,w}^{-1}(1-\Pi_{z,w}) ) 
	&= O(|x-y|^{-1}). \label{eq: 2357}
\end{align}
\end{claim}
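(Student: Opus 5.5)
The statement to prove is Claim~\ref{claim: 2343}: the three non-leading terms in \eqref{eq: 2520e} are $O(|x-y|^{-1})$. The plan is to reduce everything to three inputs:
\begin{itemize}
\item the size of the isolated eigenvalue $\beta_{z,w}$;
\item a lower bound on the overlap $\langle L,R\rangle$;
\item operator bounds on the regular part $\mathcal{B}_{z,w}^{-1}(1-\Pi_{z,w})$.
\end{itemize}
Throughout, $(z,w)=\pm(x+\mathrm{i}\eta_*,y-\mathrm{i}\eta_*)$ with $|z-w|\le 3\hat{\eps}<\beta_*$ and $\eta_1\eta_2<0$, so Lemma~\ref{lem: 792} and Corollary~\ref{cor: 953} apply. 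Then \eqref{eq: 937d} gives $|\beta_{z,w}|\sim|z-w|\ge|x-y|$, so $|\beta_{z,w}|^{-1}\lesssim|x-y|^{-1}$. Corollary~\ref{cor: 953} gives $\lVert\mathcal{B}_{z,w}^{-1}(1-\Pi_{z,w})\rVert\lesssim1$ in both the operator and the hs norms.

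\textbf{Step 1: the overlap.} I will show $|\langle L,R\rangle|\gtrsim1$. Since $\Pi$ is given by \eqref{eq: 2411}, $\lVert\Pi\rVert_{\mathrm{hs}\to\mathrm{hs}}=|\langle L,R\rangle|^{-1}$. On the other hand, $\Pi$ is the contour integral \eqref{eq: 482} of a resolvent bounded by \eqref{eq: 572a}, so $\lVert\Pi\rVert\lesssim1$. Together these give the claim. For the bounded ingredients I will use $\lVert M\rVert,\lVert M'\rVert,\lVert M^{-1}\rVert\lesssim1$ from \eqref{eq: 796}. For $\widetilde M'$ I will use \eqref{eq: 671} together with \eqref{eq: 922d}, which applies since $(w,w)$ has the same sign of imaginary part. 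Finally $\lVert\mathcal{S}\rVert_{\mathrm{hs}\to\|\cdot\|}\lesssim1$ by \eqref{eq: 954d}.

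\textbf{Step 2: the two mixed terms \eqref{eq: 2349} and \eqref{eq: 2353}.} These are rank-one traces. Using $N\sum_{a,b}\langle E^{ab},X\rangle\langle Y,E^{ab}\rangle=\langle Y,X\rangle$ (the normalized inner product), the sum in \eqref{eq: 2349} collapses to
\[
\big\langle (\mathcal{C}_{M,\widetilde M'}\mathcal{S}\mathcal{B}^{-1}_{z,w}(1-\Pi_{z,w}))^*[L],\,M'M^{-1}R\big\rangle .
\]
This is bounded by a product of $O(1)$ operator norms times $\lVert L\rVert_{\mathrm{hs}}\lVert R\rVert_{\mathrm{hs}}=1$. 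The term \eqref{eq: 2353} collapses similarly to $\langle L,\mathcal{C}_{M'M^{-1},I}\mathcal{B}^{-1}(1-\Pi)\mathcal{C}_{M,\widetilde M'}\mathcal{S}[R]\rangle=O(1)$. Multiplying by the prefactor $|\beta_{z,w}\langle L,R\rangle|^{-1}\lesssim|x-y|^{-1}$ gives both bounds.

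\textbf{Step 3: the regular term \eqref{eq: 2357}, which I expect to be the main obstacle.} Here there is no rank-one structure. Bounded operator norms alone only give an $O(N^2)$ bound for the trace on $\mathbb{C}^{N\times N}$, so something finer is needed.

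The idea is to exploit that $\mathcal{S}$ is "small-rank" in trace sense: the trace should be controlled by $\sum_{\alpha,\alpha'}$ of the covariance weights, with only $O(1)$ effective summation per index pair. Concretely, I will write
\[
\mathcal{B}^{-1}(1-\Pi)=-\frac{1}{2\pi\mathrm{i}}\oint_{|\zeta|=\eps_*}\zeta^{-1}(\zeta-\mathcal{B})^{-1}\,\mathrm{d}\zeta ,
\]
and then use
\[
(\zeta-\mathcal{B})^{-1}=(\zeta-1)^{-1}+(\zeta-1)^{-1}(\zeta-\mathcal{B})^{-1}\,\mathcal{C}_{M,\widetilde M}\mathcal{S},
\]
applied to each of the two factors. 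This splits the trace into pieces of two kinds:
\begin{itemize}
\item pieces that contain an explicit $\mathcal{S}$ sandwiched between bounded operators;
\item the purely scalar piece $\Tr_{\mathrm{op}}(\mathcal{C}_{M'M^{-1},I}\,\mathcal{C}_{M,\widetilde M'}\mathcal{S})$.
\end{itemize}
Each explicit $\mathcal{S}=\mathcal{S}_c+\mathcal{S}_d$ has trace-class-type norm $O(N)$ in the hs-normalization. For $\mathcal{S}_c$ this follows from \eqref{eq: 814}, via $\sum_{j,b}\lVert T_c^{(jb)}\rVert\lesssim N$; for $\mathcal{S}_d$ it follows from \eqref{eq: 245b}. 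I will then estimate $|\Tr_{\mathrm{op}}(\mathcal{X}\mathcal{S})|\le\sum_{j,b}|\langle E^{jb},\mathcal{X}\mathcal{S}[E^{jb}]\rangle|N\lesssim\lVert\mathcal{X}\rVert_{\|\cdot\|\to\|\cdot\|}\sum_{j,b}\lVert T^{(jb)}\rVert/N$. This is the step I will check most carefully, since whether \eqref{eq: 814} truly yields an $N$-independent bound depends on the normalizations.

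If this works, the last trace is $O(1)$, which is even better than $O(|x-y|^{-1})$ since $|x-y|\lesssim\hat{\eps}$. If only the $\mathcal{S}_c$ piece behaves well, I would fall back on using \eqref{eq: 237} to transfer the bound to $\mathcal{S}_d$, mirroring the proof of \eqref{eq: 1048a}.
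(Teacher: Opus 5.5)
Your Steps 1 and 2 are correct and essentially the paper's argument. The overlap bound via $\lVert\Pi_{z,w}\rVert_{\lVert\cdot\rVert_{\textnormal{hs}}\to\lVert\cdot\rVert_{\textnormal{hs}}}=|\langle L,R\rangle|^{-1}$ and \eqref{eq: 572a} is a neat shortcut for Lemma~\ref{lem: 2587}. Your Step~3 decomposition $\mathcal{B}_{z,w}^{-1}(1-\Pi_{z,w})=1+\mathcal{X}\,\mathcal{C}_{M,\widetilde M}\mathcal{S}$, with $\mathcal{X}$ bounded by \eqref{eq: 572a}, is also sound. (The resolvent identity should carry a minus sign, which is harmless.) It is even sharper than the paper's use of the full $\mathcal{B}_{z,w}^{-1}$.

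The gap is the trace estimate you apply to each explicit $\mathcal{S}$. Write $\mathcal{S}_{\#}[E^{jb}]=N^{-1}T_{\#}^{(jb)}$ with $T_{\#}^{(jb)}(a,k)=\kappa_{\#}(aj,bk)$. The bound $|\Tr_{\textnormal{op}}(\mathcal{X}\mathcal{S})|\le\lVert\mathcal{X}\rVert_{\lVert\cdot\rVert\to\lVert\cdot\rVert}N^{-1}\sum_{j,b}\lVert T^{(jb)}\rVert$ is true. However, $\sum_{j,b}\lVert T^{(jb)}\rVert\lesssim N$ holds only for the $\kappa_c$-part, where \eqref{eq: 241b} makes $(a,k)\mapsto\kappa_c(aj,bk)$ Toeplitz-dominated by $C_{k-a,b-j}$. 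For the $\kappa_d$-part, \eqref{eq: 245b} only gives $|\kappa_d(aj,bk)|\le D_{b-a,k-j}$. Hence $\lVert T_d^{(jb)}\rVert=O(1)$ and $\sum_{j,b}\lVert T_d^{(jb)}\rVert\sim N^2$ in general. For real Wigner matrices, $\mathcal{S}_d[X]=N^{-1}X^{\mathsf T}$ and $T_d^{(jb)}=E^{bj}$, so your estimate only yields $O(N)$.

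No bound in terms of $\lVert\mathcal{X}\rVert$ alone can do better. For $\mathcal{X}[Y]=Y^{\mathsf T}$, an isometry in both norms, one has $\Tr_{\textnormal{op}}(\mathcal{X}\mathcal{S}_d)=N$ for GOE. The fallback via \eqref{eq: 237} does not help, because that identity is an index realignment, not an operator relation. For GOE, $\mathcal{S}_c=\langle\cdot\rangle I$ has rank one, while $\mathcal{S}_d$ is $N^{-1}$ times an isometry. In the proof of \eqref{eq: 1048a} the swap worked only on explicit entries of $G=G^{\mathsf T}$. Moreover, the $T_d^{(jk)}(a,b)=\kappa_d(aj,bk)$ used there is a different slicing of $\kappa_d$ than $\mathcal{S}_d[E^{jb}]$.

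What is missing is to exploit the operators surrounding $\mathcal{S}_d$. In your expansion, only the term $\Tr_{\textnormal{op}}(\mathcal{C}_{M',\widetilde M'}\mathcal{S})$ contains a single $\mathcal{S}$, and it must be computed explicitly. The paper does this with the shifts in \eqref{eq: 3113a}--\eqref{eq: 3271a}. Alternatively, $\Tr_{\textnormal{op}}(\mathcal{C}_{A,B}\mathcal{S})=\E[\Tr(AW)\Tr(WB)]$, which Cauchy--Schwarz and Assumption~\ref{assump: 140}(i) with $k=2$ bound by $\lesssim\lVert A\rVert_{\textnormal{hs}}\lVert B\rVert_{\textnormal{hs}}$.

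Every other term contains two factors of $\mathcal{S}$, separated by operators bounded in $\lVert\cdot\rVert_{\textnormal{hs}}\to\lVert\cdot\rVert_{\textnormal{hs}}$, and you should pair them. As an operator on $\mathbb{C}^{N\times N}$, $\mathcal{S}$ is Hilbert--Schmidt with $\lVert\mathcal{S}\rVert_{\mathfrak{S}_2}^2=N^{-2}\sum_{\alpha,\alpha'}|\kappa(\alpha,\alpha')|^2\lesssim 1$ by \eqref{eq: 241b}--\eqref{eq: 245b}. Therefore $|\Tr_{\textnormal{op}}(\mathcal{A}\mathcal{S}\mathcal{A}'\mathcal{S})|\le\lVert\mathcal{A}\rVert\,\lVert\mathcal{A}'\rVert\,\lVert\mathcal{S}\rVert_{\mathfrak{S}_2}^2$. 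This pairing is what the paper's \eqref{eq: 2911}--\eqref{eq: 2956} accomplishes by splitting the other $\mathcal{S}$ whenever a $\kappa_d$-part appears. With these two ingredients your Step~3 closes, and with your bounded $\mathcal{X}$ it even gives $O(1)$.
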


Given the above two claims, Proposition \ref{prop: 478} follows directly.
We will show these claims in the rest of this sub-section.

\subsubsection{Proof of Claim \ref{claim: 2339}}

Since $M\mathcal{S}[R]\widetilde{M} = - \{(1-\mathcal{C}_{M,\widetilde{M}}\mathcal{S})-1\}[R] = (1-\beta_{z,w})R$,
\begin{equation*}
	\langle L, \mathcal{C}_{M,\widetilde{M}'}\mathcal{S}[R] \rangle
	= (1-\beta_{z,w})\langle L, R\widetilde{M}^{-1}\widetilde{M}'\rangle.
\end{equation*}
Thus we have
\begin{equation}\label{eq: 2570e}
	\frac{1}{(\beta_{z,w}\langle L, R \rangle)^{2}} \langle L,M'M^{-1}R\rangle \langle L, \mathcal{C}_{M,\widetilde{M}'}\mathcal{S}[R] \rangle
	= \frac{(1-\beta_{z,w})}{(\beta_{z,w}\langle L, R \rangle)^{2}} \langle L,M'M^{-1}R\rangle \langle L, R\widetilde{M}^{-1}\widetilde{M}'\rangle.
\end{equation}

We restrict our proof to the case that
$z=x+\mathrm{i}\eta_{*}$ and $w=y-\mathrm{i}\eta_{*}$ where $\eta_{*}=N^{-100}$.
Recall that $x,y\in[E_{0}-\hat{\eps},E_{0}+\hat{\eps}]$ with $\hat{\eps}\in(0,\beta_{*}/3)$.
We note that $|z-w|\le \beta_{*}$ (for $N$ sufficiently large).
Applying Lemma \ref{lem: 523} with setting $(z_{0},w_{0})=(z,\bar{z})$ and $(z_{1},w_{1})=(z,w)$
(in addition, $R(z,w)\coloneqq R_{1}/\lVert R_{1}\rVert_{\text{hs}}$ and $L(z,w)\coloneqq L_{1}/\lVert L_{1}\rVert_{\text{hs}}$ with $R_{1}$ and $L_{1}$ defined in Lemma \ref{lem: 523}),
\begin{align}
	&\beta_{z,w}  \langle L(z,w), R(z,w)\rangle \nn\\
	&= \beta_{z,\bar{z}}\langle L(z,\bar{z}), R(z,\bar{z})\rangle + \langle L(z,\bar{z}),M(z)\mathcal{S}[R(z,\bar{z})](M(\bar{z})-M(w)) \rangle + O(|x-y|^{2}) \nn\\
	&= \beta_{z,\bar{z}}\langle L(z,\bar{z}), R(z,\bar{z})\rangle + \langle L(z,\bar{z}),\mathcal{C}_{M(z),M(\bar{z})}\mathcal{S}[R(z,\bar{z})] M^{-1}(\bar{z})(M(\bar{z})-M(w)) \rangle + O(|x-y|^{2}) \nn\\
	&= \beta_{z,\bar{z}}\langle L(z,\bar{z}), R(z,\bar{z})\rangle + \big(1-\beta_{z,\bar{z}}\big)\langle L(z,\bar{z}),R(z,\bar{z})M^{-1}(\bar{z})M'(\bar{z})\rangle (x-y) + O(|x-y|^{2}), \label{eq: 2931}
\end{align}
where we used \eqref{eq: 796} and $\lVert M'' \rVert\lesssim 1$ in the bulk (obtained by differentiating $M'$ via \eqref{eq: 671} and using \eqref{eq: 922d}, \eqref{eq: 949d}--\eqref{eq: 954d} and \eqref{eq: 796}).
Similarly, we have
\begin{multline}\label{eq: 2475a}
	\beta_{z,w}\langle L(z,w), R(z,w)\rangle
	= \beta_{\bar{w},w}\langle L(\bar{w},w), R(\bar{w},w)\rangle \\
	+ (1-\beta_{\bar{w},w})\langle L(\bar{w},w),M'(\bar{w})M^{-1}(\bar{w})R(\bar{w},w)\rangle (y-x) + O(|x-y|^{2}).
\end{multline}

In addition, by Lemma \ref{lem: 523} again, we observe that
\begin{multline}\label{eq: 2481a}
	(1-\beta_{z,w})\langle L(z,w), R(z,w)M^{-1}(w)M'(w)\rangle \\
	= (1-\beta_{z,\bar{z}})\langle L(z,\bar{z}),R(z,\bar{z})M^{-1}(\bar{z})M'(\bar{z})\rangle + O(|x-y|),
\end{multline}
and
\begin{equation}\label{eq: 2485a}
	\langle L(z,w),M'(z)M^{-1}(z)R(z,w)\rangle = \langle L(\bar{w},w),M'(\bar{w})M^{-1}(\bar{w})R(\bar{w},w)\rangle + O(|x-y|),
\end{equation}
where we also used \eqref{eq: 796} and $\lVert M'' \rVert\lesssim 1$ in the bulk.

By Lemma \ref{lem: 792}, we have $|\beta_{z,\bar{z}}|\sim\eta_{*}$ and $|\beta_{\bar{w},w}|\sim\eta_{*}$.
Since $|\langle L,R\rangle|\le 1$ and $|x-y|\ge N^{\delta}\eta_{*}$, the first terms in the right-hand sides of \eqref{eq: 2931}--\eqref{eq: 2475a} are negligible compared with the second terms if the coefficients of $(x-y)$ are of order $1$.
Note that the coefficients of the $(x-y)$ terms in \eqref{eq: 2931}--\eqref{eq: 2475a} are the same (modulo sign) as the leading terms of the factors $\langle L,M'M^{-1}R\rangle$ and $(1-\beta_{z,w})\langle L, R\widetilde{M}^{-1}\widetilde{M}'\rangle$ in \eqref{eq: 2570e} by \eqref{eq: 2481a}--\eqref{eq: 2485a}.
Thus, after cancelling these coefficients in \eqref{eq: 2570e}, to establish \eqref{eq: 2379}, it is enough to check the following technical proposition.

\begin{proposition}\label{prop: 2625}
Recall that we fixed $\hat{\eps}\in(0,\beta_{*}/3)$ satisfying $[E_{0}-\hat{\eps},E_{0}+\hat{\eps}]\subset\mathcal{I}(3\eps_{0}/2)$.
Consider $z=x+\mathrm{i}\eta_{*}$ and $w=y-\mathrm{i}\eta_{*}$ where $x,y\in[E_{0}-\hat{\eps},E_{0}+\hat{\eps}]$ and $\eta_{*}=N^{-100}$.
There exists a small constant $c>0$ such that
\begin{equation}\label{eq: 2506gt}
	|\langle L(z,\bar{z}),R(z,\bar{z})M^{-1}(\bar{z})M'(\bar{z})\rangle| > c,
\end{equation}
and, similarly,
\begin{equation*}
	|\langle L(\bar{w},w),M'(\bar{w})M^{-1}(\bar{w})R(\bar{w},w)\rangle| > c.
\end{equation*}
\end{proposition}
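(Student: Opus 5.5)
The plan is to deduce the lower bound \eqref{eq: 2506gt} indirectly, from the expansion \eqref{eq: 2931} together with the two-sided eigenvalue bound \eqref{eq: 937d} of Lemma \ref{lem: 792}, rather than by computing $L(z,\bar z)$ and $R(z,\bar z)$ explicitly. The key observation is that the coefficient of $(x-y)$ in \eqref{eq: 2931},
\begin{equation*}
	\gamma(z)\coloneqq \big(1-\beta_{z,\bar z}\big)\langle L(z,\bar{z}),R(z,\bar{z})M^{-1}(\bar{z})M'(\bar{z})\rangle,
\end{equation*}
depends only on $z$ and not on the second spectral parameter. Therefore \eqref{eq: 2931} may be applied with an auxiliary point $w'=y'-\mathrm{i}\eta_*$ that is not the $y$ of the proposition.

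First, fix $z=x+\mathrm{i}\eta_*$ and choose $y'$ with $|x-y'|=t$, where $t\in(0,\beta_*/3)$ is a small constant to be fixed below. We may assume, after shrinking $\hat\eps$ relative to $\eps_0$ if necessary, that $y'\in\mathcal{I}(\eps_0/2)$. Then $(z,w')$ satisfies the hypotheses of Lemmas \ref{lem: 792} and \ref{lem: 523}. The derivation of \eqref{eq: 2931} goes through verbatim for $(z,w')$, with an error bounded by $C_1 t^2$ for a constant $C_1$ depending only on the model parameters. This uses Lemma \ref{lem: 523} and the bounds $\lVert M'\rVert,\lVert M''\rVert\lesssim 1$. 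We obtain
\begin{equation*}
	\beta_{z,w'}\langle L(z,w'),R(z,w')\rangle = \beta_{z,\bar z}\langle L(z,\bar z),R(z,\bar z)\rangle + \gamma(z)(x-y') + O(t^2).
\end{equation*}

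Next, bound both sides. By \eqref{eq: 937d}, $|\beta_{z,\bar z}|\lesssim\eta_*$ and $|\beta_{z,w'}|\ge c_1\min\{1,|z-w'|\}\ge c_1 t$. To make use of the left-hand side we also need $|\langle L(z,w'),R(z,w')\rangle|\ge c_2>0$ uniformly. This holds because the spectral projection $\Pi_{z,w'}$ in \eqref{eq: 2411} has norm $1/|\langle L,R\rangle|$, and this norm is bounded by the contour-integral representation \eqref{eq: 482} combined with \eqref{eq: 572a}. Hence
\begin{equation*}
	c_1c_2\,t \le |\gamma(z)|\,t + C\eta_* + C_1t^2 .
\end{equation*}
Choosing $t\le c_1c_2/(4C_1)$ and using $\eta_*=N^{-100}\ll t$, we get $|\gamma(z)|\ge c_1c_2/2$. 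Since $|1-\beta_{z,\bar z}|\le 2$ (indeed $\beta_{z,\bar z}=O(\eta_*)$), this gives \eqref{eq: 2506gt} with $c=c_1c_2/4$. The second inequality follows by the identical argument applied to \eqref{eq: 2475a}, now with the base point $(\bar w,w)$ and an auxiliary $x'$ at distance $t$ from $y$.

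I expect the main obstacle to be verifying that \eqref{eq: 2931} holds uniformly with an $O(t^2)$ error whose constant is independent of $t$ and $N$. This requires $\lVert\Delta\mathcal{B}\rVert_{\mathrm{hs}\to\mathrm{hs}}\lesssim|z-w'|$, which follows from \eqref{eq: 954d} and $\lVert M'\rVert\lesssim 1$ along the segment from $\bar z$ to $w'$. It also requires that the constants in Lemma \ref{lem: 523} depend only on $\eps_*,\beta_*$. The lower bound on $|\langle L,R\rangle|$ is the other delicate point; if the projection-norm argument proves insufficient, I would instead compare with the explicit Hermitian-like structure at $w=\bar z$, where $R(z,\bar z)$ is approximately proportional to $\Im M(z)$ by the identity $\mathcal{B}_{z,\bar z}[\Im M]=\eta_* MM^*$, and then transport the bound to $(z,w')$ via Lemma \ref{lem: 523}.
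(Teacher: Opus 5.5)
Your argument is correct, and it takes a genuinely different route from the paper.

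The paper works infinitesimally at $w=\bar z$. A Hellmann--Feynman identity gives $\langle L,RM^{-1}(w)M'(w)\rangle=(\partial_w\beta_{z,w})\langle L,R\rangle/(\beta_{z,w}-1)$. The paper then proves $|\partial_w\beta_{z,w}|\gtrsim 1$ at $w=\bar z$ using the representation $\beta_{z,w}=(z-w)F(w)$, which comes from $\mathcal{B}_{z,w}[M(z)-M(w)]=(z-w)M(z)M(w)$. This needs \eqref{eq: 937d} for $|F(\bar z)|\sim 1$ and a separate bound $F'(\bar z)=O(1)$, which requires differentiating $\Pi_{z,w}$. Finally it proves $|\langle L(z,\bar z),R(z,\bar z)\rangle|\gtrsim 1$ through Krein--Rutman positivity, flatness (Lemma~\ref{lem: 2809}) and the approximation $\Im M(z)=\gamma R(z,\bar z)+O(\eta_*)$ of Claim~\ref{claim: 3252}.

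You instead take a finite difference over a fixed small distance $t$. The coefficient of $(x-y)$ in \eqref{eq: 2931} depends only on $z$. Combining the lower bound $|\beta_{z,w'}|\gtrsim t$ from \eqref{eq: 937d} with $|\langle L,R\rangle|=\lVert\Pi_{z,w'}\rVert_{\mathrm{hs}\to\mathrm{hs}}^{-1}\gtrsim 1$ from \eqref{eq: 482} and \eqref{eq: 572a} forces that coefficient to be of order one. This works once $t$ is small compared with $c_1c_2/C_1$ and $\eta_*\ll t$.

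Each route buys something different:
\begin{itemize}
\item Your route is shorter. It avoids the positivity structure and the derivative of the spectral projection entirely.
\item Your projection-norm argument also gives $|\langle L,R\rangle|\gtrsim 1$ directly throughout the regime $|z-w|\le\beta_*$, $\Im z\,\Im w<0$, not only at the base point.
\item The paper's route yields explicit structural information at $w=\bar z$: the identification of the coefficient with $\partial_w\beta_{z,w}$, and the fact that $R(z,\bar z)$ is, up to $O(\eta_*)$, a multiple of $\Im M(z)$.
\item Both rest on the same input, the two-sided bound \eqref{eq: 937d}. The paper uses it at separation $2\eta_*$; you use it at separation $t$.
\end{itemize}

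Two minor remarks on your write-up:
\begin{itemize}
\item You need not shrink $\hat\eps$. Taking $t\le\hat\eps$ lets you choose $y'=x\pm t$ inside $[E_0-\hat\eps,E_0+\hat\eps]$, and your final constant $c_1c_2/4$ does not depend on $t$.
\item Passing from the unnormalized $L_1,R_1$ of Lemma~\ref{lem: 523} to normalized eigenvectors costs only $O(t^2)$. This is because $|\beta_{z,w'}|\lesssim t$ and $\lVert L_1\rVert_{\mathrm{hs}},\lVert R_1\rVert_{\mathrm{hs}}=1+O(t)$.
\end{itemize}
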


Given the above proposition, the proof of Claim \ref{claim: 2339} ends.
\qed

\begin{proof}[Proof of Proposition \ref{prop: 2625}] 

Since the reasoning is identical, we restrict our proof to \eqref{eq: 2506gt}.
Write $R\equiv R(z,w)$.
Note that
\begin{equation*}
	\partial_{w}\mathcal{B}_{z,w}[R] = -\mathcal{C}_{M(z),M'(w)}\mathcal{S}[R]
	=  (\beta_{z,w}-1)R M^{-1}(w)M'(w),
\end{equation*}
since $\mathcal{C}_{M(z),M(w)}\mathcal{S}[R] = (1-\beta_{z,w})R$.
Taking $\langle L,\cdot\rangle$,
\begin{equation*}
	\langle L,\partial_{w}\mathcal{B}_{z,w}[R]\rangle = (\beta_{z,w}-1)\langle L,R M^{-1}(w)M'(w)\rangle.
\end{equation*}
We further observe that, by taking $\partial_{w}$ and $\langle L,\cdot\rangle$ to $\mathcal{B}_{z,w}[R] = \beta_{z,w} R$,
\begin{equation*}
	\langle L,(\partial_{w}\mathcal{B}_{z,w})[R]\rangle = (\partial_{w}\beta_{z,w}) \langle L,R\rangle.
\end{equation*}
Thus,
\begin{equation*}
	\langle L,R M^{-1}(w)M'(w)\rangle
	= \frac{(\partial_{w}\beta_{z,w})\langle L,R\rangle}{\beta_{z,w}-1}.
\end{equation*}
Since $|\beta_{z,\bar{z}}-1|\sim 1$ by \eqref{eq: 937d} in Lemma \ref{lem: 792}, it is enough to consider $\partial_{w}\beta_{z,w}$ and $\langle L,R\rangle$ for the case $w=\bar{z}$, i.e.,
we need to show
\begin{equation}\label{eq: 2608a}
	\big|(\partial_{w}\beta_{z,w})|_{w=\bar{z}}\big|\gtrsim 1, 
\end{equation}
and
\begin{equation}\label{eq: 2612a}
	|\langle L(z,\bar{z}),R(z,\bar{z})\rangle|\gtrsim 1.
\end{equation}

To show \eqref{eq: 2608a}, we first notice the following relation from \eqref{eq: 197}:
\begin{equation}\label{eq: 2617}
	\mathcal{B}_{z,w}[M(z)-M(w)] = (z-w)M(z)M(w).
\end{equation}
Taking $\langle L,\cdot\rangle$ and using $\mathcal{B}^{*}[L]=\bar{\beta}_{z,w}L$,
we get
\begin{equation*}
	\beta_{z,w}\langle L,M(z)-M(w)\rangle = (z-w)\langle L,M(z)M(w)\rangle,
\end{equation*}
which implies
\begin{equation*}
	\beta_{z,w} = (z-w) \frac{\langle L,M(z)M(w)\rangle}{\langle L,M(z)-M(w)\rangle}.
\end{equation*}
Fix $z$ and define
\begin{equation}\label{eq: 3475}
	F(w) \coloneqq \frac{\langle L,M(z)M(w)\rangle}{\langle L,M(z)-M(w)\rangle} = \frac{\beta_{z,w}}{z-w}.
\end{equation}
Then,
\begin{equation*}
	\partial_{w}\beta_{z,w} = -F(w) + (z-w)F'(w).
\end{equation*}
For $w=\bar{z}$,
\begin{equation*}
	\partial_{w}\beta_{z,\bar{z}} = -F(\bar{z}) + (z-\bar{z})F'(\bar{z}) = -F(\bar{z}) + (2\mathrm{i}\eta_{*})F'(\bar{z}).
\end{equation*}
In order to show $|\partial_{w}\beta_{z,\bar{z}}|\sim 1$, we first check that $F(\bar{z})$ is of order $1$.
Using \eqref{eq: 3475} with $w=\bar{z}$, the estimate \eqref{eq: 937d} implies $|F(\bar{z})|\sim 1$.
Then, what remains is to show that $(2\mathrm{i}\eta)F'(\bar{z})$ is small, so the following claim ends the proof of \eqref{eq: 2608a}.
\begin{claim}\label{claim: 2917a}
We have $F'(\bar{z})=O(1)$.
\end{claim}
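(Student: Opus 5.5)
We need $F'(\bar z)=O(1)$, where
\[
F(w)=\frac{\langle L,M(z)M(w)\rangle}{\langle L,M(z)-M(w)\rangle},
\]
with $L=L(z,w)$ depending on $w$. Differentiating directly is awkward because $L$ moves with $w$ and the denominator is small (of order $\eta_*$ near $w=\bar z$). Instead, I will avoid differentiating $L$ at all.

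\emph{Step 1: an eigenvector-free expression for $\beta_{z,w}$.} On the disk $|w-\bar z|\le r$ with $r\sim 1$ small, the spectral projection $\Pi_{z,w}$ from \eqref{eq: 482} is analytic in $w$. Indeed, $(\zeta-\mathcal{B}_{z,w})^{-1}$ is bounded on $|\zeta|=\eps_{*}$ by \eqref{eq: 572a}, and $\mathcal{B}_{z,w}$ is analytic in $w$ because $M(w)$ is. This should be checked for $w$ on both sides of the real axis near $\bar z$. If \eqref{eq: 572a} only covers $\Im z\,\Im w<0$, I would restrict to the half-disk and use that $M$ extends analytically across the bulk (since $\rho>0$, the limit $M(x-\mathrm{i}0)$ is real-analytic in $x$). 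Then
\[
\beta_{z,w}=\Tr_{\rm op}(\mathcal{B}_{z,w}\Pi_{z,w}),
\]
using $\operatorname{rank}\Pi_{z,w}=1$. Equivalently,
\[
\beta_{z,w}=\frac{1}{2\pi\mathrm{i}}\oint_{|\zeta|=\eps_*}\zeta\,\Tr_{\rm op}(\zeta-\mathcal{B}_{z,w})^{-1}\,\mathrm{d}\zeta .
\]
Hence $w\mapsto\beta_{z,w}$ is analytic on that disk. It is bounded by $\eps_{*}$, since it is the unique eigenvalue inside the circle.

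\emph{Step 2: bound $F'$ via Cauchy.} Now $F(w)=\beta_{z,w}/(z-w)$. Take the circle $|w-\bar z|=\rho_0$ with $\rho_0\sim 1$ small, say $\rho_0\le \beta_*/2$. On this circle $|z-w|\ge \rho_0-2\eta_*\gtrsim 1$, and $|\beta_{z,w}|\lesssim 1$, so $|F|\lesssim 1$ there. The quotient $\beta_{z,w}/(z-w)$ has a potential pole at $w=z$, which lies inside the disk at distance $2\eta_*$ from $\bar z$. It is removable: when $w=z$ and $\eta_1\eta_2>0$ there is no small eigenvalue, which conflicts with continuity. I therefore prefer to use the identity $F(w)=\langle L,M(z)M(w)\rangle/\langle L,M(z)-M(w)\rangle$ only as motivation, and to work on the region where $\Im w<0$.

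Concretely, I would apply Cauchy's estimate for $F'(\bar z)$ on a circle of radius $\eta_*/2$, but this loses too much. The cleaner route is to use the bulk analytic continuation of $M$ from the lower half plane through the real axis. Denote it $M_-(w)$, defined for $|w-\bar z|\le\rho_0$; it is analytic, with bounded $M_-$ and $M_-'$ by \eqref{eq: 796} and $\lVert M''\rVert\lesssim1$. Replace $\mathcal{B}_{z,w}$ by $1-\mathcal{C}_{M(z),M_-(w)}\mathcal{S}$. The stability statement \eqref{eq: 572a} persists by perturbation (Lemma~\ref{lem: 523}) for $|w-\bar z|\le \rho_0$ small. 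Then $\beta(w)$ is analytic on the full disk. Using \eqref{eq: 2617}, which holds for $M_-$ since it solves the MDE with the same $z$-parameter, $\beta$ vanishes at $w=z$, where $M_-(z)=M(z)$ when $z$ is in the continuation domain. Thus $F=\beta/(z-w)$ is analytic with $|F|\lesssim 1$ on the boundary circle, so $|F'(\bar z)|\lesssim \rho_0^{-1}\sup|F|\lesssim 1$ by the maximum principle and Cauchy.

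The main obstacle is justifying the analytic continuation of $M$ across the real axis in the bulk and the uniform spectral gap of the continued stability operator on a disk of size $\sim1$. I expect this to follow from $\rho(E)\ge\eps_0/2$, $\lVert M\rVert\lesssim 1$, and the invertibility of $\mathcal{B}_{x}$ at real $x$ in the bulk via the implicit function theorem, as in \cite{AEK20}.
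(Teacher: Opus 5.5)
Your route differs genuinely from the paper's and can be made to work. However, the step that makes $F=\beta/(z-w)$ analytic on the disk rests on a false statement.

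\textbf{The error.} The continuation $M_-$ of $M|_{\{\Im w<0\}}$ through the bulk is the second-sheet branch, so $M_-(z)\neq M(z)$. Instead, $M_-(z)=M(\bar z)+O(\eta_*)=M(z)^*+O(\eta_*)$, using $\lVert M_-'\rVert\lesssim 1$ on the disk. For the semicircle law, for instance, $m_-(z)=1/m(z)$.

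If $M_-(z)=M(z)$ were true, your argument would collapse. First, \eqref{eq: 2617} at $w=z$ would only give $0=0$. Second, the continued operator at $w=z$ would be $\mathcal{B}_z$, which is invertible by \eqref{eq: 922d}. Then $\beta(z)\neq 0$, $F$ would have a pole at distance $2\eta_*$ from $\bar z$, and your Cauchy estimate would yield nothing.

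\textbf{The fix.} The correct justification is the opposite one. Set $X\coloneqq M(z)-M_-(z)$. Then $X=2\mathrm{i}\Im M(z)+O(\eta_*)\neq 0$, because $\langle \Im M(z)\rangle\gtrsim\rho(x)\gtrsim 1$. The identity \eqref{eq: 2617}, which holds for $M_-$ since it solves the MDE at spectral parameter $w$, gives $(1-\mathcal{C}_{M(z),M_-(z)}\mathcal{S})[X]=0$ at $w=z$. The continued operator has exactly one eigenvalue in $|\zeta|<\eps_*$ on the whole disk. Hence that eigenvalue is $0$, i.e.\ $\beta(z)=0$, and the singularity is removable.

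With this fix the argument closes, provided you also establish what you correctly flag as the main obstacle. This is the continuation of $M$ across the bulk with uniform bounds on $M_-,M_-',M_-''$ on a disk of radius $\sim 1$. It can be obtained from the implicit function theorem together with \eqref{eq: 922d} at $x-\mathrm{i}0$, but the paper does not provide it.

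\textbf{Your starting premise is also wrong.} The denominator is
$\langle L,M(z)-M(\bar z)\rangle=2\mathrm{i}\langle L,\Im M(z)\rangle$, which is of order one, not $O(\eta_*)$, by Lemma \ref{lem: 2809} and Claim \ref{claim: 3252}. This is exactly what the paper exploits:
\begin{enumerate}
\item It replaces $L(z,w)$ by $\Pi_{z,w}^*[L(z,\bar z)]$, which leaves the ratio unchanged.
\item It writes $F=N/D$ with $|D(\bar z)|\gtrsim 1$.
\item It bounds $N$, $D$ and their $w$-derivatives at $\bar z$, using the contour formula for $\Pi_{z,w}$ together with \eqref{eq: 572a}, \eqref{eq: 949d} and \eqref{eq: 796}.
\item It concludes by the quotient rule.
\end{enumerate}
This uses analyticity only in an arbitrarily small neighbourhood of $\bar z$, so no continuation across the real axis is needed.

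Your corrected route avoids the positivity input ($L\ge c'I$ via Krein--Rutman and flatness). The price is the analytic continuation of $M$ and a spectral-gap persistence argument for the continued stability operator on a disk of radius $\sim 1$.
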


Now we will prove \eqref{eq: 2612a}.
Using \eqref{eq: 2617} with $w=\bar{z}$, we have
\begin{equation}\label{eq: 2780}
	\mathcal{B}_{z,\bar{z}}[\Im M(z)] = \eta_{*} M(z)M^{*}(z).
\end{equation}
As $\eta_{*}$ is small, inspired by \eqref{eq: 937d} and \eqref{eq: 2780}, one expects that $\Im M(z)$ is approximated by the right eigenvector $R$, which is indeed verified in the following claim.
 
\begin{claim}\label{claim: 3252}
We have
\begin{equation}\label{eq: 2995e}
	\Im M(z) = \gamma R(z,\bar{z}) + E_{\eta_{*}},
\end{equation}
where
\begin{equation*}
	\gamma = \frac{\langle L(z,\bar{z}),\Im M(z)\rangle}{\langle L(z,\bar{z}),R(z,\bar{z})\rangle},
\end{equation*}
and the error term $E_{\eta_{*}}$ satisfies $\lVert E_{\eta_{*}}\rVert_{\textnormal{hs}}=O(\eta_{*})$.
Moreover,
\begin{equation}\label{eq: 2813a}
	\langle L(z,\bar{z}), \Im M(z)\rangle \gtrsim 1
	\quad\text{and}\quad \gamma \sim 1.
\end{equation}
\end{claim}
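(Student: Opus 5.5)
We prove Claim \ref{claim: 3252} by applying the spectral decomposition \eqref{eq: 2317} of $\mathcal{B}_{z,\bar z}$ to the identity \eqref{eq: 2780}, and then obtain the lower bounds \eqref{eq: 2813a} from positivity of $\Im M(z)$. Inverting \eqref{eq: 2780} gives
\begin{equation*}
	\Im M(z) = \eta_{*}\beta_{z,\bar z}^{-1}\Pi_{z,\bar z}[MM^{*}] + \eta_{*}\mathcal{B}_{z,\bar z}^{-1}(1-\Pi_{z,\bar z})[MM^{*}].
\end{equation*}
By Corollary \ref{cor: 953} and $\lVert M\rVert\lesssim 1$ from \eqref{eq: 949d}, the second term has $\lVert\cdot\rVert_{\textnormal{hs}}=O(\eta_{*})$; we call it $E_{\eta_*}$. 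Corollary \ref{cor: 953} applies because $|z-\bar z|=2\eta_*\le\beta_*$ and $\Im z\cdot\Im\bar z<0$. The first term is a multiple of $R(z,\bar z)$ by \eqref{eq: 2411}.

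To identify the coefficient, we apply $\langle L,\cdot\rangle$ to the decomposition. Since $(1-\Pi)$ maps into the range annihilated by $L$, we have $\langle L,E_{\eta_*}\rangle=0$. Hence $\langle L,\Im M\rangle=\gamma'\langle L,R\rangle$ for the coefficient $\gamma'$ of $R$, so $\gamma'=\gamma$ as defined. This proves \eqref{eq: 2995e}.

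For \eqref{eq: 2813a}, we first note that $\lVert\Im M(z)\rVert_{\textnormal{hs}}\ge\langle\Im M\rangle\gtrsim\rho(x)\gtrsim 1$ in the bulk. Combining this with $\lVert E_{\eta_*}\rVert_{\textnormal{hs}}=O(\eta_*)$ and $\lVert R\rVert_{\textnormal{hs}}=1$ gives $|\gamma|\sim 1$; the upper bound $|\gamma|\lesssim 1$ follows from $\lVert \Im M\rVert\lesssim 1$. The real work is the lower bound $\langle L,\Im M\rangle\gtrsim 1$, which is equivalent to $|\langle L,R\rangle|\gtrsim 1$ (the goal \eqref{eq: 2612a}). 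For this we use the symmetry of $\mathcal{B}_{z,\bar z}$. Since $\mathcal{C}_{M,M^*}$ is positivity preserving and $\mathcal{S}$ is self-adjoint and positivity preserving, we write
\begin{equation*}
	\mathcal{C}_{M,M^*}\mathcal{S}=\mathcal{C}_{M,M^*}^{1/2}\,\big(\mathcal{C}_{M,M^*}^{1/2}\mathcal{S}\,\mathcal{C}_{M,M^*}^{1/2}\big)\,\mathcal{C}_{M,M^*}^{-1/2}.
\end{equation*}
Here the square root is taken via the standard balanced polar decomposition $M=W^{1/2}UW^{1/2}$ of \cite{AEK20}; in the bulk, $W\sim 1$ and $U$ is unitary. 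The middle operator is then a self-adjoint (indeed positive) operator $\mathcal{F}$ conjugated by an operator that is bounded with bounded inverse. Up to such bounded similarity, $R$ and $L$ are the right and left eigenvectors of a self-adjoint operator, plus the unitary twist $\mathcal{C}_{U,U^*}$. The perturbation theory of \cite{AEK20} for $\mathcal{C}_{U,U^*}\mathcal{F}$ (Perron--Frobenius for $\mathcal{F}$ together with $\lVert\mathcal{F}\rVert=1-O(\eta_*)$) shows that $R$ and $L$ are both comparable to the positive Perron vector, up to conjugation by bounded operators. Hence $|\langle L,R\rangle|\gtrsim 1$. As a cross-check, $\Im M$ is positive definite with $\Im M\gtrsim 1$ by flatness \eqref{eq: 294a}, so $\Im M$ is itself a good approximate positive eigenvector.

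We expect this last step, the uniform non-orthogonality of $L$ and $R$, to be the main obstacle. The first thing we would try is a more direct route. Apply \eqref{eq: 2617} to the adjoint, $\mathcal{B}_{z,\bar z}^*[\,\cdot\,]$, with the analogous identity in which $\mathcal{S}\mathcal{C}_{M^*,M}$ acts on $M^{-1}(\Im M)(M^*)^{-1}$-type matrices. This exhibits an explicit approximate left eigenvector $\widetilde L\approx \mathcal{S}[\Im M]$-like, which is positive definite; this should give $L\propto \widetilde L+O(\eta_*)$. Then $\langle L,\Im M\rangle$ is, up to normalization, the trace of a product of two positive definite matrices with eigenvalues $\sim 1$, which is $\gtrsim 1$. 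This yields the bound after normalization, with the $O(\eta_*)$ corrections again controlled by Corollary \ref{cor: 953}.
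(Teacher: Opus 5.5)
Your proof of \eqref{eq: 2995e} is the paper's argument in a slightly different order. The paper defines $E_{\eta_*}\coloneqq(1-\Pi_{z,\bar z})[\Im M]$ and then derives $E_{\eta_*}=\eta_*\mathcal{B}_{z,\bar z}^{-1}(1-\Pi_{z,\bar z})[MM^*]$; you invert \eqref{eq: 2780} directly. Your bound $|\gamma|\sim1$ via $\lVert\Im M\rVert_{\textnormal{hs}}\ge\langle\Im M\rangle\gtrsim1$ is a little more direct than the paper's, which passes through the first half of \eqref{eq: 2813a}.

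The lower bound $\langle L,\Im M\rangle\gtrsim1$ is where you genuinely differ. The paper (Lemma \ref{lem: 2809}) argues as follows:
\begin{itemize}
\item it takes $L(z,\bar z)\ge0$ from Krein--Rutman;
\item it uses the upper flatness bound and $\lVert L\rVert_{\textnormal{hs}}=1$ to get $\langle L\rangle\gtrsim1$;
\item it uses the lower flatness bound to get $L\ge c'I$, so $\langle L,\Im M\rangle\ge c'\langle\Im M\rangle$.
\end{itemize}
Your ``direct route'' is correct, and it is the argument you should actually present. Since $\Im(-M^{-1})=M^{-1}(\Im M)(M^*)^{-1}=(M^*)^{-1}(\Im M)M^{-1}=\eta_*+\mathcal{S}[\Im M]$, there is an exact adjoint counterpart of \eqref{eq: 2780}:
\begin{equation*}
\mathcal{B}_{z,\bar z}^{*}\big[\Im(-M^{-1})\big]=\Im(-M^{-1})-\mathcal{S}\big[M^{*}\Im(-M^{-1})M\big]=\Im(-M^{-1})-\mathcal{S}[\Im M]=\eta_{*}I.
\end{equation*}
The operator $(\mathcal{B}_{z,\bar z}^{-1}(1-\Pi_{z,\bar z}))^{*}$ is bounded by Corollary \ref{cor: 953}. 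The same projection argument with it gives $L=\widetilde L/\lVert\widetilde L\rVert_{\textnormal{hs}}+O(\eta_*)$ (after fixing the phase of $L$), where $\widetilde L\coloneqq\Im(-M^{-1})\ge c_{\textnormal{flat}}\langle\Im M\rangle$. Hence $\langle L,\Im M\rangle\gtrsim\langle\Im M\rangle^{2}\gtrsim1$.

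This route gives you explicit approximations of both eigenvectors: $R\propto\Im M$ and $L\propto\Im(-M^{-1})$, up to $O(\eta_*)$. It uses only the lower flatness bound. It also avoids Krein--Rutman, and with it the implicit step of identifying $1-\beta_{z,\bar z}$ as the Perron--Frobenius eigenvalue of $\mathcal{S}\mathcal{C}_{M^*,M}$. The paper's route gives instead the exact pointwise bound $L\ge c'I$.

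Your symmetrization argument, by contrast, is not a proof as written. The operator $\mathcal{C}_{M,M^*}$ is not self-adjoint, so $\mathcal{C}_{M,M^*}^{1/2}$ has no canonical meaning; the correct form is a similarity by $\mathcal{C}_{Q^*,Q}$ with $M=Q^*UQ$. More importantly, the claim that both $L$ and $R$ are comparable to the Perron vector of $\mathcal{F}$ is exactly what needs proof. For a general unitary twist, the left and right eigenvectors of $\mathcal{C}_{U,U^*}\mathcal{F}$ can be nearly orthogonal. The claim holds here only because the Perron vector is $O(\eta_*)$-close to a multiple of $\Im U$, which commutes with $U$ and is therefore fixed by $\mathcal{C}_{U,U^*}$. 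You do not state this point.
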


Applying \eqref{eq: 2995e} of Claim \ref{claim: 3252}, we get
\begin{equation*}
	\langle L(z,\bar{z}),R(z,\bar{z})\rangle 
	= \gamma^{-1}\langle L(z,\bar{z}), \Im M(z)\rangle
	+ \gamma^{-1}\langle L(z,\bar{z}), E_{\eta_{*}}\rangle,
\end{equation*}
and the desired estimate \eqref{eq: 2612a} follows from \eqref{eq: 2813a}.
Thus we completed the proof of Proposition \ref{prop: 2625} assuming Claim \ref{claim: 2917a}--\ref{claim: 3252} (to be proven below).

\end{proof}

Since $\mathcal{S}$ and $\mathcal{C}_{M,M^{*}}$ are positivity-preserving, by the Krein--Rutman theorem (a generalization of the Perron--Frobenius theorem), we assume $R(z,\bar{z})$ and $L(z,\bar{z})$ are positive definite, without loss of generality, in the rest of our argument.
Before showing of Claim \ref{claim: 2917a}--\ref{claim: 3252}, we state the following lemma as a prerequisite.

\begin{lemma}\label{lem: 2809}
Set $L(z,\bar{z})$ to be positive definite without loss of generality.
There exists a small constant $c'>0$ such that
\begin{equation}\label{eq: 3601}
	L(z,\bar{z}) \ge c'I.
\end{equation}
\end{lemma}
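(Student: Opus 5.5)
The plan is to use the left-eigenvector equation of $\mathcal{B}_{z,\bar z}$ and bound $L$ below by the flatness \eqref{eq: 294a}. Write $L\equiv L(z,\bar z)$ and $\beta\equiv\beta_{z,\bar z}$. Since $\mathcal{C}_{M,M^*}^*=\mathcal{C}_{M^*,M}$ and $\mathcal{S}$ is self-adjoint and positivity preserving, the relation $\mathcal{B}_{z,\bar z}^*[L]=\bar\beta L$ reads
\begin{equation*}
	(1-\bar\beta)L = \mathcal{S}\mathcal{C}_{M^*,M}[L] = \mathcal{S}[M^*LM].
\end{equation*}
Because $L\ge 0$, the matrix $M^*LM$ is positive semi-definite. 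Flatness \eqref{eq: 294a} then gives
\begin{equation*}
	\mathcal{S}[M^*LM]\ge c_{\text{flat}}\langle M^*LM\rangle I.
\end{equation*}
Lemma \ref{lem: 792} gives $|\beta|\sim\eta_*\ll1$, so $|1-\bar\beta|\le 2$. Since $L\ge0$ and $\mathcal{S}[M^*LM]\ge0$, the scalar $1-\bar\beta$ must be a positive real; positivity of both sides forces this once $L\neq0$. We conclude
\begin{equation*}
	L\ge \tfrac{c_{\text{flat}}}{2}\langle M^*LM\rangle I.
\end{equation*}

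It remains to bound $\langle M^*LM\rangle$ from below. Here we use $\lVert M^{-1}\rVert\lesssim1$ from \eqref{eq: 949d}. This gives $MM^*\ge \lVert M^{-1}\rVert^{-2}I\gtrsim I$, and hence
\begin{equation*}
	\langle M^*LM\rangle=\langle L^{1/2}MM^*L^{1/2}\rangle\gtrsim\langle L\rangle.
\end{equation*}
Finally we need $\langle L\rangle\gtrsim1$ given the normalization $\lVert L\rVert_{\mathrm{hs}}=1$. For $L\ge0$ we have $\langle L^2\rangle\le\lVert L\rVert\langle L\rangle$, so it suffices to show $\lVert L\rVert\lesssim1$.

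The main obstacle is this operator-norm bound, since HS normalization alone does not control $\lVert L\rVert$. The plan is to use the eigen-equation once more together with the upper flatness bound:
\begin{equation*}
	\lVert L\rVert\le\frac{\lVert\mathcal{S}[M^*LM]\rVert}{|1-\bar\beta|}\lesssim \lVert \mathcal{S}\rVert_{\mathrm{hs}\to\lVert\cdot\rVert}\lVert M\rVert^2\lVert L\rVert_{\mathrm{hs}}\lesssim1.
\end{equation*}
This uses \eqref{eq: 954d} and $\lVert M\rVert\lesssim1$ from \eqref{eq: 949d}; equivalently one can use $\mathcal{S}[X]\le C_{\text{flat}}\langle X\rangle$ directly. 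Combining the steps, $\langle L\rangle\ge \langle L^2\rangle/\lVert L\rVert\gtrsim1$, so $\langle M^*LM\rangle\gtrsim1$ and therefore $L\ge c'I$.
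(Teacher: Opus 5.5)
Your proposal is correct and follows essentially the same route as the paper. Both arguments start from the left-eigenvector equation $(1-\beta)L=\mathcal{S}[M^*LM]$, use upper flatness (or \eqref{eq: 954d}) together with $\lVert L\rVert_{\mathrm{hs}}=1$ to get $\langle L\rangle\gtrsim 1$, and then use lower flatness with $\lambda_{\min}(MM^*)=\lVert M^{-1}\rVert^{-2}\gtrsim 1$ to conclude $L\gtrsim\langle L\rangle\gtrsim 1$. The only cosmetic differences are that you obtain $\langle L\rangle\gtrsim 1$ through $\langle L^2\rangle\le\lVert L\rVert\langle L\rangle$ rather than by squaring the operator inequality $L\lesssim\langle L\rangle$, and that you explicitly justify why $1-\bar\beta$ is a positive real.
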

\begin{proof}
Set $L\equiv L(z,\bar{z})$ for brevity.
Recall the flatness condition \eqref{eq: 294a}.
From $\mathcal{B}_{z,\bar{z}}^{*}[L]=\bar{\beta}_{z,\bar{z}}L$, we derive $\mathcal{S}\mathcal{C}_{M^{*},M}[L]=(1-\beta_{z,\bar{z}})L$.
Using the flatness and $L\ge 0$,
\begin{equation*}
	(1-\beta_{z,\bar{z}})L = \mathcal{S}[M^{*}LM] \le C_{\text{flat}}\langle M^{*}LM \rangle 
	\lesssim \langle L \rangle,
\end{equation*}
where we used Lemma \ref{lem: 354} for the last inequality.
After squaring, we get
\begin{equation*}
	(1-\beta_{z,\bar{z}})^{2}L^{2} \lesssim \langle L \rangle^{2} \quad\text{and hence}\quad
	(1-\beta_{z,\bar{z}})^{2}\langle L^{2} \rangle \lesssim \langle L \rangle^{2}.
\end{equation*}
Recall that $\langle L^{2} \rangle=\langle L^{*}L \rangle=\lVert L\rVert_{\text{hs}}^{2}=1$ and $1-\beta_{z,\bar{z}}= 1+o(1)$. Thus we have
\begin{equation}\label{eq: 3571}
	\langle L(z,\bar{z}) \rangle \gtrsim 1.
\end{equation}

Again, using the flatness, but in the other direction for this time,
\begin{equation*}
	(1-\beta_{z,\bar{z}})L = \mathcal{S}[M^{*}LM] \ge c_{\text{flat}}\langle M^{*}LM \rangle = c_{\text{flat}}\langle MM^{*}L \rangle
	\ge c_{\text{flat}}\lambda_{\min}(MM^{*}) \langle L\rangle.
\end{equation*}
Due to the fact $\lambda_{\min}(MM^{*})=\lVert M^{-1}\rVert^{-2}$
and Lemma \ref{lem: 354},
we have
\begin{equation*}
	(1-\beta_{z,\bar{z}})L \gtrsim \langle L\rangle.
\end{equation*}
Combining with \eqref{eq: 3571}, we obtain \eqref{eq: 3601}.
\end{proof}

We first prove Claim \ref{claim: 3252} and then Claim \ref{claim: 2917a}.

\begin{proof}[Proof of Claim \ref{claim: 3252}]

Let $\Pi\equiv \Pi_{z,\bar{z}}$ be the spectral projection as in \eqref{eq: 2411} for $w=\bar{z}$.
We set $R\equiv R(z,\bar{z})$ and $L\equiv L(z,\bar{z})$.
By defining $Q\coloneqq 1-\Pi$, we have
\begin{equation*}
	\Im M = \Pi[\Im M] + Q[\Im M] = \gamma R + E_{\eta_{*}},
\end{equation*}
where $M\equiv M(z)$ and $E_{\eta_{*}} \coloneqq Q[\Im M].$
In addition, for $\mathcal{B}\equiv \mathcal{B}_{z,\bar{z}}$,
\begin{equation*}
	\mathcal{B}[\Im M] = \mathcal{B}[\gamma R + E_{\eta_{*}}] = \gamma\beta_{z,\bar{z}} R + \mathcal{B}[E_{\eta_{*}}].
\end{equation*}
By \eqref{eq: 2780},
\begin{equation*}
	\gamma\beta_{z,\bar{z}} R + \mathcal{B}[E_{\eta_{*}}] = \eta_{*} MM^{*}.
\end{equation*}
Applying the operator $Q$,
\begin{equation*}
	Q[\gamma\beta_{z,\bar{z}} R + \mathcal{B}[E_{\eta_{*}}]] = \eta_{*} Q[MM^{*}].
\end{equation*}
Note that
\begin{equation*}
	Q[\gamma\beta_{z,\bar{z}} R + \mathcal{B}[E_{\eta_{*}}]] = Q\mathcal{B}[E_{\eta_{*}}] = \mathcal{B}Q[E_{\eta_{*}}]
	= \mathcal{B}Q^{2}[\Im M] = \mathcal{B}Q[\Im M] = \mathcal{B}[E_{\eta_{*}}].
\end{equation*}
Thus,
\begin{equation*}
	\mathcal{B}[E_{\eta_{*}}] = \eta_{*} Q[MM^{*}],
\end{equation*}
and, we deduce that
\begin{equation*}
	E_{\eta_{*}} = \eta_{*} \mathcal{B}^{-1}Q[MM^{*}].
\end{equation*}
By Corollary \ref{cor: 953} and Lemma \ref{lem: 354}, we get
\begin{equation*}
	\lVert E_{\eta_{*}}\rVert_{\textnormal{hs}} = O(\eta_{*}),
\end{equation*}
which completes the proof of \eqref{eq: 2995e}.

Next we shall prove \eqref{eq: 2813a}.
Note that we have the lower bound for $L(z,\bar{z})$ by Lemma \ref{lem: 2809} and thus the first part of \eqref{eq: 2813a} is obtained;
\begin{equation}\label{eq: 2829a}
	\langle L(z,\bar{z}), \Im M(z)\rangle \ge c'\langle \Im M(z)\rangle \gtrsim \rho(x) \gtrsim 1,
\end{equation}
where we used the assumption $x\in[E_{0}-\hat{\eps},E_{0}+\hat{\eps}]\subset\mathcal{I}(3\eps_{0}/2)$ (essentially meaning that $x$ is inside the spectral bulk).
For the second part of \eqref{eq: 2813a}, we use \eqref{eq: 2995e};
\begin{equation*}
	\lVert \Im M(z)\rVert_{\text{hs}}
	= \gamma + o(1).
\end{equation*}
Since $|\langle L, \Im M(z) \rangle|\le \lVert \Im M(z)\rVert_{\text{hs}} \le \lVert M\rVert_{\text{hs}},$
the second part of \eqref{eq: 2813a} follows from \eqref{eq: 949d} and \eqref{eq: 2829a}.

\end{proof}

\begin{proof}[Proof of Claim \ref{claim: 2917a}] 
Fix $z$ and define (locally for this proof)
\begin{equation*}
	N(w)\coloneqq \langle \Pi_{z,w}^{*}[L(z,\bar{z})], M(z)M(w)\rangle, \quad
	D(w)\coloneqq \langle \Pi_{z,w}^{*}[L(z,\bar{z})], M(z)-M(w)\rangle.
\end{equation*}
By Lemma \ref{lem: 523}, we observe that $L(z,w)=\Pi_{z,w}^{*}[L(z,\bar{z})]/\lVert \Pi_{z,w}^{*}[L(z,\bar{z})]\rVert_{\text{hs}}$ and $\lVert \Pi_{z,w}^{*}[L(z,\bar{z})]\rVert_{\text{hs}}\sim 1$ in a neighborhood of $w=\bar{z}$.
Thus we have the identity $F(w)=N(w)/D(w)$ near $w=\bar{z}$.
By the quotient rule, we get $F'(w) = \mathcal{R}_{1}(w) + \mathcal{R}_{2}(w)$ where
\begin{align}
	\mathcal{R}_{1}(w) &= \frac{\langle \partial_{\bar{w}}\Pi_{z,w}^{*}[L(z,\bar{z})], M(z)M(w) \rangle D(w) - N(w) \langle \partial_{\bar{w}}\Pi_{z,w}^{*}[L(z,\bar{z})], M(z) - M(w) \rangle}{D^{2}(w)}, \label{eq: 2870qw}\\
	\mathcal{R}_{2}(w) &= \frac{\langle \Pi_{z,w}^{*}[L(z,\bar{z})], M(z)M'(w) \rangle D(w) - N(w) \langle \Pi_{z,w}^{*}[L(z,\bar{z})], M(z) - M'(w) \rangle}{D^{2}(w)}. \label{eq: 2871qw}
\end{align}
We need to estimate $\mathcal{R}_{i}(w)$ for $w=\bar{z}$.
We have $D(\bar{z})\gtrsim 1$ because, setting $w=\bar{z}$ and recalling $\Im M=(M-M^{*})/2\mathrm{i}$, the first part of the estimate \eqref{eq: 2813a} implies
\begin{equation*}
	|\langle L(z,\bar{z}),\Pi_{z,\bar{z}}[M(z)-M(\bar{z})]\rangle| \gtrsim 1.
\end{equation*}
Moreover, using \eqref{eq: 572a}--\eqref{eq: 482}, \eqref{eq: 949d} and \eqref{eq: 796}, all numerators and denominators in \eqref{eq: 2870qw}--\eqref{eq: 2871qw} are $O(1)$ respectively.
Therefore $|F'(\bar{z})|$ is bounded as desired.
\end{proof}

\subsubsection{Proof of Claim \ref{claim: 2343}}\label{sec: 2944} 

We need the following technical lemma to obtain Claim \ref{claim: 2343}.

\begin{lemma}\label{lem: 2587}
Consider $\hat{\eps}\in(0,\beta_{*}/3)$ satisfying $[E_{0}-\hat{\eps},E_{0}+\hat{\eps}]\subset\mathcal{I}(3\eps_{0}/2)$.
Let $$(z,w)=\pm(x+\mathrm{i}\eta_{*},y-\mathrm{i}\eta_{*}),$$ with $x,y\in[E_{0}-\hat{\eps},E_{0}+\hat{\eps}]$ and $\eta_{*}=N^{-100}$.
For $|x-y|\ge N^{\delta}\eta_{*}$, we have
\begin{equation*}
	|\beta_{z,w}\langle L, R\rangle| \sim |x-y|.
\end{equation*}
\end{lemma}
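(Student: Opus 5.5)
Here is the plan. I restrict to $(z,w)=(x+\mathrm{i}\eta_{*},y-\mathrm{i}\eta_{*})$; the case with the opposite sign follows by conjugation, and the argument is symmetric. The idea is to reuse the first-order perturbative expansion \eqref{eq: 2931} from the proof of Claim \ref{claim: 2339}. That expansion came from Lemma \ref{lem: 523} with $(z_0,w_0)=(z,\bar z)$ and $(z_1,w_1)=(z,w)$, and reads
\begin{equation*}
	\beta_{z,w}\langle L,R\rangle = \beta_{z,\bar z}\langle L(z,\bar z),R(z,\bar z)\rangle + (1-\beta_{z,\bar z})\langle L(z,\bar z),R(z,\bar z)M^{-1}(\bar z)M'(\bar z)\rangle (x-y) + O(|x-y|^{2}).
\end{equation*}
I will control the three terms separately. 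Since $|\langle L,R\rangle|\le 1$, the first term is $O(\eta_{*})$ by \eqref{eq: 937d}, which gives $|\beta_{z,\bar z}|\sim\eta_{*}$. By Proposition \ref{prop: 2625} and $|1-\beta_{z,\bar z}|\sim1$, the coefficient of $(x-y)$ has modulus in $[c,C]$; the upper bound $C$ comes from \eqref{eq: 796}.

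Next I handle the upper bound. Using $|x-y|\ge N^{\delta}\eta_{*}$, we get $\eta_{*}\le N^{-\delta}|x-y|$. Moreover $|x-y|\le 2\hat\eps\lesssim1$, so $|x-y|^2\lesssim |x-y|$. Together these give $|\beta_{z,w}\langle L,R\rangle|\lesssim |x-y|$ directly. Alternatively, the upper bound also follows from \eqref{eq: 937d}, since $|\beta_{z,w}|\lesssim|z-w|\lesssim |x-y|+\eta_*$.

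For the lower bound, the triangle inequality gives $|\beta_{z,w}\langle L,R\rangle|\ge c|x-y| - C\eta_{*} - C'|x-y|^2$. The $\eta_*$ term is absorbed for large $N$. The quadratic term is the obstacle, because it is only small when $|x-y|\le c/(2C')$, and $\hat\eps$ is a fixed constant that may not be small enough for this. I see two ways to deal with it. One is to shrink $\hat\eps$; this is allowed, since in Proposition \ref{prop: 478} $\hat\eps$ is eventually taken arbitrarily small. If instead the full range $|x-y|\le 2\hat\eps$ must be covered, I would argue as follows for $|x-y|\gtrsim1$. First, $|\beta_{z,w}|\sim\min\{1,|z-w|\}\sim1$ by \eqref{eq: 937d}. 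Second, $\langle L(z,w),R(z,w)\rangle$ stays bounded away from zero, because $\beta_{z,w}$ remains an isolated simple eigenvalue inside the disk of radius $\eps_*$ (Lemma \ref{lem: 792}, with $|z-w|\le\beta_*$). Concretely, $|\langle L,R\rangle|^{-1}=\lVert\Pi_{z,w}\rVert_{\mathrm{hs}\to\mathrm{hs}}$, and this is bounded via the contour integral \eqref{eq: 482} together with \eqref{eq: 572a}. Combining the two regimes, with the threshold between them chosen so that the quadratic error is dominated, yields $|\beta_{z,w}\langle L,R\rangle|\sim|x-y|$.
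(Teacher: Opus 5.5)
Your proposal is correct and follows the paper's proof, which derives the lemma from the expansion \eqref{eq: 2931}, the eigenvalue bound \eqref{eq: 937d}, and Proposition~\ref{prop: 2625}; your separate treatment of the regime $|x-y|\sim 1$, where the $O(|x-y|^{2})$ error is not automatically dominated, fills in a point the paper leaves implicit. In fact your identity $|\langle L,R\rangle|^{-1}=\lVert\Pi_{z,w}\rVert_{\lVert\cdot\rVert_{\mathrm{hs}}\to\lVert\cdot\rVert_{\mathrm{hs}}}\lesssim 1$ (from \eqref{eq: 482} and \eqref{eq: 572a}), together with $|\beta_{z,w}|\sim|z-w|\sim|x-y|$ from \eqref{eq: 937d}, already proves the whole lemma for all $N^{\delta}\eta_{*}\le|x-y|\le 2\hat{\eps}$, with no regime split needed.
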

\begin{proof}
This result is a consequence of the estimate \eqref{eq: 2931}, \eqref{eq: 937d} and Proposition \ref{prop: 2625}.
\end{proof}

From now on we will give the proof of Claim \ref{claim: 2343} by showing the estimates \eqref{eq: 2349}--\eqref{eq: 2357}.

\begin{proof}[Proof of \eqref{eq: 2349} and \eqref{eq: 2353}]
We only prove \eqref{eq: 2349} for concision. The other estimate \eqref{eq: 2353} follows by essentially the same argument.
Note that
\begin{equation*}
	\langle L, \mathcal{C}_{M,\widetilde{M}'}\mathcal{S}\mathcal{B}_{z,w}^{-1}(1-\Pi)[E^{ab}]\rangle
	= \langle (\mathcal{B}_{z,w}^{-1}(1-\Pi))^{*}\mathcal{S}\mathcal{C}_{M^{*},(\widetilde{M}')^{*}}[L], E^{ab}\rangle.
\end{equation*}
We have
\begin{multline*}
	\sum_{a,b} N \langle E^{ab},M'M^{-1}R\rangle\langle (\mathcal{B}_{z,w}^{-1}(1-\Pi))^{*}\mathcal{S}\mathcal{C}_{M^{*},(\widetilde{M}')^{*}}[L], E^{ab}\rangle \\
	= \langle (\mathcal{B}_{z,w}^{-1}(1-\Pi_{z,w}))^{*}\mathcal{S}\mathcal{C}_{M^{*},(\widetilde{M}')^{*}}[L],M'M^{-1}R \rangle \\
	\le \lVert (\mathcal{B}_{z,w}^{-1}(1-\Pi_{z,w}))^{*}\mathcal{S}\mathcal{C}_{M^{*},(\widetilde{M}')^{*}}[L]\rVert_{\text{hs}} \lVert M'M^{-1}R\rVert_{\text{hs}} = O(1),
\end{multline*}
where we used Corollary \ref{cor: 953} for the last step.
Applying Lemma \ref{lem: 2587}, we obtain \eqref{eq: 2349}.

\end{proof}

\begin{proof}[Proof of \eqref{eq: 2357}]

By the cyclic invariance of the trace,
\begin{multline*}
	\Tr_{\text{op}}( \mathcal{C}_{M'M^{-1},I}\mathcal{B}_{z,w}^{-1}(1-\Pi_{z,w})\mathcal{C}_{M,\widetilde{M}'}\mathcal{S}\mathcal{B}_{z,w}^{-1}(1-\Pi_{z,w}) ) \\
	= \Tr_{\text{op}}( \mathcal{B}_{z,w}^{-1}(1-\Pi_{z,w})\mathcal{C}_{M'M^{-1},I}\mathcal{B}_{z,w}^{-1}(1-\Pi_{z,w})\mathcal{C}_{M,\widetilde{M}'}\mathcal{S} ).
\end{multline*}
We will use the decomposition $\mathcal{B}_{z,w}^{-1}= 1 + \mathcal{B}_{z,w}^{-1}\mathcal{C}_{M,\widetilde{M}}\mathcal{S}.$
Then,
\begin{equation}\label{eq: 3299e}
	\mathcal{B}_{z,w}^{-1}(1-\Pi_{z,w})\mathcal{C}_{M'M^{-1},I}\mathcal{B}_{z,w}^{-1}(1-\Pi_{z,w})\mathcal{C}_{M,\widetilde{M}'}\mathcal{S}
	= \mathcal{D}_{1} + \mathcal{D}_{2} + \mathcal{D}_{3},
\end{equation}
where
\begin{align*}
	\mathcal{D}_{1} &\coloneqq (1-\Pi_{z,w})\mathcal{C}_{M'M^{-1},I}(1-\Pi_{z,w})\mathcal{C}_{M,\widetilde{M}'}\mathcal{S}, \\
	\mathcal{D}_{2} &\coloneqq \mathcal{B}_{z,w}^{-1}\mathcal{C}_{M,\widetilde{M}}\mathcal{S} (1-\Pi_{z,w})\mathcal{C}_{M'M^{-1},I}(1-\Pi_{z,w})\mathcal{C}_{M,\widetilde{M}'}\mathcal{S}, \\
	\mathcal{D}_{3} &\coloneqq \mathcal{B}_{z,w}^{-1}(1-\Pi_{z,w})\mathcal{C}_{M'M^{-1},I} \mathcal{B}_{z,w}^{-1}\mathcal{C}_{M,\widetilde{M}}\mathcal{S} (1-\Pi_{z,w})\mathcal{C}_{M,\widetilde{M}'}\mathcal{S}.
\end{align*}
We will estimate each part separately.
For the first part $\Tr_{\text{op}}( \mathcal{D}_{1} )$,
we claim that
\begin{equation}\label{eq: 2786}
	\Tr_{\text{op}}( \mathcal{D}_{1} ) = O(1).
\end{equation}
Note that $\mathcal{D}_{1} = \mathcal{D}_{1,1} + \mathcal{D}_{1,2} + \mathcal{D}_{1,3}$ where
\begin{align}
	\mathcal{D}_{1,1} \coloneqq \mathcal{C}_{M'M^{-1},I}\mathcal{C}_{M,\widetilde{M}'}\mathcal{S}, \qquad
	\mathcal{D}_{1,2} \coloneqq - \Pi_{z,w} \mathcal{C}_{M'M^{-1},I}\mathcal{C}_{M,\widetilde{M}'}\mathcal{S}, \nn\\
	\mathcal{D}_{1,3} \coloneqq - (1-\Pi_{z,w})\mathcal{C}_{M'M^{-1},I}\Pi_{z,w} \mathcal{C}_{M,\widetilde{M}'}\mathcal{S}. \qquad\qquad \label{eq: 2690}
\end{align}
We first estimate $\Tr_{\text{op}}(\mathcal{D}_{1,1})$
and then bound the remaining part $\Tr_{\text{op}}(\mathcal{D}_{1,2}+\mathcal{D}_{1,3})$.
Recalling that $(\mathcal{S}[E^{ab}])_{xy} = N^{-1}\kappa(xa,by)$, we get
\begin{equation*}
	\Tr_{\text{op}}(\mathcal{D}_{1,1})
	= \sum_{a,b} N \langle E^{ab}, \mathcal{C}_{M'M^{-1},I}\mathcal{C}_{M,\widetilde{M}'}\mathcal{S}[E^{ab}] \rangle \\
	= \frac{1}{N} \sum_{a,b}\sum_{x,y} \widetilde{M}'_{yb}M'_{ax}\kappa(xa,by).
\end{equation*}
Since we have the decomposition $\kappa=\kappa_{c}+\kappa_{d}$, it follows that
\begin{equation}\label{eq: 3113a}
	\frac{1}{N} \sum_{a,b}\sum_{x,y} \widetilde{M}'_{yb} M'_{ax}\kappa_{c}(xa,by) \\
	= \frac{1}{N} \sum_{r,s}\sum_{a,x} (\widetilde{M}')^{(r,s)}_{xa} M'_{ax} \kappa_{c}(xa,(a+s)(x+r)),
\end{equation}
where $(\widetilde{M}')^{(r,s)}$ is obtained from $\widetilde{M}'$ by setting $(\widetilde{M}')^{(r,s)}_{xa}\coloneqq\widetilde{M}'_{(x+r)(a+s)}$, i.e., shifting rows and columns with respect to $r$ and $s$ (modulo $N$ respectively) with $|r|,|s|\le N/2$.
In \eqref{eq: 3113a}, we used the notational convention for $(a+s)$ and $(x+r)$ as in \eqref{eq: 241b}--\eqref{eq: 245b}.

Define the matrix $K^{(r,s)}$ by $K^{(r,s)}(a,x)\coloneqq M'_{ax} \kappa_{c}(xa,(a+s)(x+r))$.
Then,
\begin{equation*}
	\eqref{eq: 3113a}
	= \sum_{r,s} \langle \overline{(\widetilde{M}')^{(r,s)}}, K^{(r,s)} \rangle.
\end{equation*}
Together with the assumption \eqref{eq: 241b}, we observe
\begin{equation}\label{eq: 3257a}
	|\langle \overline{(\widetilde{M}')^{(r,s)}}, K^{(r,s)} \rangle| \le \lVert \overline{(\widetilde{M}')^{(r,s)}}\rVert_{\text{hs}} \lVert K^{(r,s)}\rVert_{\text{hs}} \lesssim C_{r,s},
\end{equation}
and thus it follows that
\begin{equation*}
	\bigg| \sum_{r,s} \langle \overline{(\widetilde{M}')^{(r,s)}}, K^{(r,s)} \rangle \bigg| \lesssim 1.
\end{equation*}
Similarly, via the same argument but with the assumption \eqref{eq: 245b} instead of \eqref{eq: 241b},
\begin{equation}\label{eq: 3271a}
    \bigg| \frac{1}{N} \sum_{r,s}\sum_{a,x} (\widetilde{M}')^{(s,r)}_{ax} M'_{ax}\kappa_{d}(xa,(x+r)(a+s)) \bigg| \lesssim 1,
\end{equation}
which implies
\begin{equation}\label{eq: 2771}
	\Tr_{\text{op}}(\mathcal{D}_{1,1})
	= O(1).
\end{equation}

For the remaining parts $\Tr_{\text{op}}(\mathcal{D}_{1,2}+\mathcal{D}_{1,3})$,
\begin{multline*}
	\Tr_{\text{op}}( \Pi_{z,w} \mathcal{C}_{M'M^{-1},I}\mathcal{C}_{M,\widetilde{M}'}\mathcal{S} )
	= \frac{1}{\langle L,R\rangle} \sum_{a,b} N\langle E^{ab}, R\rangle \langle L,\mathcal{C}_{M'M^{-1},I}\mathcal{C}_{M,\widetilde{M}'}\mathcal{S}[E^{ab}]\rangle \\ 
	\lesssim \lVert \mathcal{S}\mathcal{C}_{M^{*},(\widetilde{M}')^{*}}\mathcal{C}_{(M'M^{-1})^{*},I}[L]\rVert_{\text{hs}} \lVert R\rVert_{\text{hs}},
\end{multline*}
and similarly,
\begin{multline*}
	\Tr_{\text{op}}( (1-\Pi)\mathcal{C}_{M'M^{-1},I}\Pi \mathcal{C}_{M,\widetilde{M}'}\mathcal{S} )
	= \frac{1}{\langle L,R\rangle} \sum_{a,b} N\langle E^{ab}, (1-\Pi)\mathcal{C}_{M'M^{-1},I}[R]\rangle \langle L, \mathcal{C}_{M,\widetilde{M}'}\mathcal{S}[E^{ab}]\rangle \\
	\lesssim \lVert \mathcal{S}\mathcal{C}_{M^{*},(\widetilde{M}')^{*}}[L]\rVert_{\text{hs}} \lVert (1-\Pi)\mathcal{C}_{M'M^{-1},I}[R]\rVert_{\text{hs}},
\end{multline*}
where we used $|\langle L,R\rangle| \sim 1$ (by \eqref{eq: 937d} and Lemma \ref{lem: 2587}) for both cases.
Thus,
\begin{equation}\label{eq: 2808}
	\Tr_{\text{op}}(\mathcal{D}_{1,2}+\mathcal{D}_{1,3})
	= O(1).
\end{equation}
In summary, combining \eqref{eq: 2690}, \eqref{eq: 2771} and \eqref{eq: 2808}, we indeed obtain the claim \eqref{eq: 2786}.

Next we shall show
\begin{equation}\label{eq: 3205}
    \Tr_{\text{op}}( \mathcal{D}_{2} )
	= O(|x-y|^{-1}).
\end{equation}
Introduce the decomposition $\mathcal{D}_{2}=\mathcal{D}_{2,1}+\mathcal{D}_{2,2}$ where
\begin{align*}
	\mathcal{D}_{2,1} &\coloneqq \mathcal{B}_{z,w}^{-1}\mathcal{C}_{M,\widetilde{M}}\mathcal{S} \mathcal{C}_{M'M^{-1},I}\mathcal{C}_{M,\widetilde{M}'}\mathcal{S}, \\
	\mathcal{D}_{2,2} &\coloneqq -\mathcal{B}_{z,w}^{-1}\mathcal{C}_{M,\widetilde{M}}\mathcal{S} (1-\Pi_{z,w})\mathcal{C}_{M'M^{-1},I} \Pi_{z,w}\mathcal{C}_{M,\widetilde{M}'}\mathcal{S} -\mathcal{B}_{z,w}^{-1}\mathcal{C}_{M,\widetilde{M}}\mathcal{S} \Pi_{z,w}\mathcal{C}_{M'M^{-1},I} \mathcal{C}_{M,\widetilde{M}'}\mathcal{S}.
\end{align*}
For the terms with the rank-one projection $\Pi_{z,w}$, we apply the argument that we used to derive \eqref{eq: 2808} in a similar fashion. Then,
\begin{equation}\label{eq: 2803}
    \Tr_{\text{op}}( \mathcal{D}_{2,2} )
	= O(|x-y|^{-1}),
\end{equation}
where the factor $|x-y|^{-1}$ originated from $\mathcal{B}_{z,w}^{-1}$.
We omit the detail.

We now focus on estimating
\begin{equation}\label{eq: 3455e}
    \Tr_{\text{op}}( \mathcal{D}_{2,1} )
	= \sum_{a,b} \Big(\mathcal{B}_{z,w}^{-1}\mathcal{C}_{M,\widetilde{M}}\mathcal{S} \mathcal{C}_{M'M^{-1},I}\mathcal{C}_{M,\widetilde{M}'}\mathcal{S}[E^{ab}]\Big)_{ab}.
\end{equation}
For the second $\mathcal{S}$ in \eqref{eq: 3455e}, write $\mathcal{S}=\mathcal{S}_{c}+\mathcal{S}_{d}$, where
\begin{equation*}
	\mathcal{S}_{\#}[E^{ab}]
	= \sum_{x,y}\frac{1}{N}\kappa_{\#}(xa,by)E^{xy}, \quad \# = c,d.
\end{equation*}
Then, for the $\mathcal{S}_{c}$-part, we write
\begin{equation*}
	\mathcal{S}_{c}[E^{ab}]
	= \frac{1}{N}\sum_{s}\sum_{x}\kappa_{c}(xa,(a+r)(x+s))E^{x(x+s)},
\end{equation*}
where we apply the same convention for $r$ and $s$ as in \eqref{eq: 3113a}.
For the $\mathcal{S}_{c}$-part of \eqref{eq: 3455e},
\begin{multline}\label{eq: 2892}
	\sum_{a,b} \Big(\mathcal{B}_{z,w}^{-1}\mathcal{C}_{M,\widetilde{M}}\mathcal{S} \mathcal{C}_{M'M^{-1},I}\mathcal{C}_{M,\widetilde{M}'}\mathcal{S}_{c}[E^{ab}]\Big)_{ab} \\
	= \frac{1}{N} \sum_{r,s} \sum_{a} \bigg(\mathcal{B}_{z,w}^{-1}\mathcal{C}_{M,\widetilde{M}}\mathcal{S} \mathcal{C}_{M'M^{-1},I}\mathcal{C}_{M,\widetilde{M}'}\Big[\sum_{x}\kappa_{c}(xa,(a+r)(x+s))E^{x(x+s)}\Big]\bigg)_{a(a+r)}\\
	= O(|x-y|^{-1}),
\end{multline}
where we used \eqref{eq: 937d}, \eqref{eq: 954d}, and (by recalling \eqref{eq: 241b})
\begin{equation*}
	\Big\lVert \sum_{x}\kappa_{c}(xa,(a+r)(x+s))E^{x(x+s)} \Big\rVert_{\text{hs}}
	\le \max_{x,a} |\kappa_{c}(xa,(a+r)(x+s))| \le C_{r,s}.
\end{equation*}
Analogously, for the $\mathcal{S}_{d}$-part of \eqref{eq: 3455e},
\begin{multline}\label{eq: 2911}
	\sum_{a,b} \Big(\mathcal{B}_{z,w}^{-1}\mathcal{C}_{M,\widetilde{M}}\mathcal{S} \mathcal{C}_{M'M^{-1},I}\mathcal{C}_{M,\widetilde{M}'}\mathcal{S}_{d}[E^{ab}]\Big)_{ab} \\
	= \frac{1}{N} \sum_{r,s} \sum_{a,b} \kappa_{d}((b+s)a,b(a+r))(\mathcal{B}_{z,w}^{-1}\mathcal{C}_{M,\widetilde{M}}\mathcal{S} \mathcal{C}_{M'M^{-1},I}\mathcal{C}_{M,\widetilde{M}'}[E^{(b+s)(a+r)}])_{ab},
\end{multline}
where we also apply the same convention as in \eqref{eq: 3113a}.
Note that
\begin{equation*}
	(\mathcal{C}_{M'M^{-1},I}\mathcal{C}_{M,\widetilde{M}'}[E^{(b+s)(a+r)}])_{ij} = (M'E^{(b+s)(a+r)}\widetilde{M}')_{ij} = (M')_{i(b+s)}(\widetilde{M}')_{(a+r)j}.
\end{equation*}
Then,
\begin{equation}\label{eq: 3176gt}
	\mathcal{S}\mathcal{C}_{M'M^{-1},I}\mathcal{C}_{M,\widetilde{M}'}[E^{(b+s)(a+r)}]
	= \frac{1}{N} \sum_{i,j} \sum_{x,y} (M')_{i(b+s)}(\widetilde{M}')_{(a+r)j} \kappa(xi,jy)E^{xy}.
\end{equation}
Using the decomposition $\kappa = \kappa_{c} + \kappa_{d}$ again in \eqref{eq: 3176gt}
and shifting the indices,
we write the contribution of the $\kappa_{c}$-part from \eqref{eq: 3176gt} to the right-hand side of \eqref{eq: 2911} as follows:
\begin{multline*}
    \frac{1}{N} \sum_{r,s} \sum_{a,b} \kappa_{d}((b+s)a,b(a+r)) \\
	\times \Big(\mathcal{B}_{z,w}^{-1}\mathcal{C}_{M,\widetilde{M}} \Big[\frac{1}{N} \sum_{r',s'} \sum_{i} (M')_{i(b+s)}(\widetilde{M}')_{(a+r)(i+s')} \Big(\sum_{x}\kappa_{c}(xi,(i+s')(x+r'))E^{x(x+r')}\Big) \Big] \Big)_{ab} \\
	= \frac{1}{N^{2}} \sum_{r,s} \sum_{r',s'} \sum_{a,b,i} \kappa_{d}((b+s)a,b(a+r)) (M')_{i(b+s)}(\widetilde{M}')_{(a+r)(i+s')}\\
	\times \Big(\mathcal{B}_{z,w}^{-1}\mathcal{C}_{M,\widetilde{M}}\Big[\sum_{x}\kappa_{c}(xi,(i+s')(x+r'))E^{x(x+r')}\Big]\Big)_{ab}.
\end{multline*}
Denote $K_{ab}\coloneqq \kappa_{d}((b+s)a,b(a+r))$, $u_{a}\coloneqq (\widetilde{M}')_{(a+r)(i+s')}$,
$v_{b}\coloneqq (M')_{i(b+s)}$, and $$H_{ab}\coloneqq \Big(\mathcal{B}_{z,w}^{-1}\mathcal{C}_{M,\widetilde{M}}\Big[\sum_{x}\kappa_{c}(xi,(i+s')(x+r'))E^{x(x+r')}\Big]\Big)_{ab}.$$
Since
\begin{equation}\label{eq: 2949}
	\sum_{a,b} |K_{ab} u_{a} H_{ab} v_{b}|
	\le (\sum_{a,b}|K_{ab}H_{ab}|^{2})^{1/2} (\sum_{a}|u_{a}|^{2})^{1/2} (\sum_{b}|v_{b}|^{2})^{1/2},
\end{equation}
we conclude that
\begin{multline}\label{eq: 2956}
	\frac{1}{N^{2}} \sum_{r,s} \sum_{r',s'} \sum_{a,b,i} \kappa_{d}((b+s)a,b(a+r)) (M')_{i(b+s)}(\widetilde{M}')_{(a+r)(i+s')}\\
	\times \Big(\mathcal{B}_{z,w}^{-1}\mathcal{C}_{M,\widetilde{M}}\Big[\sum_{x}\kappa_{c}(xi,(i+s')(x+r'))E^{x(x+r')}\Big]\Big)_{ab} 
	= O(|x-y|^{-1}N^{-1/2}),
\end{multline}
where we used $\sqrt{N}\lVert \sum_{x}\kappa_{c}(xi,(i+s')(x+r'))E^{x(x+r')}\rVert_{\text{hs}} = O(\sqrt{N})$ and the assumption \eqref{eq: 245b}.
We estimate the contribution of the $\kappa_{d}$-part from \eqref{eq: 3176gt} to the right-hand side of \eqref{eq: 2911} analogously.
Combining \eqref{eq: 2892} and \eqref{eq: 2956}, we indeed have
\begin{equation}\label{eq: 2981}
	\Tr_{\text{op}}( \mathcal{D}_{2,1} )
	= O(|x-y|^{-1}).
\end{equation}
The estimate \eqref{eq: 3205} is obtained by \eqref{eq: 2803} and \eqref{eq: 2981}.

The last piece that we need to estimate is $\Tr_{\text{op}}( \mathcal{D}_{3} )$.
Together with Corollary \ref{cor: 953}, the same reasoning for \eqref{eq: 2808} and \eqref{eq: 2981} gives us
\begin{equation}\label{eq: 2996}
    \Tr_{\text{op}}( \mathcal{D}_{3} )
	= O(|x-y|^{-1}).
\end{equation}
Therefore, recalling \eqref{eq: 3299e}, we established \eqref{eq: 2357} by \eqref{eq: 2786}, \eqref{eq: 3205} and \eqref{eq: 2996}. 
\end{proof}

Since the estimates \eqref{eq: 2349}--\eqref{eq: 2357} are now proven,
we completed the proof of Claim \ref{claim: 2343}.
\qed

\subsection{Proof of Proposition \ref{prop: 2274a} and Lemma \ref{lem: 2129}}

\begin{proof}[Proof of Proposition \ref{prop: 2274a}]
One can regard this proposition as a corollary of Proposition \ref{prop: 478}.
First, from the proof of Proposition \ref{prop: 478}, we deduce for $|x-y|<N^{\delta}\eta_{*}$,
\begin{equation*}
	\partial_{w}\Tr(M'(x\pm\mathrm{i}\eta_{*})M^{-1}(x\pm\mathrm{i}\eta_{*})V(x\pm\mathrm{i}\eta_{*},y\mp\mathrm{i}\eta_{*})) = O(\eta_{*}^{-2}).
\end{equation*}
Since $|f(x)-f(y)|^2=O(N^{2\delta}\eta_{*}^2)$ for $|x-y|<N^{\delta}\eta_{*}$, we obtain the desired estimate \eqref{eq: 2276f}.
\end{proof}

\begin{proof}[Proof of Lemma \ref{lem: 2129}]
Denote $\mathcal{B}\coloneqq 1-\mathcal{C}_{M,\widetilde{M}}\mathcal{S}$ with $M\equiv M(z)$ and $\widetilde{M}\equiv\widetilde{M}(w)$ for brevity.
Let $\{\lambda_{i}\}$ be the eigenvalues of $\mathcal{B}^{*}\mathcal{B}$. We have
\begin{equation*}
	\log|\det(\mathcal{B})| = \frac{1}{2}\log\det(\mathcal{B}^{*}\mathcal{B}) = \frac{1}{2}\sum_{i}\log\lambda_{i}
	\le \frac{1}{2}\Tr_{\text{op}}(\mathcal{B}^{*}\mathcal{B}-1)
	\lesssim N^2,
\end{equation*}
where we used $\log t\le t-1$ for $t>0$ in the third step, and Lemma \ref{lem: 354} to get $\lVert \mathcal{B} \rVert_{\lVert\cdot\rVert_{\text{hs}}\to\lVert\cdot\rVert_{\text{hs}}}=O(1)$ in the last step.
On the other hand,
\begin{equation*}
	\log|\det(\mathcal{B})| = \frac{1}{2}\sum_{i}\log\lambda_{i} \ge \frac{N}{2}\log\lambda_{\min}(\mathcal{B}^{*}\mathcal{B}).
\end{equation*}
By \eqref{eq: 913d} in Lemma \ref{lem: 792}, $\lambda_{\min} \gtrsim \eta_{*}^{2}.$
Recalling $\eta_{*}=N^{-100}$, $\log|\det(\mathcal{B})| \ge - C N\log N$
for some large $C>0$. Thus,
\begin{equation*}
	\log|\det(1-\mathcal{C}_{M,\widetilde{M}}\mathcal{S})| = O(N^{2}),
\end{equation*}
which provides a bound for $|\mathcal{L}(z,w)|$, the first part of \eqref{eq: 3455}.

Recall
\begin{equation*}
	\partial_{z}\mathcal{L}(z,w) = \Tr_{\text{op}}(\mathcal{C}_{M'M^{-1},I}(1-\mathcal{C}_{M,\widetilde{M}}\mathcal{S})^{-1}\mathcal{C}_{M,\widetilde{M}}\mathcal{S}),
\end{equation*}
\begin{equation*}
	\partial_{w}\mathcal{L}(z,w) = \Tr_{\text{op}}(\mathcal{C}_{I,\widetilde{M}^{-1}\widetilde{M}'}(1-\mathcal{C}_{M,\widetilde{M}}\mathcal{S})^{-1}\mathcal{C}_{M,\widetilde{M}}\mathcal{S}),
\end{equation*}
and
\begin{equation*}
	\frac{\partial^{2}}{\partial w\partial z}\mathcal{L}(z,w)
	= \Tr_{\textnormal{op}}(\mathcal{C}_{M'M^{-1},I}(1-\mathcal{C}_{M,\widetilde{M}}\mathcal{S})^{-1}\mathcal{C}_{M,\widetilde{M}'}\mathcal{S}(1-\mathcal{C}_{M,\widetilde{M}}\mathcal{S})^{-1}).
\end{equation*}
The estimate \eqref{eq: 3463} follows from the proof of Proposition \ref{prop: 478} as a byproduct.

Indeed, consider $\partial_{z}\mathcal{L}(z,w)$ and recall the decomposition \eqref{eq: 2317}.
Then,
\begin{equation*}
    \partial_{z}\mathcal{L}(z,w)
	= \beta_{z,w}^{-1}\Tr_{\text{op}}( \mathcal{C}_{M'M^{-1},I}\Pi_{z,w}\mathcal{C}_{M,\widetilde{M}}\mathcal{S} )
	+ \Tr_{\text{op}}( \mathcal{C}_{M'M^{-1},I}\mathcal{B}_{z,w}^{-1}(1-\Pi_{z,w})\mathcal{C}_{M,\widetilde{M}}\mathcal{S} ).
\end{equation*}
Repeating the argument in the proof of \eqref{eq: 2379} and \eqref{eq: 2357},
we can obtain
\begin{equation*}
	|\partial_{z}\mathcal{L}(z,w)| = O(\beta_{z,w}^{-1}).
\end{equation*}
Then the $\partial_{z}\mathcal{L}$-part of the second estimate in \eqref{eq: 3455} follows immediately from \eqref{eq: 937d} of Lemma \ref{lem: 792}
We skip the details for brevity because the reasoning is very similar.
The $\partial_{w}\mathcal{L}$-part of the second estimate in \eqref{eq: 3455} is shown analogously.
\end{proof}

\subsection{Proof of Lemma \ref{lem: 1009}}\label{sec: 3491}

We consider $\partial_{w}\Tr_{\text{op}}( \mathcal{C}_{M'M^{-1},I} \mathcal{B}_{z,w}^{-1} \mathcal{C}_{M,\widetilde{M}}\mathcal{S}_{d} )$.
Note that
\begin{equation}\label{eq: 4008ft}
    \partial_{w}(\mathcal{C}_{M'M^{-1},I} \mathcal{B}_{z,w}^{-1} \mathcal{C}_{M,\widetilde{M}}\mathcal{S}_{d}) \\
    =
	\mathcal{C}_{M'M^{-1},I} \mathcal{B}_{z,w}^{-1} \mathcal{C}_{M,\widetilde{M}'}\mathcal{S} \mathcal{B}_{z,w}^{-1} \mathcal{C}_{M,\widetilde{M}}\mathcal{S}_{d}
	+ \mathcal{C}_{M'M^{-1},I} \mathcal{B}_{z,w}^{-1} \mathcal{C}_{M,\widetilde{M}'}\mathcal{S}_{d}.
\end{equation}
First of all, consider the case that $|z-w|\le\beta_{*}$ and $\eta_{1}\eta_{2}<0$, where $\lVert \mathcal{B}_{z,w}^{-1} \rVert_{\lVert\cdot\rVert_{\textnormal{hs}}\to\lVert\cdot\rVert_{\textnormal{hs}}}$ is not well-bounded; thus this case of \eqref{eq: 4008ft} contains the singular part.
Recalling the decomposition of $\mathcal{B}_{z,w}^{-1}$ from \eqref{eq: 2317}, the most singular part is
\begin{multline*}
	\beta_{z,w}^{-2}\Tr_{\text{op}}( \mathcal{C}_{M'M^{-1},I} \Pi_{z,w} \mathcal{C}_{M,\widetilde{M}'}\mathcal{S} \Pi_{z,w} \mathcal{C}_{M,\widetilde{M}}\mathcal{S}_{d} ) \\
	= \frac{1}{\beta_{z,w}^{2}\langle L,R \rangle^{2}}
	\Big( \sum_{a,b} N\langle E^{ab}, M'M^{-1}R \rangle \langle L, \mathcal{C}_{M,\widetilde{M}}\mathcal{S}_{d}[E^{ab}]\rangle \Big)
	\times \langle L, \mathcal{C}_{M,\widetilde{M}'}\mathcal{S}[R] \rangle.
\end{multline*}
We observe that
\begin{align*}
	\sum_{a,b} N\langle E^{ab}, M'M^{-1}R \rangle &\langle L, \mathcal{C}_{M,\widetilde{M}}\mathcal{S}_{d}[E^{ab}]\rangle
	= \frac{1}{N^{2}} \sum_{x,y} \sum_{a,b} (M'M^{-1}R)^{\mathsf{T}}_{ba} (M^{^{*}}L\widetilde{M}^{*})^{*}_{yx} \kappa_{d}(xa,by) \\
	&\le \frac{1}{N} \sum_{r,s} \bigg|\frac{1}{N} \sum_{a,b} \kappa_{d}((b+r)a,b(a+s)) (M'M^{-1}R)^{\mathsf{T}}_{ba} (M^{^{*}}L\widetilde{M}^{*})^{*}_{a+s,b+r} \bigg| \\
	&= \frac{1}{N} \sum_{r,s} |\langle K^{(rs)}\circ (M^{^{*}}L\widetilde{M}^{*})^{(s,r)}, (M'M^{-1}R)^{\mathsf{T}} \rangle|.
\end{align*}
Here $r,s$ range over $|r|,|s|\leq N/2$, with the same convention as in
\eqref{eq: 3113a}, and
\begin{equation*}
        (K^{(rs)})^{*}_{ab}
        \coloneqq
        \kappa_{d}((b+r)a,b(a+s)).
\end{equation*}
Moreover, $(M^{*}L\widetilde{M}^{*})^{(s,r)}$ denotes the matrix obtained from
$M^{*}L\widetilde{M}^{*}$ by shifting the rows and columns,
and $K^{(rs)}\circ (M^{*}L\widetilde{M}^{*})^{(s,r)}$ denotes the entrywise
product.

Since shifting row/column does not affect the Hilbert--Schmidt norm, by the assumption \eqref{eq: 245b}
\begin{equation*}
	|\langle K^{(rs)}\circ (M^{^{*}}L\widetilde{M}^{*})^{(s,r)}, (M'M^{-1}R)^{\mathsf{T}} \rangle|
	\le \lVert K^{(rs)}\circ (M^{^{*}}L\widetilde{M}^{*})^{(s,r)} \rVert_{\text{hs}}
	\lVert (M'M^{-1}R)^{\mathsf{T}} \rVert_{\text{hs}} \lesssim D_{r,s}.
\end{equation*}
Together with $\sum_{r,s} D_{r,s} \lesssim 1$ (from \eqref{eq: 245b}) and \eqref{eq: 937d}, we have
\begin{equation*}
	\Big| \beta_{z,w}^{-2}\Tr_{\text{op}}( \mathcal{C}_{M'M^{-1},I} \Pi_{z,w} \mathcal{C}_{M,\widetilde{M}'}\mathcal{S} \Pi_{z,w} \mathcal{C}_{M,\widetilde{M}}\mathcal{S}_{d} ) \Big|
	\lesssim N^{-1} (1+|\eta|^{-1}) (1+|\tilde{\eta}|^{-1}).
\end{equation*}
By Lemma \ref{lem: 358}, we get
\begin{equation*}
	\int_{\Omega_{0}}\int_{\Omega_{0}'}\frac{\partial\tilde{f}(z)}{\partial\bar{z}}\frac{\partial\tilde{f}(w)}{\partial\bar{w}}
	\beta_{z,w}^{-2}\Tr_{\text{op}}( \mathcal{C}_{M'M^{-1},I} \Pi_{z,w} \mathcal{C}_{M,\widetilde{M}'}\mathcal{S} \Pi_{z,w} \mathcal{C}_{M,\widetilde{M}}\mathcal{S}_{d} ) 
	\mathrm{d}^{2}w \mathrm{d}^{2}z
	= o(1).
\end{equation*}
The remaining (less singular) terms from the decomposition of $\mathcal{B}_{z,w}^{-1}$ are estimated similarly and thus we have
\begin{equation}\label{eq: 4057ft}
	\Big| \partial_{w}\Tr_{\text{op}}( \mathcal{C}_{M'M^{-1},I} \mathcal{B}_{z,w}^{-1} \mathcal{C}_{M,\widetilde{M}}\mathcal{S}_{d} ) \Big|
	\lesssim N^{-1} (1+|\eta|^{-1}) (1+|\tilde{\eta}|^{-1}) + 1 + |\eta|^{-1} + |\tilde{\eta}|^{-1}.
\end{equation}
We omit the details because these steps are essentially identical with those of Section \ref{sec: 2944}.

For the other case that $\eta_{1}\eta_{2}>0$ or $|z-w|>\beta_{*}$ (no singular part domain), following the argument of \eqref{eq: 2771} and \eqref{eq: 2981} analogously, we find that
\begin{equation}\label{eq: 4065ft}
	\partial_{w}\Tr_{\text{op}}( \mathcal{C}_{M'M^{-1},I} \mathcal{B}_{z,w}^{-1} \mathcal{C}_{M,\widetilde{M}}\mathcal{S}_{d} ) = O(1),
\end{equation}
where we used $\mathcal{B}_{z,w}^{-1}= 1 + \mathcal{B}_{z,w}^{-1}\mathcal{C}_{M,\widetilde{M}}\mathcal{S}$
and $\lVert \mathcal{B}_{z,w}^{-1} \rVert_{\lVert\cdot\rVert_{\textnormal{hs}}\to\lVert\cdot\rVert_{\textnormal{hs}}} = O(1)$ from \eqref{eq: 913d}.
Combining \eqref{eq: 4057ft} and \eqref{eq: 4065ft}, the proof is now completed by Lemma \ref{lem: 358}.
\qed

\bibliographystyle{abbrv}

\end{document}